\newtheorem{theorem}{Theorem}[section]
\newtheorem{definition}[theorem]{Definition}
\newtheorem{convention}[theorem]{Convention}
\newtheorem{example}[theorem]{Example}
\newtheorem{proposition}[theorem]{Proposition}
\newtheorem{lemma}[theorem]{Lemma}
\newtheorem{remark}[theorem]{Remark}
\newtheorem{corollary}[theorem]{Corollary}
\newcommand{\BC}{\pt/\mathbb{C}^*}
\newcommand{\dd}{\mathbf{d}}
\newcommand{\DD}{\mathbb{D}}
\newcommand{\ee}{\mathbf{e}}
\newcommand{\mm}{\mathbf{m}}
\newcommand{\ff}{\mathbf{f}}
\newcommand{\Boxtimes}{\mbox{\larger[3]{$\boxtimes$}}}
\DeclareMathOperator{\HN}{HN}
\DeclareMathOperator{\Z}{Z}
\DeclareMathOperator{\cyc}{cyc}
\newcommand{\LL}{\mathbb{L}}
\DeclareMathOperator{\DIM}{dim}
\newcommand{\QQ}{\mathbb{Q}}
\DeclareMathOperator{\PpP}{P}
\DeclareMathOperator{\Simp}{S}
\DeclareMathOperator{\Top}{top}
\DeclareMathOperator{\sfr}{-sfr}
\newcommand{\WW}{\mathcal{T}r(W)}
\newcommand{\WWW}{\mathfrak{Tr}(W)}
\newcommand{\ZZ}{\mathbb{Z}}
\newcommand{\Mst}{\mathfrak{M}}
\newcommand{\Msp}{\mathcal{M}}
\newcommand{\ICS}{\mathcal{IC}}
\newcommand{\phim}[1]{\phi^{\mon}_{#1}}
\newcommand{\shave}{\overline{\Simp}_{\Ss}}
\renewcommand{\theta}{\uptheta}
\renewcommand{\alpha}{\upalpha}
\renewcommand{\beta}{\upbeta}
\renewcommand{\gamma}{\upgamma}
\renewcommand{\delta}{\updelta}
\renewcommand{\zeta}{\upzeta}
\renewcommand{\pi}{\uppi\hspace{0.05em}}
\renewcommand{\xi}{\upxi}
\renewcommand{\chi}{\upchi}
\renewcommand{\sigma}{\upsigma}
\renewcommand{\Lambda}{\Uplambda}
\renewcommand{\Gamma}{\Upgamma}
\renewcommand{\phi}{\upphi}
\renewcommand{\psi}{\uppsi}
\renewcommand{\nu}{\upnu}
\renewcommand{\tau}{\uptau}
\renewcommand{\mu}{\upmu}
\renewcommand{\eta}{\upeta}
\DeclareMathOperator{\IIm}{\mathcal{I}m}
\DeclareMathOperator{\RRe}{\mathcal{R}e}
\DeclareMathOperator{\sst}{-ss}
\DeclareMathOperator{\nilp}{nilp}
\DeclareMathOperator{\KK}{K_0}
\DeclareMathOperator{\Dim}{Dim}
\DeclareMathOperator{\surj}{surj}
\DeclareMathOperator{\st}{-st}
\DeclareMathOperator{\tw}{tw}
\DeclareMathOperator{\reg}{reg}
\DeclareMathOperator{\Tor}{T}
\DeclareMathOperator{\Ror}{A}
\DeclareMathOperator{\Hodge}{Hodge}
\DeclareMathOperator{\For}{F}
\DeclareMathOperator{\crit}{crit}
\DeclareMathOperator{\CTens}{\hat{T}}
\DeclareMathOperator{\Hom}{Hom}
\DeclareMathOperator{\mon}{mon}
\DeclareMathOperator{\Mu}{\Upxi}
\DeclareMathOperator{\End}{End}
\DeclareMathOperator{\Ext}{Ext}
\DeclareMathOperator{\MMHM}{MMHM}
\DeclareMathOperator{\fd}{\mathbf{f.d}}
\DeclareMathOperator{\Gr}{Gr}
\DeclareMathOperator{\tmod}{mod-}
\DeclareMathOperator{\Tmod}{Mod-}
\newcommand{\rmod}[1]{\tmod\!#1}
\newcommand{\Rmod}[1]{\Tmod\!#1}
\DeclareMathOperator{\supp}{supp}
\DeclareMathOperator{\Aut}{Aut}
\DeclareMathOperator{\codim}{codim}
\DeclareMathOperator{\Perv}{Perv}
\DeclareMathOperator{\MHM}{MHM}
\DeclareMathOperator{\IC}{IC}
\DeclareMathOperator{\DT}{DT}
\DeclareMathOperator{\Sym}{Sym}
\DeclareMathOperator{\Spec}{Spec}
\DeclareMathOperator{\Gl}{GL}
\DeclareMathOperator{\id}{id}
\DeclareMathOperator{\Jac}{Jac}
\DeclareMathOperator{\HJac}{\widehat{Jac}}
\DeclareMathOperator{\Perf}{Perf}
\DeclareMathOperator{\HGamma}{\widehat{\Gamma}}
\DeclareMathOperator{\Tr}{Tr}
\DeclareMathOperator{\pt}{pt}
\DeclareMathOperator{\EXP}{EXP}
\DeclareMathOperator{\princ}{princ}
\DeclareMathOperator{\Ss}{\overline{s}}
\newcommand{\pleth}[1]{\EXP\left(#1\right)}
\DeclareMathOperator{\vir}{vir}
\DeclareMathOperator{\Ho}{\mathcal{H}}
\DeclareMathOperator{\HO}{H}
\DeclareMathOperator{\Dulf}{\mathcal{D}^{\geq, lf}}
\DeclareMathOperator{\Dllf}{\mathcal{D}^{\leq, lf}}
\newcommand{\Db}{\mathcal{D}^{b}}
\newcommand{\Dub}{\mathcal{D}}
\begin{document}
\title[Positivity for quantum cluster algebras]{Positivity for quantum cluster algebras}
\author{Ben Davison}

\maketitle
\vspace{-0.2in}
\begin{center} 
{\it In memory of Kentaro Nagao}
\end{center}
\begin{abstract}
Building on work by Kontsevich, Soibelman, Nagao and Efimov, we prove the positivity of quantum cluster coefficients for all skew--symmetric quantum cluster algebras, via a proof of a conjecture first suggested by Kontsevich on the purity of mixed Hodge structures arising in the theory of cluster mutation of spherical collections in 3--Calabi--Yau categories. The result implies positivity, as well as the stronger Lefschetz property conjectured by Efimov, and also the classical positivity conjecture of Fomin and Zelevinsky, recently proved by Lee and Schiffler.  Closely related to these results is a categorified ``no exotics'' type theorem for cohomological Donaldson--Thomas invariants, which we discuss and prove in the appendix.
\end{abstract}
\tableofcontents

\section{Introduction}
\subsection{Background}
This paper concerns the positivity conjecture for quantum cluster algebras, which were introduced in \cite{BZ05}.  These algebras are certain combinatorially defined noncommutative algebras over $\mathbb{Z}[q^{\pm 1/2}]$, generated by a distinguished set of n-element subsets, for some fixed $n$, called the \textit{clusters}.  These clusters are related to each other by a recursive procedure, called cluster mutation.  We refer the reader to the excellent references \cite{Ke375,Keller10} for a comprehensive guide and introduction to the background on cluster algebras, and we recall the necessary definitions in Section \ref{genCluSec}.  While quantum cluster mutations have a straightforward definition, the behaviour of clusters after iterated cluster mutation is rather complicated.  

The quantum cluster positivity conjecture states that every element in every cluster is a Laurent polynomial in the monomials of any other cluster, where the coefficients of these Laurent polynomials are themselves Laurent polynomials in $q^{1/2}$ with positive integer coefficients.  This statement, without the positivity, was proved at the outset of the subject by Berenstein and Zelevinsky in \cite{BZ05}.  Specialising the quantum cluster positivity conjecture at $q^{1/2}=1$ we obtain the \textit{classical} positivity conjecture of Fomin and Zelevinsky \cite{FZ02}, recently proved by Lee and Schiffler \cite{LS15} in the skew-symmetric case, and then in the case of geometric type by Gross, Hacking, Keel and Kontsevich \cite{GHKK14}, via quite different methods, which are not so distant from the mathematics of the current paper.

Some cases of the quantum version of the positivity conjecture have already been proved.  In \cite[Cor.3.3.10]{KQ14} the conjecture is proved by Kimura and Qin in the case in which the cluster algebra has a seed with an acyclic quiver, building on ideas of Nakajima \cite{Na11} in the classical case.  By different methods, which are much closer to the ones employed in this paper, Efimov \cite{Efi11} recovers positivity in the case that either the cluster $S'$ containing the monomial that we wish to express in terms of some other cluster $S$ corresponds to an acyclic quiver, or in the case that $S$ itself corresponds to an acyclic quiver.  In this instance Efimov proves the stronger \textit{Lefschetz property} (see Definition \ref{LefDef} below), which we prove in the general case.  Finally in \cite{DMSS13}, following \cite{Efi11}, along with Maulik, Sch\"urmann and Szendr\H{o}i  we were able to prove the quantum cluster positivity conjecture, along with the Lefschetz property, for all quantum cluster algebras admitting a seed, the quiver of which admits a quasihomogeneous nondegenerate potential.  This result is sufficient to prove the conjecture in many, but not all, cases arising `in nature'.

The papers \cite{Efi11} and \cite{DMSS13} closely follow the outline in the paper \cite{Na13}.  The discussion \cite[Sec.7]{Na13} is recommended for the reader wishing to gain some heuristic insight into the functioning of the proof below (although effort has been made to make the present paper reasonably self-contained).  There are some differences between our approach and that of \cite{Na13}, which can be explained with reference to the following principle, quoted from \cite{Na13}:
\begin{center}
``Starting from a simple categorical statement, provide an identity in the motivic Hall algebra. Pushing it out by the integration map, we get a power series identity for the generating functions of Donaldson--Thomas invariants.''
\end{center}
Via elementary recursive arguments, the resulting power series expansions recover mutated quantum cluster monomials.  

We start with the same categorical statement  --- the existence and uniqueness of Harder--Narasimhan filtrations --- but we use it instead to provide an isomorphism in the category of mixed Hodge modules over the space of dimension vectors, as opposed to an equality in a Grothendieck ring.  Our integration map (and the resulting identities) are essentially the same as the map used by Nagao, which was introduced by Kontsevich and Soibelman in their work on motivic Donaldson--Thomas invariants and Hall algebras \cite{KS}.  The benefit of working with these mixed Hodge modules, which provide a categorification of the motivic Hall algebra, is that we can make use of powerful results from Saito's theory of mixed Hodge modules \cite{Sai89,Sai90,Saito89}.  In particular, we make essential use of the concept of purity of mixed Hodge modules, and Saito's version of the decomposition theorem of Beilinson, Bernstein, Deligne and Gabber \cite{BBD}.

The relation of purity to the quantum cluster positivity conjecture was first made explicit in a conjecture suggested by Kontsevich, and explained by Efimov in \cite{Efi11}.  Before this, the deep connection between cluster mutation and the motivic Donaldson--Thomas theory of 3--Calabi--Yau categories was established by Kontsevich and Soibelman in \cite{KS}.  It is in this framework that the above quote of Nagao makes sense.  Via further work of Kontsevich and Soibelman \cite{COHA} the vanishing cycle cohomology of the particular moduli space underlying the element in the motivic Hall algebra that produces quantum cluster coefficients under the integration map carries a monodromic mixed Hodge structure, as defined in \cite[Sec 7]{COHA}.  Purity of this monodromic mixed Hodge structure implies the quantum cluster positivity conjecture; this implication was proved by Efimov in \cite{Efi11}.  Kontsevich and Efimov conjectured, and we prove below as Theorem \ref{KConj}, that this purity statement holds generically\footnote{In fact we are able to do without the genericity assumption.}.  We conclude the paper with the resulting derivation of quantum cluster positivity, which we state as Theorem \ref{mainThm} below.

\subsection{Standing conventions}
Throughout the paper we set $\mathbb{N}=\{r\in\mathbb{Z}|\hbox{ }r\geq 0\}$.

All varieties, schemes and stacks are assumed to be complex.  All quotients of schemes by algebraic groups are taken in the stack theoretic sense, unless we specify otherwise.  All functors are derived, unless we specify otherwise.

Given an object $M$ in an Abelian or triangulated category $\mathcal{D}$, we denote by $[M]$ the corresponding element in the Grothendieck group, which we denote by $\KK(\mathcal{D})$.

Given an algebra $A$, we denote by $\rmod{A}$ the category of finite-dimensional right $A$-modules, and $\Rmod{A}$ denotes the category of all right $A$-modules.  Where we consider the derived category of $A$-modules, a superscript $\fd$ will indicate that we restrict to the full subcategory containing those complexes of modules with finite dimensional total cohomology.  

All quivers in the paper are finite.  If $\Ss=(s_1,\ldots,s_t)$ is a sequence of vertices of a quiver, we denote by $\Ss'$ the truncated sequence $(s_1,\ldots,s_{t-1})$, and for $t'\leq t$ we denote by $\Ss_{\leq t'}$ the truncated sequence $(s_1,\ldots,s_{t'})$.

For $1\leq s\leq n$, we will use the symbol $1_s$ to denote the $s$th element in the natural set of generators for $\mathbb{Z}^n$.  We consider vectors as column vectors, so that the bilinear form associated to a square matrix $C$ is defined by $(\dd',\dd''):=\dd'^TC\dd''$.

If $\Mst$ is a moduli space of modules for a finitely generated algebra $A$, and $\Mst^{\nilp}\subset \Mst$ is the reduced subspace whose geometric points correspond to nilpotent modules, and $\mathcal{F}$ is a monodromic mixed Hodge module on $\Mst$, we write $\mathcal{F}_{\nilp}$ to denote $(\Mst^{\nilp}\rightarrow \Mst)_*(\Mst^{\nilp}\rightarrow \Mst)^*\mathcal{F}$. 

If $\mathcal{F}\in\mathcal{D}$ is an element of a triangulated category with a given t structure, we denote the total cohomology
\[
\Ho(\mathcal{F}):=\bigoplus_{i}\Ho^i(\mathcal{F})[-i].
\]

At numerous points we take ordered tensor products $\bigotimes_{\gamma\in S}\mathcal{F}_{\gamma}$, where $S$ is an infinite ordered set.  These are to be understood in the following sense.  Firstly, it will always be the case that each $\mathcal{F}_{\gamma}$ is canonically written as a direct sum $\mathbf{1}\oplus\mathcal{F}'_{\gamma}$, where $\mathbf{1}$ is a monoidal unit.  Then we define
\[
\bigotimes_{\gamma\in S}\mathcal{F}_{\gamma}=\bigoplus_{\textrm{finite}\;T\subset S}\left(\bigotimes_{\gamma\in T}\mathcal{F}'_{\gamma}\right).
\]
The direct sum includes the term $\mathbf{1}$ corresponding to the empty subset $\emptyset\subset S$.
\begin{remark}
\label{differenceRem}
For the reader that is familiar with preceding work on the link between Donaldson--Thomas theory and quantum cluster mutation in  \cite{Efi11} and \cite{DMSS13}, we flag and explain a technical difference in the approach of the present paper.  Firstly, recall (or see Section \ref{mutPot}) that the mutation of an algebraic quiver with potential need not be algebraic, in the sense of Definition \ref{algDef}.  In \cite{Efi11} and \cite{DMSS13} this issue was handled as follows.  We picked a formal potential $W$ for $Q$ and considered the category $\mathcal{A}=\Rmod{\HJac(Q,W)}$ of right modules over the associated completed Jacobi algebra, defined in Section \ref{mutPot}.  We compared this category with the tilted heart $\mathcal{A}'=(\Rmod{\HJac(Q,W)})^{(\mathcal{F}'_{\Ss},\mathcal{T}'_{\Ss}[-1])}$, where $(\mathcal{T}'_{\Ss},\mathcal{F}'_{\Ss})$ is a torsion structure on $\mathcal{A}$, built recursively from the data of a sequence of vertices $\Ss$, using the version of Nagao's procedure in \cite[Sec.3]{Na13} that starts with a \textbf{left} tilt.  I.e. in the case that $\Ss$ is empty, $\mathcal{F}'_{\Ss}=\Rmod{\HJac(Q,W)}$.

For certain choices of $W$, there is an equivalence of categories $\mathcal{A}'\cong \Rmod{\HJac(Q',W')}$, where now $W'$ is algebraic, and so in the target category $\mathcal{A}'$, Donaldson--Thomas theory is more straightforward to set up. In particular, the moduli stack in the motivic Hall algebra that we apply the integration map to and then conjugate by in order to reproduce the operation of quantum cluster mutation (the stack of finite-dimensional objects in $\mathcal{T}'_{\Ss}$) carries a monodromic mixed Hodge structure on its vanishing cycle cohomology, via the constructions of \cite{COHA}, since it may be considered as a substack of the stack of objects in $\mathcal{A}'$.  

Note, however, that the intermediate torsion categories $\mathcal{T}'_{\Ss_{\leq t'}}$ occurring in the recursive definition of $\mathcal{T}'_{\Ss}$ are all subcategories of $\mathcal{A}$, and not $\mathcal{A}'$.  Since we have no guarantee that $\mathcal{A}=\Rmod{\HJac(Q,W)}$ is the category of right modules of a (completed) Jacobi algebra arising from an algebraic quiver with potential, it is thus not clear how to use the Hodge-theoretic constructions of \cite{COHA} in inductive arguments.  

We remedy this by \textit{picking} $W$ to be an algebraic potential for $Q$, and tilting \textbf{right}, i.e. the category we tilt towards is $\mathcal{A}''=(\Rmod{\HJac(Q,W)})^{(\mathcal{F}''_{\Ss}[1],\mathcal{T}''_{\Ss})}$, where $(\mathcal{T}''_{\Ss},\mathcal{F}''_{\Ss})$ is a torsion structure built using the version of Nagao's recipe that starts with a \textit{right} tilt.  I.e. if $\Ss$ is empty, we have $\mathcal{T}''_{\Ss}=\Rmod{\HJac(Q,W)}$.  With this approach, quantum cluster mutaton is given by conjugating by the integral of the stack of finite-dimensional objects in $\mathcal{F}''_{\Ss}$.  Furthermore, the categories $\mathcal{F}''_{\Ss_{\leq t'}}$ that we encounter in inductive arguments are subcategories of $\mathcal{A}$, which now has been chosen to be the category of representations of a Jacobi algebra with an algebraic potential, allowing us to use Hodge theory inductively.  

As a result of this subtle change in setup, some of the statements below are technically different from their counterparts in the literature, and for this reason, as well as the hope that the paper can be relatively self-contained, some proofs are repeated.  Finally, due to this change, strictly speaking, the version of Kontsevich's conjecture that we prove as Theorem \ref{KConj} is different to the version stated in \cite{Efi11}.
\end{remark}
\subsection{Acknowledgements}
I would like to thank Bernhard Keller for patiently explaining cluster mutation to me, and Bal\'azs Szendr\H{o}i for introducing me to the subject in the first place.  During the writing of this paper, I was a postdoctoral researcher at EPFL, supported by the Advanced Grant ``Arithmetic and Physics of Higgs moduli spaces'' No. 320593 of the European Research Council.  During the revision of this paper I was supported by the University of Glasgow.

\section{Quivers and cluster algebras}\label{genCluSec}
\subsection{Quantum cluster algebras}
\label{cluSec}
Let $Q$ be a finite quiver, i.e. a pair of finite sets $Q_1$ (the arrows) and $Q_0$ (the vertices), along with two maps $s,t\colon Q_1\rightarrow Q_0$, taking an arrow to its source and target, respectively.  We assume that $Q$ has no loops or 2-cycles.  We define two bilinear forms on $\mathbb{Z}^{Q_0}$:
\begin{align}
(\dd',\dd'')_Q=&\sum_{i\in Q_0}\dd'_i\dd''_i-\sum_{a\in Q_1} \dd'_{t(a)}\dd''_{s(a)},\\
\langle \dd',\dd''\rangle_Q=&(\dd',\dd'')_Q-(\dd'',\dd')_Q.
\end{align}
We will omit the subscript $Q$ where there is no chance of confusion.  We fix a labelling of the vertices $Q_0$ by numbers $\{1,\ldots,n\}$, and fix a number $m\leq n$.  This defines for us an \textit{ice quiver}, i.e. a quiver with the extra data of a subset of vertices $S\subset Q_0$, the so-called \textit{frozen vertices}.  In our case we set $S=\{m+1,\ldots,n\}$.  The full subquiver $Q_{\princ}$ containing the vertices $\{1,\ldots,m\}$ is called the \textit{principal part} of $Q$ --- these are the vertices that we are allowed to perform mutations at.  

The \textit{mutation} $\mu_i(Q)$ of $Q$ at a vertex $i\leq m$ is performed in 3 steps.
\begin{enumerate}
\item
For all paths of length 2 passing through $i$, i.e. pairs of arrows $b,c\in Q_0$ with $t(b)=i=s(c)$, we introduce an arrow $[cb]$ with $s([cb])=s(b)$ and $t([cb])=t(c)$.
\item
For all arrows $b\in Q_1$ incident to $i$, we replace $b$ with an arrow $\overline{b}$ with the opposite orientation.
\item
If for two vertices $j,j'$ there are $r$ arrows going from $j$ to $j'$, and $r'$ arrows going from $j'$ to $j$, with $r\geq r'$, we delete all of the arrows going from $j'$ to $j$, and $r'$ of the arrows going from $j$ to $j'$.
\end{enumerate}
The set of frozen vertices is unchanged by mutation.  This defines an automorphism of the set of isomorphism classes of ice quivers without loops and 2-cycles.  The operation is well-defined because of the deletion step, and is an automorphism because mutation at $i$ is an involution on the set of such isomorphism classes.

Let $L$ be a rank $n$ free $\mathbb{Z}$-module, and let $\Lambda\colon L\times L\rightarrow L$ be a skew-symmetric bilinear form.  The \textit{quantum torus} $\Tor_{\Lambda}$ is freely generated, as a $\mathbb{Z}[q^{\pm 1/2}]$-module, by elements $X^\ee$, for $\ee\in L$, with multiplication on these elements defined by
\begin{equation}
\label{LambdaMult}
X^\ee\cdot X^\ff=q^{\Lambda(\ee,\ff)/2}X^{\ee+\ff}
\end{equation}
and extended $\mathbb{Z}[q^{\pm 1/2}]$-linearly to the whole of $\Tor_{\Lambda}$.  We denote by $\For_{\Lambda}$ the skew field of fractions of $\Tor_{\Lambda}$.  A \textit{toric frame} for $\For_{\Lambda}$ is a map
\[
M\colon \mathbb{Z}^n\rightarrow \For_{\Lambda}
\]
defined by $M(c)=\tau(X^{\nu(c)})$ with $\tau\in\Aut_{\mathbb{Q}(q^{\pm 1/2})}(\For_{\Lambda})$ and $\nu\colon \mathbb{Z}^n\rightarrow L$ an isomorphism of lattices.  We fix an identification $L=\mathbb{Z}^n$, and we fix an initial toric frame $M$ by setting $\tau=\id$.  The pair $(Q,M)$ is called the \textit{initial seed}.

Since $Q$ contains no 2-cycles, the isomorphism class of the ice quiver $Q$ is encoded in the $n\times m$ matrix $\tilde{B}$, defined by setting $\tilde{B}_{ij}=a_{ji}-a_{ij}$, where $a_{ij}$ is the number of arrows $a\in Q_1$ with $s(a)=i$ and $t(a)=j$.  We identify $\Lambda$ with the $n\times n$ matrix associated with $\Lambda$ via the identification $L=\mathbb{Z}^n$.  The matrix $\tilde{B}$ may alternatively be described as the matrix given by expressing the bilinear form ${}-\langle\bullet,\bullet\rangle_Q$ in the standard basis, and then deleting the last $(n-m)$ columns.  We say that $\tilde{B}$ is \textit{compatible} with $\Lambda$ if 
\begin{equation}
\label{compatibility}
\tilde{B}^T\Lambda=\tilde{I},
\end{equation}
where the first $m$ columns of $\tilde{I}$ are the identity matrix, and the remaining entries are zeroes.  We say the ice quiver $Q$ is compatible with $\Lambda$ if $\tilde{B}$ is.

The elements $M(1_1),\ldots,M(1_m)\in \For_{\Lambda}$ are called the \textit{cluster variables}, while the elements $M(1_{m+1}),\ldots,M(1_n)$ are called the \textit{coefficients}.  So the cluster variables of the initial seed are $X^{1_1},\ldots,X^{1_m}$, while the coefficients are $X^{1_{m+1}},\ldots,X^{1_n}$.  Note that the coefficients are unchanged by mutation.

We define the ring $\Ror_Q$ to be the free $\mathbb{Z}[q^{\pm 1/2}]$-module generated by elements $Y^\ee$, for $\ee\in\mathbb{N}^m$, with multiplication defined by
\begin{equation}
\label{Ymult}
Y^\ee\cdot Y^\ff=q^{\langle \ff,\ee\rangle_Q/2}Y^{\ee+\ff}.
\end{equation}
If $\tilde{B}$ and $\Lambda$ are compatible, then the map
\begin{align}
\iota\colon&\Ror_Q\rightarrow\Tor_{\Lambda}\label{iotadef}\\
&Y^\ee\rightarrow X^{{}-\tilde{B}\cdot \ee}\nonumber
\end{align}
is a homomorphism of algebras.  Let $\PpP=\mathbb{Z}((q^{1/2}))$.  We define $\hat{\Ror}_Q$ to be the completion of $\Ror_Q\otimes_{\mathbb{Z}[q^{\pm 1/2}]} P$ with respect to the two-sided ideal generated by $\{Y^{1_s}|s\leq m\}$.  Consider the $\Tor_{\Lambda}$-module $K=\prod_{\ee\in\mathbb{Z}^n}X^{\ee}P$, with $\Tor_{\Lambda}$-action defined via (\ref{LambdaMult}).  The map $\iota$ extends naturally to a map $\hat{\iota}\colon\hat{\Ror}_Q\rightarrow K$, which is injective by (\ref{compatibility}), and we define
\[
\hat{\Tor}_{\Lambda}:=\bigcup_{\ee\in \mathbb{Z}^n}X^{\ee}\cdot \textrm{Image}(\hat{\iota})
\]
with multiplication as in (\ref{LambdaMult}).  There is a natural inclusion of $\mathbb{Z}[q^{\pm 1/2}]$-algebras $\Tor_{\Lambda}\subset\hat{\Tor}_{\Lambda}$.

If $\tilde{B}$ is the matrix associated to the ice quiver $Q$, and $s$ is a vertex in the principal part of $Q$, we define $\mu_s(\tilde{B})$ to be the matrix associated to the mutated ice quiver $\mu_s(Q)$.  We define the mutation of toric frames via the rule
\begin{align}\label{mutFram}
\mu_s(M)(1_i)=\begin{cases} M(1_i)& \textrm{for }i\neq s,\\ M\left(\sum_{b_{rs}>0}b_{rs}1_r-1_s\right)+ M\left({}-\sum_{b_{rs}<0}b_{rs}1_r-1_s\right) &\textrm{for }i=s.\end{cases}
\end{align}
Mutation of seeds is defined by $\mu_s((Q,M))=(\mu_s(Q),\mu_s(M))$.  The classical notion of cluster mutation is recovered by specialising at $q^{1/2}=1$.  

We consider the initial seed $(Q,M)$ defined above.  If $\Ss=(s_1,\ldots,s_t)$ is a sequence of mutations, we define $\mu_{\Ss}(Q):=\mu_{s_t}(\ldots\mu_{s_1}(Q)\ldots )$ and define $\mu_{\Ss}((Q,M))$ similarly.  If a quiver $Q$ is understood, we write $\mu_{\Ss}(M)$ to denote the toric frame of $\mu_{\Ss}((Q,M))$, i.e. the toric frame defined recursively from the initial seed $(Q,M)$, where at each stage in the recursive procedure we use the mutated quiver to define the sum in (\ref{mutFram}).  The set 
\[
\big\{\{\mu_{\Ss}(M)(1_i)|i\leq m\}\;|\;\Ss\textrm{ a sequence of vertices of }Q_{\princ}\big\}
\]
is called the set of clusters, while the set 
\[
\{\mu_{\Ss}(M)(\dd)\;|\;\Ss\textrm{ a sequence of vertices of }Q_{\princ},\,\,\dd\in \mathbb{N}^{n}\}
\]
is called the set of cluster monomials.  The \textit{quantum cluster algebra} $\mathcal{A}_{\Lambda,Q}$ is the sub $\mathbb{Z}[q^{\pm 1/2}]$-algebra of $\For_{\Lambda}$ generated by the set
\[
\{\mu_{\Ss}(M)(\dd)\;|\;\Ss\textrm{ a sequence of vertices of }Q_{\princ},\,\, \dd\in \mathbb{N}^{m}\times\mathbb{Z}^{n-m}\}.
\]
Setting $q^{1/2}=1$, we recover the ordinary \textit{commutative} cluster algebra $\mathcal{A}_{Q}$ of \cite{FZ02}.  

\begin{remark}
\label{quantization}
We say that the cluster algebra $\mathcal{A}_Q$ associated to an ice quiver $Q$ can be \textit{quantized} if we can find a quiver $Q'$, containing $Q$ as a full subquiver, such that if we set the principal part of $Q'$ to be the same as the principal part of $Q$ (i.e. we only add frozen vertices), we can find a skew-symmetric $n'\times n'$ matrix $\Lambda$ compatible with $Q'$, where $|Q'_0|=n'$.  By \cite[Lem.4.4]{DMSS13}, this can always be done.
\end{remark}
By a result of Berenstein and Zelevinsky \cite[Cor.5.2]{BZ05}, the inclusion $\mathcal{A}_{\Lambda,Q}\subset\For_{\Lambda}$ factors through the inclusion $\Tor_{\Lambda}\subset \For_{\Lambda}$.  Equivalently, for a given mutated toric frame $M'=\mu_{\Ss}(M)$, and an arbitrary cluster monomial $Y$, we can write
\begin{equation}
\label{LaurentExp}
Y=\sum_{\dd\in \mathbb{Z}^n}a_\dd(q^{1/2})M'(\dd)
\end{equation}
where the $a_\dd(q^{1/2})\in\mathbb{Z}[q^{\pm 1/2}]$.
\begin{definition}
\label{LefDef}
We say that a Laurent polynomial $a(q^{1/2})$ is of \textit{Lefschetz type} if it can be written as a sum of polynomials of the form $(q^{d/2}-q^{-d/2})/(q^{1/2}-q^{-1/2})$ for positive integers $d$.
\end{definition}
In particular, a polynomial of Lefschetz type has positive integral coefficients.
\begin{remark}
Say $b(q^{1/2})=\sum_{i\in \mathbb{Z}}\dim(V^i)q^{i/2}$ is the characteristic polynomial of a $\mathbb{Z}$-graded finite-dimensional vector space.  Then $b(q^{1/2})$ is of Lefschetz type if and only if there is a degree two operator $l\colon V^{\bullet}\rightarrow V^{\bullet+2}$ such that $l^k\colon V^{-k}\rightarrow V^k$ is an isomorphism for all $k$.  For example, by the hard Lefschetz theorem, this occurs if $V=\HO(X,\mathbb{Q})[\dim(X)]$ is the (shifted) cohomology of a smooth projective variety.
\end{remark}
The purpose of this paper is to prove the following theorem.  The proof will be completed in Section \ref{mainProof}.
\begin{theorem}[Quantum cluster positivity]
\label{mainThm}
Let $\mathcal{A}_{\Lambda,Q}$ be a quantum cluster algebra defined by a compatible pair $(Q,\Lambda)$.  For a mutated toric frame $M'$, and a cluster monomial $Y$, the Laurent polynomials $a_\dd(q^{1/2})$ appearing in the expression (\ref{LaurentExp}) are of Lefschetz type, and in particular, they have positive coefficients.  Furthermore, they can be written in the form $a_{\dd}(q^{1/2})=b_{\dd}(q)q^{-\deg(b_{\dd}(q))/2}$ for $b_{\dd}(q)\in\mathbb{N}[q]$, i.e. each polynomial $a_{\dd}(q^{1/2})$ contains only even or odd powers of $q^{1/2}$.
\end{theorem}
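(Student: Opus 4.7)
The plan is to reduce Theorem \ref{mainThm} to the purity assertion of Theorem \ref{KConj}, following the route from purity to positivity laid out by Efimov \cite{Efi11}, but reorganised around the right-tilt variant of Nagao's recipe described in Remark \ref{differenceRem}. First, by Remark \ref{quantization}, I enlarge $Q$ if necessary so that a compatible skew-symmetric $\Lambda$ exists, and fix an algebraic nondegenerate potential $W$ for the principal part of $Q$. For a mutation sequence $\Ss$, the right-tilt recipe inductively produces torsion pairs $(\mathcal{T}''_{\Ss_{\leq t'}}, \mathcal{F}''_{\Ss_{\leq t'}})$ on $\Rmod{\HJac(Q,W)}$. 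The crucial feature, absent in the left-tilt approach of \cite{Efi11,DMSS13}, is that each intermediate category $\mathcal{F}''_{\Ss_{\leq t'}}$ lives inside the single ambient category $\Rmod{\HJac(Q,W)}$ attached to an algebraic quiver with potential, so the monodromic mixed Hodge module constructions of \cite{COHA} apply directly at every recursive step.

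The categorical input is then the existence and uniqueness of the Harder--Narasimhan filtration with respect to $(\mathcal{T}''_{\Ss}, \mathcal{F}''_{\Ss})$. Lifted to the categorified motivic Hall algebra of $\MMHM$s on the stack of finite-dimensional modules graded by dimension vector, this supplies an isomorphism of mixed Hodge modules, which under the Kontsevich--Soibelman integration map becomes an identity in $\hat{\Tor}_{\Lambda}$. An inductive unpacking matching this identity against the mutation rule (\ref{mutFram}) level by level identifies the series attached to $\mathcal{F}''_{\Ss}$ with the element whose conjugation action carries the initial Donaldson--Thomas series to the mutated toric frame $\mu_{\Ss}(M)$. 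Expanding an arbitrary cluster monomial back in the original frame, the coefficient $a_{\dd}(q^{1/2})$ of (\ref{LaurentExp}) is thereby identified, up to an explicit $q^{1/2}$-monomial produced by (\ref{LambdaMult}), with the Poincar\'e polynomial of the total monodromic vanishing-cycle cohomology of the moduli space of objects of $\mathcal{F}''_{\Ss}$ of dimension vector $\dd$.

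Theorem \ref{KConj} asserts that this monodromic mixed Hodge structure is pure. Granted purity, the Lefschetz property of Definition \ref{LefDef} is extracted via the decomposition theorem of \cite{BBD} in Saito's form \cite{Sai89,Sai90}: a relative compactification of the map from the relevant moduli stack, combined with the decomposition into simple $\IC$-summands on strata, equips each summand with a hard Lefschetz $\mathfrak{sl}_2$-action, so each Poincar\'e polynomial is a nonnegative integral sum of characters of irreducible $\mathfrak{sl}_2$-representations, hence of Lefschetz type. The remaining parity statement, that $a_{\dd}(q^{1/2})$ contains only powers of one parity and so has the form $b_{\dd}(q)q^{-\deg(b_{\dd}(q))/2}$, follows from the standard fact that monodromic vanishing-cycle cohomology of moduli of representations of the $3$-Calabi--Yau Jacobi algebra $\HJac(Q,W)$ at a fixed dimension vector $\dd$ is concentrated in cohomological degrees of a single parity, determined by $(\dd,\dd)_Q$.

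The principal obstacle is of course Theorem \ref{KConj} itself, which is the heart of the paper: its proof requires a careful analysis of the supports of the simple summands appearing in the $\BBD$-decomposition of the integration-map pushforward of the vanishing-cycle complex, together with the ``no exotics'' type theorem for cohomological Donaldson--Thomas invariants foreshadowed by the abstract and treated in the appendix. Once Theorem \ref{KConj} is in hand, the derivation of Theorem \ref{mainThm} sketched above is essentially bookkeeping, amounting to running the argument of \cite{Efi11} with the left tilt replaced by the right tilt throughout, so that the inductive reduction never leaves the category of modules over a Jacobi algebra with algebraic potential.
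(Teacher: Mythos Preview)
Your proposal has the right architecture but misidentifies the cohomology that actually carries the cluster coefficients, and this is a genuine gap. You write that $a_{\dd}(q^{1/2})$ is the Poincar\'e polynomial of the vanishing-cycle cohomology of ``the moduli space of objects of $\mathcal{F}''_{\Ss}$ of dimension vector $\dd$''. That object is the \emph{unframed} stack, and its cohomology appears as the conjugating factor in Theorem \ref{sfrThm}, not as the Laurent coefficients themselves. The paper's argument (Section \ref{mainProof}) passes from the conjugation identity to an explicit coefficient formula via a second wall-crossing, this time in the \emph{framed} quiver $Q_{\ff}$ with $\ff$ equal to the exponent of the cluster monomial: combining (\ref{WC}), (\ref{bigWall}), (\ref{bigWCF}) and restricting to $\{1\}\times R$ yields (\ref{finsf}), which expresses $\mu_{\Ss}(M)(\ff)$ as a sum over $\dd$ of $\chi_{Q_{\ff}}$ applied to the vanishing-cycle cohomology of the \emph{framed scheme} $\Msp(Q)^{\zeta_{\Ss},\theta_{\Ss}\sfr}_{\ff,\dd}$ restricted to the nilpotent locus. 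It is precisely this framed cohomology whose purity and Lefschetz structure are asserted in Theorem \ref{KConj}, and the Lefschetz operator comes not from an abstract BBD decomposition but from the fact (Proposition \ref{eqSchemes}) that the nilpotent locus is a \emph{proper} union of components of the support of the vanishing-cycle sheaf, placing us in case (c) of \cite[Thm.2.1]{DMSS13}.

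Two further corrections. First, your parity argument is stated as a cohomological-degree parity fact ``determined by $(\dd,\dd)_Q$''; in the paper the parity is instead deduced from the \emph{trivial monodromy} clause of Theorem \ref{KConj} via Remark \ref{monRem}, which in turn rests on the inductive parity control of Proposition \ref{invOD}. Second, the ``no exotics'' result in the appendix is a \emph{consequence} of the same circle of ideas (Proposition \ref{stackPure} and Theorem \ref{rays}), not an input to the proof of Theorem \ref{KConj}; the latter is proved directly from Proposition \ref{stackPure} by a further framed wall-crossing argument, with no appeal to the appendix.
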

\subsection{Mutation of quivers with potential}
\label{mutPot}
Given a quiver as in Section \ref{cluSec}, we define $\mathbb{C}Q$ to be the free path algebra of $Q$ over $\mathbb{C}$.  This algebra contains a two-sided ideal $\mathbb{C}Q_{\geq 1}$ generated by the elements $a\in Q_1$, and we define $\widehat{\mathbb{C}Q}$ to be the topological algebra obtained by completing $\mathbb{C}Q$ with respect to this ideal.

Given a topological algebra $A$, we define $A_{\cyc}:=A/\overline{[A,A]}$.  Let $W\in \widehat{\mathbb{C}Q}_{\cyc}$ be a \textit{potential}, i.e. a formal linear combination of cyclic words in $Q$, where we consider two cyclic words to be the same if we can cyclically permute one to the other.  
\begin{definition}
\label{algDef}
We say that a potential is \textit{algebraic} if it is in the image of the injection
\[
\mathbb{C}Q/[\mathbb{C}Q,\mathbb{C}Q]\hookrightarrow \widehat{\mathbb{C}Q}_{\cyc},
\]
i.e. it is a finite linear combination of cyclic words in $Q$.  We say that the quiver with potential (QP for short) $(Q,W)$ is algebraic if $W$ is.
\end{definition}
Sometimes we will refer to a potential as a \textit{formal potential} if we want to make it clear that it is not necessarily algebraic.

Given a cyclic word $c\in \mathbb{C}Q/[\mathbb{C}Q,\mathbb{C}Q]$, and an arrow $a\in Q_1$, we define
\[
\partial c/\partial a=\sum_{\tilde{c}=bag}gb,
\]
where $\tilde{c}$ is a fixed lift of $c$ to $\mathbb{C}Q$.  Extending by linearity and then continuity, we obtain an operation $\partial/\partial a \colon \widehat{\mathbb{C}Q}_{\cyc}\rightarrow \widehat{\mathbb{C}Q}$, restricting to an operation $\partial/\partial a\colon \mathbb{C}Q/[\mathbb{C}Q,\mathbb{C}Q]\rightarrow \mathbb{C}Q$.  Given a QP $(Q,W)$, we define the \textit{Jacobi algebra}
\begin{align*}
&\HJac(Q,W):=\widehat{\mathbb{C}Q}/\overline{\langle \partial W/\partial a|a\in Q_1\rangle},
\end{align*}
and if $W$ is algebraic, we define the \textit{algebraic} Jacobi algebra
\begin{align*}
&\Jac(Q,W):=\mathbb{C}Q/\langle \partial W/\partial a|a\in Q_1\rangle.
\end{align*}
If $W$ is an algebraic potential, by pulling back along the map $\Jac(Q,W)\rightarrow \HJac(Q,W)$ we obtain a functor $\rmod{\HJac(Q,W)}\rightarrow \rmod{\Jac(Q,W)}$.  

\begin{proposition}
\label{EngelProp}
Let $W$ be an algebraic potential for a quiver $Q$.  The functor $\rmod{\HJac(Q,W)}\rightarrow \rmod{\Jac(Q,W)}$ is an equivalence after restricting the target to $(\rmod{\Jac(Q,W)})_{\nilp}$, the full subcategory of finite dimensional $\Jac(Q,W)$-modules for which all sufficiently long paths act by the zero map.  
\end{proposition}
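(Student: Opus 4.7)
The plan is to construct an inverse to the pullback functor $F\colon\rmod{\HJac(Q,W)}\to\rmod{\Jac(Q,W)}$ induced by $\Jac(Q,W)\to\HJac(Q,W)$, and verify it identifies the source with the nilpotent subcategory. Two things need to be established: first, that every finite-dimensional $\HJac(Q,W)$-module is automatically nilpotent; and second, that every nilpotent $\Jac(Q,W)$-module admits a canonical $\HJac(Q,W)$-structure extending its $\Jac(Q,W)$-structure.

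The main obstacle is the first point. Fix $M\in\rmod{\HJac(Q,W)}$ and let $B\subset\End_{\mathbb{C}}(M)$ denote the image of the action map, a finite-dimensional quotient of $\HJac(Q,W)$. Writing $\mathfrak{m}\subset\HJac(Q,W)$ for the two-sided ideal generated by the arrows and $\bar{\mathfrak{m}}\subset B$ for its image, I claim $\bar{\mathfrak{m}}\subseteq J(B)$, the Jacobson radical. For any $x\in\mathfrak{m}$, the series $\sum_{k\geq 0}x^k$ converges in the arrow-adic topology on $\widehat{\mathbb{C}Q}$ and descends to a two-sided inverse for $1-x$ in $\HJac(Q,W)$; applying this to $ax$ for arbitrary $a\in\HJac(Q,W)$ shows $\bar{\mathfrak{m}}$ is a quasi-regular ideal in $B$, hence contained in $J(B)$. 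Since $B$ is finite-dimensional, $J(B)$ is nilpotent, so $\bar{\mathfrak{m}}^N=0$ for some $N$, meaning all paths of length $\geq N$ annihilate $M$, so $F(M)$ is nilpotent as required.

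For the second point, given a nilpotent $\Jac(Q,W)$-module $M$ on which paths of length $\geq N$ vanish, the $\Jac(Q,W)$-action factors through the finite-dimensional quotient $R:=\mathbb{C}Q/(I+\mathbb{C}Q_{\geq N})$, where $I=\langle\partial W/\partial a\mid a\in Q_1\rangle$. I would define the $\HJac(Q,W)$-action on $M$ by composing the $R$-action with the continuous ring surjection $\HJac(Q,W)\to R$ induced by truncation $\widehat{\mathbb{C}Q}\to\mathbb{C}Q/\mathbb{C}Q_{\geq N}$. The only point to check is that $\hat I=\overline{\langle\partial W/\partial a\rangle}\subset\widehat{\mathbb{C}Q}$ maps into the image of $I$ in $\mathbb{C}Q/\mathbb{C}Q_{\geq N}$: any $z\in\hat I$ is a convergent sum $\sum_\alpha u_\alpha r_\alpha v_\alpha$ with $r_\alpha\in I$, and modulo $\widehat{\mathbb{C}Q}_{\geq N}$ all but finitely many summands vanish while each remaining one may be replaced by $\bar u_\alpha r_\alpha\bar v_\alpha$ with $\bar u_\alpha,\bar v_\alpha\in\mathbb{C}Q$ of length $<N$, visibly in $I$ since $r_\alpha$ is. Here the algebraicity of $W$ is essential, since it guarantees $r_\alpha\in\mathbb{C}Q$ rather than merely in $\widehat{\mathbb{C}Q}$. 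That $F$ and this construction are mutually inverse, on both objects and morphisms, is then immediate.
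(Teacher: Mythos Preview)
Your proof is correct and takes a genuinely different route from the paper's. For the key direction---showing that a finite-dimensional $\HJac(Q,W)$-module $M$ is nilpotent---the paper argues pointwise: for each $z$ in the arrow ideal it writes the action in upper-triangular form and uses convergence of the series $z+D^{-1}z^2+D^{-2}z^3+\cdots$ in $\widehat{\mathbb{C}Q}$ to force the diagonal entries to vanish, then invokes Engel's theorem to pass from ``each $z$ acts nilpotently'' to ``all long paths act by zero''. Your argument is more structural: the same geometric-series observation shows $1-x$ is a unit for every $x$ in the arrow ideal, whence the image $\bar{\mathfrak{m}}\subseteq\End_{\mathbb{C}}(M)$ is quasi-regular and therefore lies in the Jacobson radical of a finite-dimensional algebra, which is nilpotent. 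This bypasses Engel's theorem entirely and gives nilpotency of the whole ideal at once rather than element by element.

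For the reverse direction both arguments are essentially the same factorization through $\mathbb{C}Q/\mathbb{C}Q_{\geq N}$; you are more explicit than the paper about why the closed ideal $\hat I$ maps into the image of $I$, and correctly isolate the role of algebraicity of $W$. One small point of phrasing: rather than asserting that every $z\in\hat I$ is a convergent sum $\sum u_\alpha r_\alpha v_\alpha$, it suffices (and is cleaner) to say that $z$ is congruent modulo $\widehat{\mathbb{C}Q}_{\geq N}$ to a \emph{finite} such sum, which is all your truncation argument uses.
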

\begin{proof}
Let $M$ be a finite-dimensional $\Jac(Q,W)$-module for which every element $z\in \mathbb{C}Q_{\geq 1}$ acts nilpotently.  Then by Engel's theorem, for the Lie algebra \[
\textrm{Image}\left(\mathbb{C}Q_{\geq 1}\rightarrow \End_{\mathbb{C}}(M)\right), 
\]
there is a basis of $M$ on which every element of $\Jac(Q,W)$ acts via strictly upper triangular matrices, and so the action of $\mathbb{C}Q$ factors through the map $\mathbb{C}Q\rightarrow \mathbb{C}Q/\mathbb{C}Q_{\geq \dim(M)}$, and in particular $M$ carries a continuous $\widehat{\mathbb{C}Q}$ action inducing the given $\Jac(Q,W)$-action.  Conversely, assume that a $\Jac(Q,W)$-action on a vector space extends to a continuous $\widehat{\mathbb{C}Q}$-action.  Then write the action of $z\in \Jac(Q,W)_{\geq 1}$ as an upper triangular matrix, with $D$ denoting the maximum modulus of the entries on the diagonal.  If $D\neq 0$, we may consider the action of $z+D^{-1}z^2+D^{-2}z^3+\ldots$ to arrive at a contradiction, and so we deduce that the action of $z$ is via a strictly upper triangular matrix.  By Engel's theorem again, all paths of length greater than $\dim(M)$ act on $M$ via the zero map.
\end{proof}

If $(Q,W)$ is a QP, and $s\in Q_0$, we denote by $\Simp(Q)_s$ the nilpotent $\HJac(Q,W)$-module with dimension vector $1_s$, which by Proposition \ref{EngelProp} we may also consider as a nilpotent $\Jac(Q,W)$-module if $(Q,W)$ is an algebraic QP.

Given a QP $(Q,W)$, and a principal vertex $s\leq m$, we recall the definition of the mutated QP $\mu_s((Q,W))$ from \cite{DWZ08,DWZ10}, which are comprehensive references for the material in this subsection.  We assume that $Q$ has no loops or 2-cycles.  The \textit{premutation} $\mu'_s((Q,W))=(\mu'_s(Q),\mu'_s(W))$ is defined on the $Q$ component in the same way as mutation, except we leave out the deletion step (so $\mu'_s(Q)$ may contain 2-cycles).  We obtain $W_s$ from $W$ by replacing every instance of $cb$ in $W$, where $cb$ is a path of length two passing through $s$ (as in step one of the definition of mutation for $Q$), with $[cb]$.  We then define
\[
\mu'_s(W)=W_s+\sum_{\substack{c,b\in Q_1\\s(c)=t(b)=s}}[cb]\overline{b}\overline{c}.
\]
Given a quiver $Q'$ with vertex set equal to our fixed set $\{1,\ldots,n\}$, we define
\[
R:=\bigoplus_{s\in Q'_0} \mathbb{C}\cong \mathbb{C}^{\oplus n},
\]
and for $s\in Q'_0$ we define $e_s\in R$ to be the idempotent corresponding to the vertex $s$.  We fix an $R$-bimodule $E_{Q'}$ with $\dim(e_j E_{Q'} e_i)$ equal to the number of arrows from $i$ to $j$ in $Q'$.  Fixing an identification between the arrows of $Q'$ from $i$ to $j$, and a basis for the vector space $e_j E_{Q'} e_i$, defines an isomorphism $\widehat{\mathbb{C}Q'} \cong \CTens_R(E_{Q'})$, where $\CTens_R(E_{Q'})$ is the completed free unital tensor algebra generated by the $R$-bimodule $E_{Q'}$.  Let $W$ be a formal potential for $Q'$.  By the splitting theorem \cite[Thm.4.6]{DWZ08} there is an isomorphism of completed unital $R$-algebras
\[
\psi\colon\CTens(E_{Q'})\cong \CTens(E_{Q'_{\textrm{triv}}}\oplus_{R\textrm{-bimod}}E_{Q'_{\textrm{red}}})
\]
such that $\psi(W)=W_{\textrm{triv}}+W_{\textrm{red}}$, with $W_{\textrm{triv}}\in \CTens(E_{Q'_{\textrm{triv}}})_{\cyc}$ and $W_{\textrm{red}}\in \CTens(E_{Q'_{\textrm{red}}})_{\cyc}$, such that $\HJac(Q'_{\textrm{triv}},W_{\textrm{triv}})\cong R$ in the category of completed $R$-algebras, and such that $W_{\textrm{red}}$ can be expressed as a formal linear combination of cyclic words of length at least three.  Here, $Q'_{\mathrm{triv}}$ is the quiver with $\dim(e_jE_{Q'_{\textrm{triv}}}e_i)$ arrows from $i$ to $j$, and $Q'_{\mathrm{red}}$ is defined similarly.  We define $(Q',W)_{\textrm{red}}:=(Q'_{\textrm{red}},W_{\textrm{red}})$.  Finally, we define 
\[
\mu_s((Q,W))=(\mu'_s(Q),\mu'_s(W))_{\textrm{red}}.
\]
The mutated QP is well-defined up to isomorphisms induced by isomorphisms of completed $R$-algebras.  

We say that $W$ is nondegenerate with respect to mutation at $s$ if the underlying quiver of $\mu_s((Q,W))$ is equal to $\mu_s(Q)$, which occurs if and only if the underlying quiver contains no 2-cycles.  Given a sequence $\Ss=(s_1,\ldots,s_t)$ of vertices of $Q_0$, we say that $W$ is nondegenerate with respect to $\Ss$ if for all $t'\leq t$, the underlying quiver of $\mu_{\:\Ss_{\leq t'}}(Q,W)$ contains no 2-cycles so that, in particular, each $\mu_{\:\Ss_{\leq t'}}(Q,W)$ is well-defined, recursively.  We say that $W$ is nondegenerate if it is nondegenerate with respect to all sequences of principal vertices.  Since we work over $\mathbb{C}$, by \cite[Cor.7.4]{DWZ08}, there always exists an algebraic nondegenerate potential for $Q$.
\section{Some Donaldson--Thomas theory}
\subsection{Monodromic mixed Hodge modules}\label{MMHMSec}
Let $X$ be a complex variety.   We define as in \cite{Saito89,Sai90} the derived category $\Dub(\MHM(X))$ of mixed Hodge modules on $X$, or we refer the reader to \cite{Sai89} for an overview.  We refer the reader to \cite[Sec.4, Sec.7]{COHA} for a discussion of the related concept of monodromic mixed Hodge structures, which we expand upon to suit our purposes here.

The category of \textit{monodromic mixed Hodge modules} on $X$, denoted $\MMHM(X)$, is defined as the Serre quotient of two subcategories of $\MHM(X\times\mathbb{A}^1)$.  Firstly we define $\mathcal{B}_X$ to be the full subcategory of $\MHM(X\times\mathbb{A}^1)$ containing those $\mathcal{F}$ such that for every $x\in X$, the total cohomology of the pullback $(\{x\}\times\mathbb{G}_m\hookrightarrow X\times\mathbb{A}^1)^*\mathcal{F}$ is an admissible variation of mixed Hodge structure on $\mathbb{G}_m$.  Via Saito's description of $\MHM(X\times\mathbb{A}^1)$, we may alternatively describe $\mathcal{B}_X$ as the subcategory of $\MHM(X\times\mathbb{A}^1)$ obtained by iterated extension of mixed Hodge modules $\IC_Y(\mathcal{L})[\dim(Y)]$, where $\mathcal{L}$ is a pure variation of Hodge structure on a dense open subvariety $Y'$ of the regular locus $Y_{\reg}$ of a closed irreducible subvariety $Y\subset X\times\mathbb{A}^1$, where $\mathbb{G}_m$ acts by scaling $\mathbb{A}^1$ and this action restricts to an action on $Y'$.  Secondly, we define $\mathcal{C}_X$ to be the full subcategory of $\MHM(X\times\mathbb{A}^1)$ containing those $\mathcal{F}$ obtained as $\pi^*\mathcal{G}[1]$, for $\mathcal{G}\in\MHM(X)$, and $\pi\colon X\times\mathbb{A}^1\rightarrow X$ the natural projection.  A mixed Hodge module $\mathcal{F}$ is in $\mathcal{C}_X$ if and only if the total cohomology of $(\{x\}\times\mathbb{A}^1\rightarrow X\times\mathbb{A}^1)^*\mathcal{F}$ is an admissible variation of mixed Hodge structure for all $x\in X$.  Again, via Saito's results, we may alternatively describe $\mathcal{C}_X$ as the smallest full subcategory, closed under extensions, containing $\IC_{Y\times \mathbb{A}^1}(\mathcal{L})[\dim(Y)+1]$ for $Y\subset X$ an irreducible closed subvariety, and $\mathcal{L}$ a pure variation of Hodge structure on $Y'\times \mathbb{A}^1$, a dense open subvariety of $Y_{\reg}\times\mathbb{A}^1$, since by \cite{StZu85}, any such variation of Hodge structure is trivial along the fibres of the projection $X\times\mathbb{A}^1\rightarrow X$.  The category $\mathcal{C}_X$ is a Serre subcategory of $\mathcal{B}_X$.  

Following \cite[Sec.7]{COHA} we define 
\[
\MMHM(X):=\mathcal{B}_X/\mathcal{C}_X.
\]

The natural functor $\Dub(\mathcal{B}_X)/\Dub_{\mathcal{C}_X}(\mathcal{B}_X)\rightarrow \Dub(\mathcal{B}_X/\mathcal{C}_X)$ is an equivalence of triangulated categories, where $\Dub_{\mathcal{C}_X}(\mathcal{B}_X)\subset \Dub(\mathcal{B}_X)$ is the full subcategory containing those objects whose cohomology objects are in $\mathcal{C}_X$.  The subcategory $\Dub_{\mathcal{C}_X}(\mathcal{B}_X)$ is stable under the Verdier duality functor $\DD_{X\times \mathbb{A}^1}$ defined on $\Dub(\MHM(X\times\mathbb{A}^1))$, and so the category $\Dub(\MMHM(X))=\Dub(\mathcal{B}_X/\mathcal{C}_X)$ inherits a Verdier duality functor, which we denote $\DD_{X}^{\mon}$.  We define the four functors $f^*,f^!,f_*,f_!$ for categories of monodromic mixed Hodge modules via the same observation.  We embed $\MHM(X)$ inside $\MMHM(X)$ via direct image along the zero section $(X\times\{0\}\xrightarrow{z} X\times\mathbb{A}^1)$.

Since the associated graded object $\Gr_W(\mathcal{F})$ of an object in $\mathcal{C}_X$ is also in $\mathcal{C}_X$, an object in $\MMHM(X)$ has a well-defined weight filtration; if
\[
\xymatrix{
\mathcal{F}&\mathcal{G}\ar@{->>}[d]^{\varpi}\\
\mathcal{F}'\ar@{^(->}[u]\ar[r]^{\cong}&\mathcal{G}''
}
\]
represents an isomorphism $\mathcal{F}\rightarrow \mathcal{G}$ in $\mathcal{B}_X/\mathcal{C}_X$ (i.e. $\mathcal{F}/\mathcal{F}'$ and $\ker(\varpi)$ are elements of $\mathcal{C}_X$), then after applying the functor $W_n$ to the diagram it represents the isomorphism $W_n\mathcal{F}\cong W_n\mathcal{G}$ in the quotient category by exactness of the functor $W_n$ \cite[5.1.14]{Sa88}.

\begin{definition}
We say that an object of $\MMHM(X)$ is pure of weight $n$ if $W_{n-1}\mathcal{F}=0$ and $W_{n}\mathcal{F}=\mathcal{F}$.  We say that an object $\mathcal{F}\in\Dub(\MMHM(X))$ is pure of weight $n$ if $\Ho^l(\mathcal{F})$ is pure of weight $l+n$ for all $l$, or we will just say that $\mathcal{F}$ is \textit{pure} if it is pure of weight zero.
\end{definition}
We define $\Ho(\mathcal{F}):=\bigoplus_{i\in\mathbb{Z}}\Ho^i(\mathcal{F})[-i]$, and so the object $\mathcal{F}\in\Dub(\MMHM(X))$ is pure if and only if $\Ho(\mathcal{F})$ is.
\begin{definition}
We define $\Dulf(\MMHM(X))\subset \Dub(\MMHM(X))$ to be the full subcategory containing those objects $\mathcal{F}$ satisfying the following condition: for each connected component $Y\in\pi_0(X)$, there exists a $N_Y\in\mathbb{Z}$ such that $\Gr_W^g(\mathcal{H}(\mathcal{F})|_Y)=0$ for all $g\leq N_Y$.  Here $\mathcal{H}(\mathcal{F})$ is the total cohomology of $\mathcal{F}$, considered as an object of $\Dub(\MMHM(X))$ via the cohomological grading (i.e. as a complex with zero differential).  We require also that for all $g>N_Y$, $\Gr_W^{g}(\Ho(\mathcal{F})|_Y)\in\Db(\MMHM(Y))$.  We define $\Dllf(\MMHM(X))$ similarly.  
\end{definition}


The categories $\Dulf(\MMHM(X))$ and $\Dllf(\MMHM(X))^{\textrm{op}}$ are equivalent via Verdier duality.  For two varieties $X$ and $Y$ we define an external tensor product $\Dulf(\MMHM(X))\times\Dulf(\MMHM(Y))\rightarrow \Dulf(\MMHM(X\times Y))$ by setting
\begin{equation}
\label{TP}
\mathcal{F}\boxtimes\mathcal{G}:=(X\times Y\times \mathbb{A}^1\times\mathbb{A}^1\xrightarrow{\id_{X\times Y}\times +} X\times Y\times \mathbb{A}^1)_*\pi_{1,3}^*\mathcal{F}\otimes\pi_{2,4}^*\mathcal{G}
\end{equation}
where $\pi_{i,j}$ is the projection onto the $i$th and $j$th factors, and the tensor product on the right hand side of (\ref{TP}) is the usual tensor product of complexes of mixed Hodge modules.  If $Y$ is a point, and $\mathcal{F}\in\Dulf(\MMHM(X))$ and $\mathcal{G}\in\Dulf(\MMHM(Y))$, we will denote by $\mathcal{F}\otimes\mathcal{G}\in\Dulf(\MMHM(X))$ their external tensor product.
\begin{proposition}\label{tensPres}
The weight filtrations on $\MMHM(X)$ and $\MMHM(Y)$ are compatible with the external tensor product, which is biexact.
\end{proposition}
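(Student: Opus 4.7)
The plan is to prove both assertions by decomposing $\boxtimes$ into its constituent operations, handling each first on the ambient category $\MHM(X\times Y\times\mathbb{A}^1\times\mathbb{A}^1)$, and then descending to the Serre quotient $\MMHM(X\times Y)=\mathcal{B}_{X\times Y}/\mathcal{C}_{X\times Y}$. Before anything else, I would check that the formula descends to the quotient, i.e. that $\mathcal{C}_X\times\mathcal{B}_Y$ and $\mathcal{B}_X\times\mathcal{C}_Y$ are sent into $\mathcal{C}_{X\times Y}$; this is immediate from the description of $\mathcal{C}$ as objects constant in the $\mathbb{A}^1$ direction.

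Next I would factor $\boxtimes$ as (i) the smooth pullbacks $\pi_{1,3}^*$ and $\pi_{2,4}^*$, of relative dimensions $\dim Y+1$ and $\dim X+1$, (ii) the internal tensor product on $X\times Y\times\mathbb{A}^2$, and (iii) direct image along the addition map $\id_{X\times Y}\times +$. The pullbacks, suitably shifted, are exact and strictly preserve weights up to the expected Tate twists. The tensor product $\pi_{1,3}^*\mathcal{F}\otimes\pi_{2,4}^*\mathcal{G}$ is an external box product in disguise, since the two factors are pulled back from transverse directions; it is therefore biexact and weight-additive by Saito's standard theory of external products.

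The crux is step (iii). After applying the shear automorphism $(x,y,u,v)\mapsto(x,y,u+v,v)$ — an isomorphism, so harmless — direct image along $+$ becomes direct image along the projection $p$ onto $X\times Y\times\mathbb{A}^1$ forgetting the second $\mathbb{A}^1$. This $p$ is a smooth affine morphism of relative dimension one, and $p_*$ is in general neither exact nor weight-preserving. The key point is that the object being pushed forward inherits a monodromic structure along this second $\mathbb{A}^1$ from the diagonal $\mathbb{G}_m$-scaling on $\mathbb{A}^2$. I would argue that on such monodromic objects, the non-unipotent monodromy summands have vanishing $p_*$ by the computation $\HO^\bullet(\mathbb{A}^1,\mathcal{L})=0$ for $\mathcal{L}$ a nontrivial rank-one local system, while the unipotent summand contributes only sheaves that are constant in the first $\mathbb{A}^1$ direction, hence lie in $\mathcal{C}_{X\times Y}$. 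Consequently $p_*$ descends to an exact, weight-preserving functor on the Serre quotient.

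The main obstacle is precisely this analysis of the pushforward modulo $\mathcal{C}$; everything else is formal. Given biexactness on the Serre quotient, compatibility with the weight filtrations follows by tracking Tate twists through each step and using that the functors $W_n$ are exact on $\MHM$ and hence descend to $\MMHM$ via the universal property of the Serre quotient, combined with the additivity of weights already established for each constituent.
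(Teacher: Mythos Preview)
Your decomposition into pullback, tensor, and pushforward is fine, and steps (i)--(ii) are handled correctly. The issue is in step (iii). Your claim that after the shear the object inherits a monodromic structure along the fibre $\mathbb{A}^1_v$ of the projection is not right: the diagonal $\mathbb{G}_m$-scaling on $(u',v)$ moves \emph{between} fibres of $p$; it does not act along a single fibre $\{u'\}\times\mathbb{A}^1_v$. The two $\mathbb{G}_m$-actions you started with (separately scaling $u$ and $v$) do act on $\mathbb{A}^2$, but neither transports under the shear to an action fixing $u'$; for instance, $v\mapsto tv$ with $u$ fixed becomes $(u',v)\mapsto(u'+(t-1)v,\,tv)$. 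So there is no monodromic structure along the fibre direction to exploit, and your subsequent splitting into unipotent and non-unipotent pieces along $v$ has no foundation. The assertion that the unipotent part lands in $\mathcal{C}_{X\times Y}$ is likewise unjustified: after the shear, the object still depends nontrivially on $u'$.

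The paper takes a different route that sidesteps this entirely. Rather than analysing $(\id\times +)_*$ directly, it invokes the key isomorphism
\[
(\id_{X\times Y}\times +)_!\bigl(\pi_{1,3}^*\mathcal{F}\otimes\pi_{2,4}^*\mathcal{G}\bigr)\xrightarrow{\;\sim\;}(\id_{X\times Y}\times +)_*\bigl(\pi_{1,3}^*\mathcal{F}\otimes\pi_{2,4}^*\mathcal{G}\bigr)
\]
in $\Db(\MMHM(X\times Y))$, cited from \cite[Lem.~1]{COHA}. Once $!=*$ is in hand, both assertions fall out immediately: since $+$ is affine, Artin vanishing makes $(\id\times +)_*$ right t-exact and $(\id\times +)_!$ left t-exact, so the identified functor is exact; and since $(\id\times +)_*$ increases weights while $(\id\times +)_!$ decreases them, the identified functor preserves weights. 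Your shear is in fact one natural ingredient in proving this $!=*$ lemma, but the full argument requires more than the monodromy heuristic you sketch.
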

The part of the proposition regarding weight filtrations is an easy consequence of \cite[Prop.4]{COHA}.  We will mainly use two special cases: the external product of mixed Hodge modules with trivial monodromy in the sense of Definition \ref{trivMonDef}, where the proposition is \cite[(3.8.2)]{Sai90}, and the case where $X$ and $Y$ are a point, where the proposition is a special case of \cite[Prop.4]{COHA}.  
\begin{proof}
For the biexactness statement, first note that for $\mathcal{F}\in\mathcal{B}_X$ and $\mathcal{G}\in\mathcal{B}_Y$, as in \cite[Lem.1]{COHA}, there is an isomorphism
\[
(\id_{X\times Y}\times +)_!\left(\pi_{1,3}^*\mathcal{F}\otimes\pi_{2,4}^*\mathcal{G}\right)\rightarrow(\id_{X\times Y}\times +)_*\left(\pi_{1,3}^*\mathcal{F}\otimes\pi_{2,4}^*\mathcal{G}\right)
\]
considered as a morphism in $\Db(\MMHM(X\times Y))$.  The map $(\id_{X\times Y}\times +)$ is affine, and so $(\id_{X\times Y}\times +)_*$ is right exact, while $(\id_{X\times Y}\times +)_!$ is left exact.  In addition, $(\id_{X\times Y}\times +)_*$ increases weights, while $(\id_{X\times Y}\times +)_!$ decreases them, which gives the statement regarding the weight filtrations, as in \cite[Prop.4]{COHA}.
\end{proof}
If $(X,\,\tau\colon X\times X\rightarrow X,\,\eta\colon\Spec(\mathbb{C})\rightarrow X)$ is a monoid in the category of locally finite type schemes, with $\tau$ of finite type, and so in particular the induced map $\pi_0(X)\times\pi_0(X)\rightarrow \pi_0(X)$ has finite fibres, we define tensor products $\boxtimes_{\tau}$ for $\Dulf(\MMHM(X))$ and for $\Dllf(\MMHM(X))$ by setting 
\begin{equation}
\mathcal{F}\boxtimes_{\tau}\mathcal{G}:=\tau_*\left(\mathcal{F}\boxtimes\mathcal{G}\right).
\end{equation}
Saito proved \cite[(4.5.3),(4.5.4)]{Sai90} that for fixed $w\in\mathbb{Z}$, the category of pure weight $w$ mixed Hodge modules on a variety $X$ is semisimple, and if $\mathcal{F}\in\Dub(\MHM(X))$ is pure of weight $w$ for some $w$, and $f$ is a proper map of varieties, then there is a noncanonical isomorphism
\[
f_*\mathcal{F}\cong \Ho(f_*\mathcal{F})
\]
into pure weight $w$ summands.  This is the version, in the framework of Saito's theory, of the famous decomposition theorem of Beilinson, Bernstein and Deligne \cite{BBD}.  
\begin{proposition}
\label{weightPreserve}
If $\tau$ is proper, the tensor product $\boxtimes_{\tau}$ takes pairs of pure objects to pure objects.  If $\tau$ is moreover finite, then $\boxtimes_{\tau}$ is biexact.
\end{proposition}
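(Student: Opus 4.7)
The plan is to factor the proposition into two independent purity/exactness statements, one for the external product $\boxtimes$ and one for pushforward along $\tau$, and combine. For the purity assertion, I would first check that $\mathcal{F}\boxtimes\mathcal{G}$ is pure when $\mathcal{F}$ and $\mathcal{G}$ are pure. This is a direct consequence of Proposition \ref{tensPres}: compatibility of the weight filtration with $\boxtimes$ means that if $\mathcal{F}$ is pure of weight $w_1$ and $\mathcal{G}$ is pure of weight $w_2$, then $\mathcal{F}\boxtimes\mathcal{G}$ is pure of weight $w_1+w_2$. (For complexes in $\Dulf(\MMHM(-))$ this is to be read cohomological-degree by cohomological-degree in the standard way.)

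Next I would apply $\tau_*$. Since $\tau$ is proper and of finite type, the map $\tau\times\id_{\mathbb{A}^1}$ through which $\tau_*$ on $\MMHM(X\times X)$ is defined is also proper. Saito's decomposition theorem, recalled immediately before the statement of the proposition, therefore implies that $\tau_*(\mathcal{F}\boxtimes\mathcal{G})$ decomposes (noncanonically) as a direct sum of shifts of pure objects of the same weight, and is in particular pure. Purity passes from $\Dub(\MHM(X\times\mathbb{A}^1))$ to $\Dub(\MMHM(X))=\Dub(\mathcal{B}_X/\mathcal{C}_X)$ because, as noted in Section \ref{MMHMSec}, the weight filtration is well-defined and exact on the Serre quotient and $\mathcal{C}_X$ is stable under the functors involved.

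For the biexactness assertion, suppose further that $\tau$ is finite. Then $\tau\times\id_{\mathbb{A}^1}$ is finite, hence simultaneously affine and proper; so on one hand $\tau_*$ is left exact (it coincides with $\tau_!$ on finite maps), and on the other hand it is right exact (it is pushforward along an affine map). Thus $\tau_*$ is exact on $\MMHM$, and combining this with the biexactness of $\boxtimes$ from Proposition \ref{tensPres} gives that $\boxtimes_\tau=\tau_*\circ\boxtimes$ is biexact in each variable.

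I do not expect a substantive obstacle: once Proposition \ref{tensPres} and Saito's decomposition theorem are in hand, both assertions are essentially formal consequences. The only bookkeeping point that requires a moment's care is verifying that the decomposition-theorem and weight-filtration inputs descend cleanly through the quotient $\mathcal{B}_X/\mathcal{C}_X$ defining $\MMHM(X)$; this is immediate from the observations made in Section \ref{MMHMSec} that $\mathcal{C}_X$ is a Serre subcategory stable under the relevant functors, so Verdier duality, pushforward, and the weight filtration all pass to the quotient without modification.
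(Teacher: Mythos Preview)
Your proposal is correct and follows essentially the same route as the paper: factor $\boxtimes_\tau$ as the external product $\boxtimes$ followed by pushforward along $\tau\times\id_{\mathbb{A}^1}$, then invoke Proposition~\ref{tensPres} for the first step and Saito's purity/exactness results for proper (respectively finite) direct image for the second. The paper's argument is more terse and does not spell out the descent through $\mathcal{B}_X/\mathcal{C}_X$, but the substance is identical.
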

\begin{proof}
The map $\tau\times +$ defining $\boxtimes_{\tau}$ can be factorised as $(\tau\times\id_{\mathbb{A}^1})(\id_{X\times X}\times +)$.  By our assumption on $\tau$, the map $(\tau\times \id_{\mathbb{A}^1})$ is proper, and so its associated direct image functor preserves purity by \cite[(4.5.2), (4.5.4)]{Sai90}, while $(\id_{X\times X}\times +)$ is the map defining the external tensor product on the category $\Dub(\MMHM(X))$, and the associated direct image functor preserves the weight filtration by Proposition \ref{tensPres}.  The biexactness follows similarly from Proposition \ref{tensPres}, and the fact that the direct image functor for finite morphisms is exact.
\end{proof}

Let $X$ be a smooth variety, and let $f$ be a regular function on $X$.  We denote by $X_0$ the preimage of zero under $f$, and set $X_{\leq 0}=f^{-1}(\mathbb{R}_{\leq 0})$.  We define the underived functor
\[
\Gamma_{X_{\leq 0}}\mathcal{F}(U)=\ker\left(\mathcal{F}(U)\rightarrow \mathcal{F}(U\setminus (U\cap X_{\leq 0}))\right), 
\]
and set $\phi_f\mathcal{F}=(R\Gamma_{X_{\leq 0}}\mathcal{F})[1]|_{X_0}$.  By the construction of Saito's category $\Dub(\MHM(X))$, the functor $\phi_f\colon \Dub(X)\rightarrow \Dub(X)$, defined at the level of derived categories of Abelian sheaves with constructible cohomology, lifts to a functor $\phi_f\colon \Dub(\MHM(X))\rightarrow \Dub(\MHM(X))$.  As in \cite[Sec.7]{COHA}, we define the functor
\begin{align*}
\phim{f}\colon&\Dub(\MHM(X))\rightarrow \Dub(\MMHM(X))\\
&\mathcal{F}\mapsto (X\times\mathbb{G}_m\rightarrow X\times \mathbb{A}^1)_!\phi_{f/u}(X\times\mathbb{G}_m\rightarrow X)^*\mathcal{F}
\end{align*}
where $u$ is a coordinate on $\mathbb{G}_m$.  
\begin{proposition}\label{exactness}
The functor $\phim{f}$ takes objects of $\MHM(X)$ to objects of $\MMHM(X)$; in other words it is exact with respect to the natural t structures on $\Dub(\MHM(X))$ and $\Dub(\MMHM(X))$.  
\end{proposition}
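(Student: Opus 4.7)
The plan is to decompose $\phim{f}$ as the composition of three more basic functors---smooth pullback along $\pi\colon X\times\mathbb{G}_m\rightarrow X$, vanishing cycles for the regular function $f/u$ on $X\times\mathbb{G}_m$, and extension by zero along the affine open immersion $j\colon X\times\mathbb{G}_m\hookrightarrow X\times\mathbb{A}^1$---and to verify that each of these is t-exact on mixed Hodge modules up to a shift that cancels out across the composition. Since by the very definition $\phim{f}(\mathcal{F})=j_!\phi_{f/u}\pi^*\mathcal{F}$, establishing t-exactness of the three constituents delivers the proposition.

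For the three pieces I would invoke the following standard facts from Saito's theory. First, $\pi$ is smooth of relative dimension one, so $\pi^*[1]$ is t-exact; hence $\pi^*\mathcal{F}$ sits in cohomological degree $-1$ for $\mathcal{F}\in\MHM(X)$. Second, for any regular function $g$ on a smooth variety the shifted vanishing cycle functor $\phi_g[-1]$ is t-exact; applied to $g=f/u$ this yields that $\phi_{f/u}\pi^*\mathcal{F}=(\phi_{f/u}[-1])(\pi^*\mathcal{F}[1])$ lies in the heart $\MHM(X\times\mathbb{G}_m)$. Third, the open immersion $j$ is affine, its complement being the principal divisor $\{u=0\}$, so by the Artin-vanishing argument within Saito's framework $j_!$ is t-exact with respect to the natural t-structure on mixed Hodge modules. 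Composing these three statements shows that $\phim{f}$ is t-exact as a functor $\MHM(X)\to\MHM(X\times\mathbb{A}^1)$, and passage through the Serre quotient gives the claimed exactness into $\MMHM(X)$.

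The main thing to keep an eye on is the bookkeeping of the shifts and the invocation of affine-open-immersion t-exactness for $j_!$: each ingredient is standard in Saito's theory, but combining them without introducing stray cohomological degrees requires some care. The fact that the image lies in $\mathcal{B}_X\subset\MHM(X\times\mathbb{A}^1)$ rather than merely in some larger subcategory of $\MHM(X\times\mathbb{A}^1)$ is already built into the construction of $\phim{f}$ as in \cite{COHA}: slice-wise, $\pi^*\mathcal{F}|_{\{x\}\times\mathbb{G}_m}$ is $\mathbb{G}_m$-constant, and the vanishing cycles of $u\mapsto f(x)/u$ on this family are either zero (when $f(x)\neq 0$, since $f/u$ is then nowhere vanishing on the slice) or reduce to an admissible $\mathbb{G}_m$-constant variation of mixed Hodge structure built from $\mathcal{F}|_x$ (when $f(x)=0$). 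In either case the slice restriction is an admissible VMHS, confirming membership in $\mathcal{B}_X$.
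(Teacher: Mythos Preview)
Your proof is correct and follows essentially the same approach as the paper: decompose $\phim{f}$ as $j_!\circ\phi_{f/u}\circ\pi^*$ and check t-exactness of each piece using that $\pi^*[1]$ is exact (smooth pullback of relative dimension one), $\phi_{f/u}[-1]$ is exact, and $j_!$ is exact. The only cosmetic differences are that the paper reduces first to the level of perverse sheaves (citing \cite{BBD}) rather than working directly in $\MHM$, and it justifies exactness of $j_!$ by splitting it into ``left exact because affine'' and ``right exact because quasi-finite'' rather than invoking the affine-open-immersion case in one step.
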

\begin{proof}
Exactness follows from the corresponding statement at the level of perverse sheaves.  The functors $(X\times\mathbb{G}_m\rightarrow X)^*[1]$ and $\phi_{f/u}[-1]$ are exact \cite{BBD}, and $(X\times\mathbb{G}_m\rightarrow X\times \mathbb{A}^1)_!$ is left exact, since $(X\times\mathbb{G}_m\rightarrow X\times \mathbb{A}^1)$ is affine, and right exact, since $(X\times\mathbb{G}_m\rightarrow X\times \mathbb{A}^1)$ is quasi-finite.
\end{proof}

For $X$ a not necessarily smooth complex quasiprojective variety, we may define $\phim{f}=i^*\phim{\overline{f}}i_*$ where $i$ is a closed embedding into a smooth variety $\overline{X}$, and $\overline{f}$ is a function on $\overline{X}$ extending $f$.  There is a natural isomorphism $\DD_{X}^{\mon}\phi^{\mon}_f\cong \phi^{\mon}_f\DD_X$ by the main theorem of \cite{Sai89duality}.  If $p\colon X\rightarrow Y$ is a proper map of varieties, and $f$ is a regular function on $Y$, then there is a natural isomorphism $\phim{f}p_*\cong p_*\phim{fp}$ by \cite[Thm.2.14]{Sai90}.  

Let $X$ and $Y$ be a pair of complex varieties, and let $f$ and $g$ be regular functions on them.  Then by the Thom--Sebastiani theorem due to Saito \cite{Saito10}, proved at the level of underlying perverse sheaves by Massey \cite{Ma01}, there is a natural equivalence of functors 
\begin{equation}
\label{TSiso}
\left(\phim{f\boxplus g}(\mathcal{F}\boxtimes \mathcal{G})\right)|_{f,g=0}\cong \phim{f}\mathcal{F}\boxtimes \phim{g}\mathcal{G}\colon \MHM(X)\times\MHM(Y)\rightarrow \MMHM(X\times Y).
\end{equation}
For further discussion of this isomorphism, as well as the compatibility of these two versions of the Thom--Sebastiani theorem, we refer the reader to Sch\"{u}rmann's appendix to \cite{Br12}.

\begin{definition}\label{trivMonDef}
We say an object $\mathcal{F}\in\Dub(\MMHM(X))$ has \textit{trivial monodromy} if it is in the essential image of the map 
\[
(X\times\{0\}\xrightarrow{z} X\times\mathbb{A}^1)_*\colon \Dub(\MHM(X))\rightarrow\Dub(\MMHM(X)).
\]
\end{definition}

Let $\mathbb{L}=\HO_c(\mathbb{A}^1,\mathbb{Q})$, i.e. $\mathbb{L}$ is the pure one-dimensional Hodge structure concentrated in cohomological degree 2.  The category $\Dub(\MMHM(\pt))$ contains a square root of $\LL$; we set $\LL^{1/2}:=\HO_c(\mathbb{A}^1,\phim{x^2}\mathbb{Q}_{\mathbb{A}_1})$, and we have $\LL^{1/2}\otimes \LL^{1/2}\cong \LL$ via the Thom--Sebastiani theorem.  The monodromic mixed Hodge module $\LL^{1/2}$ is pure and has perverse degree 1; it is given explicitly by $j_!\mathcal{L}$, where $\mathcal{L}$ is the rank one local system on $\mathbb{C}^*$ with monodromy given by multiplication by $-1$, and $j\colon\mathbb{C}^*\rightarrow\mathbb{A}^1$ is the inclusion.  
\begin{remark}
\label{monRem}
Note that $\LL^{1/2}$ does not have trivial monodromy, and in fact there is no square root of $\LL$ considered as an object of $\Dub(\MHM(\pt))$.  It follows that if we have a direct sum decomposition in $\Dub(\MMHM(\pt))$
\[
\mathcal{H}\cong\bigoplus_{g\in\mathbb{Z}}\left(\mathbb{L}^{g/2}\right)^{\oplus c_g},
\]
with $c_g\in\mathbb{N}$ for all $g$, then $\mathcal{H}$ has trivial monodromy if and only if $c_g=0$ for all odd $g$.
\end{remark}
In what follows, if $X$ is a connected irreducible algebraic variety, we set 
\[
\ICS_X(\mathbb{Q})=\IC_X(\mathbb{Q}_{X_{\mathrm{reg}}})\otimes\LL^{{}-\dim(X)/2}, 
\]
i.e. if $\dim(X)$ is even, we shift the usual intersection complex mixed Hodge module so that its underlying complex of perverse sheaves is in the natural heart of $\Dub(\Perv(X))$, and considered as an object in $\Dub(\MHM(X))$, the object $\ICS_X(\mathbb{Q})$ is pure.  In the odd case we are doing the same thing, but only after passing to the larger category of monodromic mixed Hodge modules on $X$.  If $X$ is a disjoint union of irreducible algebraic varieties, we set
\[
\ICS_X(\mathbb{Q})=\bigoplus_{Y\in \pi_0(X)}\ICS_Y(\mathbb{Q}).
\]
We set 
\[
\HO(X,\mathbb{Q})_{\vir}:=\Ho((X\rightarrow \pt)_*\ICS_X(\mathbb{Q})).
\]
If $X$ is a proper variety, then $\HO(X,\QQ)_{\vir}$ is pure.  We extend this notation by setting 
\[
\HO(\BC,\QQ)_{\vir}:=\HO(\BC,\QQ)\otimes\LL^{1/2}.
\]
We say a monodromic mixed Hodge module $\mathcal{F}\in\MMHM(\pt)$ is of \textit{Tate type} if it is obtained by taking iterated extensions of the monodromic mixed Hodge modules $\LL^{g/2}[g]$ for $g\in\mathbb{Z}$.  We say an object $\mathcal{F}\in\Dub(\MMHM(\pt))$ is of Tate type if each $\Ho^p(\mathcal{F})$ is.  If $X$ is a disjoint union of points, then there is a natural equivalence of categories
\begin{equation}
\label{pointDecomp}
\Dub(\MMHM(X))\cong\prod_{x\in X}\Dub(\MMHM(\pt)),
\end{equation}
and we say an object $\mathcal{F}\in\Dub(\MMHM(X))$ is of Tate type if each of its factors under the equivalence (\ref{pointDecomp}) is.

\subsection{Moduli spaces of quiver representations and stability conditions}
Let $Q$ be an ice quiver.  Recall that we always identify the vertices of $Q$ with the numbers $\{1,\ldots,n\}$, and set the vertices $\{m+1,\ldots,n\}$ to be the frozen vertices, in the sense explained at the start of Section \ref{cluSec}.  Let $\dd\in\mathbb{N}^{m}$ be a dimension vector, supported on the principal part of $Q$.  We define
\[
X(Q)_\dd:=\prod_{a\in Q_1}\Hom(\mathbb{C}^{\dd_{t(a)}},\mathbb{C}^{\dd_{s(a)}}),
\]
and 
\[
X(Q)=\coprod_{\dd\in\mathbb{N}^{m}}X(Q)_{\dd}, 
\]
and define 
\[
G_\dd:=\prod_{i\leq m}\Gl_{\dd_i}.  
\]
The group $G_\dd$ acts on $X(Q)_\dd$ by change of basis of each of the spaces $\mathbb{C}^{\dd_i}$.  We let $\Mst(Q)_\dd$ denote the stack of $\dd$-dimensional right modules of $\mathbb{C}Q$.  Then
\[
\Mst(Q)_\dd\cong X(Q)_\dd/G_\dd,
\]
where we take the stack-theoretic quotient.  We set
\[
\Mst(Q):=\coprod_{\dd\in\mathbb{N}^{m}}\Mst(Q)_\dd.
\]
Note that we will only ever consider moduli stacks of modules supported on the principal parts of quivers.

A (Bridgeland) \textit{stability condition} for the quiver $Q$ is an element $\zeta\in\mathbb{H}_+^{n}$, where 
\[
\mathbb{H}_+:=\{r\exp(i\theta)|\theta\in[0,\pi),r\in \mathbb{R}_{>0}\}.  
\]

Given a stability condition $\zeta$, we define $\Z^{\zeta}\colon\KK(\rmod{\mathbb{C}Q})\rightarrow \mathbb{C}$ by 
\[
\Z^{\zeta}([M])=\zeta\cdot\DIM(M),
\]
and we define the \textit{slope} $\Mu^{\zeta}(M)\in [0, 2\pi)$ of an object $M\in\Dub^{\fd}(\Rmod{\mathbb{C}Q})$ satisfying $\Z^{\zeta}([M])\neq 0$ to be the argument of $\Z^{\zeta}([M])$.  If $0\neq M\in\rmod{\mathbb{C}Q}$ then $\Mu^{\zeta}(M)\in [0,\pi)$.  Similarly, for nonzero $\dd\in\mathbb{N}^n$, we define $\Mu^{\zeta}(\dd)\in [0,\pi)$ to be the argument of $\Z^{\zeta}(\dd)$.  A $\mathbb{C}Q$-module $M$ is $\zeta$\textit{-stable} if for all proper submodules $M'\subset M$ we have the inequality $\Mu^{\zeta}(M')<\Mu^{\zeta}(M)$.  If it is only true that $\Mu^{\zeta}(M')\leq \Mu^{\zeta}(M)$, for all proper submodules $M'\subset M$, we say that $M$ is $\zeta$\textit{-semistable}.

Fix a dimension vector $\dd\in\mathbb{N}^m$.  If $\zeta\in \mathbb{H}_+^n$, we can first of all replace $\zeta$ with a stability condition in $(i+\mathbb{Q})^n$ such that the sets of $\zeta$-stable and $\zeta$-semistable $\dd$-dimensional modules are unchanged and such that $\RRe(\zeta\cdot \dd)=0$, by \cite[Lem.4.21]{DMSS13}.  Then we can pick a $N\in\mathbb{N}$ such that $N\RRe(\zeta_s)\in\mathbb{Z}$ for all $s\leq m$.  We linearise the $G_\dd$-action on $X(Q)_\dd$ via the character 
\[
\chi\colon (g_s)_{s\leq m}\mapsto\prod_{s\leq m}\det(g_s)^{N\RRe(\zeta_s)}
\]
and define $X(Q)^{\zeta\sst}_\dd$ to be the scheme of semistable points with respect to this linearisation.  By \cite{King94}, using the geometric invariant theory constructions of \cite{MFK94}, the GIT quotient $X(Q)_{\dd}/\!\!/_{\chi} G_\dd$ provides a coarse moduli space of $\zeta$-semistable $\dd$-dimensional right $\mathbb{C}Q$-modules.  For $K$ a field extension of $\mathbb{C}$, the $K$-points of this moduli space are in bijection with isomorphism classes of direct sums of $\zeta$-stable $KQ$-modules \cite[Prop.3.2]{King94} of the same slope.  We denote this GIT quotient by $\Msp(Q)^{\zeta\sst}_\dd$, and define
\[
\Msp(Q)^{\zeta\sst}:=\coprod_{\dd\in\mathbb{N}^{m}}\Msp(Q)_{\dd}^{\zeta\sst}.
\]
We denote by 
\begin{equation}
\label{pdef}
p^{\zeta}_{\dd}\colon \Mst(Q)^{\zeta\sst}_\dd\rightarrow\Msp(Q)^{\zeta\sst}_\dd
\end{equation}
the map from the stack-theoretic quotient to the coarse moduli space.

We abbreviate $\Msp(Q)_{\dd}:=\Msp(Q)_{\dd}^{\zeta_{\deg}}$, where $\zeta_{\deg}$ is the degenerate stability condition $(i,\ldots,i)$.  In words: if a stability condition is missing from a coarse moduli space, it is defined to be the coarse moduli space of semisimple modules or, equivalently, the affinization.  We denote by 
\begin{equation}
q_{\dd}^{\zeta}\colon\Msp(Q)^{\zeta\sst}_{\dd}\rightarrow \Msp(Q)_{\dd}
\end{equation}
the map to the affinization.
\begin{convention}
Wherever a space, map, or monodromic mixed Hodge module is defined with respect to a subscript $\dd$ denoting a dimension vector, and that dimension vector is replaced by a slope $\gamma\in [0,\pi)$, the direct sum over all $\dd\in\mathbb{N}^m$ satisfying $\dd=0$ or $\Mu^{\zeta}(\dd)=\gamma$ is intended.  If the subscript is missing altogether, then the direct sum over all $\dd\in\mathbb{N}^m$ is intended.
\end{convention}
Given a stability condition $\zeta\in\mathbb{H}_+^n$, a finite-dimensional $\mathbb{C}Q$-module $M$ admits a unique filtration (the Harder--Narasimhan filtration)
\[
0=M_0\subset M_1\subset\ldots\subset M_h=M
\]
such that each subquotient $M_g/M_{g-1}$ is $\zeta$-semistable, and the slopes of the subquotients $M_1,M_2/M_1,\ldots,M_h/M_{h-1}$ are strictly decreasing.  

For $\dd\in\mathbb{N}^{m}$, let $\HN_{\dd}$ denote the set of Harder--Narasimhan types for $\dd$, i.e. the set of tuples of nonzero dimension vectors $(\dd^1,\ldots,\dd^h)$ (for varying $h$) such that $\sum_{1\leq g\leq h} \dd^g=\dd$ and $\Mu^{\zeta}(\dd^g)>\Mu^{\zeta}(\dd^{g+1})$ for all $g<h$.  We define
\[
\HN=\coprod_{\dd\in\mathbb{N}^{m}}\HN_\dd.
\]
For $\overline{\dd}\in \HN_{\dd}$ we let $X(Q)_{\overline{\dd}}^{\zeta}\subset X(Q)_{\dd}$ denote the locally closed subvariety of points for which the Harder--Narasimhan filtration of the associated module, with respect to the stability condition $\zeta$, is in $\overline{\dd}$, and denote by $\Mst(Q)^{\zeta}_{\overline{\dd}}$ the corresponding stack.  The space $X(Q)$ admits a decomposition into locally closed subvarieties
\[
X(Q)=\coprod_{\overline{\dd}\in\HN}X(Q)^{\zeta}_{\overline{\dd}},
\]
and the stack $\Mst(Q)$ admits a decomposition into locally closed substacks
\[
\Mst(Q)= \coprod_{\overline{\dd}\in\HN}\Mst(Q)^{\zeta}_{\overline{\dd}}
\]
by \cite[Prop.3.4]{Reineke_HN}.  If $S\subset [0,\pi)$ is an interval, we denote by $X(Q)^{\zeta}_S\subset X(Q)$ the locally closed subvariety whose points correspond to modules for which the Harder--Narasimhan type $\overline{\dd}=(\dd^1,\ldots,\dd^h)$ satisfies $\Mu^{\zeta}(\dd^g)\in S$ for all $g\leq h$.  We define 
\[
X(Q)^{\zeta}_{S,\dd}:=X(Q)^{\zeta}_S\cap X(Q)_\dd
\]
and define $\Mst(Q)^{\zeta}_{S,\dd}$ likewise.  We denote by
\begin{equation}
\label{pSdef}
p^{\zeta}_{S,\dd}\colon \Mst(Q)_{S,\dd}^{\zeta}\rightarrow\Msp(Q)_{\dd}
\end{equation}
the map to the coarse moduli space (equivalently, the affinization, as the target is given the degenerate stability condition).  As usual, if the dimension vector $\dd$ is missing from (\ref{pSdef}), the disjoint union over all $\dd\in\mathbb{N}^m=\mathbb{N}^{(Q_{\princ})_0}$ is intended, although the moduli stack $\Mst(Q)^{\zeta}_{S,\dd}$ will be empty if $\dd\neq 0$ and $\Mu^{\zeta}(\dd)\notin S$.

Let $W\in\mathbb{C}Q/[\mathbb{C}Q,\mathbb{C}Q]$ be an algebraic potential.  Then taking the trace defines a function $\Tr(W)$ on $X(Q)$, defined by
\[
\Tr\left(\sum_{c \textrm{ a cycle in }Q}a_cc\right)\colon (\rho)\mapsto \sum_c a_c\Tr(\rho(c)).
\]
The restriction of this function to $X(Q)_\dd$ is $G_\dd$-invariant, and so $\Tr(W)$ induces a function 
\[
\mathfrak{Tr}(W)\colon \Mst(Q)\rightarrow \mathbb{C}.  
\]
We denote by 
\[
\WW\colon\Msp(Q)\rightarrow \mathbb{C}
\]
the induced function on the coarse moduli space.  As substacks of $\Mst(Q)_{\dd}$, there is an equality between $\crit(\WWW_{\dd})$ and the stack of $\dd$-dimensional right $\Jac(Q,W)$-modules.
\begin{convention}
\label{QWconvention}
If $\bullet(Q)^{\ldots}_{\ldots}$ is one of the spaces defined above, for which there is a natural map $\bullet(Q)^{\ldots}_{\ldots}\rightarrow\Mst(Q)$, we define
\[
\bullet(Q,W)^{\ldots}_{\ldots}:=\bullet(Q)^{\ldots}_{\ldots}\times_{\Mst(Q)} \crit(\WWW).
\]
Similarly, we define 
\[
\bullet(Q)^{\ldots,\nilp}_{\ldots}:=\bullet(Q)^{\ldots}_{\ldots}\times_{\Msp(Q)} \Msp(Q)^{\nilp}
\]
where $\Msp(Q)^{\nilp}\cong\mathbb{N}^m$ is the reduced vanishing locus of the infinite set of functions $\{\mathcal{T}r(c)|\hbox{ }c\textrm{ a nontrivial cycle in }Q\}$.  If $\bullet(Q)^{\ldots}_{\ldots}$ is a stack and not a scheme, we denote by $\Dim_{\ldots}^{\ldots}$ the map from $\bullet(Q)^{\ldots}_{\ldots}$ to $\mathbb{N}^{m}$ taking a connected component to its dimension vector.  If $\bullet(Q)^{\ldots}_{\ldots}$ is a scheme we denote this map $\dim_{\ldots}^{\ldots}$.  When the domain of the maps $\Dim_{\ldots}^{\ldots}$ or $\dim_{\ldots}^{\ldots}$ are clear, we will omit the superscripts and subscripts, to ease the notation.  If $\mathcal{F}\in\Dub(\MMHM(\bullet(Q)^{\ldots}_{\ldots}))$, then we define
\[
\mathcal{F}_{\nilp}:=\mathcal{F}|_{\bullet(Q)^{\ldots,\nilp}_{\ldots}}.
\]
\end{convention}

\subsection{Categorification of the completed quantum space $\hat{\Ror}_Q$}
The space $\Msp(Q)$ is a monoid in the category of schemes, with monoid map 
\[
\oplus\colon\Msp(Q)\times\Msp(Q)\rightarrow \Msp(Q)
\]
acting, at the level of points, by taking a pair of semisimple modules to their direct sum.  This map is finite by \cite[Lem.2.1]{MeRe14} (for the sake of completeness we reprove this as Lemma \ref{finLem} below).  It follows that the monoidal product $\boxtimes_{\oplus}$ on the category $\Dulf\left(\MMHM(\Msp(Q))\right)$, defined in Section \ref{MMHMSec}, is bi-exact, and preserves pure objects, by Proposition \ref{weightPreserve}.  

The map of schemes 
\[
\dim\colon\Msp(Q)\rightarrow\mathbb{N}^{m}
\]
taking a $\mathbb{C}Q_{\princ}$-module to its dimension vector is a map of monoids, where the monoidal structure on $\mathbb{N}^m$ is provided by addition.  It follows that the functor
\begin{align*}
&\dim_*\colon \Dulf\left(\MMHM(\Msp(Q))\right)\rightarrow\Dulf(\MMHM(\mathbb{N}^{m}))
\end{align*}
is a monoidal functor, where the domain category carries the monoidal product $\boxtimes_{\oplus}$, and the target carries the monoidal product $\boxtimes_+$.  The map 
\[
\Dim\colon \Mst(Q)\rightarrow\mathbb{N}^m
\]
is the composition $\dim\circ p$, with $p$ defined as in (\ref{pSdef}).

We next explain how to twist the above monoidal structures to form a categorification of the completed algebra $\hat{\Ror}_{Q}$.  Given an object $\mathcal{F}\in\Dulf\left(\MMHM(\Msp(Q))\right)$, we denote by $\mathcal{F}_\dd$ the summand supported on $\Msp(Q)_{\dd}$.  We define the \textit{twisted monoidal product}
\begin{equation}
\label{monDef}
\mathcal{F}\boxtimes_{\oplus}^{\tw}\mathcal{G}:=\bigoplus_{\dd',\dd''\in\mathbb{N}^{m}}\mathcal{F}_{\dd'}\boxtimes_{\oplus}\mathcal{G}_{\dd''}\otimes\mathbb{L}^{\langle \dd'',\dd'\rangle_Q /2}.
\end{equation}
We define the twisted monoidal product on $\Dulf(\MMHM(\mathbb{N}^m))$ via equation (\ref{monDef}) again, and so the functor $\dim_*$ is again a monoidal functor, if we give the domain and the target categories the \textit{twisted} monoidal product.  
\begin{remark}
\label{vswap}
By skew-symmetry of $\langle\bullet,\bullet\rangle_Q$ and properness of $\oplus$, the twisted monoidal product commutes with Verdier duality, after swapping the arguments; i.e. there is a natural isomorphism
\[
\DD^{\mon}_{\Msp(Q)}\mathcal{F}\boxtimes_{\oplus}^{\tw}\DD^{\mon}_{\Msp(Q)}\mathcal{G}\cong\DD^{\mon}_{\Msp(Q)}\left(\mathcal{G}\boxtimes_{\oplus}^{\tw}\mathcal{F}\right).
\]
\end{remark}
We define the \textit{integration map}
\begin{align}\label{IM}
\chi_{Q}\colon& \KK\left(\Dulf(\MMHM(\mathbb{N}^{m}))\right)\rightarrow\hat{\Ror}_Q\\
&[\mathcal{F}]\mapsto\sum_{\dd\in\mathbb{N}^{m}}\chi_{q}([\mathcal{F}_\dd],-q^{1/2})Y^{\dd},\label{dulfjust}
\end{align}
where for $\mathcal{G}\in\Dulf(\MMHM(\pt))$,
\[
\chi_q([\mathcal{G}],q^{1/2}):=\sum_{i\in\mathbb{Z}}\sum_{r\in \mathbb{Z}}(-1)^i\dim\left(\Gr_W^{r}(\Ho^i(\mathcal{G}))\right)q^{r/2}.
\]
We restrict to $\Dulf(\MMHM(\mathbb{N}^m))$ so that the coefficients on the right hand side of (\ref{dulfjust}) belong to $\mathbb{Z}((q^{1/2}))$.  
\begin{remark}\label{Lsign}
Note that $\chi_q(\mathbb{L}^{1/2},q^{1/2})=-q^{1/2}$ since $\mathbb{L}^{1/2}$ is pure of weight one.  The sign that appears in (\ref{dulfjust}) means that $\LL^{1/2}$ plays the role of $q^{1/2}$ in the theory of quantum cluster mutation.  The `classical limit', recovering the theory of commutative cluster algebras from quantum cluster algebras, is given by $q^{1/2}=1$.  This is \textit{not} the traditional classical limit of motivic Donaldson--Thomas theory, see e.g. \cite[Sec.7.1]{KS}, which is obtained by setting $q^{1/2}=-1$, or equivalently, taking the Euler characteristic.  This choice of signs means that quantum cluster positivity follows directly from purity:
\end{remark}
\begin{proposition}
Let $\mathcal{F}\in\Dulf(\MMHM(\mathbb{N}^m))$ be pure.  Then 
\[
\chi_Q([\mathcal{F}])\in \mathbb{N}((q^{1/2}))[[Y^{1_s}|s\leq m]],
\]
i.e. $\chi_Q([\mathcal{F}])$ has only positive coefficients.
\end{proposition}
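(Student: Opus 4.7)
The proof should be nearly immediate from unpacking the definition of purity together with the sign convention built into the integration map; the key observation is that the sign $(-1)^i$ arising from the Euler characteristic convention in $\chi_q$ exactly cancels the sign $(-1)^r$ produced by substituting $-q^{1/2}$ for $q^{1/2}$ once we know $r=i$ on the nose.

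More precisely, my plan is as follows. First, expand $\chi_Q([\mathcal{F}])$ using the definitions: for each $\dd\in\mathbb{N}^m$,
\[
\chi_q([\mathcal{F}_\dd],-q^{1/2})=\sum_{i\in\ZZ}\sum_{r\in\ZZ}(-1)^i(-1)^r\dim\bigl(\Gr_W^{r}\Ho^i(\mathcal{F}_\dd)\bigr)q^{r/2}.
\]
Second, invoke purity of $\mathcal{F}$: by the definition preceding the proposition, $\Ho^i(\mathcal{F}_\dd)$ is pure of weight $i$, so $\Gr_W^{r}\Ho^i(\mathcal{F}_\dd)=0$ whenever $r\neq i$. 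Third, substitute $r=i$ in the surviving terms to obtain
\[
\chi_q([\mathcal{F}_\dd],-q^{1/2})=\sum_{i\in\ZZ}\dim\bigl(\Ho^i(\mathcal{F}_\dd)\bigr)q^{i/2},
\]
and observe that each coefficient $\dim\Ho^i(\mathcal{F}_\dd)$ is a non-negative integer.

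Finally, I will check that the output lies in $\NN((q^{1/2}))[[Y^{1_s}\mid s\leq m]]$, not just in some formal sum. For this I will use the hypothesis $\mathcal{F}\in\Dulf(\MMHM(\NN^m))$: by the definition of $\Dulf$, on each connected component (i.e.\ for each fixed $\dd$) there is a lower bound $N_\dd$ on the weights appearing in $\Ho(\mathcal{F}_\dd)$, and each weight graded piece is bounded (it lies in $\Db(\MMHM(\pt))$), so the sum above is a well-defined element of $\NN((q^{1/2}))$. Summing over $\dd$ then gives an element of $\NN((q^{1/2}))[[Y^{1_s}\mid s\leq m]]$, as required.

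There is no real obstacle here; the entire statement is a bookkeeping exercise that reveals the deliberate matching between the sign conventions of the integration map (Remark \ref{Lsign}) and the weight-filtration grading on pure objects. Everything substantive happens earlier in the paper, in setting things up so that this cancellation works cleanly.
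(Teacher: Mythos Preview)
Your proposal is correct and matches the paper's intent exactly: the paper states this proposition without proof, treating it as immediate from the definitions and the sign convention explained in Remark~\ref{Lsign}, and your unpacking of that cancellation $(-1)^i(-1)^r=1$ when $r=i$ is precisely the intended argument.
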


The following proposition, which is a consequence of Proposition \ref{weightPreserve}, explains the sense in which $\Dulf(\MMHM(\mathbb{N}^{m}))$ is a categorification of $\hat{\Ror}_Q$.

\begin{proposition}
The map $\chi_{Q}$ is a homomorphism of rings, where $\KK\left(\Dulf(\MMHM(\mathbb{N}^{m}))\right)$ is given the noncommutative product induced by the twisted monoidal product $\boxtimes_{\oplus}^{\tw}$.
\end{proposition}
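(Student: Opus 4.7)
The plan is to unwind both sides of the claimed equality $\chi_Q([\mathcal{F}\boxtimes_{\oplus}^{\tw}\mathcal{G}])=\chi_Q([\mathcal{F}])\cdot\chi_Q([\mathcal{G}])$ directly, checking that the twist factor $\LL^{\langle\dd'',\dd'\rangle_Q/2}$ on the categorical side matches the power $q^{\langle\dd'',\dd'\rangle_Q/2}$ appearing in the multiplication law (\ref{Ymult}) of $\hat{\Ror}_Q$. First, I would verify that $\chi_Q$ is well-defined on $\KK(\Dulf(\MMHM(\mathbb{N}^m)))$: additivity of the weighted Euler characteristic $\chi_q$ in short exact sequences of monodromic mixed Hodge modules (using that $\mathbb{N}^m$ is a disjoint union of points, so by (\ref{pointDecomp}) we may work over $\pt$) makes $\chi_Q$ descend from the category to its Grothendieck group. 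The finiteness condition built into $\Dulf$ ensures that $\chi_q([\mathcal{F}_\dd],-q^{1/2})$ lies in $\PPP=\ZZ((q^{1/2}))$ for each $\dd$.

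Next, I would compute the left-hand side. By the definition (\ref{monDef}) of the twisted product on $\mathbb{N}^m$ (using $\boxtimes_+$ as the monoidal product on the target monoid), the $\dd$-graded piece of $\mathcal{F}\boxtimes_{\oplus}^{\tw}\mathcal{G}$ is
\[
\bigoplus_{\dd'+\dd''=\dd}\mathcal{F}_{\dd'}\otimes\mathcal{G}_{\dd''}\otimes\LL^{\langle \dd'',\dd'\rangle_Q/2}.
\]
Applying $\chi_q(\cdot,-q^{1/2})$ and using the Künneth-type multiplicativity of $\chi_q$ on external tensor products (a direct consequence of Proposition \ref{tensPres}, which guarantees that the weight filtration is compatible with $\boxtimes$), each summand contributes
\[
\chi_q([\mathcal{F}_{\dd'}],-q^{1/2})\cdot\chi_q([\mathcal{G}_{\dd''}],-q^{1/2})\cdot\chi_q([\LL^{\langle\dd'',\dd'\rangle_Q/2}],-q^{1/2}).
\]
By Remark \ref{Lsign} and the relation $\LL^{1/2}\otimes\LL^{1/2}\cong\LL$ from the Thom--Sebastiani discussion, we have $\chi_q([\LL^{1/2}],-q^{1/2})=-(-q^{1/2})=q^{1/2}$, and hence $\chi_q([\LL^{k/2}],-q^{1/2})=q^{k/2}$ for every $k\in\ZZ$. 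Summing over all $\dd',\dd''$ yields
\[
\chi_Q([\mathcal{F}\boxtimes_{\oplus}^{\tw}\mathcal{G}])=\sum_{\dd',\dd''}q^{\langle\dd'',\dd'\rangle_Q/2}\chi_q([\mathcal{F}_{\dd'}],-q^{1/2})\chi_q([\mathcal{G}_{\dd''}],-q^{1/2})Y^{\dd'+\dd''}.
\]

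The right-hand side is simply
\[
\chi_Q([\mathcal{F}])\cdot\chi_Q([\mathcal{G}])=\sum_{\dd',\dd''}\chi_q([\mathcal{F}_{\dd'}],-q^{1/2})\chi_q([\mathcal{G}_{\dd''}],-q^{1/2})\,Y^{\dd'}Y^{\dd''},
\]
and applying the multiplication law (\ref{Ymult}), namely $Y^{\dd'}Y^{\dd''}=q^{\langle\dd'',\dd'\rangle_Q/2}Y^{\dd'+\dd''}$, gives precisely the same expression. This establishes multiplicativity; sending the monoidal unit (the skyscraper $\LL^0$ at $0\in\NN^m$) to $Y^0=1$ is immediate, so $\chi_Q$ is a ring homomorphism. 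The only point requiring any care is the sign convention in Remark \ref{Lsign}: it is precisely the substitution of $-q^{1/2}$ (rather than $q^{1/2}$) into $\chi_q$ that absorbs the sign coming from the odd perverse degree of $\LL^{1/2}$ and produces the correct quantum twist, so this is the part of the argument where one must be vigilant; everything else is a routine Künneth computation.
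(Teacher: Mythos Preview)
Your proof is correct and is essentially the same approach the paper has in mind: the paper simply states that the proposition is a consequence of Proposition~\ref{weightPreserve} (whose key ingredient is Proposition~\ref{tensPres}, the compatibility of the weight filtration with the external tensor product), and your argument spells out exactly this computation. Your handling of the sign via Remark~\ref{Lsign} is also the correct mechanism.
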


\subsection{Critical cohomology}
Let $X$ be a smooth complex variety, and let $\overline{f}$ be a regular function on $X$.  Furthermore, let $X$ carry a $G$-action, for $G$ an algebraic group, such that $\overline{f}$ is invariant with respect to the $G$-action, inducing a function $f$ on the stack-theoretic quotient $X/G$.  Let $\overline{p}\colon X\rightarrow Y$ be a map of varieties, which is also $G$-invariant, inducing a map $p\colon X/G\rightarrow Y$.  We recall the definition of $\Ho(p_*\phim{f}\mathbb{Q}_{X/G})$ and $\Ho(p_!\phim{f}\mathbb{Q}_{X/G})$, which is a relative version of the definition of the equivariant cohomology of the vanishing cycle complex from e.g. \cite[Sec.7]{COHA}, see \cite[Sec.2.2]{DaMe15b} for a fuller discussion.  

For simplicity we will assume that $X$ is equidimensional --- if $\Mst$ is obtained by taking the disjoint union of a number of smooth connected global quotient stacks $X_i/G_i$, we define 
\[
\Ho(p_*\phim{f}\mathbb{Q}_{\Mst})=\bigoplus_{X_i/G_i\in \pi_0(\Mst)}\Ho\left(p|_{X_i/G_i,*}\left(\phim{f|_{X_i/G_i}}\mathbb{Q}_{X_i/G_i}\right)\right)
\]
and we extend all related definitions in the same manner.  

Let $V_1\subset V_2\subset\ldots$ be an ascending sequence of finite-dimensional $G$-representations, which we identify with the total spaces of their underlying vector spaces, considered as $G$-equivariant varieties.  Assume that we have a sequence $U_1\subset U_2\subset\ldots$ of $G$-equivariant varieties satisfying the following conditions:
\begin{enumerate}
\item
$U_N\subset X\times V_N$ for all $N$, and $U_N$ is acted on scheme-theoretically freely by $G$.
\item
$\codim_{X\times V_N}\left((X\times V_N)\setminus U_N\right)\mapsto\infty$ as $N\mapsto \infty$.
\item
The map $\pi_N\colon U_N\rightarrow U_N/G$ is a principal $G$-bundle in the category of schemes.
\end{enumerate}
We denote by $f_N\colon U_N/G\rightarrow \mathbb{C}$ the induced function, and $p_N\colon U_N/G\rightarrow Y$ the induced map.  We define
\begin{align*}
\Ho(p_*\phim{f}\mathbb{Q}_{X/G}):=&\lim_{N\mapsto \infty}\Ho(p_{N,*}\phim{f_N}\mathbb{Q}_{U_N/G})\\
\Ho(p_!\phim{f}\mathbb{Q}_{X/G}):=&\lim_{N\mapsto \infty}\left(\Ho(p_{N,!}\phim{f_N}\mathbb{Q}_{U_N/G})\otimes \mathbb{L}^{-\dim(V_i)}\right)\\
\Ho(p_*\phim{f}\ICS_{X/G}(\mathbb{Q})):=&\Ho(p_*\phim{f}\mathbb{Q}_{X/G})\otimes\mathbb{L}^{(\dim(G)-\dim(X))/2}\\
\Ho(p_!\phim{f}\ICS_{X/G}(\mathbb{Q})):=&\Ho(p_!\phim{f}\mathbb{Q}_{X/G})\otimes\mathbb{L}^{(\dim(G)-\dim(X))/2}.
\end{align*}
In the first two equations, the limit exists because in each fixed cohomological degree, the cohomology stabilises for sufficiently large $N$ by our codimension assumption on $U_N$ --- see \cite[Sec.2]{DaMe15b}.  Since in the first equation, the right hand side vanishes in fixed sufficiently low degree, and by \cite[Prop.2.26]{Sai90}, the direct image increases weight, it follows that $\Ho(p_*\phim{f}\mathbb{Q}_{X/G})\in\Dulf(\MMHM(Y))$, and also that $\Ho(p_!\phim{f}\mathbb{Q}_{X/G})\in\Dllf(\MMHM(X))$ by commutativity of $\phi^{\mon}_{f_N}$ with the Verdier duality functor \cite{Sai89duality}.

Let $Z\subset X$ be a $G$-equivariant subvariety.  We define $Z_N:=(Z\times V_N)\cap U_N$.  Then we define 
\begin{align*}
\Ho\left(p_*\left(\phim{f}\mathbb{Q}_{X/G}\right)_{Z/G}\right):=&\lim_{N\mapsto \infty}\Ho\left(p_{N,*}\left(\phim{f_N}\mathbb{Q}_{U_N/G}\right)_{Z_N/G}\right)\\
\Ho\left(p_!\left(\phim{f}\mathbb{Q}_{X/G}\right)_{Z/G}\right):=&\lim_{N\mapsto \infty}\left(\Ho\left(p_{N,!}\left(\phim{f_N}\mathbb{Q}_{U_N/G}\right)_{Z_N/G}\right)\otimes \mathbb{L}^{-\dim(V_i)}\right)\\
\Ho\left(p_*\left(\phim{f}\ICS_{X/G}(\mathbb{Q})\right)_{Z/G}\right):=&\Ho\left(p_*\left(\phim{f}\mathbb{Q}_{X/G}\right)_{Z/G}\right)\otimes\mathbb{L}^{(\dim(G)-\dim(X))/2}\\
\Ho\left(p_!\left(\phim{f}\ICS_{X/G}(\mathbb{Q})\right)_{Z/G}\right):=&\Ho\left(p_!\left(\phim{f}\mathbb{Q}_{X/G}\right)_{Z/G}\right)\otimes\mathbb{L}^{(\dim(G)-\dim(X))/2}.
\end{align*}

\begin{definition}\label{properMapsDef}
We say that the map $p\colon X/G\rightarrow Y$ can be \textit{approximated by proper maps} if we can pick a system of $U_N$, continuing the notation from above, such that each of the maps $p_N\colon U_N/G\rightarrow Y$ is a  proper map.
\end{definition}
For $p\colon X/G\rightarrow Y$ a map to a variety, and $f$ a regular function on $X/G$, we only define the total cohomology of the pushforward of vanishing cycles to $Y$.  There is no a priori obvious comparison between $\Ho\left(\tau_*\Ho(p_*\phim{f}\mathbb{Q}_{X/G})\right)$ and 
\[
\HO_G(X,\phim{\overline{f}}\mathbb{Q}):=\Ho((\tau p)_*\phim{f}\mathbb{Q}_{X/G}), 
\]
where the equivariant cohomology of the vanishing cycle complex is defined as in \cite{COHA}, $\tau\colon Y\rightarrow\pt$ is the structure morphism, and $\overline{f}$ is the induced function on $X$.  By contrast, in the case of morphisms that are approximated by proper maps, we have the following degeneration result, which is one of the many uses of this notion.
\begin{lemma}
Assume that the map $p\colon X/G\rightarrow Y$ from the smooth stack $X/G$ to the variety $Y$ is approximated by proper maps, $f$ is a regular function on $X/G$ which can be written as $f'p$ for some function $f'$ on $Y$, and $\tau\colon Y\rightarrow \pt$ is the structure morphism.  Then there is a (noncanonical) isomorphism
\[
\Ho((\tau p)_*\phim{f}\mathbb{Q}_{X/G})\cong\Ho\left(\tau_*\Ho(p_*\phim{f}\mathbb{Q}_{X/G})\right).
\]
\end{lemma}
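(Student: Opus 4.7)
The plan is to exploit the approximation by proper maps together with the factorisation $f=f'p$ to put ourselves in a situation where the decomposition theorem applies. Writing $f_N := f'p_N$ on $U_N/G$, the hypothesis that each $p_N$ is proper allows us to invoke Saito's commutation isomorphism \cite[Thm.2.14]{Sai90}, recalled earlier in the excerpt:
\[
p_{N,*}\phim{f_N}\mathbb{Q}_{U_N/G} \;\cong\; \phim{f'}\,p_{N,*}\mathbb{Q}_{U_N/G}.
\]

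Next, since $U_N/G$ is smooth, the constant mixed Hodge module $\mathbb{Q}_{U_N/G}$ (appropriately shifted) is pure, and properness of $p_N$ combined with Saito's version of the decomposition theorem \cite[(4.5.3),(4.5.4)]{Sai90} gives a noncanonical isomorphism between $p_{N,*}\mathbb{Q}_{U_N/G}$ and its total cohomology $\Ho(p_{N,*}\mathbb{Q}_{U_N/G})$. Applying the t-exact functor $\phim{f'}$ (Proposition \ref{exactness}) preserves this splitting, so that
\[
p_{N,*}\phim{f_N}\mathbb{Q}_{U_N/G} \;\cong\; \Ho\bigl(p_{N,*}\phim{f_N}\mathbb{Q}_{U_N/G}\bigr).
\]

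The conclusion then follows by applying $\tau_*$ to both sides of this noncanonical isomorphism, taking total cohomology, tensoring with the appropriate Tate twist $\mathbb{L}^{-\dim(V_N)/2}$ or just $\mathbb{L}^{0}$ depending on whether we follow the $p_!$ or $p_*$ convention, and passing to the limit $N\to\infty$. The codimension assumption on $U_N$ ensures stabilisation in each fixed cohomological degree, so that the left-hand side converges to $\Ho((\tau p)_*\phim{f}\mathbb{Q}_{X/G})$ while the right-hand side converges to $\Ho(\tau_*\Ho(p_*\phim{f}\mathbb{Q}_{X/G}))$ in the sense of the earlier definitions.

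I do not anticipate a serious obstacle. The only mild subtlety is that the splittings at each level $N$ given by the decomposition theorem are noncanonical, so they cannot be expected to assemble into a strictly compatible system across all $N$; but since the statement is only claimed up to noncanonical isomorphism, we may pick such splittings independently for each $N$, and the stabilisation in each fixed degree means only finitely many $N$ are relevant to the comparison in that degree.
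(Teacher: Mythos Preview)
Your proposal is correct and follows essentially the same route as the paper: commute $p_{N,*}$ past $\phim{f_N}$ via properness, apply the decomposition theorem to $p_{N,*}\mathbb{Q}_{U_N/G}$, use exactness of $\phim{f'}$ to pass to total cohomology, then apply $\tau_*$ and stabilise degree by degree. The paper makes the stabilisation step slightly more explicit by fixing a single cohomological degree $l$ and a single sufficiently large $N$ (so no compatibility of splittings across $N$ is ever needed), and for the $p_*$ convention used in the statement no Tate twist is required.
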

\begin{proof}
Fix a cohomological degree $l$, and fix a sufficiently large $N\in\mathbb{N}$.  Then there is a chain of isomorphisms
\begin{align*}
\Ho^l((\tau p)_{N,*}\phim{f_N}\mathbb{Q}_{U_N/G})= &\Ho^l((\tau p_N)_*\phim{f_N}\mathbb{Q}_{U_N/G})
\\ \cong&\Ho^l(\tau_* \phim{f'} p_{N,*}\mathbb{Q}_{U_N/G})&\textrm{properness of }p_N
\\ \cong&\Ho^l(\tau_* \phim{f'} \Ho(p_{N,*}\mathbb{Q}_{U_N/G}))&\textrm{decomposition theorem}
\\ \cong&\Ho^l(\tau_* \Ho(\phim{f'} p_{N,*}\mathbb{Q}_{U_N/G})) &\textrm{exactness of }\phim{f'}
\\\cong&\Ho^l(\tau_* \Ho( p_{N,*}\phim{f_N}\mathbb{Q}_{U_N/G})) &\textrm{properness of }p_{N},
\end{align*}
and then the lemma follows, since once $\Ho^{l'}( p_{N,*}\phim{f_N}\mathbb{Q}_{U_N/G})$ stabilises for all $l'\leq l+\dim(Y)$, the final term in the chain of isomorphisms stabilises to $\Ho^l(\tau_* \Ho( p_{*}\phim{f}\mathbb{Q}_{X/G}))$.
\end{proof}
We now explain how certain maps from moduli stacks of $\mathbb{C}Q$-modules to the corresponding coarse moduli spaces are approximated by proper maps.
\begin{definition}[Framed quiver]
Let $Q$ be an ice quiver, and let $\ff\in\mathbb{N}^{n}$ be a dimension vector.  We define $Q_{\ff}$ by 
\begin{itemize}
\item
$(Q_{\ff})_0=Q_0\coprod \{\infty\}$
\item
$(Q_{\ff})_1=Q_1\coprod\{\beta_{i,g}|i\in Q_0,1\leq g\leq \ff_i\}$
\end{itemize}
where $t(\beta_{i,g})=\infty$ and $s(\beta_{i,g})=i$.  We set $\infty$ to be a principal vertex of $Q_{\ff}$.
\end{definition}
In what follows, for dimension vectors $\ff\in\mathbb{N}^n$, we write $\ff\mapsto \infty$ to mean that each component of $\ff$ is taken to be arbitrarily large.

Fix a dimension vector $\dd\in\mathbb{N}^{m}$.  Let 
\begin{equation}
\label{VDef}
V_{\ff,\dd}=\prod_{s\leq m}\Hom(\mathbb{C}^{\ff_s},\mathbb{C}^{\dd_s}).  
\end{equation}
Then $V_{\ff,\dd}$ carries a $G_{\dd}$-action via the $\Gl_{\dd_s}$-actions on the vector spaces $\mathbb{C}^{\dd_s}$.  Define $V_{\ff,\dd}^{\surj}\subset V_{\ff,\dd}$ by $V_{\ff,\dd}^{\surj}=\prod_{s\leq m}\Hom^{\surj}(\mathbb{C}^{\ff_s},\mathbb{C}^{\dd_s})$.  Composition of linear maps provides a surjection of topological spaces
\[
\Hom(\mathbb{C}^{\ff_s},\mathbb{C}^{\dd_s-1})\times \Hom(\mathbb{C}^{\dd_s-1},\mathbb{C}^{\dd_s})\rightarrow  \left(\Hom(\mathbb{C}^{\ff_s},\mathbb{C}^{\dd_s})\setminus \Hom^{\surj}(\mathbb{C}^{\ff_s},\mathbb{C}^{\dd_s})\right)
\]
for which the domain has dimension $\ff_s\dd_s-\ff_s+\dd_s^2-\dd_s$ and so we deduce
\begin{align}
\label{codim}
\codim_{V_{\ff,\dd}}\left(V_{\ff,\dd}\setminus V^{\surj}_{\ff,\dd}\right)\geq \lvert\ff\rvert-\dd\cdot\dd+\lvert\dd\rvert\mapsto \infty&&\textrm{as }\ff\mapsto\infty.
\end{align}
We write $(1,\dd)$ for the dimension vector for $Q_\ff$ that is $1$ at the framing vertex $\infty$, and $\dd$ when restricted to the quiver $Q$.

\begin{definition}
\label{ztDef}
Let $\zeta\in\mathbb{H}_+^n$ be a stability condition.  Fix $\theta\in [0,\pi)$ a slope.  We extend $\zeta$ to a stability condition $\zeta^{(\theta)}$ for $Q_\ff$ by setting the argument of $\zeta^{(\theta)}_{\infty}$ to be equal to $\theta+\epsilon$ for $0<\epsilon\ll 1$, and making $|\zeta^{(\theta)}_{\infty}|$ very large (possibly depending on $\dd$).
\end{definition}
For a $(1,\dd)$-dimensional $\mathbb{C}Q_\ff$-module $\rho$, $\zeta^{(\theta)}$-stability is equivalent to the following two conditions:
\begin{enumerate}
\item
If $\rho'$ is the underlying $\mathbb{C}Q$-module of $\rho$, then every $\dd'$ in the Harder--Narasimhan type of $\rho'$ has slope less than or equal to $\theta$.
\item
If $\rho''\subset \rho$ is the smallest sub $\mathbb{C}Q_\ff$-module of $\rho$ satisfying $\dim(\rho'')_{\infty}=1$, then all of the $\dd'$ in the Harder--Narasimhan type of $\rho/\rho''$ have slope greater than $\theta$.
\end{enumerate}
It follows from the first condition that $\rho'$ must have slope less than or equal to $\theta$.  It follows from the indivisibility of the dimension vector $(1,\dd)$ that a $(1,\dd)$-dimensional $\mathbb{C}Q_\ff$-module is $\zeta^{(\theta)}$-stable if it is $\zeta^{(\theta)}$-semistable.  The $G_{\dd}$-action on $X(Q_{\ff})^{\zeta^{(\theta)}\sst}_{(1,\dd)}$ is scheme-theoretically free, by the standard argument recalled in \cite[Prop.3.7]{Efi11}.  Let $\overline{N}\in \mathbb{Z}^n$ be the constant dimension vector $(N,\ldots,N)$.  There are natural open inclusions 
\[
X(Q)_{[0,\theta],\dd}^{\zeta}\times V^{\surj}_{\ff,\dd}\subset X(Q_\ff)_{(1,\dd)}^{\zeta^{(\theta)}\sst}\subset X(Q)^{\zeta}_{[0, \theta],\dd}\times V_{\ff,\dd}
\]
of $G_{\dd}$-equivariant varieties, and so it follows from (\ref{codim}) that the spaces $X(Q_{\overline{N}})_{(1,\dd)}^{\zeta^{(\theta)}\sst}$ provide a system of spaces $U_N$ fulfilling the requirements listed at the start of this section for calculating direct images of vanishing cycles on $\Mst(Q)_{[0,\theta],\dd}^{\zeta}$.  We define 
\begin{equation}
\label{sfrIntro}
\Msp(Q)^{\zeta,\theta\sfr}_{\ff,\dd}:=X(Q_\ff)^{\zeta^{(\theta)}\sst}_{(1,\dd)}/G_{\dd},
\end{equation}
the scheme-theoretic quotient.  We denote by
\[
\pi^{\zeta,\theta\sfr}_{\ff,\dd}\colon \Msp(Q)^{\zeta,\theta\sfr}_{\ff, \dd}\rightarrow \Msp(Q)_{\dd}
\]
the natural projection, taking a stable framed $\mathbb{C}Q_{\ff}$-module to the semisimplification of its underlying $\mathbb{C}Q$-module.  

By \cite[Thm.1]{LBP90}, for an arbitrary finite quiver $Q'$, and dimension vector $\dd'\in\mathbb{N}^{Q'_0}$, the $G_{\dd'}$-invariant functions on $X(Q')_{\dd'}$, are generated by the functions $\rho\mapsto\Tr(\rho(c))$, for $c\in\mathbb{C}Q'/[\mathbb{C}Q',\mathbb{C}Q']$ a cycle.  As $Q_{\ff}$ contains no cycles not already contained in $Q$, we deduce that $\pi^{\zeta,\theta\sfr}_{\ff,\dd}$ is projective, as it is the composition of the GIT quotient map 
\[
X(Q_{\ff})_{(1,\dd)}^{\zeta^{(\theta)}\sst}/G_\dd\rightarrow \Msp(Q_{\ff})_{(1,\dd)}
\]
and the forgetful isomorphism $\Msp(Q_{\ff})_{(1,\dd)}\rightarrow \Msp(Q)_{\dd}$ between the affinizations.  In conclusion, we may make identifications
\begin{align*}
&\Ho\left(p_{[0,\theta],\dd,*}^{\zeta}\phim{\WWW^{\zeta}_{[0,\theta],\dd}}\ICS_{\Mst(Q)_{[0,\theta],\dd}^{\zeta}}(\mathbb{Q})\right)=\\&\lim_{\ff\mapsto\infty}\left(\Ho\left(\pi_{\ff,\dd,*}^{\zeta,\theta\sfr}\phim{\WW^{\zeta,\theta\sfr}_{\ff,\dd}}\ICS_{\Msp(Q)^{\zeta,\theta\sfr}_{\ff,\dd}}(\mathbb{Q})\right)\otimes \LL^{\ff\cdot\dd/2}\right)
\end{align*}
and
\begin{align*}
&\Ho\left(p_{[0,\theta],\dd,!}^{\zeta}\phim{\WWW^{\zeta}_{[0,\theta],\dd}}\ICS_{\Mst(Q)_{[0,\theta],\dd}^{\zeta}}(\mathbb{Q})\right)=\\&\lim_{\ff\mapsto\infty}\left(\Ho\left(\pi_{\ff,\dd,!}^{\zeta,\theta\sfr}\phim{\WW^{\zeta,\theta\sfr}_{\ff,\dd}}\ICS_{\Msp(Q)^{\zeta,\theta\sfr}_{\ff,\dd}}(\mathbb{Q})\right)\otimes \LL^{{}-\ff\cdot\dd/2}\right),
\end{align*}
and we have proved the following proposition.
\begin{proposition}
\label{thapm}
For an arbitrary stability condition $\zeta\in \mathbb{H}_+^n$ slope $\theta\in[0,\pi)$ and dimension vector $\dd\in\mathbb{N}^m$, the map $p_{[0,\theta],\dd}^{\zeta}\colon \Mst(Q)_{[0,\theta],\dd}^{\zeta}\rightarrow \Msp(Q)_{\dd}$ is approximated by proper maps in the sense of Definition \ref{properMapsDef}.
\end{proposition}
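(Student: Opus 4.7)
The plan is to produce an explicit approximating system $U_N$ for $p_{[0,\theta],\dd}^{\zeta}$ by exploiting the framing construction for quivers with an extended stability condition $\zeta^{(\theta)}$. Concretely, I would take $V_N := V_{\overline{N},\dd}$ (with $\overline{N} = (N,\ldots,N)$) as in (\ref{VDef}), and set
\[
U_N := X(Q_{\overline{N}})^{\zeta^{(\theta)}\sst}_{(1,\dd)} \subset X(Q)_\dd \times V_{\overline{N},\dd}.
\]
The inclusion $U_N \subset X(Q)^{\zeta}_{[0,\theta],\dd}\times V_{\overline{N},\dd}$ is forced by the first semistability condition on $(1,\dd)$-dimensional $\mathbb{C}Q_{\overline{N}}$-modules (the Harder--Narasimhan type of the underlying $\mathbb{C}Q$-module has slopes $\leq \theta$), so the $U_N$ do approximate $X(Q)^{\zeta}_{[0,\theta],\dd}$ from inside.

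First I would verify the three conditions at the start of the section on critical cohomology. Freeness of the $G_\dd$-action on $U_N$ comes from indivisibility of the dimension vector $(1,\dd)$ together with the standard framing argument recalled in \cite[Prop.3.7]{Efi11}; this gives the principal bundle condition as well. The codimension bound follows from (\ref{codim}), since $V^{\surj}_{\overline{N},\dd} \times X(Q)^{\zeta}_{[0,\theta],\dd} \subset U_N$, so the complement of $U_N$ inside $X(Q)^{\zeta}_{[0,\theta],\dd}\times V_{\overline{N},\dd}$ has codimension at least $\lvert\overline{N}\rvert - \dd\cdot\dd + \lvert \dd\rvert$, which tends to $\infty$ as $N \to \infty$.

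The substantive step is proving properness of the induced map $\pi^{\zeta,\theta\sfr}_{\overline{N},\dd}\colon U_N/G_\dd \to \Msp(Q)_\dd$. The plan is to factor this map as
\[
U_N/G_\dd = \Msp(Q)^{\zeta,\theta\sfr}_{\overline{N},\dd} \longrightarrow \Msp(Q_{\overline{N}})_{(1,\dd)} \longrightarrow \Msp(Q)_\dd.
\]
The first arrow is the GIT quotient morphism from the semistable locus to the affinization, which is projective by \cite{MFK94,King94}. The second arrow, the forgetful map between affinizations, is where the key observation enters: by Le Bruyn--Procesi \cite{LBP90} the coordinate ring of $\Msp(Q')_{\dd'}$ for any quiver $Q'$ is generated by the trace functions $\rho \mapsto \Tr(\rho(c))$ as $c$ runs over cycles in $Q'$, and by construction $Q_{\overline{N}}$ contains no cycles not already present in $Q$. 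Hence this forgetful map is an isomorphism, and the composition is projective, in particular proper.

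The main obstacle is really just bookkeeping: one has to ensure that the two semistability conditions characterising $U_N$ (in particular condition (1) on the Harder--Narasimhan type of the underlying $\mathbb{C}Q$-module) produce exactly the right locus inside $X(Q)^{\zeta}_{[0,\theta],\dd}\times V_{\overline{N},\dd}$, so that the limiting direct images compute the cohomology of $\Mst(Q)^{\zeta}_{[0,\theta],\dd}$ rather than something larger. Once this compatibility is noted, the rest is the verification of Definition \ref{properMapsDef} outlined above.
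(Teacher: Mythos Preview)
Your proposal is correct and follows essentially the same approach as the paper: the paper builds the approximating system from $U_N = X(Q_{\overline{N}})^{\zeta^{(\theta)}\sst}_{(1,\dd)}$, uses the chain of inclusions $X(Q)_{[0,\theta],\dd}^{\zeta}\times V^{\surj}_{\overline{N},\dd}\subset U_N\subset X(Q)^{\zeta}_{[0,\theta],\dd}\times V_{\overline{N},\dd}$ together with (\ref{codim}) for the codimension condition, and proves properness of $\pi^{\zeta,\theta\sfr}_{\overline{N},\dd}$ by the same factorisation through the GIT map and the Le~Bruyn--Procesi argument that the forgetful map of affinizations is an isomorphism.
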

\begin{corollary}\label{easyAPM}
The map (\ref{pdef}):
\begin{equation}
p^{\zeta\sst}_{\dd}\colon \Mst(Q)^{\zeta\sst}_\dd\rightarrow\Msp(Q)^{\zeta\sst}_\dd
\end{equation}
is approximated by proper maps.
\end{corollary}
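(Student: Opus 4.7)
My plan is to deduce the corollary from Proposition \ref{thapm} by specialising to $\theta=\Mu^{\zeta}(\dd)$. The first step will be to observe that with this choice of slope, the intermediate stack $\Mst(Q)^{\zeta}_{[0,\theta],\dd}$ coincides with $\Mst(Q)^{\zeta\sst}_{\dd}$. Indeed, if a $\dd$-dimensional $\mathbb{C}Q$-module has Harder--Narasimhan type $(\dd^1,\ldots,\dd^h)$ with $\Mu^{\zeta}(\dd^g)\leq \theta$ for every $g$, then each $\zeta\cdot\dd^g$ lies in the convex cone $\{re^{i\phi}: r\geq 0,\ 0\leq \phi\leq \theta\}$ while their sum $\zeta\cdot\dd$ lies on the extreme ray $\{re^{i\theta}:r\geq 0\}$; this forces every $\zeta\cdot\dd^g$ to lie on the same extreme ray, and the strict decrease of slopes along a HN filtration then forces $h=1$, so the module is $\zeta$-semistable of slope $\theta$.

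Next, I will reuse the approximating system $U_N = X(Q_{\overline{N}})^{\zeta^{(\theta)}\sst}_{(1,\dd)}$ that underlies the proof of Proposition \ref{thapm}; by the previous step these varieties automatically satisfy conditions (1)--(3) of Definition \ref{properMapsDef} for the map $p^{\zeta\sst}_{\dd}$. The substantive step is to promote the projective approximating maps $\pi^{\zeta,\theta\sfr}_{\ff,\dd}\colon\Msp(Q)^{\zeta,\theta\sfr}_{\ff,\dd}\to\Msp(Q)_{\dd}$ to proper maps with the smaller target $\Msp(Q)^{\zeta\sst}_{\dd}$. By the first paragraph, every point of $X(Q_{\ff})^{\zeta^{(\theta)}\sst}_{(1,\dd)}$ has underlying $\mathbb{C}Q$-module that is $\zeta$-semistable of slope $\theta$, so the universal property of the coarse moduli space yields a canonical factorisation
\[
\pi^{\zeta,\theta\sfr}_{\ff,\dd}=q^{\zeta}_{\dd}\circ\tilde{\pi}^{\zeta,\theta\sfr}_{\ff,\dd},
\]
with $\tilde{\pi}^{\zeta,\theta\sfr}_{\ff,\dd}\colon\Msp(Q)^{\zeta,\theta\sfr}_{\ff,\dd}\to\Msp(Q)^{\zeta\sst}_{\dd}$.

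The corollary will then follow from the standard fact that, in a factorisation of a proper map through a separated one, the first factor is automatically proper. Since $q^{\zeta}_{\dd}$ is a morphism between finite-type $\mathbb{C}$-schemes and is therefore separated, this gives properness of $\tilde{\pi}^{\zeta,\theta\sfr}_{\ff,\dd}$ and completes the required approximation. There is essentially no obstacle here: the only point requiring any genuine thought is the extreme-ray argument identifying the two moduli stacks in the first paragraph, and the rest is a transcription of the approximating system from Proposition \ref{thapm}.
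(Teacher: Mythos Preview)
Your proof is correct and follows essentially the same approach as the paper: set $\theta=\Mu^{\zeta}(\dd)$, identify $\Mst(Q)^{\zeta}_{[0,\theta],\dd}$ with $\Mst(Q)^{\zeta\sst}_{\dd}$, and invoke Proposition~\ref{thapm}. You are in fact more explicit than the paper on one point: the paper simply draws a commutative triangle with diagonal labelled $p^{\zeta}_{[0,\theta],\dd}$ landing in $\Msp(Q)^{\zeta\sst}_{\dd}$ and appeals to Proposition~\ref{thapm}, tacitly assuming the approximating maps factor through $\Msp(Q)^{\zeta\sst}_{\dd}$ and remain proper; your factorisation $\pi^{\zeta,\theta\sfr}_{\ff,\dd}=q^{\zeta}_{\dd}\circ\tilde{\pi}^{\zeta,\theta\sfr}_{\ff,\dd}$ together with the proper-through-separated lemma makes this step explicit.
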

\begin{proof}
Setting $\theta=\Mu^{\zeta}(\dd)$, there is a commutative diagram
\[
\xymatrix{
\Mst(Q)^{\zeta\sst}_\dd\ar[r]^{p^{\zeta\sst}_{\dd}}\ar[d]^=&\Msp(Q)^{\zeta\sst}_\dd\\
\Mst(Q)_{[0,\theta],\dd}^{\zeta}\ar[ur]_{p_{[0,\theta],\dd}^{\zeta}},
}
\]
where the vertical equality follows from the fact that if a $\dd$-dimensional $\mathbb{C}Q$-module $M$ is not semistable, it has a Harder--Narasimhan filtration
\[
0=M_0\subset\ldots \subset M_h=M
\]
with $h\geq 2$, and $\Mu^{\zeta}(\dim(M_h/M_{h-1}))>\Mu^{\zeta}(\dd)$.  The diagonal morphism in the commutative diagram is approximated by proper maps by Proposition \ref{thapm}, and the corollary follows.
\end{proof}

By \cite{Sai90}, (monodromic) vanishing cycle functors commute with taking the direct image along proper maps, and are exact by Proposition \ref{exactness}, and so we deduce from Proposition \ref{thapm} that
\begin{align}
\label{bettStarForm}
&\Ho\left(p_{[0,\theta],\dd,*}^{\zeta}\left(\phim{\WWW^{\zeta}_{[0,\theta],\dd}}\ICS_{\Mst(Q)_{[0, \theta],\dd}^{\zeta}}(\mathbb{Q})\right)\right)\cong\\&\phim{\WW_{\dd}}\lim_{\ff\mapsto\infty}\left(\Ho\left(\pi_{\ff,\dd,*}^{\zeta,\theta\sfr}\ICS_{\Msp(Q)^{\zeta,\theta\sfr}_{\ff,\dd}}(\mathbb{Q})\right)\otimes \LL^{\ff\cdot\dd/2}\right)\nonumber\cong\\&
\phim{\WW_{\dd}}\Ho\left(p_{[0,\theta],\dd,*}^{\zeta}\left(\ICS_{\Mst(Q)_{[0, \theta],\dd}^{\zeta}}(\mathbb{Q})\right)\right)\nonumber
\end{align}
and
\begin{align}
&\Ho\left(p_{[0, \theta],\dd,!}^{\zeta}\left(\phim{\WWW^{\zeta}_{[0,\theta],\dd}}\ICS_{\Mst(Q)_{[0, \theta],\dd}^{\zeta}}(\mathbb{Q})\right)\right)\cong\\&\phim{\WW_{\dd}}\lim_{\ff\mapsto\infty}\left(\Ho\left(\pi_{\ff,\dd,!}^{\zeta,\theta\sfr}\ICS_{\Msp(Q)^{\zeta,\theta\sfr}_{\ff,\dd}}(\mathbb{Q})\right)\otimes \LL^{-\ff\cdot\dd/2}\right)\nonumber\cong\\&
\phim{\WW_{\dd}}\Ho\left(p_{[0,\theta],\dd,!}^{\zeta}\left(\ICS_{\Mst(Q)_{[0, \theta],\dd}^{\zeta}}(\mathbb{Q})\right)\right)\nonumber.
\end{align}
We now have the notation at hand to give the promised proof of the following lemma.
\begin{lemma}\cite[Lem.2.1]{MeRe14}\label{finLem}
The map $\Msp(Q)\times \Msp(Q)\xrightarrow{\oplus} \Msp(Q)$ is finite.
\end{lemma}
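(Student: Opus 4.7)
The plan is to establish finiteness componentwise: $\oplus$ breaks up by dimension vectors into maps $\oplus_{\dd',\dd''}\colon \Msp(Q)_{\dd'}\times\Msp(Q)_{\dd''}\rightarrow\Msp(Q)_{\dd'+\dd''}$ between affine schemes of finite type. Since a proper affine morphism is automatically finite, and affine morphisms are separated, by the valuative criterion for properness it is enough to produce, for every DVR $R$ with fraction field $K$ and every commutative square placing a $\Spec K$-point of $\Msp(Q)_{\dd'}\times\Msp(Q)_{\dd''}$ over a $\Spec R$-point of $\Msp(Q)_{\dd'+\dd''}$, at least one extension $\Spec R\to \Msp(Q)_{\dd'}\times\Msp(Q)_{\dd''}$. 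For orientation, I would first record that $\oplus_{\dd',\dd''}$ is quasi-finite: the fibre over $[\bigoplus_i S_i^{n_i}]$ is in bijection with tuples $(a_i)_i$ satisfying $0\leq a_i\leq n_i$ and $\sum_i a_i\dim S_i=\dd'$, which is a finite set.

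The $\Spec K$-point amounts to a pair of polystable $K$-representations $(M'_K,M''_K)$ of dimensions $\dd',\dd''$, while the $\Spec R$-point amounts to a $\CC$-algebra map to $R$ whose $K$-reduction factors as the class of $M'_K\oplus M''_K$. After passing to a finite ramified extension of $R$ (permissible, since universal closedness can be checked after a faithfully flat DVR cover), I would lift the $R$-point of the GIT quotient to a representation $\rho_R\in X(Q)_{\dd'+\dd''}(R)$ whose generic fibre $\rho_K$ contains $M'_K$ as a submodule with quotient $M''_K$. Let $\mathcal{M}'\subset\rho_R$ denote the scheme-theoretic closure of $M'_K\hookrightarrow \rho_K$. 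Since $R$ is a DVR, torsion-freeness of a finitely generated module is equivalent to flatness, so $\mathcal{M}'$ and the quotient $\rho_R/\mathcal{M}'$ are both $R$-flat $\mathbb{C}Q$-modules. The resulting pair $(\mathcal{M}',\rho_R/\mathcal{M}')$ defines the desired $R$-point of $\Msp(Q)_{\dd'}\times\Msp(Q)_{\dd''}$: on generic fibres it recovers $([M'_K],[M''_K])$ by construction, while the direct sum $\mathcal{M}'\oplus\rho_R/\mathcal{M}'$ has the same Jordan--H\"older content as $\rho_R$ on each fibre, and so induces the same $R$-point of $\Msp(Q)_{\dd'+\dd''}$.

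The main obstacle I anticipate is the lifting step: passing from an $R$-point of the GIT quotient $\Msp(Q)_{\dd'+\dd''}=X(Q)_{\dd'+\dd''}/\!\!/ G_{\dd'+\dd''}$ to a concrete $R$-valued representation $\rho_R$ whose generic fibre lies in the prescribed closed $G_{\dd'+\dd''}$-orbit. This is standard for actions of reductive groups on affine varieties---one picks a representative of the closed $K$-orbit and lifts it along the invariant map $X(Q)_{\dd'+\dd''}\to X(Q)_{\dd'+\dd''}/\!\!/ G_{\dd'+\dd''}$, possibly after a finite ramified base change of $R$---and the cover does not affect the universal closedness we are trying to verify. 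With this lifting granted, the scheme-theoretic closure trick in the middle paragraph yields the extension and completes the proof.
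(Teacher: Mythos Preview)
Your argument is essentially correct and takes a genuinely different route from the paper's. The paper does not run the valuative criterion directly on representations; instead it introduces the framed moduli spaces (noncommutative Hilbert schemes) $\Msp(Q)^{\sfr}_{\ff,\dd}$, which are projective over $\Msp(Q)_\dd$, shows via a semicontinuity argument that the block-diagonal inclusion $\Msp(Q)^{\sfr}_{\ff',\dd'}\times\Msp(Q)^{\sfr}_{\ff'',\dd''}\hookrightarrow\Msp(Q)^{\sfr}_{\ff'+\ff'',\dd'+\dd''}$ is closed, and then reads off properness of $\oplus$ from a two-line diagram chase (proper diagonal, surjective vertical, separated base). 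Your approach trades that auxiliary geometry for a direct lift-and-saturate argument and so stands on its own without the framed spaces; the paper's version re-uses machinery already built elsewhere in the paper.

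Two technical caveats on your side. First, ``finite ramified extension'' is too strong: the DVR $R'$ you produce by dominating a local ring of the (positive-dimensional) fibre product $X(Q)_\dd\times_{\Msp(Q)_\dd}\Spec R$ will typically have fraction field transcendental over $K$. This is harmless, since the valuative criterion for universal closedness allows arbitrary DVR extensions, but you should phrase it that way. Second, you are right to insist that the lift land in the closed orbit---an arbitrary lift with the correct semisimplification need not contain any submodule with class $[M'_K]$---so you should take $\rho_{R'}$ inside the closure of $G_\dd(K)\cdot(M'_K\oplus M''_K)$ in that fibre product; this closure is irreducible and meets the special fibre, hence admits an $R'$-point. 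With that in hand, your flat-closure step (equivalently, properness of the Quot scheme of $\rho_{R'}$) and the trace identity $\Tr_{\rho_{R'}}(c)=\Tr_{\mathcal{M}'}(c)+\Tr_{\rho_{R'}/\mathcal{M}'}(c)$ finish the job exactly as written.
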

\begin{proof}
Quasi-finiteness is clear, since up to isomorphism, a direct sum of simple $\mathbb{C}Q$-modules can be written as a direct sum of direct sums of simple $\mathbb{C}Q$-modules in only finitely many ways.  So all that remains is to prove properness.  Fix two dimension vectors $\dd',\dd''\in\mathbb{N}^{m}$.  Setting $\zeta=(i,\ldots,i)$ the moduli space 
\[
\Msp(Q)_{\ff,\dd}^{\sfr}:=\Msp(Q)_{\ff,\dd}^{\zeta, \pi/2\sfr}
\]
is just the usual noncommutative Hilbert scheme, i.e. the moduli space of framed $\dd$-dimensional $\mathbb{C}Q$-modules for which the framing vector generates the $\mathbb{C}Q$-module, as introduced in \cite{Re05}.  Then for sufficiently large $\ff',\ff''$ (for instance if $\ff'\geq \dd'$ and $\ff''\geq \dd''$ for the natural partial ordering), the moduli spaces $\Msp(Q)^{\sfr}_{\ff',\dd'}$ and $\Msp(Q)^{\sfr}_{\ff'',\dd''}$ are nonempty, and the maps $\Msp(Q)^{\sfr}_{\ff',\dd'}\rightarrow \Msp(Q)_{\dd'}$ and $\Msp(Q)^{\sfr}_{\ff'',\dd''}\rightarrow \Msp(Q)_{\dd''}$ are surjective on geometric points.  

We claim that the natural map $\Msp(Q)^{\sfr}_{\ff',\dd'}\times \Msp(Q)^{\sfr}_{\ff'',\dd''}\rightarrow \Msp(Q)^{\sfr}_{\ff,\dd}$ is a closed embedding, where we have set $\dd=\dd'+\dd''$ and $\ff=\ff'+\ff''$.  We define $\nu'\colon\Msp(Q)^{\sfr}_{\ff,\dd}\rightarrow \mathbb{Z}$ to be the lower semicontinuous function taking a framed representation to the dimension of the $\mathbb{C}Q$-module generated by the first $\ff'$ framing vectors, and define $\nu''$ similarly, by considering the last $\ff''$ framing vectors.  Then $\nu:=\nu'+\nu''$ is lower semicontinuous, and the desired inclusion is a component of the inclusion of the set $\nu^{-1}(\tau)$, where $\tau=\sum_{s\in Q_0}\dd_s$ is the minimal value of $\nu$, thus proving the claim.  The map $\pi_{\ff,\dd}\colon\Msp(Q)^{\sfr}_{\ff,\dd}\rightarrow \Msp(Q)_{\dd}$ is proper, as noted above.  So in the diagram
\[
\xymatrix{
\Msp(Q)^{\sfr}_{\ff',\dd'}\times\Msp(Q)^{\sfr}_{\ff'',\dd''}\ar[dr]\ar[d]_-{\pi_{\ff',\dd'}\times\pi_{\ff'',\dd''}} \\\Msp(Q)_{\dd'}\times\Msp(Q)_{\dd''}\ar[r]_-{\oplus} &\Msp(Q)_{\dd}
}
\]
the diagonal map is proper, while the vertical map is surjective on points, and $\oplus$ is separated since $\Msp(Q)_{\dd'}$ and $\Msp(Q)_{\dd''}$ are quasiprojective.  Properness of the horizontal map then follows via the valuative criterion of properness.
\end{proof}
\subsection{Cohomological wall crossing}
The following theorem is a categorification of the identity in the quantum torus $\hat{\Ror}_Q$ induced by the existence and uniqueness of Harder--Narasimhan filtrations.  This identity is known as the wall crossing formula in the work of Kontsevich, Soibelman, Joyce and Song.  It is this theorem that allows us to categorify the strategy for understanding quantum cluster mutation that starts with Nagao's quote from the introduction.
\begin{theorem}
\label{rays}
There is an isomorphism in $\Dulf(\MMHM(\Msp(Q)))$
\begin{align*}
&\left(\Ho\left(p_{[0,\theta],*}^{\zeta}\left(\phim{\WWW^{\zeta}_{[0,\theta]}}\ICS_{\Mst(Q)_{[0,\theta]}^{\zeta}}(\mathbb{Q})\right)\right)\right)_{\nilp}\cong\\&\Boxtimes_{\oplus,[\theta\xrightarrow{\gamma}0]}^{\tw}\left(\Ho\left(q^{\zeta}_{\gamma,*}p^{\zeta\sst}_{\gamma,*}\left(\phim{\WWW^{\zeta\sst}_{\gamma}}\ICS_{\Mst(Q)^{\zeta\sst}_{\gamma}}(\mathbb{Q})\right)\right)\right)_{\nilp},
\end{align*}
where the monoidal product is taken over descending slopes.
\end{theorem}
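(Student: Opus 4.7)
My plan is to prove this cohomological wall-crossing identity via the Harder--Narasimhan stratification of $\Mst(Q)^\zeta_{[0,\theta]}$, combined with the decomposition theorem and a Thom--Sebastiani-style factorization of the vanishing cycle of $\WWW$ across the stratification. Using existence and uniqueness of HN filtrations, one writes $\Mst(Q)^\zeta_{[0,\theta]}$ as the disjoint union of locally closed substacks $\Mst(Q)^\zeta_{\overline{\dd}}$ indexed by HN types $\overline{\dd}=(\dd^1,\ldots,\dd^h)$ with strictly descending slopes in $[0,\theta]$. Uniqueness identifies each $\Mst(Q)^\zeta_{\overline{\dd}}$ with the stack of HN-filtered modules of type $\overline{\dd}$, which in turn carries a ``take associated graded'' morphism
\[
\mathrm{gr}_{\overline{\dd}}\colon \Mst(Q)^\zeta_{\overline{\dd}} \to \prod_{i=1}^{h} \Mst(Q)^{\zeta\sst}_{\dd^i}.
\]
This morphism is a stack-theoretic affine bundle whose relative dimension computes the $\Ext^1$-contribution between HN pieces of distinct slopes; after the normalisations built into $\ICS$, this contributes exactly the twist $\LL^{\sum_{i<j}\langle \dd^j,\dd^i\rangle_Q/2}$ that defines $\boxtimes^{\tw}_{\oplus}$.

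Next, I would invoke Proposition \ref{thapm}, which guarantees that $p^\zeta_{[0,\theta]}$ is approximated by proper maps. Applying Saito's version of the decomposition theorem to each approximating projective map, together with the identifications (\ref{bettStarForm}), shows that the left-hand side of the claim is pure in $\Dulf(\MMHM(\Msp(Q)))$. Purity, combined with the iterated cone presentation of the pushforward arising from the HN stratification, forces the associated triangles to split, yielding a direct sum decomposition
\[
\bigl(\Ho(p^\zeta_{[0,\theta],*}\phim{\WWW^\zeta_{[0,\theta]}}\ICS_{\Mst(Q)^\zeta_{[0,\theta]}}(\mathbb{Q}))\bigr)_{\nilp}\cong\bigoplus_{\overline{\dd}}\bigl(\Ho(p^\zeta_{\overline{\dd},*}\phim{\WWW^\zeta_{\overline{\dd}}}\ICS_{\Mst(Q)^\zeta_{\overline{\dd}}}(\mathbb{Q}))\bigr)_{\nilp}.
\]
Each summand on the right, via the composition of $\mathrm{gr}_{\overline{\dd}}$, the semistable projections $p^{\zeta\sst}_{\dd^i}$ and the semisimplifications $q^\zeta_{\dd^i}$ to $\Msp(Q)$, and the monoid map $\oplus$, naturally contributes to an iterated $\boxtimes^{\tw}_{\oplus}$; the descending slope ordering of $\boxtimes^{\tw}_{\oplus,[\theta\xrightarrow{\gamma}0]}$ matches the ordering inherent in HN filtrations.

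The technical heart, and where I expect the main difficulty to lie, is the factorization of $\phim{\WWW^\zeta_{\overline{\dd}}}\ICS$ along $\mathrm{gr}_{\overline{\dd}}$ into an external product of $\phim{\WWW^{\zeta\sst}_{\dd^i}}\ICS$ on the semistable factors. The function $\Tr(W)$ on the HN-filtered stack is not literally the sum of the $\Tr(W)$'s on its graded pieces: it contains polynomial ``cross-terms'' built from the upper-triangular extension data between pieces of distinct slopes. The plan is to pass to the framed models used in Proposition \ref{thapm}, so that all relevant spaces are ordinary smooth varieties, and then to run a relative dimensional reduction argument in the spirit of Kontsevich--Soibelman \cite[Sec.7]{COHA} and \cite{DMSS13}: the direct image of $\phim{\Tr(W)}$ along the linear extension coordinates of $\mathrm{gr}_{\overline{\dd}}$ is identified with a shift of its restriction to the zero-section, producing exactly the prescribed $\LL$-twist. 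The restriction $(-)_{\nilp}$ plays an essential role here, as nilpotency ensures that the fibers of $p^\zeta_{\overline{\dd}}$ and its semistable analogues are compatible after this reduction. With the extension directions integrated out, the Thom--Sebastiani isomorphism (\ref{TSiso}) then splits the remaining contribution as the required external product of vanishing cycles on the semistable strata, completing the proof.
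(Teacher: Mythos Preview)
Your overall strategy via the Harder--Narasimhan stratification is correct, and the affine-fibration identification of each stratum contribution with a twisted box product is exactly what the paper uses.  However, there is a genuine gap in your purity argument, and the order in which you introduce the vanishing cycle functor is the source of the difficulty you identify at the end.

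You claim that Proposition~\ref{thapm} together with~(\ref{bettStarForm}) shows that the left-hand side is pure.  But~(\ref{bettStarForm}) only gives
\[
\Ho\bigl(p^{\zeta}_{[0,\theta],*}\phim{\WWW}\ICS\bigr)\;\cong\;\phim{\WW}\,\Ho\bigl(p^{\zeta}_{[0,\theta],*}\ICS\bigr),
\]
and while the decomposition theorem makes the inner pushforward pure, the functor $\phim{\WW}$ does \emph{not} preserve purity.  So you cannot conclude purity of the left-hand side at this stage; indeed, that purity (after restriction to $\nilp$) is the content of Proposition~\ref{stackPure}, which is proved \emph{using} Theorem~\ref{rays}.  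Without purity, the distinguished triangles coming from the stratification need not split, and your direct-sum decomposition over HN types is not established.

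The paper avoids this by reversing the order of operations: it first proves the splitting for $\Ho\bigl(p^{\zeta}_{[0,\theta],!}\ICS\bigr)$ with \emph{no} vanishing cycles present.  Here each stratum contribution is genuinely pure (a box product of proper-pushforward pure objects), so semisimplicity forces the triangles~(\ref{trian}) to split.  Only after obtaining the isomorphism~(\ref{swapped}) at the level of $\Msp(Q)$ does one apply $\phim{\WW}$ to both sides.  This also dissolves the ``cross-term'' problem you flag as the technical heart: on $\Msp(Q)\times\Msp(Q)$ one has $\WW\circ\oplus=\WW\boxplus\WW$ exactly (trace of a direct sum is the sum of traces), so the Thom--Sebastiani isomorphism~(\ref{TSiso}) applies directly after restricting to the nilpotent locus, with no dimensional reduction needed.
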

\begin{proof}
Let $\HN_{[0,\theta],\dd}$ be the set of all Harder--Narasimhan types $\overline{\dd}=(\dd^1,\ldots,\dd^t)\in\HN_{\dd}$ such that all the slopes $\Mu^{\zeta}(\dd^g)$ belong to $[0,\theta]$.  By \cite[Prop.3.7]{Reineke_HN} there is a partial order $\leq'$ on the set $\HN_{[0,\theta],\dd}$ such that for all $\overline{\ee}\in\HN_{[0,\theta],\dd}$, the closure of $X(Q)^{\zeta}_{\overline{\ee}}$ is contained in $\bigcup_{\overline{\ee}'\leq' \overline{\ee}}X(Q)^{\zeta}_{\overline{\ee}'}$.  We complete $\leq'$ to a total order $\leq$ on $\HN_{[0,\theta],\dd}$.  For $\overline{\ee}\in\HN_{[0,\theta],\dd}$ we define
\begin{align*}
X(Q)^{\zeta}_{\leq \overline{\ee}}:=&\bigcup_{\overline{\ee}'\leq \overline{\mathbf{e}}}X(Q)^{\zeta}_{\overline{\ee}'},\\
X(Q)^{\zeta}_{<\overline{\ee}}:=&\bigcup_{\overline{\ee}'< \overline{\mathbf{e}}}X(Q)^{\zeta}_{\overline{\ee}'}.
\end{align*}
We denote by 
\begin{align*}
i_{\overline{\ee}}\colon&\Mst(Q)_{\overline{\ee}}^{\zeta}\hookrightarrow\Mst(Q)_{\dd} \\
i_{<\overline{\ee}}\colon&\Mst(Q)_{<\overline{\ee}}^{\zeta}\hookrightarrow\Mst(Q)_{\dd}\\
i_{\leq\overline{\ee}}\colon &\Mst(Q)_{\leq \overline{\ee}}^{\zeta}\hookrightarrow\Mst(Q)_{\dd}
\end{align*}
the obvious inclusions.  Let $\mm\in\HN_{[0,\theta],\dd}$ be the maximum element with respect to $\leq$.  The variety $X(Q)_{[0,\theta],\dd}^{\zeta}$ is open in $X(Q)_{\dd}$ (see e.g. the proof of \cite[Prop.4.20]{DMSS13}), and so these schemes have the same dimension, and there is an isomorphism
\[
\Ho\left(p^\zeta_{[0,\theta],\dd,!}i_{\leq\mm}^*\ICS_{\Mst(Q)_{\dd}}(\mathbb{Q})\right)\cong \Ho\left(p^\zeta_{[0,\theta],\dd,!}\ICS_{\Mst(Q)_{[0,\theta],\dd}^{\zeta}}(\mathbb{Q})\right).
\]
For all $\ee\in\HN_{[0,\theta],\dd}$ the inclusion $X_{\overline{\mathbf{e}}}\subset X_{\leq \overline{\mathbf{e}}}$ is open, with complement $X_{<\overline{\mathbf{e}}}$.  As such, there is a distinguished triangle
\begin{align}
\label{trian}
\Ho\left( p_{\dd,!}i_{\overline{\ee},!}i_{\overline{\ee}}^*\ICS_{\Mst(Q)_{\dd}}(\mathbb{Q})\right)\rightarrow \Ho\left( p_{\dd,!}i_{\leq \overline{\ee},!}i_{\leq \overline{\ee}}^*\ICS_{\Mst(Q)_{\dd}}(\mathbb{Q})\right)\rightarrow \Ho\left( p_{\dd,!}i_{<\overline{\ee},!}i_{<\overline{\ee}}^*\ICS_{\Mst(Q)_{\dd}}(\mathbb{Q})\right)\rightarrow.
\end{align}
On the other hand, for $\overline{\ee}=(\dd^1,\ldots,\dd^t)$, there is an isomorphism, via the usual diagram of affine fibrations and the biexactness of $\boxtimes_{\oplus}$ (see e.g. the proof of the cohomological wall crossing isomorphism in \cite[Sec.4.2]{DaMe15b})
\begin{equation}
\label{purebox}
\Ho\left( p_{\dd,!}i_{\overline{\ee},!}i_{\overline{\ee}}^*\ICS_{\Mst(Q)_\dd}(\mathbb{Q})\right)\cong \Ho\left(q^{\zeta}_{\dd^t,!}p^{\zeta\sst}_{\dd^t,!}\ICS_{\Mst(Q)^{\zeta\sst}_{\dd^t}}(\mathbb{Q})\right)\boxtimes_{\oplus}^{\tw}\ldots\boxtimes_{\oplus}^{\tw}\Ho\left(q^{\zeta}_{\dd^1,!}p^{\zeta\sst}_{\dd^1,!}\ICS_{\Mst(Q)^{\zeta\sst}_{\dd^1}}(\mathbb{Q})\right).
\end{equation}
Each of the terms on the right hand side of (\ref{purebox}) is pure since for each $g\leq t$ the map $q^{\zeta}_{\dd^g,!}p^{\zeta\sst}_{\dd^g,!}$ is approximated by proper maps by Corollary \ref{easyAPM} and properness of $q^{\zeta}_{\dd^g,!}$ (it is a GIT quotient map).  It follows that the right hand side of (\ref{purebox}) is pure by properness of the direct sum map (Lemma \ref{finLem}) and Proposition \ref{weightPreserve}, and so the left hand side of (\ref{purebox}) is pure, and we deduce that the first term of (\ref{trian}) is pure.  

It follows by induction with respect to the order $\leq$ that all of the terms in (\ref{trian}) are pure, and the associated long exact sequence breaks up into short exact sequences (one for each cohomological degree), which are moreover split by semisimplicity of the category of pure mixed Hodge modules on $\Msp(Q)_{\dd}$ \cite[(4.5.3)]{Sai90}.  It follows that there is an isomorphism
\[
\Ho\left(p_{[0,\theta],\dd,!}^\zeta\ICS_{\Mst(Q)^{\zeta}_{[0,\theta],\dd}}(\mathbb{Q})\right)\cong\bigoplus_{\substack{(\dd^1,\ldots,\dd^t)\in (\Mu^{\zeta})^{-1}([0,\theta])\\ \Mu^{\zeta}(\dd^1)<\ldots<\Mu^{\zeta}(\dd^t)\\ \sum_{i=1}^t \dd^i=\dd}}\Boxtimes_{\oplus,i=1,\ldots,t}^{\tw}\Ho\left(q^{\zeta}_{\dd^i,!}p^{\zeta\sst}_{\dd^i,!}\ICS_{\Mst(Q)^{\zeta\sst}_{\dd^i}}(\mathbb{Q})\right).
\]
Summing over $\dd$ and rearranging we obtain the isomorphism
\begin{equation}
\label{unswapped}
\Ho\left(p_{[0,\theta],!}^{\zeta}\ICS_{\Mst(Q)_{[0,\theta]}^{\zeta}}(\QQ)\right)\cong\Boxtimes_{\oplus,[0\xrightarrow{\gamma}\theta]}^{\tw}\Ho\left(q^{\zeta}_{\gamma,!}p^{\zeta\sst}_{\gamma,!}\ICS_{\Mst(Q)^{\zeta\sst}_{\gamma}}(\mathbb{Q})\right).
\end{equation}
Taking Verdier duals, we obtain the isomorphism
\begin{equation}
\label{swapped}
\Ho\left(p_{[0,\theta],*}^{\zeta}\ICS_{\Mst(Q)_{[0,\theta]}^{\zeta}}(\QQ)\right)\cong\Boxtimes_{\oplus,[\theta\xrightarrow{\gamma}0]}^{\tw}\Ho\left(q_{\gamma,*}^{\zeta}p^{\zeta\sst}_{\gamma,*}\ICS_{\Mst(Q)^{\zeta\sst}_{\gamma}}(\mathbb{Q})\right).
\end{equation}
The reversal in the order of the product between (\ref{unswapped}) and (\ref{swapped}) is as in Remark \ref{vswap}.  Applying the functor $\phim{\WW}$ to (\ref{swapped}), we obtain
\[
\phim{\WW}\Ho\left(p_{[0,\theta],*}^{\zeta}\ICS_{\Mst(Q)_{[0,\theta]}^{\zeta}}(\QQ)\right)\cong\phim{\WW}\Boxtimes_{\oplus,[\theta\xrightarrow{\gamma}0]}^{\tw}\Ho\left(q^{\zeta}_{\gamma,*}p^{\zeta\sst}_{\gamma,*}\ICS_{\Mst(Q)^{\zeta\sst}_{\gamma}}(\mathbb{Q})\right).
\]
Since $\WWW$ is identically zero on the nilpotent locus, we may apply the Thom--Sebastiani isomorphism (\ref{TSiso}), after restriction to the nilpotent locus, to obtain the isomorphism
\[
\Ho\left(\phim{\WW}p_{[0,\theta],*}^{\zeta}\ICS_{\Mst(Q)_{[0,\theta]}^{\zeta}}(\QQ)\right)_{\nilp}\cong\Boxtimes_{\oplus,[\theta\xrightarrow{\gamma}0]}^{\tw}\Ho\left(\phim{\WW_{\gamma}}q^{\zeta}_{\gamma,*}p^{\zeta\sst}_{\gamma,*}\ICS_{\Mst(Q)^{\zeta\sst}_{\gamma}}(\mathbb{Q})\right)_{\nilp},
\]
where we have also used exactness of $\phim{\WW_{\gamma}}$ (Proposition \ref{exactness}) to commute the vanishing cyce functor past the total cohomology functor.  Since all relevant maps are either proper or can be approximated by proper maps, and so commute with vanishing cycle functors, we deduce that there is an isomorphism
\begin{align*}
&\left(\Ho\left(p_{[0,\theta],*}^{\zeta}\phim{\WWW_{[0,\theta]}^{\zeta}}\ICS_{\Mst(Q)_{[0,\theta]}^{\zeta}}(\QQ)\right)\right)_{\nilp}\cong\\&\Boxtimes_{\oplus,[\theta\xrightarrow{\gamma}0]}^{\tw}\left(\Ho\left(q^{\zeta}_{\gamma,*}p^{\zeta\sst}_{\gamma,*}\phim{\WWW^{\zeta\sst}_{\gamma}}\ICS_{\Mst(Q)^{\zeta\sst}_{\gamma}}(\mathbb{Q})\right)\right)_{\nilp},
\end{align*}
as required.
\end{proof}

\begin{definition}\label{plethDef}
In the following example, we use the plethystic exponential $\EXP$, defined as follows.  If $p(x_1,\ldots,x_s,q^{1/2})\in\mathbb{Z}((q^{1/2}))[[x_1,\ldots,x_s]]$ is a formal power series in commuting variables, written as
\[
p(x_1,\ldots,x_s,q^{1/2})=\sum_{g_1,\ldots,g_s\in\mathbb{N},h\in \mathbb{Z}}a_{g_1,\ldots,g_s,h}x_1^{g_1}\ldots x_s^{g_s}(-q^{1/2})^h
\]
with $a_{g_1,\ldots,g_s,h}=0$ if $g_1=\ldots = g_s=0$, then we define
\[
\EXP\left(p(x_1,\ldots,x_s,q^{1/2})\right)=\prod_{g_1,\ldots,g_s\in\mathbb{N},h\in \mathbb{Z}}(1-x_1^{g_1}\ldots x_s^{g_s}(-q^{1/2})^h)^{-a_{g_1,\ldots,g_s,h}}.
\]
\end{definition}
The signs appearing next to half powers of $q$ in the above definition are there in order to accord with the signs in the definition of $\chi_Q$ (see (\ref{dulfjust})).
\begin{example}
\label{basicWCF}
Let $Q$ be the ice quiver with vertices $\{1,2\}$, with one arrow going from $2$ to $1$, and no other arrows, and for which $Q_{\princ}=Q$.  Then 
\[
\hat{\Ror}_Q= \mathbb{Z}((q^{1/2}))[[Y^{(1,0)},Y^{(0,1)}]],
\]
with the commutation relation $Y^{(1,0)}Y^{(0,1)}=qY^{(0,1)}Y^{(1,0)}$.  Consider a stability condition $\zeta$, for which $\Mu^{\zeta}((1,0))=\pi/4$ and $\Mu^{\zeta}((0,1))=\pi/2$.  We put $W=0$ (there are no nonzero potentials for this quiver, as it is acyclic).  Then the only $\zeta$-semistable $\mathbb{C}Q$-modules are direct sums of the simple representations $\Simp(Q)_s$, for $s\in \{1,2\}$.  Putting $\theta=3\pi/4$ in Theorem \ref{rays}, and applying $\chi_Q$ to the resulting equality in $\KK(\Dulf(\MMHM(\mathbb{N}^2)))$, we obtain the identity
\begin{equation}
\label{oneWay}
\chi_Q\left([\Dim_*(\ICS_{\Mst(Q)}(\mathbb{Q}))]\right)=\EXP\left(\frac{Y^{(0,1)}}{q^{-1/2}-q^{1/2}}\right)\EXP\left(\frac{Y^{(1,0)}}{q^{-1/2}-q^{1/2}}\right),
\end{equation}
where we have used the well-known quantum dilogarithm identity
\begin{equation}
\label{dilogId}
\sum_{g\in\mathbb{N}}\chi_q\left(\HO(\pt/\Gl_g,\QQ)\otimes \LL^{g^2/2},-q^{1/2}\right)x^g=\EXP\left(\frac{x}{q^{-1/2}-q^{1/2}}\right),
\end{equation}
considering $x/(q^{-1/2}-q^{1/2})$ as a formal power series via the expansion
\[
\frac{x}{q^{-1/2}-q^{1/2}}=x(q^{1/2}+q^{3/2}+\ldots).
\]
On the other hand, picking $\zeta'$ so that $\Mu^{\zeta'}((1,0))=\pi/2$ and $\Mu^{\zeta'}((0,1))=\pi/4$, we obtain more semistable modules, namely, direct sums of the Jacobi algebra $\mathbb{C}Q$, considered as a module over itself.  We thus obtain the identity
\begin{equation}
\label{secondWay}
\chi_q\left([\Dim_*(\ICS_{\Mst(Q)}(\QQ))]\right)=\EXP\left(\frac{Y^{(1,0)}}{q^{-1/2}-q^{1/2}}\right)\EXP\left(\frac{Y^{(1,1)}}{q^{-1/2}-q^{1/2}}\right)\EXP\left(\frac{Y^{(0,1)}}{q^{-1/2}-q^{1/2}}\right).
\end{equation}
Multiplying (\ref{oneWay}) and (\ref{secondWay}) on the right by $\EXP\left(\frac{{}-Y^{(0,1)}}{q^{-1/2}-q^{1/2}}\right)$, considering the resulting identity as an identity between formal power series in $Y^{(1,0)}$, and considering just the linear coefficient, we obtain the identity
\begin{equation}
\label{EfId}
\EXP\left(\frac{Y^{(0,1)}}{q^{-1/2}-q^{1/2}}\right)Y^{(1,0)}\EXP\left(\frac{Y^{(0,1)}}{q^{-1/2}-q^{1/2}}\right)^{-1}=Y^{(1,0)}+Y^{(1,1)}=Y^{(1,0)}(1+q^{-1/2}Y^{(0,1)}).
\end{equation}
This is a purely algebraic identity, holding in any sufficiently complete $\mathbb{Z}((q^{1/2}))$-algebra such that $Y^{(1,0)}Y^{(0,1)}=qY^{(0,1)}Y^{(1,0)}$.  

Alternatively, multiplying (\ref{oneWay}) and (\ref{secondWay}) on the left by $\pleth{\frac{-Y^{(1,0)}}{q^{-1/2}-q^{1/2}}}$, we obtain the identity
\begin{equation}
\label{reverseWall}
\pleth{\frac{Y^{(1,0)}}{q^{-1/2}-q^{1/2}}}^{-1}Y^{(0,1)}\pleth{\frac{Y^{(1,0)}}{q^{-1/2}-q^{1/2}}}=Y^{(0,1)}+Y^{(1,1)}.
\end{equation}

The identity between the right hand sides of (\ref{oneWay}) and (\ref{secondWay}) is the simplest of the ``dilogarithm identities'' associated to Dynkin quivers --- see \cite{Keller10} for more details of the relation between these identities and quantum cluster algebras.  
\end{example}
\begin{remark}
Note that written in terms of the $\mathbb{Z}((q^{1/2}))$-basis provided by the monomials $Y^{\dd}$, the right hand sides of equations (\ref{EfId}) and (\ref{reverseWall}) do not involve any powers of $q^{1/2}$, and are in particular invariant under the substitution $q^{1/2}\mapsto q^{-1/2}$.  In the context of quantum cluster algebras, this well-known phenomenon (see \cite[Prop.6.2]{BZ05}) also follows from the Lefschetz property for quantum cluster transformations, part of our main theorem.
\end{remark}

\section{Cluster mutations from derived equivalences}
\subsection{Categorification of cluster mutation}
Let $(Q,W)$ be a quiver with formal potential.  We recall Ginzburg's construction of $\HGamma(Q,W)$ from \cite{ginz}.  Firstly, we form a graded quiver $\tilde{Q}$ from $Q$ by setting 
\begin{align*}
\tilde{Q}_0=&Q_0\\
(\tilde{Q}_1)_0=&Q_1\\
(\tilde{Q}_1)_{{}-1}=&\{a^*|a\in Q_1\}\\
(\tilde{Q}_1)_{-2}=&\{\omega_i|i\in Q_0\}.
\end{align*}
The numbers appearing outside of the brackets in the above expressions specify the degrees of the arrows.  For $a\in Q_1$ we set $s(a^*)=t(a)$ and $t(a^*)=s(a)$.  So in degree -1, $\tilde{Q}$ is the opposite quiver of $Q$.  For $i\in Q_0$ we set $s(\omega_{i})=t(\omega_i)=i$.  We let $\HGamma(Q,0)$ be the free path algebra of $\tilde{Q}$, completed at the two-sided ideal generated by the degree zero arrows.  We define a differential on the generators by setting
\begin{align*}
da=&0&\textrm{for }a\in Q_1,\\
da^*=&\partial W/\partial a, \\
d\omega_i=&e_i\sum_{a\in Q_1}[a,a^*]e_i&\textrm{for }i\in Q_0
\end{align*}
and extend $d$ to a differential on $\HGamma(Q,0)$ by the Leibniz rule, linearity and continuity.  The resulting differential graded algebra is denoted $\HGamma(Q,W)$.  Then $\HJac(Q,W)\cong\HO^0(\HGamma(Q,W))$, and there is an embedding of categories 
\[
\Rmod{\HJac(Q,W)}\subset \Dub(\Rmod{\HGamma(Q,W)})
\]
as the heart of the natural t structure.  Accordingly, we may consider $\Simp(Q)_{s}$ as the simple $\HGamma(Q,W)$-module concentrated in degree zero, with dimension vector $1_s$, and for which all of the arrows act via the zero map.  The algebra $\HGamma(Q,W)$ is 3--Calabi--Yau in the sense defined and proved in \cite{Kel09}, so that there is a bifunctorial isomorphism $\Ext^i(M,N)\cong \Ext^{3-i}(N,M)^*$ on $\Dub^{\fd}(\Rmod{\HGamma(Q,W)})$ \cite[Lem.4.1]{CYTC}.

For a dimension vector $\ff\in\mathbb{N}^{n}$ we define
\begin{align*}
\HJac(Q,W)_{\ff}:=&\bigoplus_{s\leq n}\left(e_s\cdot \HJac(Q,W)\right)^{\oplus\ff_s},
\end{align*}
and define $\HGamma(Q,W)_{\ff}$ and $\Jac(Q,W)_{\ff}$ similarly.  For $s\in Q_0$, tensoring the bimodule resolution of $\HJac(Q,W)$ (see \cite[Prop.5.1.9]{ginz}) with $\Simp(Q)_s$, there is a canonical resolution
\begin{align}
\label{resolution}
&0\rightarrow e_s\cdot \HGamma(Q,W)\rightarrow \bigoplus_{a\in Q_1|s(a)=s} e_{t(a)}\cdot\HGamma(Q,W)\rightarrow\\ \rightarrow &\bigoplus_{a\in Q_1|t(a)=s}e_{s(a)}\cdot\HGamma(Q,W) \rightarrow e_s\cdot\HGamma(Q,W)\rightarrow \Simp(Q)_{s}\rightarrow 0.\nonumber
\end{align}
The map 
\[
\DIM\colon\KK\left(\Dub^{\fd}(\Rmod{\HGamma(Q,W)})\right)\rightarrow \mathbb{Z}^{n}
\]
is an isomorphism, which extends in the obvious way to morphisms $\KK(\rmod{\Jac(Q,W)})\rightarrow \mathbb{Z}^{n}$ in the case of algebraic $W$, and $\KK(\rmod{\mathbb{C}Q})\rightarrow\mathbb{Z}^{n}$, which are isomorphisms if $\mathbb{C}Q$ is acyclic.  We denote by $\Dub_{\princ}^{\fd}(\Rmod{\HGamma(Q,W)})$ the full subcategory of $\Dub(\Rmod{\HGamma(Q,W)})$ whose objects have finite-dimensional total cohomology, supported on the principal part of $Q$.  We identify $\mathbb{Z}^{m}$ with $\KK\left(\Dub_{\princ}^{\fd}(\Rmod{\HGamma(Q,W)})\right)$ via the map sending $1_s\mapsto [\Simp(Q)_{s}]$.  

Recall that we define $\Perf(\HGamma(Q,W))\subset \Dub(\Rmod{\HGamma(Q,W)})$ to be the smallest strictly full subcategory, closed under shifts, cones and direct summands, containing the modules $e_s\cdot \HGamma(Q,W)$, for $s\in Q_0$.  We identify $\mathbb{Z}^{n}$ with $\KK(\Perf(\HGamma(Q,W)))$ via the map sending $1_s\mapsto [e_s\cdot \HGamma(Q,W)]$.  This map is indeed an isomorphism --- this appears to be a standard fact, which one may prove\footnote{Thanks to Bernhard Keller for pointing out this argument.} by combining \cite[Lem.5.2.1]{Bon10} and \cite[Lem.2.17]{KellerMutations}, or using the fact that under the Koszul duality functor $\Hom(\bigoplus_{i\in Q_0}\Simp(Q)_i,-)$ the set of perfect modules $e_i\cdot\HGamma(Q,W)$ is sent to the set of simple modules $\Simp_i$ for the Koszul dual quiver algebra \cite[Thm.5.4]{Koszul}, and the category of iterated shifts and extensions of this set of objects is the category of differential graded modules with nilpotent finite-dimensional total cohomology, which is closed under taking direct summands.  

The following proposition follows from compatibility of $\tilde{B}$ and $\Lambda$, and the existence of the resolution (\ref{resolution}).

\begin{proposition}\label{lattComm}
There is an inclusion of triangulated categories 
\[
\nu\colon\Dub_{\princ}^{\fd}(\Rmod{\HGamma(Q,W)})\hookrightarrow \Perf(\HGamma(Q,W)),
\]
and the diagram
\[
\xymatrix{
\KK\left(\Dub_{\princ}^{\fd}(\Rmod{\HGamma(Q,W)})\right)\ar[d]\ar[rr]^-{\KK(\nu)}&&\KK\left(\Perf(\HGamma(Q,W))\right)\ar[d]\\
\mathbb{Z}^{m}\ar[rr]^{ \tilde{B}\cdot}&&\mathbb{Z}^{n}
}
\]
commutes.  Furthermore, giving $\mathbb{Z}^n$ the bilinear form induced by $\Lambda$, and $\mathbb{Z}^m$ the bilinear form ${}-\langle\bullet ,\bullet\rangle_Q$, and $\KK(\Dub_{\princ}^{\fd}(\Rmod{\HGamma(Q,W)}))$ the bilinear form
\[
\langle [N],[N']\rangle=\sum_{g\in\ZZ}(-1)^g\dim\left(\Ext^g(N,N')\right),
\]
all bilinear forms are preserved in the above diagram.
\end{proposition}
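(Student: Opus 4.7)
The plan is to build everything from the Koszul-type resolution \eqref{resolution}: it gives the functor $\nu$, the $K$-theoretic matrix $\tilde{B}$, and the $\Ext$ computation needed for the bilinear forms. The key external input is the compatibility relation \eqref{compatibility} between $\tilde{B}$ and $\Lambda$. For the first step, $\Dub_{\princ}^{\fd}(\Rmod{\HGamma(Q,W)})$ is generated as a triangulated category (under shifts, cones and direct summands) by the simples $\Simp(Q)_s$ for $s\leq m$, via standard truncation and a composition series argument applied to finite-dimensional modules supported on principal vertices. Each such simple lies in $\Perf(\HGamma(Q,W))$ because \eqref{resolution} is a length-three projective resolution. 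Since $\Perf(\HGamma(Q,W))$ is thick, the full subcategory $\Dub_{\princ}^{\fd}(\Rmod{\HGamma(Q,W)})$ embeds inside it, giving the inclusion $\nu$ of triangulated subcategories.

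For the commutativity of the $K$-theory square, the alternating sum of terms in \eqref{resolution} yields
\[
[\Simp(Q)_s]=\sum_{a\in Q_1,\,s(a)=s}[e_{t(a)}\cdot\HGamma(Q,W)]-\sum_{a\in Q_1,\,t(a)=s}[e_{s(a)}\cdot\HGamma(Q,W)]
\]
in $\KK(\Perf(\HGamma(Q,W)))$. Under the identification $[e_j\cdot\HGamma(Q,W)]\leftrightarrow 1_j$, the $j$-th coordinate of the right hand side is $a_{s,j}-a_{j,s}=\tilde{B}_{j,s}$, so $\KK(\nu)(1_s)=\tilde{B}\cdot 1_s$ for every $s\leq m$, which is the square's commutativity.

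For the bilinear forms, applying $\Hom(-,\Simp(Q)_t)$ to \eqref{resolution} and observing that all induced differentials vanish (every arrow acts trivially on the simple $\Simp(Q)_t$), one reads off
\[
\dim\Ext^g(\Simp(Q)_s,\Simp(Q)_t)=\begin{cases}\delta_{s,t}&g\in\{0,3\},\\ a_{t,s}&g=1,\\ a_{s,t}&g=2,\end{cases}
\]
in agreement with the 3-Calabi--Yau duality $\Ext^g(M,N)\cong\Ext^{3-g}(N,M)^{*}$. Taking the alternating sum and matching with the explicit definition of $\langle\bullet,\bullet\rangle_Q$ yields the identification of the Euler pairing on $\KK(\Dub_{\princ}^{\fd}(\Rmod{\HGamma(Q,W)}))$ with $-\langle\bullet,\bullet\rangle_Q$ on $\ZZ^m$. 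The compatibility with the $\ZZ^n$-pairing induced by $\Lambda$ is pure linear algebra: from \eqref{compatibility},
\[
\Lambda(\tilde{B}\dd,\tilde{B}\dd')=\dd^{T}(\tilde{B}^{T}\Lambda\tilde{B})\dd'=\dd^{T}(\tilde{I}\tilde{B})\dd',
\]
and $\tilde{I}\tilde{B}$ is exactly the principal $m\times m$ block of $\tilde{B}$, which by the paper's alternative characterisation of $\tilde{B}$ is the matrix of $-\langle\bullet,\bullet\rangle_Q$ in the standard basis. The only genuinely non-formal content is the first step: that simples at principal vertices are perfect, which relies on finite length of the Ginzburg resolution \eqref{resolution} and is specific to 3-Calabi--Yau dg algebras. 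Everything else is bookkeeping around the definition of $\tilde{B}$ and the relation \eqref{compatibility}.
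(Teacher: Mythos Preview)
Your approach is exactly what the paper sketches (``follows from compatibility of $\tilde{B}$ and $\Lambda$, and the existence of the resolution \eqref{resolution}''), and you have filled in the details correctly. One sign to watch: your alternating Ext sum gives $a_{st}-a_{ts}=+\langle 1_s,1_t\rangle_Q$, not $-\langle 1_s,1_t\rangle_Q$; this discrepancy is already present in the paper's own statement (compare \eqref{differentStrokes}), and since all forms involved are skew-symmetric it is harmless for everything downstream.
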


We set $\iota= {}-\KK(\nu)$, a homomorphism of lattices with inner product, and also denote by $\iota$ the induced inclusions
\begin{align*}
\iota\colon&\Ror_Q\rightarrow\Tor_{\Lambda}\\
\iota\colon&\hat{\Ror}_Q\rightarrow\hat{\Tor}_{\Lambda}.
\end{align*}
By Proposition \ref{lattComm}, this definition of the map $\iota$ agrees with our previous definition (\ref{iotadef}) of $\iota$ as the map sending $Y^{\dd}\mapsto X^{-\tilde{B}\cdot \dd}$.
\begin{remark}
Via the identification $\mathbb{Z}^m\cong \KK\left(\Dub_{\princ}^{\fd}(\Rmod{\HGamma(Q,W)})\right)$, there are now three skew-symmetric bilinear forms on $\mathbb{Z}^m$, which we denote $\langle\bullet ,\bullet\rangle_Q$, $\tilde{B}(\bullet,\bullet)$, and $\langle \bullet,\bullet\rangle_{\chi}$, where the second is defined via the principal part of $\tilde{B}$, and the third is the Euler pairing on $\KK\left(\Dub_{\princ}^{\fd}(\Rmod{\HGamma(Q,W)})\right)$.  Since this is potentially quite confusing, we collect together the relations between these three pairings:
\begin{equation}
\label{differentStrokes}
\langle \bullet,\bullet\rangle_{\chi}=\tilde{B}(\bullet,\bullet)=-\langle \bullet,\bullet\rangle_{Q}.
\end{equation}
\end{remark}
\begin{remark}
\label{muComm}
Given $M\in\Dub^{\fd}_{\princ}\left(\Rmod{\HGamma(Q,W)}\right)$ and $N\in\Perf\left(\HGamma(Q,W)\right)$, there is an equality
\[
\Lambda\left([N],\iota([M])\right)=\sum_{g\in\mathbb{Z}}(-1)^g\dim\left(\Ext^g(N,M)\right).
\]
To show this, we remark that it is sufficient to prove the statement for the natural generating sets of $\KK\left(\Dub_{\princ}^{\fd}\left(\Rmod{\HGamma(Q,W)}\right)\right)$ and $\KK\left(\Perf\left(\HGamma(Q,W)\right)\right)$.  If we identify $\Lambda$ with the $n\times n$ matrix defining it with respect to the standard basis, then by the compatibility of $\Lambda$ and $\tilde{B}$, we have the equality
\[
-\Lambda \tilde{B}=\tilde{I}^T
\]
where $\tilde{I}$ is the $n\times m$ matrix with $I_{m\times m}$ for the first $m$ columns, and zeroes elsewhere.  By the definition of $\iota$, it then follows that then $\Lambda(1_{s'},\iota(1_{s''}))=\delta_{s',s''}$, where $\delta_{s',s''}$ is the Kronecker delta function.  Then the claim follows from the equation
\begin{align*}
\sum_{g\in\mathbb{Z}}(-1)^g\dim\left(\Ext^g(e_{s'}\cdot \HGamma(Q,W),\Simp(Q)_{s''})\right)=&\dim\left(\Hom(e_{s'}\cdot \HGamma(Q,W),\Simp(Q)_{s''})\right)\\=&\delta_{s',s''}.
\end{align*}
It follows that if 
\[
\Psi\colon \Perf\left(\HGamma(Q,W)\right)\rightarrow\Perf\left(\HGamma(Q,W)\right)
\]
is an autoequivalence of triangulated categories, restricting to an autoequivalence $\Psi\colon\Dub^{\fd}_{\princ}\left(\Rmod{\HGamma(Q,W)}\right)\rightarrow\Dub^{\fd}_{\princ}\left(\Rmod{\HGamma(Q,W)}\right)$, then there is an identity
\[
\Lambda\big(\Psi([N]),\iota\Psi([M])\big)=\Lambda([N],[M])
\]
for all $N,M$ as above, where we have used the same letter $\Psi$ to denote the induced automorphisms of the respective Grothendieck groups.
\end{remark}

\subsection{Nagao's torsion pair}\label{Tpair}

The following is proved in \cite[Thm.3.2, Rem.3.3]{KellerMutations}.
\begin{theorem}
\label{KYThm}
Let $(Q,W)$ be a QP which is nondegenerate with respect to the sequence of principal vertices $\Ss=(s_1,\ldots,s_t)$, and let $\epsilon\in\{\pm \}^{t}$ be a sequence of signs of length $t$.  There is a quasi-equivalence
\[
\Phi_{\Ss,\epsilon}\colon \Dub\left(\Rmod{\HGamma(Q,W)}\right)\rightarrow \Dub\left(\Rmod{\HGamma(\mu_{\Ss}(Q,W))}\right)
\]
restricting to quasi-equivalences
\[
\Phi_{\Ss,\epsilon}\colon\Perf\left(\HGamma(Q,W)\right)\xrightarrow{\sim}\Perf\left(\HGamma(\mu_{\Ss}(Q,W))\right)
\]
and
\begin{equation}
\label{fdRes}
\Phi_{\Ss,\epsilon}\colon\Dub^{\fd}_{\princ}\left(\Rmod{\HGamma(Q,W)}\right)\xrightarrow{\sim}\Dub^{\fd}_{\princ}\left(\Rmod{\HGamma(\mu_{\Ss}(Q,W))}\right).
\end{equation}
If $\epsilon_{t}=+$, there is an exact triangle
\begin{equation}
\label{plusCase}
\Phi^{-1}_{\Ss,\epsilon}\left(e_{s_t}\cdot\HGamma(\mu_{\Ss}(Q,W))\right)\rightarrow\bigoplus_{a\in \mu_{\Ss'}(Q)_1|t(a)=s_t}\Phi^{-1}_{\Ss',\epsilon'}\left(e_{s(a)}\cdot\HGamma(\mu_{\Ss'}(Q,W))\right)\rightarrow \Phi^{-1}_{\Ss',\epsilon'}\left(e_{s_t}\cdot \HGamma(\mu_{\Ss'}(Q,W))\right)\rightarrow
\end{equation}
while if $\epsilon_t=-$ there is an exact triangle
\begin{equation}
\label{minusCase}
\Phi^{-1}_{\Ss',\epsilon'}\left(e_{s_t}\cdot \HGamma(\mu_{\Ss'}(Q,W))\right)\rightarrow\bigoplus_{a\in \mu_{\Ss'}(Q)_1|s(a)=s_t}\Phi^{-1}_{\Ss',\epsilon'}\left(e_{t(a)}\cdot \HGamma(\mu_{\Ss'}(Q,W))\right)\rightarrow \Phi_{\Ss,\epsilon}^{-1}\left(e_{s_t}\cdot \HGamma(\mu_{\Ss}(Q,W))\right)\rightarrow.
\end{equation}
In either case, there is a quasi-isomorphism 
\begin{equation}
\label{unchanged}
\Phi_{\Ss,\epsilon}^{-1}\left(e_{i}\cdot\HGamma(\mu_{\Ss}(Q,W))\right)\cong \Phi_{\Ss',\epsilon'}^{-1}\left(e_{i}\cdot \HGamma(\mu_{\Ss'}(Q,W))\right)
\end{equation}
for $i\neq s_t$.
\end{theorem}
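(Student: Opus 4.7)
The plan is to obtain the statement inductively on $t = |\Ss|$, building the functor $\Phi_{\Ss,\epsilon}$ as a composite of single-vertex Keller--Yang mutation equivalences. The base case $t=0$ is handled by setting $\Phi_{\emptyset,\emptyset} = \id$. For the inductive step, assume $\Phi_{\Ss',\epsilon'}$ has been constructed so that all the conclusions hold for $\Ss'$ in place of $\Ss$. Applying \cite[Thm.3.2]{KellerMutations} to the QP $\mu_{\Ss'}(Q,W)$ at the vertex $s_t$ supplies a quasi-equivalence
\[
\Psi_{s_t,\epsilon_t}\colon \Dub\left(\Rmod{\HGamma(\mu_{\Ss'}(Q,W))}\right) \xrightarrow{\sim} \Dub\left(\Rmod{\HGamma(\mu_{\Ss}(Q,W))}\right),
\]
of which there are two flavours, a left mutation (for $\epsilon_t = +$) and a right mutation (for $\epsilon_t = -$). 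The desired functor is then $\Phi_{\Ss,\epsilon} := \Psi_{s_t,\epsilon_t} \circ \Phi_{\Ss',\epsilon'}$, whose inverse is $\Phi_{\Ss,\epsilon}^{-1} = \Phi_{\Ss',\epsilon'}^{-1} \circ \Psi_{s_t,\epsilon_t}^{-1}$. The nondegeneracy hypothesis along $\Ss$ is what ensures that $\mu_{\Ss}(Q,W)$ is well defined at each step, so that the Keller--Yang equivalence is applicable.

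The two exact triangles (\ref{plusCase}) and (\ref{minusCase}) come directly from the silting mutation triangles defining $\Psi_{s_t,\epsilon_t}^{-1}$ in $\Perf(\HGamma(\mu_{\Ss'}(Q,W)))$. For $\epsilon_t = +$, the image $\Psi_{s_t,+}^{-1}(e_{s_t}\cdot\HGamma(\mu_{\Ss}(Q,W)))$ is by construction the shift of the cone of the minimal left approximation of $e_{s_t}\cdot\HGamma(\mu_{\Ss'}(Q,W))$ by $\bigoplus_{a\in \mu_{\Ss'}(Q)_1,\, t(a)=s_t}e_{s(a)}\cdot\HGamma(\mu_{\Ss'}(Q,W))$, which gives precisely the triangle inside $\Perf(\HGamma(\mu_{\Ss'}(Q,W)))$. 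Applying the triangulated functor $\Phi_{\Ss',\epsilon'}^{-1}$ to this triangle yields (\ref{plusCase}). The case $\epsilon_t = -$ is symmetric, using the right approximation of $e_{s_t}\cdot\HGamma(\mu_{\Ss'}(Q,W))$ by the modules corresponding to arrows starting at $s_t$. For $i\neq s_t$ the silting mutation leaves $e_i\cdot \HGamma(\mu_{\Ss'}(Q,W))$ unchanged (only one summand of the silting object is altered), which gives the quasi-isomorphism (\ref{unchanged}) after applying $\Phi_{\Ss',\epsilon'}^{-1}$.

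It remains to check that the composite $\Phi_{\Ss,\epsilon}$ restricts to equivalences on the two highlighted subcategories. Restriction to $\Perf$ is built into silting mutation: $\Psi_{s_t,\epsilon_t}$ sends the canonical generators $e_i\cdot\HGamma(\mu_{\Ss'}(Q,W))$ to objects of $\Perf(\HGamma(\mu_{\Ss}(Q,W)))$, and conversely, so the restriction (\ref{fdRes}) follows by induction. The restriction to $\Dub^{\fd}_{\princ}$ uses Koszul duality: the simple modules $\Simp(Q)_s$ for $s$ principal are Koszul dual to the perfect generators $e_s\cdot\HGamma$ with $s$ principal, and silting mutation at a principal vertex preserves the collection of principal simples up to this duality; combined with the fact that the functor preserves finiteness of total cohomology, this gives the stated restriction.

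The main obstacle in this programme is verifying that the single-step silting mutation actually produces a dg algebra quasi-isomorphic to $\HGamma(\mu_{s_t}(\mu_{\Ss'}(Q,W)))$, i.e.\ that Ginzburg's construction is compatible with Derksen--Weyman--Zelevinsky mutation. This compatibility is the main substance of \cite[Thm.3.2]{KellerMutations}, and once one imports it as a black box the rest of the proof is the bookkeeping exercise sketched above.
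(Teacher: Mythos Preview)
The paper does not give its own proof of this theorem: it is stated with the attribution ``The following is proved in \cite[Thm.3.2, Rem.3.3]{KellerMutations}'' and no further argument. Your proposal is exactly the natural unwinding of that citation --- build $\Phi_{\Ss,\epsilon}$ as a composite of single-step Keller--Yang equivalences and read off the triangles (\ref{plusCase}), (\ref{minusCase}), (\ref{unchanged}) from the approximation triangles defining a single silting mutation --- so it is consistent with the paper's treatment.

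Two minor comments. First, you write ``so the restriction (\ref{fdRes}) follows by induction'' at the end of the $\Perf$ paragraph, but (\ref{fdRes}) is the $\Dub^{\fd}_{\princ}$ restriction; this is just a labelling slip. Second, your Koszul-duality justification for the $\Dub^{\fd}_{\princ}$ restriction is more roundabout than necessary: the paper itself records (Lemma~\ref{simpTransf}) the explicit effect of a single mutation on the simple modules, and since $s_t$ is principal, each principal simple is sent into the thick subcategory generated by the new principal simples, which immediately gives the restriction. Your argument is not wrong, but this is the cleaner route and is closer to what \cite{KellerMutations} actually does.
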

\begin{remark}
Let $\Ss=(s)$ be a sequence consisting of a single vertex.  Then $\Phi_{\Ss,-}$ is the quasi-inverse to the functor $F$ of \cite[Thm.3.2]{KellerMutations}.
\end{remark}
\begin{definition}
A torsion structure $(\mathcal{T},\mathcal{F})$ on an Abelian category $\mathcal{A}$ is a pair of full subcategories $\mathcal{T},\mathcal{F}\subset\mathcal{A}$ such that
\begin{enumerate}
\item
For all $M'\in\mathcal{T}$ and $M''\in\mathcal{F}$, $\Hom_{\mathcal{A}}(M',M'')=0$.
\item
For every $M\in \mathcal{A}$ there exists a short exact sequence
\[
0\rightarrow M'\rightarrow M\rightarrow M''\rightarrow 0
\]
with $M'\in\mathcal{T}$ and $M''\in\mathcal{F}$.
\end{enumerate}
\end{definition}

Let $\Ss=(s_1,\ldots,s_t)$ be a sequence of principal vertices of the ice quiver $Q$, and let $W\in\mathbb{C}Q/[\mathbb{C}Q,\mathbb{C}Q]$ be a nondegenerate algebraic potential with respect to $\Ss$.  Following \cite{Na13}, though with the modification discussed in Remark \ref{differenceRem}, we recursively define a torsion structure $(\mathcal{T}_{\Ss},\mathcal{F}_{\Ss})$ on the category $\Rmod{\HJac(Q,W)}$, as well as a sequence of signs $\epsilon_{\Ss}$.  We start by setting $\mathcal{T}_{\emptyset}=\Rmod{\HJac(Q,W)}$ and setting $\mathcal{F}_{\emptyset}$ to be the full subcategory containing the zero module.  This obviously provides a torsion structure for $\Rmod{\HJac(Q,W)}$.  

Now assume that we have defined $\mathcal{T}_{\Ss'}$, $\mathcal{F}_{\Ss'}$ and $\epsilon_{\Ss'}$.  We define 
\[
\Simp_{\Ss}:=\Phi_{\Ss',\epsilon_{\Ss'}}^{-1}(\Simp(\mu_{\Ss'}(Q))_{s_t}).  
\]
As in \cite[Thm.3.4]{Na13} there is an isomorphism
\[
\Phi_{\Ss',\epsilon_{\Ss'}}^{-1}\left(\Rmod{\HJac(\mu_{\Ss'}(Q,W))}\right)\cong \Rmod{\HJac(Q,W)}^{(\mathcal{F}_{\Ss'}[1],\mathcal{T}_{\Ss'})},
\]
and so, in particular, $(\Phi_{\Ss',\epsilon_{\Ss'}}(\mathcal{F}_{\Ss'})[1],\Phi_{\Ss',\epsilon_{\Ss'}}(\mathcal{T}_{\Ss'}))$ is a torsion structure on $\Rmod{\HJac(\mu_{\Ss'}(Q,W))}$.  Since $\Simp(\mu_{\Ss'}(Q))_{s_t}$ is simple, it follows that either $\Simp_{\Ss}\in \mathcal{F}_{\Ss'}[1]$ or $\Simp_{\Ss}\in \mathcal{T}_{\Ss'}$.  Given two subcategories $\mathcal{A}'$ and $\mathcal{A}''$ of an Abelian category $\mathcal{A}$, we define $\mathcal{A}'\star\mathcal{A}''\subset \mathcal{A}$ to be the full subcategory containing those objects $M$ for which there is a short exact sequence
\[
0\rightarrow M'\rightarrow M\rightarrow M''\rightarrow 0
\]
with $M'\in\mathcal{A}'$ and $M''\in \mathcal{A}''$.  Given an object $M\in\mathcal{A}$, we let $M^{\oplus}\subset \mathcal{A}$ be the full subcategory containing all direct sums of $M$.  We set
\begin{equation}
\label{TSdef}
\mathcal{F}_{\Ss}=\begin{cases} \Simp_{\Ss}^{\oplus}\star\mathcal{F}_{\Ss'}& \mathrm{if } \Simp_{\Ss}\in \mathcal{T}_{\Ss'}\\ \mathcal{F}_{\Ss'}\cap (\Simp_{\Ss}[-1]^{\perp})&\textrm{if }\Simp_{\Ss}\in\mathcal{F}_{\Ss'}[1].\end{cases}
\end{equation}
We then define $\mathcal{T}_{\Ss}:={}^\perp\mathcal{F}_{\Ss}$.  In the first instance, we extend $\epsilon_{\Ss'}$ to a sequence $\epsilon_{\Ss}$ by adding a $-$, in the second instance we add a $+$.
\begin{definition}
If the sequence of signs $\epsilon_{\Ss}$ ends with a $-$ we call it \textit{additive}, otherwise we call it \textit{subtractive}.  
\end{definition}
The reason for this odd-looking convention is that in the part of the torsion structure we care about the most is $\mathcal{F}_{\Ss}$, and in the additive case, we \textit{add} objects to $\mathcal{F}_{\Ss}$ at the final stage of its recursive definition.  We will sometimes abbreviate $\Phi_{\Ss,\epsilon_{\Ss}}$ to $\Phi_{\Ss}$ for notational convenience.  
\begin{definition}\label{shaveDef}
We define 
\[
\shave:=\begin{cases} \Simp_{\Ss}&\textrm{if }\Ss\textrm{ is additive}\\ \Simp_{\Ss}[-1]&\textrm{if }\Ss\textrm{ is subtractive.}\end{cases}
\]
\end{definition}
In other words, we define $\shave$ to be the shift of $\Simp_{\Ss}$ that belongs to the heart of the natural t structure of $\Dub(\Rmod{\HGamma(Q,W)})$.  Accordingly, in both cases, we consider $\shave$ as a $\HJac(Q,W)$-module.  

The following is proved in \cite[Lem.3.11]{KellerMutations}.
\begin{lemma}
\label{simpTransf}
There are isomorphisms
\begin{align}
\Phi_{(s),+}(\Simp(Q)_{s})\cong &\Simp(\mu_s(Q))_s[1]\nonumber \\
\Phi_{(s),-}(\Simp(Q)_{s})\cong &\Simp(\mu_s(Q))_s[-1]\label{shiftClust}
\end{align}
and distinguished triangles 
\begin{align}
\Phi_{(s),+}^{-1}\left(\Simp(\mu_s(Q))_j\right)\rightarrow \Ext^1\left(\Simp(Q)_s,\Simp(Q)_j\right)\otimes \Simp(Q)_s\rightarrow \Simp(Q)_j[1]\rightarrow \label{simpEx}\\
\Simp(Q)_j[-1]\rightarrow \Ext^2\left(\Simp(Q)_s,\Simp(Q)_j\right)\otimes \Simp(Q)_s\rightarrow \Phi_{(s),-}^{-1}\left(\Simp(\mu_s(Q))_j\right)\rightarrow.\label{readback}
\end{align}
\end{lemma}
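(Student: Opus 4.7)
The plan is to compute $\Phi_{(s),\pm}^{-1}(\Simp(\mu_s(Q))_j)$ directly by applying $\Phi_{(s),\pm}^{-1}$ termwise to the canonical projective resolution (\ref{resolution}) of $\Simp(\mu_s(Q))_j$ over $\HGamma(\mu_s(Q,W))$, and then identifying the resulting object of $\Dub(\Rmod{\HGamma(Q,W)})$ with the claimed shift or extension of simples. Since $\Phi_{(s),\pm}$ is a triangulated equivalence, the behaviour on each projective summand $e_i \cdot \HGamma(\mu_s(Q,W))$ appearing in that resolution is exactly what Theorem \ref{KYThm} provides: the triangles (\ref{plusCase}) or (\ref{minusCase}) for $i = s$, and the isomorphism (\ref{unchanged}) for $i \neq s$.

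First I would handle the case $j = s$. By nondegeneracy of $W$ at $s$, the arrows of $\mu_s(Q)$ incident to $s$ are precisely the reversed arrows $\bar a$ with $a \in Q_1$ incident to $s$, so the resolution of $\Simp(\mu_s(Q))_s$ involves $e_s \cdot \HGamma(\mu_s(Q,W))$ only at the two outer terms. Substituting the triangle (\ref{plusCase}) for each of these produces a bicomplex in $\Dub(\Rmod{\HGamma(Q,W)})$ whose rows are the original resolution and whose extra columns are controlled by (\ref{plusCase}). A direct inspection of the differentials, which are encoded by $\mu_s(W)$ via the Leibniz rule, should show that the outer pairs of terms cancel and the total complex is quasi-isomorphic to $\Simp(Q)_s[1]$; this is the first isomorphism of (\ref{shiftClust}), and the $\epsilon = -$ case is analogous, using (\ref{minusCase}) in place of (\ref{plusCase}).

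Next, for $j \neq s$, only the two middle terms of the resolution of $\Simp(\mu_s(Q))_j$ can involve $e_s \cdot \HGamma(\mu_s(Q,W))$, and that only when the corresponding arrow of $\mu_s(Q)$ runs between $j$ and $s$. Such arrows split into two types: arrows of $Q_1$ between $j$ and $s$, which survive (reversed) in the premutation, and composite arrows $[cb]$ arising from length-two paths through $s$ in $Q$. After substituting the triangle (\ref{plusCase}) into every $e_s \cdot \HGamma(\mu_s(Q,W))$-summand, I expect the $[cb]$-contributions to cancel between the two middle terms of the resolution, leaving a two-step extension of $\Simp(Q)_j[1]$ by a multiple of $\Simp(Q)_s$. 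Comparing with the canonical resolution of $\Simp(Q)_j$ and using the $3$-Calabi--Yau duality on $\Dub^{\fd}_{\princ}(\Rmod{\HGamma(Q,W)})$, the multiplicity would be identified with $\dim \Ext^1(\Simp(Q)_s, \Simp(Q)_j)$, producing (\ref{simpEx}); the triangle (\ref{readback}) then follows either by running the same argument with $\Phi_{(s),-}^{-1}$, or by combining (\ref{simpEx}) with Calabi--Yau duality together with the fact that $\Phi_{(s),-} \circ \Phi_{(s),+}$ acts as the identity on $\Simp(Q)_i$ for $i \neq s$.

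The hard part will be the bookkeeping: one must carefully match the composite arrows $[cb]$ in $\mu_s(Q)$ with length-two paths through $s$ in $Q$, and verify that the differentials in the two projective resolutions --- both determined by $W$ and $\mu_s(W)$ via the Leibniz rule --- combine correctly after applying $\Phi_{(s),\pm}^{-1}$ to give exactly the cancellations sketched above. Nondegeneracy of $(Q,W)$ at $s$ is used throughout to ensure that no $2$-cycles arise in the premutation, so that no reduction step $(-)_{\mathrm{red}}$ intervenes between $\mu'_s(Q,W)$ and $\mu_s(Q,W)$ and the combinatorics described can be read off directly from the arrows of $\mu'_s(Q)$.
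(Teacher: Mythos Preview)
The paper does not actually prove this lemma: it simply cites \cite[Lem.~3.11]{KellerMutations} for the isomorphisms and the triangle (\ref{simpEx}), and then remarks that (\ref{readback}) follows from (\ref{simpEx}) together with the 3--Calabi--Yau pairing and the fact (also from Keller--Yang) that $\Phi_{(s),-}$ is quasi-inverse to $\Phi_{(s),+}$ in the relevant sense. Your proposal is therefore not being compared against a proof in the paper, but against the original argument in Keller--Yang, and what you sketch --- pushing the canonical resolution of $\Simp(\mu_s(Q))_j$ through $\Phi_{(s),\pm}^{-1}$ using Theorem~\ref{KYThm} --- is essentially their approach. Your alternative route to (\ref{readback}) via Calabi--Yau duality is exactly the shortcut the paper takes.

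There is one genuine misconception in your write-up. You say that nondegeneracy of $(Q,W)$ at $s$ ensures that no $2$-cycles arise in the \emph{premutation} $\mu'_s(Q)$, so that no reduction step intervenes. This is not what nondegeneracy says: it only guarantees that the \emph{reduced} QP $\mu_s(Q,W)=(\mu'_s(Q,W))_{\mathrm{red}}$ has no $2$-cycles. The premutation $\mu'_s(Q)$ will have a $2$-cycle whenever a composite arrow $[cb]$ is parallel but opposite to an existing arrow of $Q$, and the reduction step is precisely what removes these pairs. Since Theorem~\ref{KYThm} is stated for $\HGamma(\mu_s(Q,W))$ (the reduced version), the combinatorics you propose to ``read off directly from the arrows of $\mu'_s(Q)$'' must in general be filtered through the splitting theorem of \cite{DWZ08}. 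This does not break your strategy --- Keller--Yang handle exactly this point --- but the bookkeeping is subtler than you indicate, and nondegeneracy alone does not let you bypass it.
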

The existence of the second distinguished triangle follows from the existence of the first one, the isomorphism (\ref{shiftClust}), the 3--Calabi--Yau pairing on $\Dub^{\fd}(\Rmod{\HGamma(Q,W)})$, and the fact, proved in \cite[Lem.3.11]{KellerMutations}, that 
\[
\Phi_{(s),-}\colon \Dub\left(\Rmod{\HGamma(Q,W)}\right)\rightarrow\Dub\left(\Rmod{\HGamma(\mu_{s}(Q,W))}\right)
\]
is a quasi-inverse to 
\[
\Phi_{(s),+}\colon \Dub\left(\Rmod{\HGamma(\mu_s(Q,W))}\right)\rightarrow \Dub\left(\Rmod{\HGamma(Q,W)}\right).
\]
Here the second functor is as in Theorem \ref{KYThm}, considering $(s)$ as a sequence of vertices of the mutated quiver $\mu_s(Q)$.  Given a sequence of vertices $\Ss$ and a sequence of signs $\epsilon$, both of the same length, we define $\Psi_{\Ss,\epsilon}$ to  be the map making the following diagram commute
\[
\xymatrix{
\KK\left(\Dub^{\fd}_{\princ}(\Rmod{\HGamma(Q,W)})\right)\ar[d]^=\ar[rr]^{\KK\left(\Phi_{\Ss,\epsilon}\right)}&&\KK\left(\Dub^{\fd}_{\princ}(\Rmod{\HGamma(\mu_{\Ss}(Q,W))})\right)\ar[d]^=\\
\mathbb{Z}^m\ar[rr]^{\Psi_{\Ss,\epsilon}}&&\mathbb{Z}^m.
}
\]
Note that $\Psi_{\Ss,\epsilon}$ depends on $\Ss$ and $\epsilon$, but not on $W$.  We abbreviate $\Psi_{\Ss,\epsilon_{\Ss}}$ to $\Psi_{\Ss}$.
\begin{proposition}
\label{invOD}
For all $\Ss=(s_1,\ldots,s_t)$, all sequences of signs $\epsilon$ of length $t$, and for all $\dd\in\mathbb{Z}^m$ we have 
\[
(\Psi_{\Ss,\epsilon}(\dd),\Psi_{\Ss,\epsilon}(\dd))_{\mu_{\Ss}(Q)}=(\dd,\dd)_Q\mod 2.
\]
\end{proposition}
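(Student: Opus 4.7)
The plan is to induct on the length $t$ of $\Ss$. The case $t=0$ is trivial; for $t>0$ the recursive construction of the equivalences in Theorem~\ref{KYThm} gives $\Psi_{\Ss,\epsilon}=\Psi^{\mu_{\Ss'}(Q,W)}_{(s_t),\epsilon_t}\circ\Psi_{\Ss',\epsilon'}$ on Grothendieck groups, so combining the inductive hypothesis for $\Ss'$ with the single-mutation case at $s_t$ on $\mu_{\Ss'}Q$ will give the desired congruence. It therefore suffices to treat the case $t=1$.

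For this base case the key observation is that $q_Q(\dd):=(\dd,\dd)_Q\bmod 2$ is a quadratic form $\mathbb{F}_2^m\to\mathbb{F}_2$ whose polarisation is
\[
q_Q(\dd+\ee)-q_Q(\dd)-q_Q(\ee)=(\dd,\ee)_Q+(\ee,\dd)_Q\equiv\langle\dd,\ee\rangle_Q\pmod 2.
\]
By Proposition~\ref{lattComm} the pairing $-\langle\bullet,\bullet\rangle_Q$ coincides with the Euler pairing on $\KK(\Dub^{\fd}_{\princ}(\Rmod{\HGamma(Q,W)}))$, which is strictly preserved by the triangulated equivalence $\Phi_{(s),\epsilon}$. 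Hence $\langle\Psi_{(s),\epsilon}(\dd),\Psi_{(s),\epsilon}(\ee)\rangle_{\mu_sQ}=\langle\dd,\ee\rangle_Q$, and in particular the two quadratic forms $\Psi_{(s),\epsilon}^{*}q_{\mu_sQ}$ and $q_Q$ on $\mathbb{F}_2^m$ share a common polarisation.

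Two quadratic forms on $\mathbb{F}_2^m$ with the same polarisation differ by an $\mathbb{F}_2$-linear function, so it suffices to verify equality on each basis vector $1_j$. Since $Q$ contains no loops and no 2-cycles, mutation does not introduce loops in $\mu_sQ$, so $q_Q(1_j)=q_{\mu_sQ}(1_k)=1$ for all $j,k$, and the task reduces to checking $q_{\mu_sQ}(\Psi_{(s),\epsilon}(1_j))=1$. Using Lemma~\ref{simpTransf} together with the resolution (\ref{resolution}) to identify $\dim\Ext^1(\Simp(Q)_s,\Simp(Q)_j)=a_{sj}$ and $\dim\Ext^2(\Simp(Q)_s,\Simp(Q)_j)=a_{js}$, where $a_{ij}$ denotes the number of arrows $i\to j$ in $Q$, I obtain
\[
\Psi_{(s),\pm}(1_s)=-1_s,\quad \Psi_{(s),+}(1_j)=1_j+a_{sj}\cdot 1_s,\quad \Psi_{(s),-}(1_j)=1_j+a_{js}\cdot 1_s\quad(j\neq s),
\]
and a direct expansion gives $q_{\mu_sQ}(\Psi_{(s),\pm}(1_j))\equiv 1+a_{sj}a_{js}\pmod 2$, which equals $1$ since the no-2-cycle hypothesis on $Q$ forces $a_{sj}a_{js}=0$.

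The principal obstacle is the explicit computation of $\Psi_{(s),\pm}(1_j)$: one must invert the distinguished triangles (\ref{simpEx}) and (\ref{readback}) in the Grothendieck group, track the signs arising from the $[\pm 1]$-shifts, and then solve for the image of $1_j$ using the already-computed value $\Psi_{(s),\pm}(1_s)=-1_s$. The final mod-2 calculation is mercifully insensitive to the cancellation step of mutation, because any 2-cycle between vertices $i$ and $j$ in a quiver $Q'$ contributes $d_id_j+d_jd_i\equiv 0\pmod 2$ to $(\bullet,\bullet)_{Q'}$, so the arrow counts in the pre-mutation $\mu'_sQ$ and in $\mu_sQ$ yield the same value of $q_{\mu_sQ}$.
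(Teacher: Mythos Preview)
Your proof is correct and follows essentially the same strategy as the paper: induct on the length of $\Ss$, reduce to a single mutation, observe that the polarisation of $q_Q$ mod $2$ is the Euler form (hence preserved by $\Phi_{(s),\epsilon}$), and then verify the claim on the standard basis vectors using Lemma~\ref{simpTransf}. The only cosmetic difference is that the paper checks $(\Psi^{-1}_{s,\pm}(1_i),\Psi^{-1}_{s,\pm}(1_i))_Q\equiv 1$ in the original quiver, whereas you check $(\Psi_{(s),\pm}(1_j),\Psi_{(s),\pm}(1_j))_{\mu_sQ}\equiv 1$ in the mutated one; these are equivalent. One small caution: with the paper's conventions (right modules, $a_{ij}=$ arrows $i\to j$) the resolution (\ref{resolution}) gives $\dim\Ext^1(\Simp(Q)_s,\Simp(Q)_j)=a_{js}$ and $\dim\Ext^2=a_{sj}$, the reverse of what you wrote, but since your final expression $1+a_{sj}a_{js}$ is symmetric in this swap the argument is unaffected.
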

\begin{proof}
For the inductive step we can assume that
\[
(\Psi_{\Ss,\epsilon}(\dd),\Psi_{\Ss,\epsilon}(\dd))_{\mu_{\Ss}(Q)}=(\Psi_{s_1,\epsilon_1}(\dd),\Psi_{s_1,\epsilon_1}(\dd))_{\mu_{s_1}(Q)}\mod 2.
\]
For $M'$ and $M''$ a pair of $\HJac(Q,W)$-modules, there is an identity 
\begin{align*}
([M'],[M''])_Q+([M''],[M'])_Q=&\langle [M'],[M'']\rangle_Q\mod 2\\
=&\sum_{g\in\mathbb{Z}}\dim\left(\Ext^g(M',M'')\right)\mod 2,
\end{align*}
and so, since the Euler form of a category is invariant under derived equivalence, we deduce that for all dimension vectors $\dd',\dd''$, there is an equality
\[
(\Psi_{s,\pm}^{-1}(\dd'),\Psi_{s,\pm}^{-1}(\dd''))_Q+(\Psi_{s,\pm}^{-1}(\dd''),\Psi_{s,\pm}^{-1}(\dd'))_Q=(\dd',\dd'')_{\mu_s(Q)}+(\dd'',\dd')_{\mu_s(Q)}\mod 2.
\]
It follows that it is enough to show that
\[
(\Psi^{-1}_{s,\pm }(1_i),\Psi^{-1}_{s,\pm }(1_i))_{Q}=1\mod 2
\]
for all $i,s\leq m$.  By Lemma \ref{simpTransf}, $\Psi_{s,\pm}^{-1}(1_s)={}-1_s$, and so we only need to consider the case in which $i\neq s$.  Then we have $\Psi^{-1}_{s,\pm}(1_i)=1_i-\max(0,\pm b_{si})\cdot 1_s$ from (\ref{simpEx}), and so in both the additive and subtractive cases we calculate
\[
(\Psi^{-1}_{s,\pm }(1_i),\Psi^{-1}_{s,\pm }(1_i))_{Q}=1+\max(0,\pm 2b_{si}^2).
\]
\end{proof}

Given a stability condition $\zeta\in\mathbb{H}_+^{Q_0}$ on a quiver $Q$, an algebraic potential $W$ for $Q$, and an interval $S\subset [0,\pi)$, we denote by $(\rmod{\Jac(Q,W)})^{\zeta}_{S}$ the full subcategory of $\rmod{\Jac(Q,W)}$ containing those modules $N$ such that the Harder--Narasimhan type $(\dd^1,\ldots,\dd^t)$ of $N$ only contains terms satisfying $\Mu^{\zeta}(\dd^g)\in S$, and for $W$ a formal potential we define $(\rmod{\HJac(Q,W)})^{\zeta}_S$ similarly.

The next proposition is only a slight modification of \cite[Prop.4.1]{Na13}, but we offer the proof for completeness, modifying the proof of \cite[Thm.4.17]{Efi11} for our purposes.
\begin{proposition}
\label{zetaTheta}
Let $W$ be a potential for $Q$, nondegenerate with respect to the sequence of vertices $\Ss$.  Then there is a Bridgeland stability condition $\zeta_{\Ss}\in\mathbb{H}_+^{n}$, and an angle $\theta_{\Ss}\in [0,\pi)$ such that 
\[
(\rmod{\HJac(Q,W)})^{\zeta_{\Ss}}_{[0,\theta_{\Ss}]}=\mathcal{F}_{\Ss}\cap \rmod{\HJac(Q,W)}
\]
and
\[
(\rmod{\HJac(Q,W)})^{\zeta_{\Ss}}_{(\theta_{\Ss},\pi)}=\mathcal{T}_{\Ss}\cap\rmod{\HJac(Q,W)}.
\]
Furthermore, we can choose $\zeta_{\Ss}$ and $\theta_{\Ss}$ so that 
\[
(\rmod{\HJac(Q,W)})^{\zeta_{\Ss}}_{(\theta_{\Ss}-\delta,\theta_{\Ss}+\delta)}=\shave^{\oplus}\bigcap \left(\rmod{\HJac(Q,W)}\right)
\]
for sufficiently small $\delta$.

\end{proposition}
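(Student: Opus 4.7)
The plan is to induct on $|\Ss|$. The base case $\Ss = \emptyset$ will be immediate: taking $\zeta_{\emptyset} := (i,i,\ldots,i)$ and $\theta_{\emptyset} := 0$ makes every nonzero module have slope $\pi/2$, so $\mathcal{T}_{\emptyset} = (\rmod{\HJac(Q,W)})^{\zeta_{\emptyset}}_{(0,\pi)}$ and $\mathcal{F}_{\emptyset} = 0 = (\rmod{\HJac(Q,W)})^{\zeta_{\emptyset}}_{[0,0]}$, while the slice claim is vacuous.

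For the inductive step, suppose the conclusion holds for $\Ss'$ with data $(\zeta_{\Ss'}, \theta_{\Ss'})$. The key structural input from Theorem \ref{KYThm} is that $\Simp_{\Ss} = \Phi_{\Ss'}^{-1}(\Simp(\mu_{\Ss'}(Q))_{s_t})$ is a simple object of the tilted Abelian heart $\Phi_{\Ss'}^{-1}(\Rmod{\HJac(\mu_{\Ss'}(Q,W))})$; equivalently, $\shave$ is simple either as an object of $\mathcal{T}_{\Ss'}$ (additive case, when $\Simp_{\Ss}\in\mathcal{T}_{\Ss'}$) or as an object of $\mathcal{F}_{\Ss'}$ (subtractive case, when $\Simp_{\Ss}\in\mathcal{F}_{\Ss'}[1]$). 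Set $\dd_0 := [\shave]\in\mathbb{N}^n$; by the inductive hypothesis, $\Mu^{\zeta_{\Ss'}}(\dd_0) > \theta_{\Ss'}$ in the additive case and $\Mu^{\zeta_{\Ss'}}(\dd_0) \leq \theta_{\Ss'}$ in the subtractive case. I will define $\zeta_{\Ss} := \zeta_{\Ss'} + \tau g$ for a small $\tau > 0$ and a direction $g\in\mathbb{C}^n$ chosen so that $\zeta_{\Ss}\in\mathbb{H}_+^n$, so that $\Mu^{\zeta_{\Ss}}(\dd_0)$ lies just above $\theta_{\Ss'}$ (decreasing from above in the additive case, increasing from below in the subtractive case), and so that no other ray $\mathbb{R}_{>0}\dd$ with $\dd$ in any fixed finite subset of $\mathbb{N}^n$ crosses the wall at $\theta_{\Ss'}$. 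Openness of $\mathbb{H}_+^n$ combined with the finiteness of the set of rays to avoid ensures such a $g$ exists. Set $\theta_{\Ss} := \Mu^{\zeta_{\Ss}}(\dd_0)$ in the additive case, and choose $\theta_{\Ss}\in (\theta_{\Ss'}, \Mu^{\zeta_{\Ss}}(\dd_0))$ in the subtractive case.

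The slice statement will then hold because, by construction, only the ray through $\dd_0$ gives $\zeta_{\Ss}$-semistable modules with slope in $(\theta_{\Ss} - \delta, \theta_{\Ss} + \delta)$ for sufficiently small $\delta$, and simplicity of $\shave$ in its tilted Abelian heart forces these to be direct sums of $\shave$. For the two main equalities I will analyse Harder--Narasimhan filtrations: in the additive case, a module $M\in\rmod{\HJac(Q,W)}$ with all $\zeta_{\Ss}$-HN-slopes in $[0,\theta_{\Ss}]$ has its top HN-piece either lying in $\mathcal{F}_{\Ss'}$ or isomorphic to $\shave^{k}$, and iteration places $M$ in $\Simp_{\Ss}^{\oplus}\star\mathcal{F}_{\Ss'} = \mathcal{F}_{\Ss}$. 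In the subtractive case, since $\shave$ now has $\zeta_{\Ss}$-slope strictly above $\theta_{\Ss}$, such an $M$ admits no $\shave$ as a subobject, giving $\Hom(\shave, M) = 0$, while smallness of the perturbation keeps $M$ inside $\mathcal{F}_{\Ss'}$, placing $M$ in $\mathcal{F}_{\Ss'}\cap \shave^{\perp} = \mathcal{F}_{\Ss}$. The main obstacle will be the quantitative control of the perturbation --- ensuring only $\dd_0$'s ray crosses the wall at $\theta_{\Ss'}$ --- which I expect to resolve by a genericity argument on $g$, exploiting the finiteness of the set of rays in any bounded region of $\mathbb{N}^n$.
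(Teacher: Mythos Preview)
Your inductive-perturbation approach is genuinely different from the paper's, and the gap you yourself flag is real. The paper does \emph{not} perturb $\zeta_{\Ss'}$: it builds $\zeta_{\Ss}$ from scratch by fixing $\IIm(Z^{\zeta_{\Ss}}(1_s))=1$ for all $s$, setting $\theta_{\Ss}=\pi/2$, and prescribing the \emph{real parts} of $Z^{\zeta_{\Ss}}$ on the classes of the simple objects $\Phi_{\Ss}^{-1}(\Simp(\mu_{\Ss}(Q))_j)$ of the tilted heart --- equal to $-1$ for $j\neq s_t$ and to $0$ (or $-\delta'$ in the subtractive case) for $j=s_t$. The point is that $\mathcal{T}_{\Ss}$ and $\mathcal{F}_{\Ss}$ are closed under extensions and every finite-dimensional object in either is an iterated extension of a specific subset of these finitely many simples; so once their central charges lie on the correct side of the ray of argument $\theta_{\Ss}$, the inclusions $\mathcal{F}_{\Ss}\subset(\cdot)^{\zeta_{\Ss}}_{[0,\theta_{\Ss}]}$ and $\mathcal{T}_{\Ss}\subset(\cdot)^{\zeta_{\Ss}}_{(\theta_{\Ss},\pi)}$ follow, and equality is then forced by the torsion-pair identity ${}^{\perp}\mathcal{F}_{\Ss}=\mathcal{T}_{\Ss}$.

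Your perturbation argument, by contrast, tries to control \emph{rays of dimension vectors}, and the finiteness you invoke (``rays in any bounded region'') is not the right one: there are infinitely many rays in $\mathbb{N}^n$, they can be dense near $\theta_{\Ss'}$, and no single small $\tau$ prevents all of them from crossing the wall. Even for a fixed module, Harder--Narasimhan filtrations jump discontinuously under perturbation, so the assertion that the top $\zeta_{\Ss}$-HN piece of $M$ ``either lies in $\mathcal{F}_{\Ss'}$ or is $\shave^{k}$'' is exactly a wall-crossing statement that needs proof, not a consequence of smallness of $\tau$. The correct finiteness is \emph{finitely many simples of the tilted heart}, not finitely many rays; once you see this, you are essentially back to the paper's direct construction, and the induction on $|\Ss|$ (beyond the recursive definition of $\Phi_{\Ss}$) becomes unnecessary.
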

\begin{proof}
Let $\Ss=(s_1,\ldots,s_t)$.  First consider the additive case.  The conditions on $\zeta_{\Ss}$ and $\theta_{\Ss}$ are implied by the conditions
\begin{enumerate}
\item
\label{condi1}
\[
\IIm\left(\exp(-\theta_{\Ss}\sqrt{-1})\Z^{\zeta_{\Ss}}\left([\Phi_{\Ss,\epsilon_{\Ss}}^{-1}(\Simp(\mu_{\Ss}(Q))_j)]\right)\right)> 0
\]
for all $j\neq s_t$, and
\item
\label{condi2}
\begin{align*}
\IIm\left(\exp(-\theta_{\Ss}\sqrt{-1})\Z^{\zeta_{\Ss}}([\Simp_{\Ss}])\right)= &0\\
\RRe\left(\exp(-\theta_{\Ss}\sqrt{-1})\Z^{\zeta_{\Ss}}([\Simp_{\Ss}])\right)> &0.
\end{align*}
\end{enumerate}
To see this, we first note that the conditions imply that 
\begin{align*}
\mathcal{F}_{\Ss}\cap \rmod{\HJac(Q,W)}\subset &(\rmod{\HJac(Q,W)})^{\zeta_{\Ss}}_{[0,\theta_{\Ss}]}\\
\mathcal{T}_{\Ss}\cap\rmod{\HJac(Q,W)}\subset &(\rmod{\HJac(Q,W)})^{\zeta_{\Ss}}_{(\theta_{\Ss},\pi)}\\
\shave^{\oplus}\bigcap \left(\rmod{\HJac(Q,W)}\right)=&(\rmod{\HJac(Q,W)})^{\zeta_{\Ss}}_{(\theta_{\Ss}-\delta,\theta_{\Ss}+\delta)}.
\end{align*}
Then equality follows from the inclusions
\begin{align*}
(\rmod{\HJac(Q,W)})^{\zeta_{\Ss}}_{[0,\theta_{\Ss}]}=((\rmod{\HJac(Q,W)})^{\zeta_{\Ss}}_{(\theta_{\Ss},\pi)})^{\perp}\subset (\mathcal{T}_{\Ss}\cap\rmod{\HJac(Q,W)})^{\perp}=\mathcal{F}_{\Ss}\cap \rmod{\HJac(Q,W)}\\
(\rmod{\HJac(Q,W)})^{\zeta_{\Ss}}_{(\theta_{\Ss},\pi)}=^\perp\!\!(\rmod{\HJac(Q,W)})^{\zeta_{\Ss}}_{[0,\theta_{\Ss}]}\subset^\perp\!\!(\mathcal{F}_{\Ss}\cap \rmod{\HJac(Q,W)})=\mathcal{T}_{\Ss}\cap\rmod{\HJac(Q,W)}
\end{align*}
We achieve conditions (\ref{condi1}) and (\ref{condi2}) by setting 
\begin{equation}
\label{strictStab}
\IIm(\Z^{\zeta_{\Ss}}(1_s))=1
\end{equation}
for all $s\in Q_0$, and 
\[
\RRe(\Z^{\zeta_{\Ss}}(\Phi_{\Ss,\epsilon_{\Ss}}^{-1}(\Simp(\mu_{\Ss}(Q,W))_j)))={}-1
\]
for $j\neq s_t$ and $\RRe(\Z^{\zeta_{\Ss}}(\Simp_{\Ss}))=0$, and setting $\theta_{\Ss}=\pi/2$.  For the subtractive case, we set $\RRe(\Z^{\zeta_{\Ss}}([\shave]))=-\delta'$ for $0<\delta'\ll 1$.
\end{proof}

We denote by $(D_{\Ss}^{\leq 0},D_{\Ss}^{\geq 1})$ the t structure on $\Dub(\Rmod{\HGamma(Q,W)})$ obtained by pulling back the standard t structure on $\Dub(\Rmod{\HGamma(\mu_{\Ss}(Q,W))})$ along the quasi-equivalence $\Phi_{\Ss}$, and we denote by $\HO_{\Ss}^n(M)$ the $n$th cohomology of $M$ with respect to this t structure.  Let $\ff\in\mathbb{N}^n$.  Since this t structure is a tilt of the usual t structure with respect to the torsion structure $(\mathcal{T}_{\Ss},\mathcal{F}_{\Ss})$ it follows that $\HGamma(Q,W)_{\ff}\in D_{\Ss}^{\leq 1}$.  On the other hand, since each simple module $\Simp(\mu_{\Ss}(Q))_{j}$ is in $\Phi_{\Ss}(\mathcal{F}_{\Ss})[1]$ or $\Phi_{\Ss}(\mathcal{T}_{\Ss})$ it follows that $\HGamma(Q,W)_{\ff}\in \:^\perp \!D_{\Ss}^{\leq -1}$ .  It follows by \cite[Lem.2.11]{Pl11} that there is an isomorphism
\begin{equation}
\label{PlamIso}
\Phi_{\Ss}(\HGamma(Q,W)_{\ff})\cong\HGamma(\mu_{\Ss}(Q,W))_{\ff'}\oplus \HGamma(\mu_{\Ss}(Q,W))_{\ff''}[-1]
\end{equation}
for some dimension vectors $\ff',\ff''\in\mathbb{N}^n$, where the isomorphism is in the category of graded modules (forgetting the differential).  In other words, i.e. in the terminology of \cite{Pl11}, $\Phi_{\Ss}(\HGamma(Q,W)_{\ff})\in\textrm{pr}_{\Dub(\Rmod{\HGamma(\mu_{\Ss}(Q,W))})}\HGamma(\mu_{\Ss}(Q,W))[-1]$.
\begin{proposition}
Let $Q$ be a quiver, and let $W$ be a potential for $Q$, nondegenerate with respect to the sequence of vertices $\Ss=(s_1,\ldots,s_t)$.  Let $\ff\in\mathbb{N}^n$ be a dimension vector.  Then $\HO^1_{\Ss}\left(\HGamma(Q,W)_{\ff}\right)$ is represented by a finite dimensional $\HJac(Q,W)$-module.
\end{proposition}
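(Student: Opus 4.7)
The plan is to identify $\HO^1_{\Ss}(\HGamma(Q,W)_{\ff})$ explicitly via the torsion pair $(\mathcal{T}_{\Ss},\mathcal{F}_{\Ss})$ and then argue by induction on $t=|\Ss|$. Since $\HGamma(Q,W)_{\ff}$ is a projective dg module quasi-isomorphic to $\HJac(Q,W)_{\ff}$ (via the resolution (\ref{resolution})), it suffices to compute the tilted cohomology on $\HJac(Q,W)_{\ff}$. I would decompose this module via the torsion pair into a short exact sequence $0\to T\to\HJac(Q,W)_{\ff}\to F\to 0$ with $T\in\mathcal{T}_{\Ss}$ and $F\in\mathcal{F}_{\Ss}$; a standard tilting argument then identifies $\HO^0_{\Ss}(\HJac(Q,W)_{\ff})=T$ and $\HO^1_{\Ss}(\HJac(Q,W)_{\ff})=F[1]$ inside $\mathcal{A}_{\Ss}$, so the proposition reduces to the claim that $F=F_{\Ss}$ is a finite-dimensional $\HJac(Q,W)$-module. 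The base case $t=0$ is immediate since $\mathcal{F}_{\emptyset}=0$.

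For the subtractive step ($\epsilon_t=+$) the recursive formula (\ref{TSdef}) yields $\mathcal{F}_{\Ss}\subset\mathcal{F}_{\Ss'}$, whence $T_{\Ss}\supset T_{\Ss'}$ and $F_{\Ss}$ becomes a quotient of $F_{\Ss'}$, which is finite-dimensional by induction. For the additive step ($\epsilon_t=-$) I instead have $\mathcal{F}_{\Ss}=\Simp_{\Ss}^{\oplus}\star\mathcal{F}_{\Ss'}\supset\mathcal{F}_{\Ss'}$, so $T_{\Ss}\subset T_{\Ss'}$ and there is a short exact sequence $0\to N_0\to F_{\Ss}\to F_{\Ss'}\to 0$ with $N_0:=T_{\Ss'}/T_{\Ss}$. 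A short diagram chase using (\ref{TSdef}) together with the orthogonality $\mathcal{T}_{\Ss'}\perp\mathcal{F}_{\Ss'}$ identifies $N_0$ as an element of $\mathcal{F}_{\Ss}\cap\mathcal{T}_{\Ss'}$, which collapses to $\Simp_{\Ss}^{\oplus}$, so $N_0\cong\Simp_{\Ss}^{\oplus I}$ for some indexing set $I$.

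The main obstacle is showing that $|I|<\infty$. Since $\Simp_{\Ss}$ is Schur-simple and $\Hom(T_{\Ss},\Simp_{\Ss})=0$ (because $T_{\Ss}\in\mathcal{T}_{\Ss}$ and $\Simp_{\Ss}\in\mathcal{F}_{\Ss}$), every morphism $T_{\Ss'}\to\Simp_{\Ss}$ kills $T_{\Ss}$ and therefore factors through $N_0$, giving $|I|=\dim_{\mathbb{C}}\Hom(T_{\Ss'},\Simp_{\Ss})$. To bound this dimension, I would apply $\Ext^{\bullet}_{\HGamma(Q,W)}(-,\Simp_{\Ss})$ to the short exact sequence $0\to T_{\Ss'}\to\HJac(Q,W)_{\ff}\to F_{\Ss'}\to 0$. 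Because $\HJac(Q,W)_{\ff}\cong\HGamma(Q,W)_{\ff}$ is a finitely generated projective dg module and $\Simp_{\Ss}=\Phi_{\Ss'}^{-1}(\Simp(\mu_{\Ss'}(Q))_{s_t})$ lies in $\Dub^{\fd}_{\princ}$ by Theorem \ref{KYThm}, the only nonvanishing Ext-group against $\HJac(Q,W)_{\ff}$ is the finite-dimensional $\Hom$-group of dimension $\ff\cdot\DIM(\Simp_{\Ss})$; and because $F_{\Ss'}$ is finite-dimensional by induction, each $\Ext^i_{\HGamma(Q,W)}(F_{\Ss'},\Simp_{\Ss})$ is finite-dimensional by the $3$-Calabi--Yau property of $\HGamma(Q,W)$ (\cite[Lem.4.1]{CYTC}). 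The resulting long exact sequence then forces $\Hom(T_{\Ss'},\Simp_{\Ss})$ to be finite-dimensional, closing the induction.
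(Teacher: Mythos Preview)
Your overall strategy matches the paper's: identify $\HO^1_{\Ss}(\HGamma(Q,W)_{\ff})$ with the torsion-free quotient $F_{\Ss}$ of $\HJac(Q,W)_{\ff}$ for the pair $(\mathcal{T}_{\Ss},\mathcal{F}_{\Ss})$, then induct, handling the subtractive and additive cases separately. The structure of the additive step (the short exact sequence $0\to N_0\to F_{\Ss}\to F_{\Ss'}\to 0$ with $N_0\in\Simp_{\Ss}^{\oplus}$) is exactly what the paper uses.

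There is, however, one genuine slip in your setup. The claim that $\HGamma(Q,W)_{\ff}$ is quasi-isomorphic to $\HJac(Q,W)_{\ff}$ is false in general: the Ginzburg dg algebra need not have cohomology concentrated in degree zero, and resolution~(\ref{resolution}) is a resolution of $\Simp(Q)_s$, not of $\HJac(Q,W)_{\ff}$. The reduction you want is nonetheless valid, because the cone of $\HGamma(Q,W)_{\ff}\to\HJac(Q,W)_{\ff}$ lies in $D^{\leq -1}\subset D_{\Ss}^{\leq 0}$, and hence has vanishing $\HO^1_{\Ss}$; the paper extracts this instead from (\ref{PlamIso}). The same slip reappears when you assert that the higher $\Ext$-groups of $\HJac(Q,W)_{\ff}$ vanish in $\Dub(\Rmod{\HGamma(Q,W)})$: they need not, but you only use finiteness of $\Hom(\HJac(Q,W)_{\ff},\Simp_{\Ss})$, which is clear since $\Simp_{\Ss}$ is finite-dimensional.

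Where your argument genuinely differs from the paper is in bounding $|I|$ in the additive step. The paper first reduces to $\ff=1_s$, observes that $F_{\Ss}$ then has simple top $\Simp(Q)_s$ and is therefore indecomposable, and concludes $|I|\leq\dim\Ext^1(F_{\Ss'},\Simp_{\Ss})$. You instead keep $\ff$ general and bound $\dim\Hom(T_{\Ss'},\Simp_{\Ss})$ via the long exact sequence for $0\to T_{\Ss'}\to\HJac(Q,W)_{\ff}\to F_{\Ss'}\to 0$, using finiteness of $\Hom(\HJac(Q,W)_{\ff},\Simp_{\Ss})$ and of $\Ext^1(F_{\Ss'},\Simp_{\Ss})$. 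This avoids the indecomposability argument at the cost of invoking one more term in the long exact sequence; both routes are sound, once the justification for finiteness of the $\Ext$-groups between finite-dimensional objects in $\Dub^{\fd}(\Rmod{\HGamma(Q,W)})$ is made precise (the 3--Calabi--Yau duality alone does not give this; rather, it follows because every object of $\Dub^{\fd}$ is a finite iterated extension of the $\Simp(Q)_j$, between which all $\Ext$-groups are finite-dimensional).
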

This is basically \cite[Cor.4.11]{Efi11}, but our proof is a little different, in part because our setup is different, by Remark \ref{differenceRem}.
\begin{proof}
It is sufficient to consider the case $\ff=1_s$ for some $s\in Q_0$.  By isomorphism (\ref{PlamIso}), $\Phi_{\Ss}(\HGamma(Q,W)_{1_s})$ is concentrated in degrees 1 and below, and so there is a natural map $h\colon\HJac(Q,W)_{1_s}\rightarrow R_{\Ss,s}$, where we define 
\[
R_{\Ss,s}=\HO^1_{\Ss}(\HGamma(Q,W)_{1_s}).
\]
The map $h$ is just the map from $\HJac(Q,W)_{1_s}$ to its torsion-free part with respect to the torsion structure $(\mathcal{T}_{\Ss},\mathcal{F}_{\Ss})$, and so in particular $h$ is a surjection onto a $\HJac(Q,W)$-module.  Recall that the \textit{top} of a module is its largest semisimple quotient.  The map $h$ induces a surjection
\[
\Simp(Q)_s=\Top\left(\HJac(Q,W)_{1_s}\right)\rightarrow \Top\left(R_{\Ss,s}\right)
\]
from which we deduce that there is an isomorphism $\Top(R_{\Ss,s})\cong\Simp(Q)_s$, and $R_{\Ss,s}$ is indecomposable.  Now the proof is by induction.  First, assume that $\Ss$ is subtractive, then the surjection $h$ factors through a map $R_{\Ss',s}\rightarrow R_{\Ss,s}$ and the finite-dimensionality of $R_{\Ss,s}$ follows from the finite-dimensionality of $R_{\Ss',s}$.  On the other hand, if $\Ss$ is additive, then since $R_{\Ss,s}\in\mathcal{F}_{\Ss}$ there is a short exact sequence
\[
0\rightarrow \bigoplus_{P}S_{\Ss}\rightarrow R_{\Ss,s}\rightarrow R_{\Ss',s}\rightarrow 0
\]
where $P$ is some indexing set, by the construction of the torsion structure (\ref{TSdef}).  Since $R_{\Ss,s}$ is indecomposable, it follows that $|P|\leq \dim(\Ext^1(R_{\Ss',s},S_{\Ss}))$, which is finite by finite dimensionality of $R_{\Ss',s}$, and so $R_{\Ss,s}$ is an extension of two finite-dimensional modules.

\end{proof}

\begin{proposition}
\label{eqSchemes}
Let $Q$ be a quiver, let $W$ be an algebraic potential for $Q$, nondegenerate with respect to the sequence of principal vertices $\Ss$.  Let $\theta_{\Ss}$ and $\zeta_{\Ss}$ be as in Proposition \ref{zetaTheta}, and let $\dd\in\mathbb{N}^m$ be a dimension vector with $\Mu^{\zeta_{\Ss}}(\dd)\leq \theta_{\Ss}$.  Then there is an isomorphism of schemes
\begin{equation}
\label{diffIso}
\Gr_{{}-\Psi_{\Ss}(\dd)}\left(\HO^1(\Phi_{\Ss}(\HGamma(Q,W)_{\ff}))\right)\cong \Msp(Q,W)^{\zeta_{\Ss},\theta_{\Ss}\sfr,\nilp}_{\ff,\dd},
\end{equation}
and the right hand side of (\ref{diffIso}) is a union of connected components of $\Msp(Q,W)^{\zeta_{\Ss},\theta_{\Ss}\sfr}_{\ff,\dd}$.
\end{proposition}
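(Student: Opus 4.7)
The plan is to realise both sides as moduli of equivalent data transported across the Keller--Yang derived equivalence $\Phi_{\Ss}$ of Theorem~\ref{KYThm}, together with the tilted heart it defines. Combining Proposition~\ref{zetaTheta} with the explicit characterisation of $\zeta^{(\theta)}$-stability recalled after Definition~\ref{ztDef}, and Proposition~\ref{EngelProp} to identify nilpotent $\Jac(Q,W)$-modules with $\HJac(Q,W)$-modules, a closed point of $\Msp(Q,W)^{\zeta_\Ss,\theta_\Ss\sfr,\nilp}_{\ff,\dd}$ is an isomorphism class of pairs $(\rho,v)$ with $\rho \in \mathcal{F}_\Ss$ a $\dd$-dimensional $\HJac(Q,W)$-module and $v\colon \HJac(Q,W)_\ff \to \rho$ a framing satisfying $\rho/\mathrm{im}(v) \in \mathcal{T}_\Ss$.

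Since $\rho \in \mathcal{F}_\Ss$, the shift $\rho[1]$ lies in the tilted heart, so $M := \Phi_\Ss(\rho[1])$ is an honest $\HJac(\mu_\Ss(Q,W))$-module of dimension vector $-\Psi_\Ss(\dd)$. The torsion decomposition $0 \to T \to \HJac(Q,W)_\ff \to F \to 0$, together with (\ref{PlamIso}) and the concentration of $\Phi_\Ss(\HGamma(Q,W)_\ff)$ in cohomological degrees $0$ and $1$, identifies $R_{\Ss,\ff}$ with $\Phi_\Ss(F[1])$, where $F$ is the torsion-free part of $\HJac(Q,W)_\ff$. Because $\Hom(\mathcal{T}_\Ss,\mathcal{F}_\Ss)=0$, the framing factors as $\HJac(Q,W)_\ff \twoheadrightarrow F \xrightarrow{\bar v} \rho$, and shifting plus applying $\Phi_\Ss$ produces a morphism $\tilde v\colon R_{\Ss,\ff} \to M$.

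Applying the six-term tilted-cohomology exact sequence to $0\to \mathrm{im}(v)\to \rho \to \rho/\mathrm{im}(v)\to 0$, in which $\mathrm{im}(v)$ and $\rho$ both lie in $\mathcal{F}_\Ss$ so that their torsion parts vanish, reduces it to
\[
0 \to T\bigl(\rho/\mathrm{im}(v)\bigr) \to \mathrm{im}(v)[1] \to \rho[1] \to F\bigl(\rho/\mathrm{im}(v)\bigr)[1] \to 0
\]
in the tilted heart. Composing this sequence, after applying $\Phi_\Ss$, with the surjection $R_{\Ss,\ff} \twoheadrightarrow \Phi_\Ss(\mathrm{im}(v)[1])$ coming from $F\twoheadrightarrow \mathrm{im}(\bar v)$, one sees that $\tilde v$ is surjective precisely when $F(\rho/\mathrm{im}(v))=0$, i.e.\ precisely when the stability condition $\rho/\mathrm{im}(v)\in\mathcal{T}_\Ss$ holds. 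The assignment $(\rho,v)\mapsto \tilde v$, equivalently $(\rho,v)\mapsto \ker\tilde v$, therefore defines a bijection onto $\Gr_{-\Psi_\Ss(\dd)}(R_{\Ss,\ff})$ (under the standard duality between sub- and quotient Grassmannians of complementary dimension).

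Functoriality of these constructions in a base scheme upgrades the set-theoretic bijection to an isomorphism of the representing schemes; the inverse pulls a submodule (resp.\ quotient) of $R_{\Ss,\ff}$ through $\Phi_\Ss^{-1}$ into the tilted heart, where the $K$-theoretic dimension constraint forces it into the form $V[1]$ (resp.\ $\rho[1]$ with $\rho\in\mathcal{F}_\Ss$), and then desuspends and precomposes with $\HJac(Q,W)_\ff\twoheadrightarrow F$ to recover $v$. For the final assertion, the Grassmannian is projective, being a closed subscheme of the classical Grassmannian of subspaces of the prescribed dimension cut out by $\HJac(\mu_\Ss(Q,W))$-closure; so its image is closed in $\Msp(Q)^{\zeta_\Ss,\theta_\Ss\sfr}_{\ff,\dd}$, while openness follows because any $\HJac(\mu_\Ss(Q,W))$-submodule of the nilpotent module $R_{\Ss,\ff}$ is automatically $W$-critical and nilpotent, so the $W$-critical-and-nilpotent locus fills out entire connected components. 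The main obstacle lies in the functoriality check of the previous paragraph and in confirming that $\Phi_\Ss^{-1}$ of a general submodule (resp.\ quotient) of $R_{\Ss,\ff}$ genuinely yields an object of the tilted heart of the constrained form $V[1]$ (resp.\ $\rho[1]$, $\rho\in\mathcal{F}_\Ss$); the openness in the clopen argument is also delicate and relies on the specific rigidity imparted by $R_{\Ss,\ff}$.
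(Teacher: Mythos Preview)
The paper does not give a self-contained proof here; it simply cites \cite[Prop.~6.3]{Efi11} and \cite[Prop.~4.35]{DMSS13} for the isomorphism, and \cite[Prop.~3.1]{Efi11} together with \cite[Prop.~4.28]{DMSS13} for the clopen statement. Your sketch is precisely the content of those references: identify a stable framed nilpotent $\Jac(Q,W)$-module with a pair $(\rho,v)$ where $\rho\in\mathcal{F}_{\Ss}$, transport through $\Phi_{\Ss}$ to a quotient of $\HO^1(\Phi_{\Ss}(\HGamma(Q,W)_{\ff}))$, and check via the six-term sequence that the stability condition $\rho/\mathrm{im}(v)\in\mathcal{T}_{\Ss}$ is equivalent to surjectivity of $\tilde v$. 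The parenthetical about sub/quotient duality is unnecessary, since the paper's $\Gr$ is already the quotient Grassmannian (cf.\ the proof of Proposition~\ref{TPure}), which is exactly what your $\tilde v$ produces.

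One caution on the second assertion: your openness argument as written is circular. Saying that ``any submodule of $R_{\Ss,\ff}$ is automatically $W$-critical and nilpotent'' only describes the image of the Grassmannian inside the framed moduli; it does not rule out the existence of nearby $W$-critical framed modules that are \emph{not} nilpotent. Openness must be argued on the framed-moduli side, showing that a $\zeta^{(\theta)}$-stable framed $\Jac(Q,W)$-module cannot degenerate from non-nilpotent to nilpotent within the critical locus --- this is what \cite[Prop.~4.28]{DMSS13} does, and you are right to flag it as the delicate step.
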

The right hand side of (\ref{diffIso}) is introduced in equation (\ref{sfrIntro}), which uses the stability condition introduced just above (\ref{sfrIntro}).  The notation on the right hand side of (\ref{diffIso}) uses Convention \ref{QWconvention}.  On the left hand side of (\ref{diffIso}), the first subscript $-\Psi_{\Ss}(\dd)$ is as defined before Proposition \ref{invOD}.  
\begin{proof}
This is almost the result stated in \cite[Prop.6.3]{Efi11}, and carefully proved as \cite[Prop.4.35]{DMSS13}.  The proof of \cite[Prop.4.35]{DMSS13} gives the isomorphism (\ref{diffIso}) without modificaton.  The second statement is a consequence of \cite[Prop.3.1]{Efi11} for the case of generic $W$, and is given by the proof of \cite[Prop.4.28]{DMSS13} for general nondegenerate $W$.
\end{proof}
\begin{corollary}
\label{allW}
If $W\in\mathbb{C}Q/[\mathbb{C}Q,\mathbb{C}Q]$ is a nondegenerate algebraic potential with respect to the sequence of mutations $\Ss$, and $\dd\in\mathbb{N}^m$ satisfies $\Mu^{{\zeta}_{\Ss}}(\dd)\leq \theta_{\Ss}$, then the stack $\crit\left(\WWW_{[0,\theta_{\Ss}],\dd}^{\zeta_{\Ss}}\right)\cap\Mst(Q)_{[0,\theta_{\Ss}],\dd}^{\zeta_{\Ss},\nilp}$ is a union of connected components of $\crit(\WWW_{[0,\theta_{\Ss}],\dd}^{\zeta_{\Ss}})$.
\end{corollary}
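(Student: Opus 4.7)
The plan is to deduce the corollary from its framed version, Proposition \ref{eqSchemes}, by transferring the statement along the forgetful map from the framed moduli scheme to the unframed critical stack. First I would fix $\ff\in\mathbb{N}^n$ large enough that each $\ff_s\geq \dd_s$, so that every geometric point of $\crit(\WWW^{\zeta_{\Ss}}_{[0,\theta_{\Ss}],\dd})$ admits a surjective framing by an element of $V_{\ff,\dd}$. The resulting forgetful morphism
\[
p_\ff\colon \Msp(Q,W)^{\zeta_{\Ss},\theta_{\Ss}\sfr}_{\ff,\dd}\longrightarrow \crit\left(\WWW^{\zeta_{\Ss}}_{[0,\theta_{\Ss}],\dd}\right)
\]
is then surjective on geometric points. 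Since $\Tr(W)$ does not involve the framing arrows, $p_\ff$ is obtained by restricting the vector-bundle morphism $(X(Q)^{\zeta_{\Ss}}_{[0,\theta_{\Ss}],\dd}\times V_{\ff,\dd})/G_\dd\to \Mst(Q)^{\zeta_{\Ss}}_{[0,\theta_{\Ss}],\dd}$ to an open substack of the source (cutting out framed $\zeta^{(\theta_{\Ss})}$-stable points) and to the closed substack of the target defined by the Jacobi relations; in particular it is smooth.

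Next I would check that $p_\ff$ identifies nilpotent loci. Because $Q_\ff$ contains no new cycles beyond those of $Q$ (the framing arrows all terminate at the new vertex $\infty$), the pullback along $p_\ff$ of the trace functions cutting out $\Mst(Q)^{\zeta_{\Ss},\nilp}_{[0,\theta_{\Ss}],\dd}$ coincides with the trace functions cutting out the nilpotent locus inside the framed moduli space. Hence
\[
p_\ff^{-1}\left(\crit\left(\WWW^{\zeta_{\Ss}}_{[0,\theta_{\Ss}],\dd}\right)\cap \Mst(Q)^{\zeta_{\Ss},\nilp}_{[0,\theta_{\Ss}],\dd}\right)=\Msp(Q,W)^{\zeta_{\Ss},\theta_{\Ss}\sfr,\nilp}_{\ff,\dd},
\]
and by Proposition \ref{eqSchemes} this preimage is a union of connected components of $\Msp(Q,W)^{\zeta_{\Ss},\theta_{\Ss}\sfr}_{\ff,\dd}$, equivalently simultaneously open and closed.

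To conclude I would appeal to smooth descent. The nilpotent sublocus of $\crit(\WWW^{\zeta_{\Ss}}_{[0,\theta_{\Ss}],\dd})$ is automatically closed, being the vanishing locus of the traces of all nontrivial cycles of $Q$; its openness then follows from openness of its $p_\ff$-preimage combined with smoothness and surjectivity of $p_\ff$. Being both open and closed, the sublocus is a union of connected components, as required. The argument is essentially formal once the smoothness of $p_\ff$ is established, and I do not anticipate any real obstacle beyond this bookkeeping; the only real input is Proposition \ref{eqSchemes}, whose content is precisely packaged by the forgetful map.
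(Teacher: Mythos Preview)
Your argument is correct and follows essentially the same route as the paper: both proofs pick $\ff\geq\dd$, pass to the framed moduli scheme $\Msp(Q,W)^{\zeta_{\Ss},\theta_{\Ss}\sfr}_{\ff,\dd}$, and deduce the statement from the last part of Proposition~\ref{eqSchemes} via smooth surjective descent of the open-and-closed property along the forgetful map. The only cosmetic difference is that the paper phrases things in terms of the explicit open atlas $(X(Q)^{\zeta_{\Ss}}_{[0,\theta_{\Ss}],\dd}\times V^{\surj}_{\ff,\dd})/G_{\dd}\subset\Msp(Q)^{\zeta_{\Ss},\theta_{\Ss}\sfr}_{\ff,\dd}$, whereas you invoke smoothness of the map to the stack directly; the content is the same.
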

For generic $W$, this is a direct consequence of \cite[Prop.3.1]{Efi11}.
\begin{proof}
The claim is equivalent to the claim that $\crit\left(\Tr(W)^{\zeta_{\Ss}}_{[0,\theta_{\Ss}],\dd}\right)\cap X(Q)_{[0,\theta_{\Ss}],\dd}^{\zeta_{\Ss},\nilp}$ is a union of connected components of $\crit\left(\Tr(W)^{\zeta_{\Ss}}_{[0,\theta_{\Ss}],\dd}\right)$.  Pick $\ff\in\mathbb{N}^m$ satisfying $\ff\geq \dd$.  Let $f$ be the function induced by $\Tr(W)_{\dd}$ on the stack
\[
(X(Q)^{\zeta_{\Ss}}_{[0,\theta_{\Ss}],\dd}\times V^{\surj}_{\ff,\dd})/G_{\dd}, 
\]
where $V^{\surj}_{\ff,\dd}$ is as introduced after (\ref{VDef}).  This stack is a scheme by \cite[Prop.23]{EdGr98}.  It is sufficient to prove that
\[
\crit(f)\cap \left(X(Q)^{\zeta_{\Ss},\nilp}_{[0,\theta_{\Ss}],\dd}\times V^{\surj}_{\ff,\dd}\right)/G_{\dd}
\]
is a union of connected components of $\crit(f)$.  This follows from the fact that 
\[
\left(X(Q)^{\zeta_{\Ss}}_{[0,\theta_{\Ss}],\dd}\times V^{\surj}_{\ff,\dd}\right)/G_{\dd}\subset \Msp(Q)^{\zeta_{\Ss},\theta_{\Ss}\sfr}_{\ff,\dd}
\]
is an open inclusion of schemes, along with the last part of Proposition \ref{eqSchemes}.

\end{proof}
Let $\dd\in\mathbb{N}^m$ satisfy $\Mu^{\zeta_{\Ss}}(\dd)\leq \theta_{\Ss}$.  Following our conventions, we define
\begin{align*}
&\Ho\left(p_{[0,\theta_{\Ss}],\dd,*}^{\zeta_{\Ss}}\left(\phim{\WWW^{\zeta_{\Ss}}_{[0,\theta_{\Ss}],\dd}}\mathbb{Q}_{\Mst(Q)^{\zeta_{\Ss}}_{[0,\theta_{\Ss}],\dd}}\right)_{\nilp}\right):=\\&\lim_{\ff\mapsto \infty}\left( \Ho\left( \pi^{\zeta_{\Ss},\theta_{\Ss}\sfr}_{\ff,\dd,*}\left( \phim{\WW_{\ff,\dd}^{\zeta_{\Ss},\theta_{\Ss}\sfr}}\mathbb{Q}_{\Msp(Q)_{\ff,\dd}^{\zeta_{\Ss},\theta_{\Ss}\sfr}}\right)_{\nilp}\right)\right)
\end{align*}
and
\begin{align*}
&\Ho\left(\Dim_{*}\left(\phim{\WWW^{\zeta_{\Ss}}_{[0,\theta_{\Ss}],\dd}}\mathbb{Q}_{\Mst(Q)^{\zeta_{\Ss}}_{[0,\theta_{\Ss}],\dd}}\right)_{\nilp}\right):=\\&\lim_{\ff\mapsto \infty}\left( \Ho\left( \dim_*\pi^{\zeta_{\Ss},\theta_{\Ss}\sfr}_{\ff,\dd,*}\left( \phim{\WW_{\ff,\dd}^{\zeta_{\Ss},\theta_{\Ss}\sfr}}\mathbb{Q}_{\Msp(Q)_{\ff,\dd}^{\zeta_{\Ss},\theta_{\Ss}\sfr}}\right)_{\nilp}\right)\right).
\end{align*}
By Corollary \ref{allW},
\begin{align*}
&\Msp(Q)^{\zeta_{\Ss},\theta_{\Ss}\sfr,\nilp}_{\ff,\dd}\cap \supp\left(\phim{\WW_{\ff,\dd}^{\zeta_{\Ss},\theta_{\Ss}\sfr}}\mathbb{Q}_{\Msp(Q)_{\ff,\dd}^{\zeta_{\Ss},\theta_{\Ss}\sfr}}\right)=
\\&(\pi_{\ff,\dd}^{\zeta_{\Ss},\theta_{\Ss}\sfr})^{-1}(0)\cap \supp\left(\phim{\WW_{\ff,\dd}^{\zeta_{\Ss},\theta_{\Ss}\sfr}}\mathbb{Q}_{\Msp(Q)_{\ff,\dd}^{\zeta_{\Ss},\theta_{\Ss}\sfr}}\right)
\end{align*}
is a union of connected components of $\supp\left(\phim{\WW_{\ff,\dd}^{\zeta_{\Ss},\theta_{\Ss}\sfr}}\mathbb{Q}_{\Msp(Q)_{\ff,\dd}^{\zeta_{\Ss},\theta_{\Ss}\sfr}}\right)$, and so there are natural isomorphisms
\begin{align*}
&\Ho\left( \pi^{\zeta_{\Ss},\theta_{\Ss}\sfr}_{\ff,\dd,*}\left( \phim{\WW_{\ff,\dd}^{\zeta_{\Ss},\theta_{\Ss}\sfr}}\mathbb{Q}_{\Msp(Q)_{\ff,\dd}^{\zeta_{\Ss},\theta_{\Ss}\sfr}}\right)_{\nilp}\right)\cong
\\&\left(\Ho\left( \pi^{\zeta_{\Ss},\theta_{\Ss}\sfr}_{\ff,\dd,*}\left( \phim{\WW_{\ff,\dd}^{\zeta_{\Ss},\theta_{\Ss}\sfr}}\mathbb{Q}_{\Msp(Q)_{\ff,\dd}^{\zeta_{\Ss},\theta_{\Ss}\sfr}}\right)\right)\right)_{\nilp}
\end{align*}
and
\begin{align}\label{Hswap}
&\Ho\left(p_{[0,\theta_{\Ss}],\dd,*}^{\zeta_{\Ss}}\left(\phim{\WWW^{\zeta_{\Ss}}_{[0,\theta_{\Ss}],\dd}}\ICS_{\Mst(Q)^{\zeta_{\Ss}}_{[0,\theta_{\Ss}],\dd}}(\mathbb{Q})\right)_{\nilp}\right)\cong
\\&\left(\Ho\left(p_{[0,\theta_{\Ss}],\dd,*}^{\zeta_{\Ss}}\left(\phim{\WWW^{\zeta_{\Ss}}_{[0,\theta_{\Ss}],\dd}}\ICS_{\Mst(Q)^{\zeta_{\Ss}}_{[0,\theta_{\Ss}],\dd}}(\mathbb{Q})\right)\right)\right)_{\nilp}.\nonumber
\end{align}
There is a natural inclusion of monoids $\mathbb{N}^m\rightarrow \Msp(Q)$ sending $\dd$ to the point representing the direct sum $\bigoplus_{s\leq m}\Simp(Q)^{\dd_s}$, which is an isomorphism onto $\Msp(Q)^{\nilp}$, and has left inverse $\dim\colon\Msp(Q)\rightarrow\mathbb{N}^m$.
We deduce from (\ref{Hswap}) that there is an isomorphism

\begin{align}\label{nilpCom}
&\Ho\left(\Dim_*\left(\phim{\WWW^{\zeta_{\Ss}}_{[0,\theta_{\Ss}],\dd}}\ICS_{\Mst(Q)^{\zeta_{\Ss}}_{[0,\theta_{\Ss}],\dd}}(\mathbb{Q})\right)_{\nilp}\right)\cong \\&\dim_*\left(\left(\Ho\left(p_{[0,\theta_{\Ss}],\dd,*}^{\zeta_{\Ss}}\left(\phim{\WWW^{\zeta_{\Ss}}_{[0,\theta_{\Ss}],\dd}}\ICS_{\Mst(Q)^{\zeta_{\Ss}}_{[0,\theta_{\Ss}],\dd}}(\mathbb{Q})\right)\right)\right)_{\nilp}\right)\nonumber
\end{align}
and by the same argument, there is an isomorphism
\begin{align}\label{nilpCom2}
&\Ho\left(\Dim_*\left(\phim{\WWW^{\zeta_{\Ss}\sst}_{\dd}}\ICS_{\Mst(Q)^{\zeta_{\Ss}\sst}_{\dd}}(\mathbb{Q})\right)_{\nilp}\right)\cong \\&\dim_*\left(\left(\Ho\left(p_{\dd,*}^{\zeta_{\Ss}\sst}\left(\phim{\WWW^{\zeta_{\Ss}\sst}_{\dd}}\ICS_{\Mst(Q)^{\zeta_{\Ss}\sst}_{\dd}}(\mathbb{Q})\right)\right)\right)_{\nilp}\right).\nonumber
\end{align}
 
\subsection{Cluster mutation from torsion pairs}

\begin{proposition}
\label{TPure}
Let $Q$ be a quiver, and let $W\in\mathbb{C}Q/[\mathbb{C}Q,\mathbb{C}Q]$ be an algebraic potential, nondegenerate with respect to the sequence of principal vertices $\Ss=(s_1,\ldots, s_t)$.  Let $\zeta_{\Ss}$ be as in Proposition \ref{zetaTheta}, and set $\gamma=\Mu^{\zeta_{\Ss}}(\shave)$.  Then
\begin{equation}
\Ho\left(\Dim_* \left(\phim{\WWW^{\zeta_{\Ss}\sst}_{\gamma}}\ICS_{\Mst(Q)_{\gamma}^{\zeta_{\Ss}\sst}}(\mathbb{Q})\right)_{\nilp}\right)
\end{equation}
is pure, of Tate type.  Furthermore, the pure monodromic mixed Hodge module
\begin{equation}
\label{secondOne}
\Ho\left(\Dim_* \left(\phim{\WWW^{\zeta_{\Ss}\sst}_{\gamma}}\QQ_{\Mst(Q)_{\gamma}^{\zeta_{\Ss}\sst}}\right)_{\nilp}\right)
\end{equation}
has trivial monodromy in the sense of Definition \ref{trivMonDef}.
\end{proposition}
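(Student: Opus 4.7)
The plan is to identify the nilpotent, $\zeta_{\Ss}$-semistable locus at slope $\gamma$ with a disjoint union of classifying stacks $\pt/\Gl_k$, on which the required vanishing-cycle cohomology is directly computable. Proposition~\ref{zetaTheta} (applied for small $\delta$) gives $\gamma=\theta_{\Ss}$ and shows that every $\zeta_{\Ss}$-semistable $\HJac(Q,W)$-module of slope $\gamma$ lies in $\shave^{\oplus}$. To see such modules split as genuine direct sums $\shave^{\oplus k}$, I observe that the derived equivalence $\Phi_{\Ss}$ sends $\shave$, up to cohomological shift, to the simple $\HJac(\mu_{\Ss}(Q,W))$-module $\Simp(\mu_{\Ss}(Q))_{s_t}$; since nondegeneracy of $W$ with respect to $\Ss$ ensures $\mu_{\Ss}(Q)$ has no loops at $s_t$, the bimodule resolution~(\ref{resolution}) gives $\Ext^{1}(\shave,\shave)=0$.

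Setting $\dd_{\Ss}:=\dim\shave$, Corollary~\ref{allW} implies that the nilpotent part of the semistable slope-$\gamma$ locus is a union of connected components of $\crit(\WWW^{\zeta_{\Ss}\sst}_{k\dd_{\Ss}})$, and its reduced stack-theoretic structure is $\pt/\Gl_k$ since $\End(\shave)=\CC$. Moreover $\WWW$ vanishes identically on this stratum: via $\Phi_{\Ss}$, the modules in question correspond to modules over $\HJac(\mu_{\Ss}(Q,W))$ concentrated at the single vertex $s_t$, and every cyclic potential acts trivially there. To extract this as a cohomological statement I use Proposition~\ref{eqSchemes}: for each framing dimension $\ff$, the space $\Msp(Q,W)^{\zeta_{\Ss},\theta_{\Ss}\sfr,\nilp}_{\ff,k\dd_{\Ss}}$ identifies with the classical Grassmannian of $k$-dimensional quotients at the vertex $s_t$ of the $\HJac(\mu_{\Ss}(Q,W))$-module $\HO^{1}(\Phi_{\Ss}(\HGamma(Q,W)_{\ff}))$, which is a smooth variety on which $\WW$ vanishes.

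On such a smooth variety with zero function, both $\phim{\WW}\QQ$ and $\phim{\WW}\ICS(\QQ)$ are pure and of Tate type. Passing to the limit $\ff\to\infty$ and applying $\dim_{*}$ yields, at the component $k\dd_{\Ss}$, the expression $\HO(\pt/\Gl_k,\QQ)\otimes\LL^{k^{2}/2}$ in the $\ICS$-case and $\HO(\pt/\Gl_k,\QQ)$ in the constant-sheaf case. The former is visibly pure of Tate type (and matches the quantum dilogarithm factor~(\ref{dilogId})), proving the first assertion; the latter involves only integer powers of $\LL$, so by Remark~\ref{monRem} has trivial monodromy. The main technical obstacle is the careful tracking of weights and Tate shifts in the limit $\ff\to\infty$, in particular verifying that the monodromic vanishing-cycle functor applied to a smooth variety carrying the zero potential indeed produces the expected pure Tate-type Hodge module with (in the constant-sheaf case) trivial monodromy, and that the clopen decomposition of the critical locus from Corollary~\ref{allW} interacts correctly with the limit under $\pi^{\zeta_{\Ss},\theta_{\Ss}\sfr}_{\ff,*}$.
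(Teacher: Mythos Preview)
Your overall strategy matches the paper's: identify the nilpotent semistable locus at slope $\gamma$ with $\pt/\Gl_k$, pass to framed moduli where this becomes a Grassmannian via Proposition~\ref{eqSchemes}, and compute directly. But there is a genuine gap in the middle of the argument.

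You write that the Grassmannian $\Msp(Q,W)^{\zeta_{\Ss},\theta_{\Ss}\sfr,\nilp}_{\ff,k\dd_{\Ss}}$ is ``a smooth variety on which $\WW$ vanishes'', and then conclude that ``on such a smooth variety with zero function'' the vanishing cycle sheaves are trivial. This conflates two different spaces. The vanishing cycle functor $\phim{\WW}$ is applied on the ambient framed moduli space $\Msp(Q)^{\zeta_{\Ss},\theta_{\Ss}\sfr}_{\ff,\dd}$, where $\WW$ is \emph{not} identically zero; the Grassmannian is only a clopen piece of the critical locus inside that ambient space. The fact that $\WW$ vanishes on the Grassmannian tells you nothing by itself about the restriction of $\phim{\WW}\QQ$ there --- what matters is the behaviour of $\WW$ in the normal directions.

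The paper supplies exactly this missing step: since the Grassmannian is a smooth component of $\crit(\WW)$, the holomorphic Bott--Morse lemma gives local coordinates in which $\WW=\sum_{i=1}^{t}x_i^2$ with $t$ the codimension, and Thom--Sebastiani identifies $(\phim{\WW}\QQ)|_{\nilp}$ with a rank one local system, trivial because the Grassmannian is simply connected. This produces the isomorphism (\ref{ZM}), and in the untwisted case the answer is $\HO(\Gr(k,c_{s_t}),\QQ)\otimes\LL^{t/2}$, not $\HO(\pt/\Gl_k,\QQ)$ as you claim. The monodromy question then reduces to the parity of $t$, which is exactly what Proposition~\ref{invOD} controls; your argument via ``only integer powers of $\LL$'' is incorrect because you have dropped the $\LL^{t/2}$ factor. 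For the $\ICS$-normalised statement your final answer happens to agree with the paper's (the codimension twist cancels against the normalisation), but the reasoning leading to it is not valid as written.
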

\begin{proof}
By the construction of $\zeta_{\Ss}$ (see Proposition \ref{zetaTheta}), the only semistable nilpotent $\Jac(Q,W)$-modules of dimension vector $\dd$, where $\Mu^{\zeta_{\Ss}}(\dd)=\gamma$, are direct sums of $\shave$.  Fix $\dd=k\dim(\shave)$.  Then by Corollary \ref{allW}, $\Mst(Q,W)_{\dd}^{\zeta_{\Ss}\sst,\nilp}$ is a connected component of $\Mst(Q,W)^{\zeta_{\Ss}\sst}_{\dd}$, and is furthermore isomorphic to the smooth stack $\pt/ \Gl_{k}$, since it is isomorphic to the stack of nilpotent $\dd$-dimensional representations of $\Jac(Q,W)$, and so by the above comment, is isomorphic to the classifying stack of $\Aut(\shave^{\oplus k})$.

Since $\Mu^{\zeta_{\Ss}}(\dd)=\gamma$ there is an equality $\Mst(Q)_{\dd}^{\zeta_{\Ss}\sst}=\Mst(Q)_{[0,\gamma],\dd}^{\zeta_{\Ss}}$.  By (\ref{bettStarForm}), there is an isomorphism
\begin{align}\label{frexp}
&\Ho\left(\Dim_* \left(\phim{\WWW^{\zeta_{\Ss}\sst}_{\dd}}\ICS_{\Mst(Q)_{\dd}^{\zeta_{\Ss}\sst}}(\mathbb{Q})\right)_{\nilp}\right)\cong
\\&\lim_{\ff\mapsto\infty}\Ho\left(\dim_* \left(\phim{\WW^{\zeta_{\Ss},\gamma\sfr}_{\ff,\dd}}\ICS_{\Msp(Q)_{\ff,\dd}^{\zeta_{\Ss},\gamma\sfr}}(\mathbb{Q})\right)_{\nilp}\otimes\mathbb{L}^{\ff\cdot\dd/2}\right).\nonumber
\end{align}
By Proposition \ref{eqSchemes} there is an isomorphism of schemes 
\begin{align}
\label{firstGr}
\Msp(Q,W)_{\ff,\dd}^{\zeta_{\Ss},\gamma\sfr,\nilp}\cong \Gr_{k\cdot 1_{s_t}}(\HO^1(\Phi_{\Ss}(\HGamma(Q,W)_{\ff})))
\end{align}
taking points of the left hand side to surjections $\HO^1(\Phi_{\Ss}(\HGamma(Q,W)_{\ff}))\rightarrow \Simp(\mu_{\Ss}(Q))_{s_t}^{\oplus k}$.  We write 
\[
\Top(\HO^1(\Phi_{\Ss}(\HGamma(Q,W)_{\ff})))=\bigoplus_{s\leq n}\Simp(\mu_{\Ss}(Q))_s^{\oplus c_s}
\]
for some integers $c_s$.  Then 
\begin{align}
\label{secondGr}
\Gr_{k\cdot 1_{s_t}}(\HO^1(\Phi_{\Ss}(\HGamma(Q,W)_{\ff})))\cong \Gr(k,c_{s_t}).
\end{align}
In particular, we deduce that $\Gr_{k\cdot 1_{s_t}}(\HO^1(\Phi_{\Ss}(\HGamma(Q,W)_{\ff})))$ is scheme-theoretically smooth, and has trivial fundamental group.  Since $\Msp(Q,W)_{\ff,\dd}^{\zeta_{\Ss},\gamma\sfr,\nilp}$ is a scheme-theoretically smooth component of the critical locus of $\WW^{\zeta_{\Ss},\gamma\sfr}_{\ff,\dd}$, we deduce from the holomorphic Bott--Morse lemma (proved as in \cite[Lem.2.2]{Mi63}) that for any $x\in\Msp(Q,W)_{\ff,\dd}^{\zeta_{\Ss},\gamma\sfr,\nilp}$ there is an analytic open neighbourhood $U$ of $x$ in $\Msp(Q)_{\ff,\dd}^{\zeta_{\Ss},\gamma\sfr}$ where $\WW^{\zeta_{\Ss},\gamma\sfr}_{\ff,\dd}$ is written, after complex analytic change of coordinates, as $\sum_{i=1}^t x_i^2$, with $x_1,\ldots, x_t$ local defining equations for the variety $\Msp(Q,W)_{\ff,\dd}^{\zeta_{\Ss},\gamma\sfr,\nilp}$, and $t$ its codimension inside $\Msp(Q,W)_{\ff,\dd}^{\zeta_{\Ss},\gamma\sfr}$.  By the Thom--Sebastiani isomorphism, and the fact that the dimension of $\HO_c(\mathbb{A}^1,\phi_{x^2})$ is one, the complex of perverse sheaves 
\[
\phi_{\WW^{\zeta_{\Ss},\gamma\sfr}_{\ff,\dd}}\QQ_{\Msp(Q)^{\zeta_{\Ss},\gamma\sfr}_{\ff,\dd}}[\dim(\Msp(Q)^{\zeta_{\Ss},\gamma\sfr}_{\ff,\dd}) -1], 
\]
restricted to $\Msp(Q,W)_{\ff,\dd}^{\zeta_{\Ss},\gamma\sfr,\nilp}$, is a rank one local system, and a perverse sheaf, since $\phi_{\WW^{\zeta_{\Ss},\gamma\sfr}_{\ff,\dd}}[-1]$ preserves the perverse t structure \cite{BBD}.  By triviality of the fundamental group of its support, this local system is globally trivial, and so
\begin{align}\label{ZM}
\left(\phim{\WW^{\zeta_{\Ss},\gamma\sfr}_{\ff,\dd}}\ICS_{\Msp(Q)^{\zeta_{\Ss},\gamma\sfr}_{\ff,\dd}}(\mathbb{Q})\right)_{\nilp}\cong&\QQ_{\Msp(Q)^{\zeta_{\Ss},\gamma\sfr,\nilp}_{\ff,\dd}}\otimes\LL^{t/2}\otimes\LL^{-\dim(\Msp(Q)^{\zeta_{\Ss},\gamma\sfr}_{\ff,\dd})/2}\\
\cong & \ICS_{\Msp(Q)^{\zeta_{\Ss},\gamma\sfr,\nilp}_{\ff,\dd}}(\mathbb{Q}).\nonumber
\end{align}
It then follows from (\ref{firstGr}) and (\ref{secondGr}) that 
\begin{equation}
\label{normali}
\Ho\left(\dim_{*}\left(\phim{\WW^{\zeta_{\Ss},\gamma\sfr}_{\ff,\dd}}\ICS_{\Msp(Q)_{\ff,\dd}^{\zeta_{\Ss},\gamma\sfr}}(\mathbb{Q})\right)_{\nilp}\right)\cong\HO(\Gr(k,c_{s_t}),\mathbb{Q})_{\vir},
\end{equation}
and so both sides of (\ref{frexp}) are pure.  

For the monodromy statement, we tensor both sides of (\ref{normali}) with $\LL^{\dim(\Msp(Q)_{\ff,\dd}^{\zeta_{\Ss},\gamma\sfr})/2}$ to obtain
\begin{equation}
\label{unnormali}
\Ho\left(\dim_*\left(\phim{\WW^{\zeta_{\Ss},\gamma\sfr}_{\ff,\dd}}\QQ_{\Msp(Q)_{\ff,\dd}^{\zeta_{\Ss},\gamma\sfr}}\right)_{\nilp}\right)\cong \HO(\Gr(k,c_{s_t}),\mathbb{Q})\otimes\LL^{t/2},
\end{equation}
where $t$ is the codimension of $\Mst(Q,W)_{\dd}^{\zeta_{\Ss}\sst,\nilp}$ inside $\Mst(Q)_{\dd}^{\zeta_{\Ss}\sst}$.  By definition, the monodromic mixed Hodge module (\ref{secondOne}) is given by (\ref{unnormali}) as we let $\ff\mapsto\infty$.  The number $t$ is equal to the difference
\[
(\Psi_{\Ss}^{-1}(k\cdot 1_{s_t}),\Psi_{\Ss}^{-1}(k \cdot 1_{s_t}))_Q-(k\cdot 1_{s_t},k\cdot 1_{s_t})_{\mu_{\Ss}(Q)}
\]
where the notation is as in Proposition \ref{invOD}.  By Proposition \ref{invOD} this number is even, and so the right hand side of (\ref{unnormali}) has trivial monodromy by Remark \ref{monRem}.
\end{proof}
In the course of the proof we have shown that for $\gamma=\Mu^{\zeta_{\Ss}}(\shave)$, the monodromic mixed Hodge module $\Ho\left(\Dim_*\left(\phim{\WWW^{\zeta_{\Ss}\sst}_{\gamma}}\ICS_{\Mst(Q)^{\zeta_{\Ss}\sst}_{\gamma}}(\mathbb{Q})\right)_{\nilp}\right)$ is isomorphic to $\Ho\left(\Dim_*\ICS_{\Mst(Q,W)_{\gamma}^{\zeta_{\Ss}\sst,\nilp}}(\mathbb{Q})\right)$.  Loosely\footnote{Loose, because we do not actually define these mixed Hodge modules, but work with approximations to them on algebraic varieties.} speaking, this means that for $\dd$ of slope $\gamma$, we can replace the restriction to the nilpotent locus of the vanishing cycle monodromic mixed Hodge module on $\Mst(Q)^{\zeta_{\Ss}\sst}_{\dd}$ by (a twist of) the constant monodromic mixed Hodge module supported on the smooth connected component of the critical locus of $\WWW_{\dd}^{\zeta_{\Ss}\sst}$ corresponding to nilpotent modules.  More explicitly, the proof shows that there are isomorphisms
\begin{align}
\Ho\left(\Dim_*\left(\phim{\WWW^{\zeta_{\Ss}\sst}_{\dd}}\ICS_{\Mst(Q)^{\zeta_{\Ss}\sst}_{\dd}}(\mathbb{Q})\right)_{\nilp}\right)\cong\nonumber
&
\Ho\left(\Dim_*\ICS_{\Mst(Q,W)_{\dd}^{\zeta_{\Ss}\sst,\nilp}}(\mathbb{Q})\right)\\
\cong&\HO(\pt/\Gl_k)_{\vir}\label{Glout}
\end{align}
where $\dd=k\dim(\shave)$.

The following proposition is a consequence of Theorem \ref{rays} (the wall-crossing isomorphism), and isomorphisms (\ref{nilpCom}) and (\ref{nilpCom2}).  It gives a recursive formula for the vanishing cycle cohomology of the stack of all finite-dimensional modules in $\mathcal{F}_{\Ss}$.

\begin{proposition}
Let $Q$ be an ice quiver, let $\Ss=(s_1,\ldots,s_t)$ be a sequence of principal vertices of $Q$, and let $W$ be an algebraic potential for $Q$, nondegenerate with respect to $\Ss$.  Let $\theta_{\Ss}$ and $\zeta_{\Ss}$ be as in Proposition \ref{zetaTheta}.  Let $\gamma=\Mu^{\zeta_{\Ss}}(\shave)$.  If $\Ss$ is additive, then there is an isomorphism of monodromic mixed Hodge modules.
\begin{align}
\label{indIso1}
&\Ho\left(\Dim_* \left(\phim{\WWW^{\zeta_{\Ss}}_{[0,\theta_{\Ss}]}}\ICS_{\Mst(Q)_{[0,\theta_{\Ss}]}^{\zeta_{\Ss}}}(\mathbb{Q})\right)_{\nilp}\right)\cong \\
&\Ho\left(\Dim_*\ICS_{\Mst(Q,W)_{\gamma}^{\zeta_{\Ss}\sst,\nilp}}(\mathbb{Q})\right)\boxtimes^{\tw}_+\Ho\left(\Dim_* \left(\phim{\WWW_{[0,\theta_{\Ss'}]}^{\zeta_{\Ss'}} }\ICS_{\Mst(Q)_{[0,\theta_{\Ss'}]}^{\zeta_{\Ss'}} }(\mathbb{Q})\right)_{\nilp}\right),\nonumber
\end{align}
while if $\Ss$ is subtractive, there is an isomorphism
\begin{align}
\label{indIso2}
&\Ho\left(\Dim_* \left(\phim{\WWW_{[0,\theta_{\Ss'}]}^{\zeta_{\Ss'}} }\ICS_{\Mst(Q)_{[0,\theta_{\Ss'}]}^{\zeta_{\Ss'}} }(\mathbb{Q})\right)_{\nilp}\right)\cong \\
&\Ho\left(\Dim_*\ICS_{\Mst(Q,W)_{\gamma}^{\zeta_{\Ss}\sst,\nilp}}(\mathbb{Q})\right)\boxtimes^{\tw}_+\Ho\left(\Dim_* \left(\phim{\WWW^{\zeta_{\Ss}}_{[0,\theta_{\Ss}]}}\ICS_{\Mst(Q)_{[0,\theta_{\Ss}]}^{\zeta_{\Ss}}}(\mathbb{Q})\right)_{\nilp}\right).\nonumber
\end{align}
\end{proposition}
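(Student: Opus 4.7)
My plan is to derive both isomorphisms from the cohomological wall-crossing identity of Theorem \ref{rays}, applied to $(\zeta_{\Ss},[0,\theta_{\Ss}])$ in the additive case and to $(\zeta_{\Ss'},[0,\theta_{\Ss'}])$ in the subtractive case, together with Proposition \ref{TPure} to pin down the factor corresponding to the slope of $\shave$.

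First I would push Theorem \ref{rays} down from $\Msp(Q)$ to $\NN^m$ via $\dim_*$, which intertwines $\boxtimes^{\tw}_\oplus$ with $\boxtimes^{\tw}_+$, and use the isomorphisms (\ref{nilpCom}) and (\ref{nilpCom2}) to pass between $\dim_*p^{\zeta}_{\bullet,*}$ and $\Dim_*$ after restriction to the nilpotent locus. Applied to $(\zeta_{\Ss},[0,\theta_{\Ss}])$ this gives
\begin{equation*}
\Ho\!\left(\Dim_*\!\left(\phim{\WWW^{\zeta_{\Ss}}_{[0,\theta_{\Ss}]}}\ICS_{\Mst(Q)_{[0,\theta_{\Ss}]}^{\zeta_{\Ss}}}(\QQ)\right)_{\!\nilp}\right)\cong \Boxtimes^{\tw}_{+,[\theta_{\Ss}\xrightarrow{\gamma'}0]}\Ho\!\left(\Dim_*\!\left(\phim{\WWW^{\zeta_{\Ss}\sst}_{\gamma'}}\ICS_{\Mst(Q)_{\gamma'}^{\zeta_{\Ss}\sst}}(\QQ)\right)_{\!\nilp}\right),
\end{equation*}
and similarly for $(\zeta_{\Ss'},[0,\theta_{\Ss'}])$. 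By Proposition \ref{TPure} and its identification (\ref{Glout}), the only nonzero contribution to the factor at the slope $\gamma$ of $\shave$ comes from nilpotent direct sums of $\shave$, and this factor equals $\Ho\!\left(\Dim_*\ICS_{\Mst(Q,W)^{\zeta_{\Ss}\sst,\nilp}_{\gamma}}(\QQ)\right)$, which is exactly the ``shaved'' tensor factor on the right-hand side of both (\ref{indIso1}) and (\ref{indIso2}). In the additive case, $\gamma=\theta_{\Ss}$ is the topmost slope in the $\zeta_{\Ss}$-product and can be split off directly; in the subtractive case the same stack of $\shave$-sums arises as the fibre of the slope-$\Mu^{\zeta_{\Ss'}}(\shave)$ factor in the $\zeta_{\Ss'}$-product, since nilpotent modules semistable of that slope under $\zeta_{\Ss'}$ are again direct sums of $\shave$.

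It remains to match the residual product of slope factors with the cohomology of the other torsion-free category. For this I would invoke the freedom left by Proposition \ref{zetaTheta} in the choice of central charges: one can take $\zeta_{\Ss}$ as a sufficiently small perturbation of $\zeta_{\Ss'}$ moving only the real part of $\Z([\shave])$ across the threshold separating $\mathcal{F}_{\Ss'}$ from $\mathcal{T}_{\Ss'}$, so that for every $\dd\in\NN^m$ not a multiple of $\dim(\shave)$ the $\zeta_{\Ss}$- and $\zeta_{\Ss'}$-semistable subcategories of $\dd$-dimensional modules coincide, and Theorem \ref{rays} applied to $(\zeta_{\Ss'},[0,\theta_{\Ss'}])$ reproduces the residual factors of the $\zeta_{\Ss}$-expansion term by term. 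The main obstacle is precisely this simultaneous compatibility of the two stability conditions across the recursive definition of Section \ref{Tpair}: one must check that such a joint choice is achievable, a continuity/discreteness argument using that each $X(Q)_\dd$ admits only finitely many Harder--Narasimhan slopes. A cleaner alternative is to replay the proof of Theorem \ref{rays} directly with the two-step torsion pair $(\shave^\oplus,\mathcal{F}_{\Ss'})$ on $\mathcal{F}_{\Ss}$ (additive case) or $(\shave^\oplus,\mathcal{F}_{\Ss})$ on $\mathcal{F}_{\Ss'}$ (subtractive case) in place of the full HN stratification: the same distinguished-triangle-plus-decomposition-theorem argument yields the two-factor decomposition immediately, with splitting of the long exact sequences ensured by the purity established in Proposition \ref{TPure}.
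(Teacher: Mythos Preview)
Your proposal has the right ingredients and is essentially the paper's approach: apply Theorem~\ref{rays}, push down via $\dim_*$ using (\ref{nilpCom}) and (\ref{nilpCom2}), and identify the factor at slope $\gamma$ with $\Ho\bigl(\Dim_*\ICS_{\Mst(Q,W)^{\zeta_{\Ss}\sst,\nilp}_\gamma}(\QQ)\bigr)$ via Proposition~\ref{TPure} and (\ref{Glout}). The paper in fact gives no proof beyond citing exactly these three inputs.

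Where you diverge is in the resolution of what you call the ``main obstacle'', namely identifying the residual product with the $(\zeta_{\Ss'},\theta_{\Ss'})$ expression. Neither a perturbation argument nor a replay of the purity/splitting argument with a two-step filtration is needed. The point is that, by Corollary~\ref{allW}, the nilpotent part of the critical locus of $\WWW$ inside $\Mst(Q)^{\zeta}_{[0,\theta]}$ is a union of connected components of $\crit(\WWW)$, so by locality of $\phim{}$ the object
\[
\Ho\!\left(\Dim_*\left(\phim{\WWW^{\zeta}_{[0,\theta]}}\ICS_{\Mst(Q)^{\zeta}_{[0,\theta]}}(\QQ)\right)_{\nilp}\right)
\]
depends only on the category of nilpotent $\HJac(Q,W)$-modules lying in $\Mst(Q)^{\zeta}_{[0,\theta]}$ --- that is, only on the torsion-free class --- and not on the particular $(\zeta,\theta)$ realising it; this is precisely what (\ref{nilpCom}) and (\ref{nilpCom2}) encode. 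Since the last clause of Proposition~\ref{zetaTheta} was arranged so that the single stability condition $\zeta_{\Ss}$ already separates $\shave^{\oplus}$ from the rest of $\mathcal{F}_{\Ss}$ (additive case) or of $\mathcal{F}_{\Ss'}$ (subtractive case), one applies Theorem~\ref{rays} once with $\zeta_{\Ss}$, peels off the $\gamma$-factor, recognises the remainder as the expression for the smaller torsion-free class computed with $\zeta_{\Ss}$ and a shifted threshold, and then relabels it with $(\zeta_{\Ss'},\theta_{\Ss'})$ by the independence just noted. Your alternative (b) would also succeed, but is more laborious than necessary.
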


\begin{remark}
\label{untwist}
For future reference, we write down the untwisted versions of isomorphisms (\ref{indIso1}) and (\ref{indIso2}).  Tensoring both sides of the component of (\ref{indIso1}) supported at $\dd$ by $\LL^{{}-(\dd,\dd)/2}$, the isomorphism becomes
\begin{align}
&\Ho\left(\Dim_* \left(\phim{\WWW^{\zeta_{\Ss}}_{[0,\theta_{\Ss}]}}\QQ_{\Mst(Q)_{[0,\theta_{\Ss}]}^{\zeta_{\Ss}}}\right)_{\nilp}\right)\cong \\
&\bigoplus_{\substack{\dd''\in\mathbb{N}^m|\Mu^{\zeta_{\Ss}}(\dd'')\leq \theta_{\Ss}\\ \dd'\in\mathbb{N}\dim(\shave)}}\Ho\left(\Dim_*\QQ_{\Mst(Q,W)_{\dd'}^{\zeta_{\Ss}\sst,\nilp}}\right)\boxtimes_+\nonumber\\
&\boxtimes_+\Ho\left(\Dim_* \left(\phim{\WWW_{[0,\theta_{\Ss'}],\dd''}^{\zeta_{\Ss'}} }\QQ_{\Mst(Q)_{[0,\theta_{\Ss'}],\dd''}^{\zeta_{\Ss'}} }\right)_{\nilp}\right)\otimes\LL^{{}-(\dd',\dd'')},\nonumber
\end{align}
while (\ref{indIso2}) becomes
\begin{align}
&\Ho\left(\Dim_* \left(\phim{\WWW_{[0,\theta_{\Ss'}]}^{\zeta_{\Ss'}} }\QQ_{\Mst(Q)_{[0,\theta_{\Ss'}]}^{\zeta_{\Ss'}} }\right)_{\nilp}\right)\cong \\
&\bigoplus_{\substack{\dd''\in\mathbb{N}^m|\Mu^{\zeta_{\Ss}}(\dd'')\leq \theta_{\Ss}\\ \dd'\in\mathbb{N}\dim(\shave)}}\Ho\left(\Dim_*\QQ_{\Mst(Q,W)_{\dd'}^{\zeta_{\Ss}\sst,\nilp}}\right)\boxtimes_+\nonumber\\
&\boxtimes_+\Ho\left(\Dim_* \left(\phim{\WWW^{\zeta_{\Ss}}_{[0,\theta_{\Ss}],\dd''}}\QQ_{\Mst(Q)_{[0,\theta_{\Ss}],\dd''}^{\zeta_{\Ss}}}\right)_{\nilp}\right)\otimes\LL^{{}-(\dd',\dd'')}.\nonumber
\end{align}
\end{remark}

Applying $\chi_Q$ to the equalities in the Grothendieck group of $\Dulf(\MMHM(\mathbb{N}^{Q_0}))$ induced by the isomorphisms (\ref{indIso1}) and (\ref{indIso2}) respectively, we deduce the following corollary.
\begin{corollary}
\label{indCor}
In the ring $\hat{\Ror}_Q$, there is an equality
\begin{align*}
&\chi_Q\left(\left[\Dim_* \left(\phim{\WWW^{\zeta_{\Ss}}_{[0,\theta_{\Ss}]}}\ICS_{\Mst(Q)_{[0,\theta_{\Ss}]}^{\zeta_{\Ss}}}(\mathbb{Q})\right)_{\nilp}\right]\right)=\\
&\EXP\left(\frac{Y^{\pm\dim(\Simp_{\Ss})}}{q^{-1/2}-q^{1/2}}\right)^{\pm 1}\chi_Q\left(\left[\Dim_* \left(\phim{\WWW_{[0,\theta_{\Ss'}]}^{\zeta_{\Ss'}} }\ICS_{\Mst(Q)_{[0,\theta_{\Ss'}]}^{\zeta_{\Ss'}} }(\mathbb{Q})\right)_{\nilp}\right]\right),
\end{align*}
where the sign is positive in the additive case, and negative in the subtractive case.
\end{corollary}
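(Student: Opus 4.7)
The plan is to apply the integration map $\chi_Q$ to the isomorphisms (\ref{indIso1}) and (\ref{indIso2}). Since $\chi_Q$ is a ring homomorphism from $\KK(\Dulf(\MMHM(\mathbb{N}^m)))$ (with the product induced by $\boxtimes^{\tw}_+$) to $\hat{\Ror}_Q$, each twisted external product becomes an ordinary product in $\hat{\Ror}_Q$, and the corollary reduces to computing the factor $\chi_Q\!\left(\left[\Ho(\Dim_*\ICS_{\Mst(Q,W)_{\gamma}^{\zeta_{\Ss}\sst,\nilp}}(\QQ))\right]\right)$ and, in the subtractive case, moving it to the other side of the equality.

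For that computation I would first invoke Proposition \ref{zetaTheta} to identify the nilpotent $\zeta_{\Ss}$-semistable $\HJac(Q,W)$-modules at slope $\gamma=\Mu^{\zeta_{\Ss}}(\shave)=\theta_{\Ss}$ as precisely the direct sums $\shave^{\oplus k}$, whence $\Mst(Q,W)_{k\dim(\shave)}^{\zeta_{\Ss}\sst,\nilp}\cong \pt/\Gl_k$. The isomorphism (\ref{Glout}) established in the proof of Proposition \ref{TPure} then identifies
\[
\Ho\left(\Dim_*\ICS_{\Mst(Q,W)_{k\dim(\shave)}^{\zeta_{\Ss}\sst,\nilp}}(\QQ)\right)\cong \HO(\pt/\Gl_k,\QQ)\otimes \LL^{k^2/2}
\]
as an object of $\Dulf(\MMHM(\pt))$ supported at the dimension vector $k\dim(\shave)$. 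Summing over $k$ and applying $\chi_Q$ --- and using the vanishing $\langle\dim(\shave),\dim(\shave)\rangle_Q=0$, which makes $(Y^{\dim(\shave)})^k=Y^{k\dim(\shave)}$ in $\hat{\Ror}_Q$ --- the quantum dilogarithm identity (\ref{dilogId}), with the formal variable $x$ specialised to $Y^{\dim(\shave)}$, evaluates the resulting sum as $\EXP\!\left(Y^{\dim(\shave)}/(q^{-1/2}-q^{1/2})\right)$.

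To match the stated form I would then translate $\dim(\shave)$ into $\dim(\Simp_{\Ss})$ through the K-theoretic identification $\mathbb{Z}^m\cong \KK(\Dub^{\fd}_{\princ}(\Rmod{\HGamma(Q,W)}))$: in the additive case $\shave=\Simp_{\Ss}$ gives $\dim(\Simp_{\Ss})=\dim(\shave)$, while in the subtractive case $\shave=\Simp_{\Ss}[-1]$ gives $\dim(\Simp_{\Ss})=-\dim(\shave)$, so in both cases $Y^{\pm\dim(\Simp_{\Ss})}=Y^{\dim(\shave)}$ with the sign matching that in the corollary. In the additive case the factor $\EXP(\cdots)$ appears directly as the outer factor on the right-hand side of (\ref{indIso1}), giving exponent $+1$; in the subtractive case the same factor multiplies the $\Ss'$-term on the right-hand side of (\ref{indIso2}), so one inverts to isolate the $\Ss$-term, producing exponent $-1$. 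No genuine obstacle is expected here --- all the difficult work has already been absorbed into Theorem \ref{rays}, Proposition \ref{TPure}, and the isomorphisms (\ref{indIso1})--(\ref{indIso2}) --- and what remains is essentially the evaluation of a quantum dilogarithm together with careful bookkeeping of the sign conventions.
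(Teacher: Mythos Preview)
Your proposal is correct and follows essentially the same approach as the paper: apply $\chi_Q$ to the isomorphisms (\ref{indIso1}) and (\ref{indIso2}), use (\ref{Glout}) and the quantum dilogarithm identity (\ref{dilogId}) to evaluate $\chi_Q$ of the first tensor factor as $\EXP\!\left(Y^{\dim(\shave)}/(q^{-1/2}-q^{1/2})\right)$, and then unwind the sign conventions for $\shave$ versus $\Simp_{\Ss}$ and invert in the subtractive case. Your write-up is in fact more explicit than the paper's terse justification, particularly in noting that $\langle\dim(\shave),\dim(\shave)\rangle_Q=0$ ensures the powers of $Y^{\dim(\shave)}$ behave commutatively so that (\ref{dilogId}) applies directly.
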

In the above corollary, we have used equation (\ref{dilogId}) and (\ref{Glout}) to write
\[
\chi_Q\left(\Ho\left(\Dim_*\ICS_{\Mst(Q,W)_{\gamma}^{\zeta_{\Ss}\sst,\nilp}}(\mathbb{Q})\right)\right)=\pleth{\frac{Y^{\dim(\shave)}}{q^{-1/2}-q^{1/2}}},
\]
where $\shave$ is as in Definition \ref{shaveDef}.
\begin{theorem}\cite[Thm.5.11]{Efi11}\label{sfrThm}
Let $Q$ be an ice quiver, let $\Ss$ be a sequence of vertices of $Q_{\princ}$, and let $W$ be an algebraic potential, nondegenerate with respect to $\Ss$.  Then there is an identity in $\hat{\Tor}_{\Lambda}$
\begin{align}
\label{mutId}
\mu_{\Ss}(M)(\ff)= &\iota\chi_Q\left(\left[\Ho\left(\Dim_* \left(\phim{\WWW_{[0,\theta_{\Ss}]}^{\zeta_{\Ss}} }\ICS_{\Mst(Q)_{[0,\theta_{\Ss}]}^{\zeta_{\Ss}} }(\mathbb{Q})\right)_{\nilp}\right)\right]\right)X^{[\HGamma(Q,W)_{\ff}]}\\&\left(\iota\chi_Q\left(\left[\Ho\left(\Dim_* \left(\phim{\WWW_{[0,\theta_{\Ss}]}^{\zeta_{\Ss}} }\ICS_{\Mst(Q)_{[0,\theta_{\Ss}]}^{\zeta_{\Ss}} }(\mathbb{Q})\right)_{\nilp}\right)\right]\right)\right)^{-1}.\nonumber
\end{align}
\end{theorem}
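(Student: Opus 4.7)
The plan is to proceed by induction on the length $t$ of the sequence $\Ss = (s_1,\ldots,s_t)$. In the base case $\Ss = \emptyset$, the torsion-free subcategory $\mathcal{F}_\emptyset$ contains only the zero module, so $\Mst(Q)^{\zeta_\emptyset}_{[0,\theta_\emptyset]}$ is the point stack, the vanishing-cycle cohomology contributes only the monoidal unit, and $\iota\chi_Q([\cdots]) = 1 \in \hat{\Tor}_\Lambda$. Since $[\HGamma(Q,W)_\ff] = \ff \in \mathbb{Z}^n$ under our identifications, the right-hand side reduces to $X^\ff = M(\ff) = \mu_\emptyset(M)(\ff)$, as required.

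For the inductive step, abbreviate the integrated class on the right-hand side of (\ref{mutId}) as $\mathcal{E}_\Ss \in \hat{\Tor}_\Lambda$. Corollary \ref{indCor}, combined with applying $\iota$ and transporting across the ring homomorphism $\chi_Q$, yields the factorisation
\[
\mathcal{E}_\Ss \;=\; \iota\!\left(\pleth{\tfrac{Y^{\pm\dim(\Simp_\Ss)}}{q^{-1/2}-q^{1/2}}}\right)^{\!\pm 1} \cdot \mathcal{E}_{\Ss'},
\]
where the signs are positive in the additive case and negative in the subtractive case. The inductive hypothesis says $\mu_{\Ss'}(M)(\ff) = \mathcal{E}_{\Ss'}\cdot X^{[\HGamma(Q,W)_\ff]}\cdot \mathcal{E}_{\Ss'}^{-1}$, so it suffices to prove that conjugation by the dilogarithm factor implements the single mutation $\mu_{s_t}$ of toric frames prescribed by (\ref{mutFram}), applied to the mutated seed $\mu_{\Ss'}((Q,M))$.

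The key computation is a direct generalisation of Example \ref{basicWCF}: in $\hat{\Tor}_\Lambda$, if $A = X^{\alpha}$ and $B = X^\beta$ satisfy $AB = q^n BA$ for $n = \Lambda(\alpha,\beta)$, then the quantum dilogarithm identity (\ref{EfId}) (and its inverse form (\ref{reverseWall})) extends inductively to give $\pleth{B/(q^{-1/2}-q^{1/2})}\cdot A\cdot \pleth{B/(q^{-1/2}-q^{1/2})}^{-1} = A\cdot \prod_{i=0}^{n-1}(1 + q^{-(2i+1)/2}B)$ for $n \geq 0$, with the analogous formula for $n \leq 0$. By Remark \ref{muComm}, the required exponent is $\Lambda(X^{\ff'}, \iota(Y^{\dim \Simp_\Ss})) = \sum_g (-1)^g \dim\Ext^g(\HGamma(\mu_{\Ss'}(Q,W))_{\ff'}, \Simp(\mu_{\Ss'}(Q))_{s_t})$, which by the resolution (\ref{resolution}) reads off the entries of the $s_t$-th column of the exchange matrix $\tilde{B}_{\mu_{\Ss'}(Q)}$. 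A short verification then shows that the quantum binomial produced on conjugating $\mu_{\Ss'}(M)(\ff)$ reproduces exactly the right-hand side of (\ref{mutFram}) applied to $\mu_{\Ss'}(M)$ at vertex $s_t$.

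The main obstacle is the bookkeeping needed to match the two formulas. One must separate the additive and subtractive cases of Definition \ref{shaveDef}: these correspond respectively to the two distinguished triangles (\ref{simpEx}) and (\ref{readback}) of Lemma \ref{simpTransf}, and to the two summands in (\ref{mutFram}). In each case one uses Proposition \ref{lattComm} to convert between the $\KK$-groups of $\Dub_{\princ}^{\fd}(\Rmod{\HGamma(Q,W)})$ and $\Perf(\HGamma(Q,W))$, together with the factorisation $\Psi_\Ss = \Psi_{(s_t),\epsilon_t}\circ\Psi_{\Ss'}$ induced by composing the Keller--Yang equivalences. Once the Grothendieck-group computation is carried out, the resulting monomial exponents on both sides match identically, completing the induction.
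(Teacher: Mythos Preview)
Your approach is essentially the same as the paper's: induction on the length of $\Ss$, using Corollary \ref{indCor} to peel off one dilogarithm factor, then verifying that conjugation by that factor implements the single cluster mutation $\mu_{s_t}$ via the commutation relations of Remark \ref{muComm} and the identities of Example \ref{basicWCF}.

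Two small points where the paper's execution differs from your sketch. First, the paper does not use the general $AB=q^nBA$ formula; instead it reduces to the generators $\ff=1_s$, where Remark \ref{muComm} gives $n=0$ for $s\neq s_t$ and $n=\pm 1$ for $s=s_t$, so only the identities (\ref{EfId}) and (\ref{reverseWall}) are needed. Since both sides of (\ref{mutId}) satisfy $A(\ee+\ff)=q^{-\Lambda(\ee,\ff)/2}A(\ee)A(\ff)$ (the left because mutation of compatible pairs preserves $\Lambda$, the right because conjugation is an algebra automorphism), checking on generators suffices; your general-$n$ route would work but produces unnecessary quantum binomials that then have to be matched against the toric-frame law. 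Second, the Grothendieck-group bookkeeping in the paper runs through the $\Perf$ side: the relevant triangles are (\ref{plusCase}), (\ref{minusCase}), (\ref{unchanged}) of Theorem \ref{KYThm}, together with the resolution (\ref{resolution}) to compute $\iota[\Simp_{\Ss}]$, rather than the simple-module triangles (\ref{simpEx}), (\ref{readback}) of Lemma \ref{simpTransf} that you cite. The two are of course linked via Proposition \ref{lattComm}, so your route can be made to work, but the exponents you need to compare are the classes $[\Phi^{-1}_{\Ss}(e_s\cdot\HGamma(\mu_{\Ss}(Q,W)))]$ in $\KK(\Perf)$, and the $\Perf$-triangles give these directly.
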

\begin{proof}
Let $\Ss=(s_1,\ldots,s_t)$.  As ever the proof is by induction on the length of $\Ss$.  There is an equality
\begin{equation}
\label{commDang}
\pleth{\frac{\iota(Y^{\pm[\Simp_{\Ss}]})}{q^{-1/2}-q^{1/2}}}^{\pm 1}X^{[\Phi^{-1}_{\Ss'}(e_s\cdot \HGamma(\mu_{\Ss'}(Q,W)))]}\pleth{\frac{\iota(Y^{\pm[\Simp_{\Ss}]})}{q^{-1/2}-q^{1/2}}}^{\mp 1}=X^{[\Phi^{-1}_{\Ss'}(e_s\cdot \HGamma(\mu_{\Ss'}(Q,W)))]}
\end{equation}
if $s_t\neq s$, since then $\iota(Y^{\pm[\Simp_{\Ss}]})$ and $X^{[\Phi^{-1}_{\Ss'}(e_s\cdot \HGamma(\mu_{\Ss'}(Q,W)))]}$ commute by Remark \ref{muComm}.  In the additive case, we set the first sign to be positive, in the subtractive case we set it to be negative.  By the last statement of Theorem \ref{KYThm}, if $s\neq s_t$ we have the equality
\[
X^{[\Phi^{-1}_{\Ss'}(e_s\cdot \HGamma(\mu_{\Ss'}(Q,W)))]}=X^{[\Phi^{-1}_{\Ss}(e_s\cdot \HGamma(\mu_{\Ss}(Q,W)))]}.
\]
This demonstrates the inductive step, for $\Ss$ additive or subtractive, and for $s_t\neq s$.  

Now say $s_t=s$.  Firstly, assume that $\Ss$ is additive.  Then
\[
X^{[\Phi^{-1}_{\Ss'}(e_s\cdot \HGamma(\mu_{\Ss'}(Q,W)))]} \iota(Y^{[\Simp_{\Ss}]})= q\iota(Y^{[\Simp_{\Ss}]}) X^{[\Phi^{-1}_{\Ss'}(e_s\cdot \HGamma(\mu_{\Ss'}(Q,W)))]}
\]
by Remark \ref{muComm}.  By Example \ref{basicWCF}, the left hand side of (\ref{commDang}) is equal to
\[
X^{[\Phi^{-1}_{\Ss'}(e_s\cdot \HGamma(\mu_{\Ss'}(Q,W)))]}+X^{[\Phi^{-1}_{\Ss'}(e_s\cdot \HGamma(\mu_{\Ss'}(Q,W)))]+\iota[\Simp_{\Ss}]}.
\]
By (\ref{minusCase}) we have the identity
\[
[\Phi^{-1}_{\Ss}(e_{s_t}\cdot \HGamma(\mu_{\Ss}(Q,W)))]=\sum_{a\in \mu_{\Ss'}(Q)_1|s(a)=s_t}[\Phi_{\Ss'}^{-1}(e_{t(a)}\cdot \HGamma(\mu_{\Ss'}(Q,W)))]-[\Phi^{-1}_{\Ss'}(e_{s_t}\cdot \HGamma(\mu_{\Ss'}(Q,W)))]
\]
while Proposition \ref{lattComm} and (\ref{resolution}) give the identity
\begin{align}\label{Sid}
\iota[\Simp_{\Ss}]:=&{}-\KK(\nu)([\Simp_{\Ss}])\\=&-\left(\sum_{a\in \mu_{\Ss'}(Q)_1|s(a)=s_t}[\Phi_{\Ss}^{-1}(e_{t(a)}\cdot \HGamma(\mu_{\Ss}(Q,W))]\right)+\left(\sum_{a\in \mu_{\Ss'}(Q)_1|t(a)=s_t}[\Phi_{\Ss}^{-1}(e_{s(a)}\cdot \HGamma(\mu_{\Ss}(Q,W))]\right).\nonumber
\end{align}
We have again used that $[\Phi_{\Ss'}^{-1}(e_{s}\cdot \HGamma(\mu_{\Ss'}(Q,W)))]=[\Phi_{\Ss}^{-1}(e_{s}\cdot \HGamma(\mu_{\Ss}(Q,W)))]$ for $s\neq s_t$.  Finally, we deduce that in the case $s=s_t$, with $\Ss$ additive, the left hand side of (\ref{commDang}) is equal to
\begin{align}\label{desClus}
&\pleth{\frac{\iota(Y^{[\Simp_{\Ss}]})}{q^{-1/2}-q^{1/2}}}X^{[\Phi^{-1}_{\Ss'}(e_s\cdot \HGamma(\mu_{\Ss'}(Q,W)))]}\pleth{\frac{-\iota(Y^{[\Simp_{\Ss}]})}{q^{-1/2}-q^{1/2}}}=\\
\nonumber
&X^{\sum_{a\in \mu_{\Ss'}(Q)_1|s(a)=s}[\Phi_{\Ss}^{-1}(e_{t(a)}\cdot \HGamma(\mu_{\Ss}(Q,W)))]-[\Phi_{\Ss}^{-1}(e_{s}\cdot\HGamma(\mu_{\Ss}(Q,W)))]}+\\
&X^{\sum_{a\in \mu_{\Ss'}(Q)_1|t(a)=s}[\Phi_{\Ss}^{-1}(e_{s(a)}\cdot \HGamma(\mu_{\Ss}(Q,W)))]-[\Phi_{\Ss}^{-1}(e_{s}\cdot\HGamma(\mu_{\Ss}(Q,W)))]}\nonumber
\end{align}
as required.

By (\ref{plusCase}), in the subtractive case, we have the identity
\begin{equation}\label{Scas}
[\Phi^{-1}_{\Ss}(e_{s_t}\cdot \HGamma(\mu_{\Ss}(Q,W)))]=\sum_{a\in \mu_{\Ss'}(Q)_1|t(a)=s_t}[\Phi_{\Ss'}^{-1}(e_{s(a)}\cdot \HGamma(\mu_{\Ss'}(Q,W)))]-[\Phi^{-1}_{\Ss'}(e_{s_t}\cdot \HGamma(\mu_{\Ss'}(Q,W)))].
\end{equation}
By Remark \ref{muComm}, we have the commutation relation
\[
X^{[\Phi^{-1}_{\Ss'}(e_s\cdot \HGamma(\mu_{\Ss'}(Q,W)))]} \iota(Y^{[\Simp_{\Ss}[-1]]})= q^{-1}\iota(Y^{[\Simp_{\Ss}[-1]]}) X^{[\Phi^{-1}_{\Ss'}(e_s\cdot \HGamma(\mu_{\Ss'}(Q,W)))]}.
\]
By (\ref{reverseWall}), there is an identity
\begin{align}
&\pleth{\frac{\iota(Y^{-[\Simp_{\Ss}]})}{q^{-1/2}-q^{1/2}}}^{- 1}X^{[\Phi^{-1}_{\Ss'}(e_s\cdot \HGamma(\mu_{\Ss'}(Q,W)))]}\pleth{\frac{\iota(Y^{-[\Simp_{\Ss}]})}{q^{-1/2}-q^{1/2}}}=\\&X^{[\Phi^{-1}_{\Ss'}(e_s\cdot \HGamma(\mu_{\Ss'}(Q,W)))]}+X^{[\Phi^{-1}_{\Ss'}(e_s\cdot \HGamma(\mu_{\Ss'}(Q,W)))]+\iota([S_{\Ss}[-1]])}=\\
\nonumber
&X^{\sum_{a\in \mu_{\Ss'}(Q)_1|t(a)=s}[\Phi_{\Ss}^{-1}(e_{s(a)}\cdot \HGamma(\mu_{\Ss}(Q,W)))]-[\Phi_{\Ss}^{-1}(e_{s}\cdot\HGamma(\mu_{\Ss}(Q,W)))]}
+\\
&X^{\sum_{a\in \mu_{\Ss'}(Q)_1|s(a)=s}[\Phi_{\Ss}^{-1}(e_{t(a)}\cdot \HGamma(\mu_{\Ss}(Q,W)))]-[\Phi_{\Ss}^{-1}(e_{s}\cdot\HGamma(\mu_{\Ss}(Q,W)))]}
\nonumber
\end{align}
where the final identity is given by (\ref{Sid}) and (\ref{Scas}).

\end{proof}
It follows that the right hand side of (\ref{mutId}) is in $\Tor_{\Lambda}$, as opposed to the completion $\hat{\Tor}_{\Lambda}$, by \cite[Cor.5.2]{BZ05}.  We will see how to derive that result within the present framework in Section \ref{cluVia}.

\section{Proof of the purity conjecture}

The goal of this section is to prove Theorem \ref{KConj}, which is a purity result for the monodromic mixed Hodge module categorifying quantum cluster coefficients.

\begin{proposition}
\label{stackPure}
Let $Q$ be an ice quiver.  Let $\Ss=(s_1,\ldots,s_t)$ be a sequence of principal vertices of $Q$, let $W$ be an algebraic potential for $Q$, nondegenerate with respect to $\Ss$, and let $\zeta_{\Ss}$ and $\theta_{\Ss}$ be as in Proposition \ref{zetaTheta}.  Then the monodromic mixed Hodge module
\begin{equation}
\label{stacPure}
\Ho\left(\Dim_* \left(\phim{\WWW^{\zeta_{\Ss}}_{[0,\theta_{\Ss}]}}\ICS_{\Mst(Q)^{\zeta_{\Ss}}_{[0,\theta_{\Ss}]}}(\mathbb{Q})\right)_{\nilp}\right)
\end{equation}
is pure, of Tate type.  Furthermore, the monodromic mixed Hodge module
\begin{equation}
\label{stacMon}
\Ho\left(\Dim_* \left(\phim{\WWW^{\zeta_{\Ss}}_{[0,\theta_{\Ss}]}}\QQ_{\Mst(Q)^{\zeta_{\Ss}}_{[0,\theta_{\Ss}]}}\right)_{\nilp}\right)
\end{equation}
has trivial monodromy.
\end{proposition}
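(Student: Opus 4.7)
The plan is induction on the length $t$ of $\Ss=(s_1,\ldots,s_t)$ using the recursive isomorphisms (\ref{indIso1}) and (\ref{indIso2}). For brevity, set
\[
\mathcal{H}_\Ss:=\Ho\left(\Dim_*\left(\phim{\WWW^{\zeta_\Ss}_{[0,\theta_\Ss]}}\ICS_{\Mst(Q)^{\zeta_\Ss}_{[0,\theta_\Ss]}}(\mathbb{Q})\right)_{\nilp}\right),\qquad \mathcal{W}_\Ss:=\Ho\left(\Dim_*\ICS_{\Mst(Q,W)_\gamma^{\zeta_\Ss\sst,\nilp}}(\mathbb{Q})\right),
\]
so that (\ref{indIso1}) reads $\mathcal{H}_\Ss\cong \mathcal{W}_\Ss\boxtimes^{\tw}_+\mathcal{H}_{\Ss'}$ in the additive case and (\ref{indIso2}) reads $\mathcal{H}_{\Ss'}\cong \mathcal{W}_\Ss\boxtimes^{\tw}_+\mathcal{H}_\Ss$ in the subtractive case. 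Proposition \ref{TPure}, together with the identification of $\mathcal{W}_\Ss$ with $\Ho(\Dim_*(\phim{\WWW_\gamma^{\zeta_\Ss\sst}}\ICS_{\Mst(Q)_\gamma^{\zeta_\Ss\sst}}(\mathbb{Q}))_{\nilp})$ that was noted just after equation (\ref{Glout}), shows that $\mathcal{W}_\Ss$ is pure of Tate type and that its $\mathbb{Q}$-analogue (without the $\LL^{-\dim/2}$ twist coming from $\ICS$) has trivial monodromy. The base case $\Ss=\emptyset$ is immediate: $\mathcal{F}_\emptyset=0$ forces $\Mst(Q)^{\zeta_\emptyset,\nilp}_{[0,\theta_\emptyset]}$ to reduce to a point, and $\mathcal{H}_\emptyset$ is the monoidal unit $\mathbb{Q}$.

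For the additive case of the inductive step, I apply (\ref{indIso1}): the twisted monoidal product $\boxtimes^{\tw}_+$ preserves purity and the Tate-type property, by Proposition \ref{weightPreserve} applied to the (finite) addition map on $\mathbb{N}^m$, together with the invariance of these properties under twists by $\LL^{k/2}$. For the trivial monodromy statement I switch to the untwisted version from Remark \ref{untwist}, whose twist factor is the integer power $\LL^{-(\dd',\dd'')}$ and so has trivial monodromy; tensor products of objects with trivial monodromy have trivial monodromy. Combining these observations with the induction hypothesis on $\mathcal{H}_{\Ss'}$ and the known properties of $\mathcal{W}_\Ss$ transfers all three desired properties to $\mathcal{H}_\Ss$.

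The subtractive case is the main obstacle, since (\ref{indIso2}) expresses $\mathcal{H}_{\Ss'}$ rather than $\mathcal{H}_\Ss$ as a twisted product, forcing us to \emph{extract} $\mathcal{H}_\Ss$ rather than build it up. The key observation is that $(\mathcal{W}_\Ss)_0\cong\mathbb{Q}$ is the monoidal unit, because the only nilpotent module of dimension zero is the zero module. Hence at each dimension vector $\dd$, the decomposition
\[
(\mathcal{H}_{\Ss'})_\dd\cong\bigoplus_{\dd'+\dd''=\dd}(\mathcal{W}_\Ss)_{\dd'}\otimes(\mathcal{H}_\Ss)_{\dd''}\otimes\LL^{\langle\dd'',\dd'\rangle/2}
\]
contains $(\mathcal{H}_\Ss)_\dd$ as the summand indexed by $\dd'=0$. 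Since $(\mathcal{H}_{\Ss'})_\dd$ is pure of Tate type by induction, and (using the untwisted reformulation of Remark \ref{untwist}) has trivial monodromy in the $\mathbb{Q}$-version, and since all three properties pass to direct summands, the same properties hold for $(\mathcal{H}_\Ss)_\dd$, completing the induction.
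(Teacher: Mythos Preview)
Your proof is correct and follows essentially the same route as the paper's: induction on the length of $\Ss$, with the additive step handled via (\ref{indIso1}) and Proposition \ref{weightPreserve}, and the subtractive step handled by observing that the monoidal unit $\mathbb{Q}_{\{0\}}$ sits inside $\mathcal{W}_\Ss$ as a direct summand, so that (\ref{indIso2}) exhibits $\mathcal{H}_\Ss$ as a direct summand of $\mathcal{H}_{\Ss'}$. The paper phrases the subtractive step contrapositively (impurity of $\mathcal{H}_{\Ss'}$ would follow from impurity of $\mathcal{H}_\Ss$), but the content is identical; likewise both arguments invoke Remark \ref{untwist} for the monodromy claim.
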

\begin{proof}
We prove the proposition by induction on the length of $\Ss$.  The result is clearly true for $\Ss=\emptyset$, since then $\mathcal{F}_{\Ss}$ contains only the zero module, and (\ref{stacPure}) and (\ref{stacMon}) are isomorphic to $\mathbb{Q}_{\{0\}}$, the constant pure mixed Hodge module supported at the origin $0\in\mathbb{N}^m$.  As in Proposition \ref{TPure} we set $\gamma=\Mu^{\zeta_{\Ss}}(\shave)$.  By the proof of Proposition \ref{TPure} (see isomorphism (\ref{Glout})), $\Ho\left(\Dim_*\ICS_{\Mst(Q,W)_{\gamma}^{\zeta_{\Ss}\sst,\nilp}}(\mathbb{Q})\right)$ is pure.  So if $\Ss$ is additive, purity follows from the isomorphism (\ref{indIso1}) and the inductive hypothesis, since $\boxtimes_{+}^{\tw}$ preserves purity, by Proposition \ref{weightPreserve} and Lemma \ref{finLem}.  On the other hand, if $\Ss$ is subtractive, then impurity of 
\begin{equation}
\label{noPrime}
\Ho\left(\Dim_* \left(\phim{\WWW^{\zeta_{\Ss'}}_{[0, \theta_{\Ss'}]}}\ICS_{\Mst(Q)^{\zeta_{\Ss'}}_{[0,\theta_{\Ss'}]}}(\mathbb{Q})\right)_{\nilp}\right)
\end{equation}
or its failure to be of Tate type, is implied by impurity of
\begin{equation}
\label{primeCase}
\Ho\left(\Dim_* \left(\phim{\WWW^{\zeta_{\Ss}}_{[0, \theta_{\Ss}]}}\ICS_{\Mst(Q)^{\zeta_{\Ss}}_{[0,\theta_{\Ss}]}}(\mathbb{Q})\right)_{\nilp}\right)
\end{equation}
or its failure to be of Tate type, since there is an inclusion of the monoidal unit 
\[
\mathbb{Q}_{\{0\}}\subset \Ho\left(\Dim_{\gamma,*}^{\zeta_{\Ss}\sst}\ICS_{\Mst(Q,W)_{\gamma}^{\zeta_{\Ss}\sst,\nilp}}(\mathbb{Q})\right)
\]
as a direct summand, and so (\ref{indIso2}) implies there is an inclusion $(\ref{primeCase})\subset(\ref{noPrime})$ as a direct summand.  So purity, of Tate type, follows again from the inductive hypothesis.  The monodromy statement is proved in exactly the same way, using the monodromy statement of Proposition \ref{TPure} in the inductive step, and the modified isomorphisms of Remark \ref{untwist} (in which no half Tate twists appear).
\end{proof}
The next theorem is a modification, in the sense elaborated upon in Remark \ref{differenceRem}, of a conjecture of Kontsevich and Efimov, stated as Conjecture 6.8 of \cite{Efi11}.  In addition to proving the conjecture, we prove that the relevant monodromic mixed Hodge module is of Tate type, and after a half Tate twist determined by $\dd$, it has trivial monodromy.
\begin{theorem}
\label{KConj}
Let $\ff\in\mathbb{N}^n$ be a framing vector, and let $\dd\in \mathbb{N}^m$ be a dimension vector satisfying $\Mu^{\zeta_{\Ss}}(\dd)\in[0,\theta_{\Ss}]$.  The monodromic mixed Hodge module 
\[
\mathcal{H}=\Ho\left(\dim_*\left(\phim{\WW^{\zeta_{\Ss},\theta_{\Ss}\sfr}_{\ff,\dd}}\ICS_{\Msp(Q)^{\zeta_{\Ss},\theta_{\Ss}\sfr}_{\ff,\dd}}(\mathbb{Q})\right)_{\nilp}\right)\
\]
is pure, of Tate type, and admits a Lefschetz operator $l\colon\mathcal{H}\rightarrow\mathcal{H}[2]$ such that for all $k\in\mathbb{N}$, $l^k\colon\mathcal{H}^{-k}\rightarrow\mathcal{H}^k$ is an isomorphism.  Furthermore, the monodromic mixed Hodge module
\[
\Ho\left(\dim_*\left(\phim{\WW^{\zeta_{\Ss},\theta_{\Ss}\sfr}_{\ff,\dd}}\QQ_{\Msp(Q)^{\zeta_{\Ss},\theta_{\Ss}\sfr}_{\ff,\dd}}\right)_{\nilp}\right)
\]
has trivial monodromy.
\end{theorem}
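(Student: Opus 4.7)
Plan. The goal is to identify $\mathcal{H}$, up to a half-Tate twist, with the virtual cohomology of a smooth projective variety, and then invoke classical Hodge theory; the overall strategy mirrors the argument used in the proof of Proposition \ref{TPure} but at the more general slopes allowed here.

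First, by Proposition \ref{eqSchemes}, the nilpotent critical locus $\Msp(Q,W)^{\zeta_\Ss,\theta_\Ss\sfr,\nilp}_{\ff,\dd}$ is a union of connected components of $\crit(\WW^{\zeta_\Ss,\theta_\Ss\sfr}_{\ff,\dd})$, isomorphic as a scheme to the quiver Grassmannian $\Gr_{-\Psi_\Ss(\dd)}(\HO^1(\Phi_\Ss(\HGamma(Q,W)_\ff)))$. It is projective, being the fiber over the origin of $\Msp(Q)_\dd$ under the proper map $\pi^{\zeta_\Ss,\theta_\Ss\sfr}_{\ff,\dd}$.

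The main obstacle is to establish that this nilpotent locus is smooth and has Tate-type cohomology. For smoothness, I would appeal to the rigidity of the $\HJac(Q,W)$-module $\HO^1(\Phi_\Ss(\HGamma(Q,W)_\ff))$, which arises because under the derived equivalence $\Phi_\Ss$ this module corresponds to a finite sum of indecomposable projective modules (up to shift) over the mutated Ginzburg dg-algebra, so is rigid in the 3-Calabi--Yau heart; rigidity in the appropriate sense controls the obstructions to smoothness of the quiver Grassmannian. For Tate-type cohomology, the quiver Grassmannian inherits a $\CC^*$-action with isolated fixed points (from a scaling action on the framing vectors), giving a Bialynicki-Birula affine cell decomposition.

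Once smoothness is established, the Bott--Morse argument from the proof of Proposition \ref{TPure} applies verbatim: near any point of the smooth nilpotent component, $\WW^{\zeta_\Ss,\theta_\Ss\sfr}_{\ff,\dd}$ is analytically equivalent to $\sum_{i=1}^t x_i^2$, where $t$ is the codimension of the component, and Thom--Sebastiani then yields
\[
\left(\phim{\WW^{\zeta_\Ss,\theta_\Ss\sfr}_{\ff,\dd}}\ICS_{\Msp(Q)^{\zeta_\Ss,\theta_\Ss\sfr}_{\ff,\dd}}(\QQ)\right)_\nilp \cong \ICS_{\Msp(Q,W)^{\zeta_\Ss,\theta_\Ss\sfr,\nilp}_{\ff,\dd}}(\QQ) \otimes \LL^{t/2}.
\]
Applying $\dim_*$ identifies $\mathcal{H}$ with the virtual cohomology of the smooth projective variety $\Msp(Q,W)^{\zeta_\Ss,\theta_\Ss\sfr,\nilp}_{\ff,\dd}$, twisted by $\LL^{t/2}$. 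Purity, Tate type, and the required Lefschetz operator $l$ then follow from classical Hodge theory applied to a projective embedding of this variety together with the cell decomposition from the preceding step. For the trivial monodromy of the unnormalized $\QQ$-coefficient version, tensoring with $\LL^{\dim(\Msp(Q)^{\zeta_\Ss,\theta_\Ss\sfr}_{\ff,\dd})/2}$ to pass from $\ICS$ to $\QQ$ leaves an overall factor of $\LL^{t/2}$, and by the parity argument appearing in the proof of Proposition \ref{invOD}, $t = (\Psi_\Ss^{-1}(\dd),\Psi_\Ss^{-1}(\dd))_Q - (\dd,\dd)_{\mu_\Ss(Q)}$ is even; Remark \ref{monRem} then gives triviality of monodromy.
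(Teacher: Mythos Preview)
Your approach has a genuine gap at its core. The crucial step is the claimed smoothness of the quiver Grassmannian $\Gr_{-\Psi_{\Ss}(\dd)}(\HO^1(\Phi_{\Ss}(\HGamma(Q,W)_{\ff})))$. In Proposition \ref{TPure} the analogous Grassmannian is smooth only because $\dd = k\dim(\shave)$ forces $-\Psi_{\Ss}(\dd) = k\cdot 1_{s_t}$ to be concentrated at a single vertex, so the quiver Grassmannian collapses to an ordinary $\Gr(k,c_{s_t})$. For an arbitrary $\dd$ with slope in $[0,\theta_{\Ss}]$ there is no such reduction. Your rigidity argument does not close the gap: the isomorphism (\ref{PlamIso}) holds only at the level of graded modules forgetting the differential, so it does not present $\HO^1(\Phi_{\Ss}(\HGamma(Q,W)_{\ff}))$ as an honest sum of projectives, and even granting rigidity of this module in the tilted heart, smoothness of quiver Grassmannians of rigid modules over (completed) Jacobi algebras is not a standard fact and is nowhere established in the paper. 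The claimed $\CC^*$-action with isolated fixed points is likewise unsubstantiated: the scaling action on framing vectors has no reason to have isolated fixed points on a general slice.

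The paper's proof takes a completely different route that bypasses the geometry of individual quiver Grassmannians. It passes to the framed quiver $Q_{\ff}$ with its extended stability condition $\xi=\zeta_{\Ss}^{(\theta_{\Ss})}$, applies the cohomological wall-crossing isomorphism (Theorem \ref{rays}) twice --- once for $\xi$ and once for a modified stability condition $\xi'$ with $\xi'_{\infty}=1$ --- and thereby exhibits $\mathcal{H}\otimes\HO(\BC,\QQ)_{\vir}$ as a direct summand of a $\boxtimes_+^{\tw}$-product whose factors are pure of Tate type by Proposition \ref{stackPure} (proved inductively). Purity and Tate type follow because the monoidal product preserves both. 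The Lefschetz operator is then obtained not from classical Hodge theory of a smooth projective variety but from the general Saito machinery via \cite{DMSS13}, using only that the nilpotent locus is a proper union of connected components of the support of the vanishing cycle sheaf (Proposition \ref{eqSchemes}). This is precisely the point of the inductive wall-crossing strategy: it trades a hard smoothness question for a formal categorical argument.
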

\begin{proof}
We write $\mathbb{N}^{(Q_{\ff,\princ})_0}=\mathbb{N}^{m+1}$, where the identification is via the ordering $(\infty,1,\ldots,m)$ of the principal vertices of $Q_{\ff}$.  Let $\xi=\zeta_{\Ss}^{(\theta_{\Ss})}$ be as in Definition \ref{ztDef}.  Set $\kappa=\Mu^{\xi}(1_\infty)$.  Recall that, by construction, $\kappa$ is slightly larger than $\theta_{\Ss}$.  By Theorem \ref{rays}, there is an isomorphism in $\Dulf(\MMHM(\mathbb{N}^{m+1}))$
\begin{align}
\label{WC}
&\Ho\left(\Dim_*\left(\phim{\WWW^{\xi}_{[0,\kappa]}}\ICS_{\Mst(Q_{\ff})_{[0,\kappa]}^{\xi}}(\mathbb{Q})\right)_{\nilp}\right)\cong\\&\Boxtimes_{+,[\kappa\xrightarrow{\gamma}0]}^{\tw}\Ho\left(\Dim_*\left(\phim{\WWW_{\gamma}^{\xi\sst}}\ICS_{\Mst(Q_{\ff})_{\gamma}^{\xi\sst}}(\mathbb{Q})\right)_{\nilp}\right),\nonumber
\end{align}
where we have again commuted the operations of passing to total cohomology and restricting to the nilpotent locus via Corollary \ref{allW}.  Since 
\[
\Ho\left(\Dim_*\phim{\WWW_{0}^{\xi\sst}}\ICS_{\Mst(Q_{\ff})_{0}^{\xi\sst}}(\mathbb{Q})\right)\cong\mathbb{Q}_{\{0\}},
\]
the constant pure mixed Hodge module on the point $0$, we deduce that for each $\gamma\in[0,\kappa]$,
\begin{equation}
\label{po}
\Ho\left(\Dim_*\left(\phim{\WWW_{\gamma}^{\xi\sst}}\ICS_{\Mst(Q_{\ff})_{\gamma}^{\xi\sst}}(\mathbb{Q})\right)_{\nilp}\right)
\end{equation} 
is a direct summand of the left hand side of (\ref{WC}), since there is an isomorphism
\begin{align*}
&\Ho\left(\Dim_*\left(\phim{\WWW_{\gamma}^{\xi\sst}}\ICS_{\Mst(Q_{\ff})_{\gamma}^{\xi\sst}}(\mathbb{Q})\right)_{\nilp}\right)\cong \\&\left(\Boxtimes^{\tw}_{+,[\kappa\xrightarrow{\gamma'}\gamma)}\mathbb{Q}_{\{0\}}\right)\boxtimes_+^{\tw}\Ho\left(\Dim_*\left(\phim{\WWW_{\gamma}^{\xi\sst}}\ICS_{\Mst(Q_{\ff})_{\gamma}^{\xi\sst}}(\mathbb{Q})\right)_{\nilp}\right)\boxtimes^{\tw}_+ \left(\Boxtimes^{\tw}_{+,(\gamma\xrightarrow{\gamma'}0]}\mathbb{Q}_{\{0\}}\right).
\end{align*}
So purity of (\ref{po}) is implied by the purity of the left hand side of (\ref{WC}), which we now demonstrate.

Define a new stability condition $\xi'\in\mathbb{H}_+^{Q_{\ff}}$ by setting $\xi'|_{Q_0}=\zeta_{\Ss}=\xi|_{Q_0}$, and $\xi'_{\infty}=1$.  Note that, by construction of $\zeta_{\Ss}$ (in particular (\ref{strictStab})), the imaginary part of $\zeta_{\Ss}\cdot\dd$ is greater than zero, for all $\dd\in\mathbb{N}^{Q_0}\setminus\{0\}$, and so with respect to the stability condition $\xi'$ any $\mathbb{C}Q_{\ff}$-module that is not entirely supported at the vertex $\infty$ has strictly greater slope than a $\mathbb{C}Q_{\ff}$-module supported entirely at $\infty$.  In particular, any $\mathbb{C}Q_{\ff}$-module which is supported both at the vertex $\infty$ and on the original quiver $Q$ is destabilised by its underlying $\mathbb{C}Q$-module.

There is an equality
\begin{align}\label{bigWall}
&\Ho\left(\Dim_*\left(\phim{\WWW^{\xi}_{[0,\kappa]}}\ICS_{\Mst(Q_{\ff})_{[0,\kappa]}^{\xi}}(\mathbb{Q})\right)_{\nilp}\right)=\\&\Ho\left(\Dim_{*}\left(\phim{\WWW^{\xi'}_{[0,\kappa]}}\ICS_{\Mst(Q_{\ff})_{[0,\kappa]}^{\xi'}}(\mathbb{Q})\right)_{\nilp}\right)\nonumber
\end{align}
since a $\mathbb{C}Q_\ff$-module belongs to $(\rmod{\mathbb{C}Q_\ff})^{\xi}_{[0,\kappa]}$, equivalently $(\rmod{\mathbb{C}Q_\ff})^{\xi'}_{[0,\kappa]}$, if and only if the underlying $\mathbb{C}Q$-module belongs to $(\rmod{\mathbb{C}Q})^{\zeta_{\Ss}}_{[0,\theta_{\Ss}]}$.  Applying Theorem \ref{rays} again, there are isomorphisms in the category $\Dulf(\MMHM(\mathbb{N}^{m+1}))$
\begin{align}\label{bigWCF}
&\Ho\left(\Dim_*\left(\phim{\WWW^{\xi'}_{[0,\kappa]}}\ICS_{\Mst(Q_{\ff})_{[0,\kappa]}^{\xi'}}(\mathbb{Q})\right)_{\nilp}\right)\\
\cong&\Boxtimes_{+,[\kappa\xrightarrow{\gamma}0]}^{\tw}\Ho\left(\Dim_*\left(\phim{\WWW_{\gamma}^{\xi'\sst}}\ICS_{\Mst(Q_{\ff})_{\gamma}^{\xi'\sst}}(\mathbb{Q})\right)_{\nilp}\right)\nonumber
\\
\cong & \Boxtimes_{+,[\theta_{\Ss}\xrightarrow{\gamma}0]}^{\tw}\Ho\left(\Dim_*\left(\phim{\WWW_{\gamma}^{\zeta_{\Ss}\sst}}\ICS_{\Mst(Q)_{\gamma}^{\zeta_{\Ss}\sst}}(\mathbb{Q})\right)_{\nilp}\right)\boxtimes_{+}^{\tw}\nonumber
\\
& \boxtimes_+^{\tw}\Ho\left(\Dim_*\ICS_{\Mst(Q_{\ff})_{(\mathbb{N},0,\ldots,0)}}(\mathbb{Q})\right)\nonumber
\\
\cong &\Ho\left(\Dim_*\left(\phim{\WWW_{[0,\theta_{\Ss}]}^{\zeta_{\Ss}}}\ICS_{\Mst(Q)_{[0,\theta_{\Ss}]}^{\zeta_{\Ss}}}(\QQ)\right)_{\nilp}\right)\boxtimes_{+}^{\tw}\nonumber
\\
&\boxtimes_+^{\tw}\Ho\left(\Dim_*\ICS_{\Mst(Q_{\ff})_{(\mathbb{N},0,\ldots,0)}}(\mathbb{Q})\right).\nonumber
\end{align}
Here the term $\Mst(Q_{\ff})_{(\mathbb{N},0,\ldots,0)}$ is the stack of finite-dimensional $\mathbb{C}Q_{\ff}$-modules having dimension vector $0$ when restricted to $\mathbb{C}Q$.  This, in turn, is the moduli stack 
\[
\coprod_{r\geq 0} (\pt/\Gl_r),
\]
which has pure cohomology, of Tate type.  As such, the term 
\[
\Ho\left(\Dim_*\ICS_{\Mst(Q_{\ff})_{(\mathbb{N},0\ldots,0)}}(\mathbb{Q})\right)
\]
is pure, of Tate type, as is 
\[
\Ho\left(\Dim_*\left(\phim{\WWW_{[0,\theta_{\Ss}]}^{\zeta_{\Ss}}}\ICS_{\Mst(Q)_{[0,\theta_{\Ss}]}^{\zeta_{\Ss}}}(\QQ)\right)_{\nilp}\right)
\]
by Proposition \ref{stackPure}.  Since $\boxtimes_{+}^{\tw}$ takes pairs of pure objects to pure objects by Proposition \ref{weightPreserve}, we finally deduce that (\ref{bigWCF}) is pure, of Tate type.  It then follows that the left hand side of (\ref{WC}) is pure, of Tate type, via the equality (\ref{bigWall}).  

Now fix $\dd$ satisfying $\Mu^{\zeta_{\Ss}}(\dd)\in[0,\theta_{\Ss}]$.  We have shown that 
\begin{align}\label{extraq}
&\Ho\left(\Dim_*\left(\phim{\WWW_{(1,\dd)}^{\xi\sst}}\ICS_{\Mst(Q_\ff)_{(1,\dd)}^{\xi\sst}}(\mathbb{Q})\right)_{\nilp}\right)\cong \\ \nonumber
&\Ho\left(\dim_*\left(\phim{\WW_{\ff,\dd}^{\zeta_{\Ss},\theta_{\Ss}\sfr}}\ICS_{\Msp(Q)_{\ff,\dd}^{\zeta_{\Ss},\theta_{\Ss}\sfr}}(\QQ)\right)_{\nilp}\right)\otimes\HO(\BC,\QQ)_{\vir}
\end{align}
is pure, of Tate type, which gives the purity (of Tate type) of
\begin{equation}
\label{des}
\Ho\left(\dim_*\left(\phim{\WW_{\ff,\dd}^{\zeta_{\Ss},\theta_{\Ss}\sfr}}\ICS_{\Msp(Q)_{\ff,\dd}^{\zeta_{\Ss},\theta_{\Ss}\sfr}}(\QQ)\right)_{\nilp}\right).
\end{equation}
The monodromy statement is proved in exactly the same way, using the monodromy statement of Proposition \ref{stackPure}.  

By Proposition \ref{eqSchemes}, the cohomology of (\ref{des}) is the cohomology of a restriction of 
\[
\phim{\WW_{\ff,\dd}^{\zeta_{\Ss},\theta_{\Ss}\sfr}}\ICS_{\Msp(Q)_{\ff,\dd}^{\zeta_{\Ss},\theta_{\Ss}\sfr}}(\QQ)
\]
to a proper union of connected components of its support.  As explained in \cite[Thm.2.3]{DMSS13}, using the machinery of \cite{Sa88} and the fact that this is an example of case (c) of \cite[Thm.2.1]{DMSS13}, we deduce that 
\[
\Ho\left(\dim_*\left(\phim{\WW_{\ff,\dd}^{\zeta_{\Ss},\theta_{\Ss}\sfr}}\ICS_{\Msp(Q)_{\ff,\dd}^{\zeta_{\Ss},\theta_{\Ss}\sfr}}(\QQ)\right)_{\nilp}\right)
\]
carries a Lefschetz operator, as required.\end{proof}
\section{Proof of the main theorem}
\label{mainProof}

\label{cluVia}
We recall from \cite{Efi11} the passage from Kontsevich's conjecture to the quantum cluster positivity conjecture.  From now on we leave out the symbols $\Ho(\ldots)$; we now have purity of all relevant mixed Hodge modules, and so all mixed Hodge modules that are well-defined before passing to total cohomology are nonetheless isomorphic to their total cohomology.  
\renewcommand*{\proofname}{Proof of Theorem \ref{mainThm}}
\begin{proof}
As in the proof of Theorem \ref{KConj}, we use Definition \ref{ztDef} and set $\xi=\zeta_{\Ss}^{(\theta_{\Ss})}$.  Define
\[
R:=\{\dd\in\mathbb{N}^m|\Mu^{\zeta_{\Ss}}(\dd)\in[0,\theta_{\Ss}]\}.  
\]
Combining isomorphisms (\ref{WC}) (\ref{bigWall}) and (\ref{bigWCF}) from the proof of Theorem \ref{KConj}, and restricting to $\{1\}\times R$, there is an isomorphism
\begin{align}\label{WCD}
&\Dim_*\left(\phim{\WWW^{\zeta_{\Ss}}_{[0,\theta_{\Ss}]}}\ICS_{\Mst(Q)_{[0, \theta_{\Ss}]}^{\zeta_{\Ss}}}(\mathbb{Q})\right)_{\nilp}\boxtimes_{+}^{\tw}\Dim_*\ICS_{\Mst(Q_{\ff})_{(1,0,\ldots,0)}}(\mathbb{Q})\cong\\
&\Dim_*\left(\phim{\WWW^{\xi\sst}_{\{1\}\times R}}\ICS_{\Mst(Q_{\ff})_{\{1\}\times R}^{\xi\sst}}(\mathbb{Q})\right)_{\nilp}\boxtimes^{\tw}_{+} \Dim_*\left(\phim{\WWW^{\zeta_{\Ss}}_{[0, \theta_{\Ss}]}}\ICS_{\Mst(Q)_{[0, \theta_{\Ss}]}^{\zeta_{\Ss}}}(\mathbb{Q})\right)_{\nilp}.\nonumber
\end{align}
There is an equality
\[
\Dim_*\ICS_{\Mst(Q_{\ff})_{(1,0,\ldots,0)}}(\mathbb{Q})=(q^{-1/2}-q^{1/2})^{-1}Y^{1_{\infty}}
\]
arising from the identity 
\[
\chi_q\left(\HO(\BC,\mathbb{Q})_{\vir},q^{1/2}\right)=-(q^{-1/2}-q^{1/2})^{-1}
\]
(recall from (\ref{IM}) the sign change in the definition of $\chi_{Q_{\ff}}$).
Applying $\chi_{Q_{\ff}}$ to the identity in $\KK(\Dulf(\MMHM(\mathbb{N}^{m+1})))$ resulting from (\ref{WCD}), and multiplying both sides on the right by 
\[
\left(q^{-1/2}-q^{1/2}\right)\chi_{Q_\ff}\left(  \Dim_*\left(\phim{\WWW^{\zeta_{\Ss}}_{[0, \theta_{\Ss}]}}\ICS_{\Mst(Q)_{[0, \theta_{\Ss}]}^{\zeta_{\Ss}}}(\mathbb{Q})\right)_{\nilp} \right)^{-1}, 
\]
we obtain the identity 
\begin{align}\label{finalId}
&\chi_{Q_{\ff}}\left(  \Dim_*\left(\phim{\WWW^{\zeta_{\Ss}}_{[0,\theta_{\Ss}]}}\ICS_{\Mst(Q)_{[0,\theta_{\Ss}]}^{\zeta_{\Ss}}}(\mathbb{Q})\right)_{\nilp} \right)Y^{1_\infty}\\ \nonumber&\chi_{Q_{\ff}}\left(  \Dim_*\left(\phim{\WWW^{\zeta_{\Ss}}_{[0,\theta_{\Ss}]}}\ICS_{\Mst(Q)_{[0,\theta_{\Ss}]}^{\zeta_{\Ss}}}(\mathbb{Q})\right)_{\nilp} \right)^{-1}=\\
&\sum_{\dd\in R}\chi_{Q_{\ff}}\left( \dim_*\left(\phim{\WW_{\ff,\dd}^{\zeta_{\Ss},\theta_{\Ss}\sfr}}\ICS_{\Msp(Q)_{\ff,\dd}^{\zeta_{\Ss},\theta_{\Ss}\sfr}}(\QQ)\right)_{\nilp} \right).\nonumber
\end{align}
Here we have used the identity
\begin{align*}
&\sum_{\dd\in R}\chi_{Q_{\ff}}\left( \dim_*\left(\phim{\WW_{\ff,\dd}^{\zeta_{\Ss},\theta_{\Ss}\sfr}}\ICS_{\Msp(Q)_{\ff,\dd}^{\zeta_{\Ss},\theta_{\Ss}\sfr}}(\QQ)\right)_{\nilp} \right)(q^{-1/2}-q^{1/2})^{-1}=\\&\chi_{Q_{\ff}}\left( \Ho\left(\Dim_*\left(\phim{\WWW_{(1,\dd)}^{\xi\sst}}\ICS_{\Mst(Q_\ff)_{(1,\dd)}^{\xi\sst}}(\mathbb{Q})\right)_{\nilp}\right)  \right)
\end{align*}
arising from (\ref{extraq}).

We extend the homomorphism $\iota\colon\hat{\Ror}_Q\rightarrow \hat{\Tor}_{\Lambda}$ to a homomorphism $\iota_{\ff}\colon\hat{\Ror}_{Q_{\ff}}\rightarrow\hat{\Tor}_{\Lambda}$ by sending $Y^{1_\infty}$ to $X^{\ff}$.  The map $\iota_{\ff}$ is indeed a ring homomorphism, as can be verified via the relations (\ref{differentStrokes}) and the calculation
\begin{align*}
\langle 1_{\infty},\dd\rangle_{Q_{\ff}}=&-\dd\cdot \ff\\
=&\Lambda(-\tilde{B}\dd,\ff),
\end{align*}
where the second equality follows from (\ref{compatibility}).  The following identity, expressing mutated cluster variables in terms of vanishing cycle cohomology, then follows from (\ref{finalId}) and Theorem \ref{sfrThm}:
\begin{equation}
\label{finsf}
\mu_{\Ss}(M)(\ff)=\sum_{\dd\in R}\iota_{\ff}\chi_{Q_{\ff}}\left( \dim_*\left(\phim{\WW_{\ff,\dd}^{\zeta_{\Ss},\theta_{\Ss}\sfr}}\ICS_{\Msp(Q)_{\ff,\dd}^{\zeta_{\Ss},\theta_{\Ss}\sfr}}(\QQ)\right)_{\nilp}\right).
\end{equation}
By Theorem \ref{KConj}, the mixed Hodge module on the right hand side of (\ref{finsf}) is pure and carries a Lefschetz operator.  Positivity, and the Lefschetz property then follow.  

Finally, note that each of the nonzero polynomials $a_\dd(q^{1/2})$ appearing in the theorem is given by the weight polynomial of the single monodromic mixed Hodge structure 
\begin{equation*}
\label{TF}
\dim_*\left(\phim{\WW_{\ff,\dd'}^{\zeta_{\Ss},\theta_{\Ss}\sfr}}\ICS_{\Msp(Q)_{\ff,\dd'}^{\zeta_{\Ss},\theta_{\Ss}\sfr}}(\QQ)\right)_{\nilp}
\end{equation*}
for $\dd'$ satisfying $\iota(\dd')+\ff=\dd$, since $\iota$ is injective by (\ref{compatibility}).  Since the monodromic mixed Hodge structure
\[
\dim_*\left(\phim{\WW_{\ff,\dd'}^{\zeta_{\Ss},\theta_{\Ss}\sfr}}\QQ_{\Msp(Q)_{\ff,\dd'}^{\zeta_{\Ss},\theta_{\Ss}\sfr}}\right)_{\nilp}
\]
is pure, of Tate type, with trivial monodromy by Theorem \ref{KConj}, we deduce that 
\begin{align*}
&\chi_q\left(\dim_*\left(\phim{\WW_{\ff,\dd'}^{\zeta_{\Ss},\theta_{\Ss}\sfr}}\ICS_{\Msp(Q)_{\ff,\dd'}^{\zeta_{\Ss},\theta_{\Ss}\sfr}}(\QQ)\right)_{\nilp},q^{1/2}\right)\\&=\chi_q\left(\LL^{-\dim(\Msp(Q)_{\ff,\dd'}^{\zeta_{\Ss},\theta_{\Ss}\sfr})},q^{1/2}\right)\chi_q\left(\dim_*\left(\phim{\WW_{\ff,\dd'}^{\zeta_{\Ss},\theta_{\Ss}\sfr}}\mathbb{Q}_{\Msp(Q)_{\ff,\dd'}^{\zeta_{\Ss},\theta_{\Ss}\sfr}}\right)_{\nilp},q^{1/2}\right)\\&=(-q^{1/2})^{-\dim(\Msp(Q)_{\ff,\dd'}^{\zeta_{\Ss},\theta_{\Ss}\sfr})}h(q)
\end{align*}
for $h(q)\in\mathbb{N}[q]$.  The sign before the half power of $q$ in the final line is as in Remark \ref{Lsign}, and is cancelled by definition of the map $\chi_{Q_{\ff}}$ (see (\ref{dulfjust})).  Combining this statement with the Lefschetz property finishes the proof of Theorem \ref{mainThm}.
\end{proof}
\renewcommand*{\proofname}{Proof}
By combining Theorem \ref{mainThm} and Remark \ref{quantization} we recover the following corollary, which is the classical positivity theorem, due to Lee and Schiffler.
\begin{theorem}\cite{LS15}
Let $Q$ be a quiver.  Then the classical positivity conjecture holds for the cluster algebra $\mathcal{A}_Q$.
\end{theorem}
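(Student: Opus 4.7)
The plan is to reduce the classical statement to the quantum one via the quantization procedure of Remark \ref{quantization}, which is possible because quantization only requires enlarging the coefficient part of the quiver. Given an ice quiver $Q$, I would first invoke Remark \ref{quantization} (which cites \cite[Lem.4.4]{DMSS13}) to produce a quiver $Q'$ with $Q$ a full subquiver, with the same principal part as $Q$ (so only frozen vertices are added) and a skew-symmetric $n'\times n'$ matrix $\Lambda$ compatible with $Q'$. Then $(Q',\Lambda)$ is a compatible pair to which Theorem \ref{mainThm} applies.

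Next I would compare the cluster structures on $\mathcal{A}_Q$ and $\mathcal{A}_{Q'}$. Because mutation is defined only at principal vertices, and because the matrix $\tilde{B}$ restricted to the first $m$ columns is entirely determined by the principal part of the quiver together with how arrows meet it, the matrix mutation of $\tilde{B}_{Q'}$ and the toric frame mutation rule \eqref{mutFram} at a principal vertex restrict, after forgetting the $n'-n$ additional frozen directions, to the corresponding mutation for $(Q,\Lambda|_{Q})$ (and to ordinary commutative mutation once $q^{1/2}=1$ is specialised). Consequently every cluster (resp.\ cluster monomial) of $\mathcal{A}_{Q'}$ restricts, under the specialisation $X^{1_s}\mapsto 1$ for $s>n$, to a cluster (resp.\ cluster monomial) of $\mathcal{A}_Q$, and every cluster of $\mathcal{A}_Q$ arises in this way.

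With these comparisons in place the deduction is immediate: apply Theorem \ref{mainThm} to $\mathcal{A}_{\Lambda,Q'}$ to obtain, for any cluster monomial $Y$ of $\mathcal{A}_{\Lambda,Q'}$ and any mutated toric frame $M'$, a Laurent expansion
\begin{equation*}
Y=\sum_{\dd}a_{\dd}(q^{1/2})\,M'(\dd),\qquad a_{\dd}(q^{1/2})\in\NN[q^{\pm 1/2}]
\end{equation*}
with $a_{\dd}(q^{1/2})$ of Lefschetz type and in particular with non-negative integer coefficients. Setting $q^{1/2}=1$ turns each $a_{\dd}(q^{1/2})$ into a non-negative integer $a_{\dd}(1)\in\NN$, yielding classical positivity for $\mathcal{A}_{Q'}$; then applying the coefficient specialisation $X^{1_s}\mapsto 1$ for $s>n$ to both sides of this classical identity preserves the Laurent form (the omitted variables are frozen and invertible Laurent monomials in them just become $1$) and the non-negativity of coefficients, giving classical positivity for $\mathcal{A}_Q$.

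There is really no obstacle here beyond bookkeeping: the only point to verify carefully is that the restriction $\mathcal{A}_{Q'}\to\mathcal{A}_Q$ sends clusters to clusters and cluster monomials to cluster monomials, which is clear from the fact that the mutation operation at a principal vertex depends only on the principal part of the quiver and on the arrows incident to that vertex, both of which are unchanged by adding further frozen vertices to $Q$ to form $Q'$.
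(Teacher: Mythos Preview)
Your proposal is correct and follows essentially the same approach as the paper, which states the proof in one line as ``By combining Theorem \ref{mainThm} and Remark \ref{quantization}''. You have simply made explicit the bookkeeping (the coefficient specialisation $X^{1_s}\mapsto 1$ for the added frozen vertices, and the observation that mutation at principal vertices commutes with this specialisation) that the paper leaves to the reader; the one notational slip is the mention of ``$(Q,\Lambda|_Q)$'', since there is no reason the restriction of $\Lambda$ is compatible with $Q$, but this plays no role once you pass to $q^{1/2}=1$.
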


\appendix
\section{No exotics property for motivic Donaldson--Thomas invariants}
In this appendix we prove a theorem related to the quantum cluster positivity theorem, regarding Donaldson--Thomas invariants for cluster collections.  Let $Q$ be a quiver without loops and 2-cycles, and let $W\in\mathbb{C}Q/[\mathbb{C}Q,\mathbb{C}Q]$ be an algebraic potential.  For the purposes of this section we assume that $Q$ has no frozen vertices, i.e. we identify the vertices of the quiver with the numbers $\{1,\ldots,m\}$, and place no restrictions apart from finiteness on the dimension vector $\dd$ of our $\mathbb{C}Q$-modules.  The bilinear form $\Lambda$ plays no role in this section, and so in particular we do not require that the coefficients of $\tilde{B}^{-1}$ are integral, or indeed that $\tilde{B}^{-1}$ exists.  Let $\zeta_{\Ss}\in\mathbb{H}_+^m$ be a stability condition, and let $\theta_{\Ss}$ be a slope, satisfying the conditions of Proposition \ref{zetaTheta}, though without the stipulation on the slope $\Mu^{\zeta_{\Ss}}(\shave)$ of $\shave$.

We assume that $\zeta_{\Ss}$ is generic, in the sense that if $\dd$ and $\dd'$ are two dimension vectors satisfying $\Mu^{\zeta_{\Ss}}(\dd)=\Mu^{\zeta_{\Ss}}(\dd')\leq \theta_{\Ss}$, then $\langle \dd,\dd'\rangle_Q=0$.  For an arbitrary stability condition satisfying the conditions of Proposition \ref{zetaTheta} this can be achieved, for example, by perturbing $\zeta_{\Ss}$ within the space of stability conditions satisfying the conditions of Proposition \ref{zetaTheta}, so that $\Mu^{\zeta_{\Ss}}(\dd)=\Mu^{\zeta_{\Ss}}(\dd')$ if and only if $\dd=r\dd'$ for some $r\in\mathbb{R}_{>0}$.  For $\gamma\in [0,\pi)$ we define
\[
\Lambda_{\gamma}^{\zeta_{\Ss}}=\{\dd\in \mathbb{N}^m\setminus\{0\}|\Mu^{\zeta_{\Ss}}(\dd)=\gamma\}\cup \{0\},
\]
i.e. $\Lambda_{\gamma}^{\zeta_{\Ss}}$ is the monoid of dimension vectors of slope $\gamma$ with respect to the stability condition $\zeta_{\Ss}$.  Then for $\gamma\in [0,\theta_{\Ss}]$, restricting the twisted product $\boxtimes_+^{\tw}$ to $\Dulf(\MMHM(\Lambda_{\gamma}^{\zeta_{\Ss}}))$, it becomes a symmetric monoidal product, as the Tate twist $\LL^{\langle \dd',\dd''\rangle_Q/2}$ is trivial for $\dd',\dd''\in\Lambda_{\gamma}^{\zeta_{\Ss}}$.  We define
\[
\Lambda^{\zeta_{\Ss},+}_{\gamma}:=\Lambda_{\gamma}^{\zeta_{\Ss}}\setminus \{0\}.
\]
Given $\mathcal{F}\in\Dulf(\MMHM(\Lambda^{\zeta_{\Ss},+}_{\gamma}))$, we define $\Sym_{\boxtimes_+}(\mathcal{F})\in\Dulf(\MMHM(\Lambda^{\zeta_{\Ss}}_{\gamma}))$ to be the free symmetric unital algebra generated by $\mathcal{F}$ in the category $\Dulf(\MMHM(\Lambda_{\gamma}^{\zeta_{\Ss}}))$.  We define the plethystic exponential
\begin{align*}
\EXP^{\Hodge}\colon\KK(\Dulf(\MMHM(\Lambda_{\gamma}^{\zeta_{\Ss},+})))\rightarrow &\KK(\Dulf(\MMHM(\Lambda_{\gamma}^{\zeta_{\Ss}})))\\
[\mathcal{F}]\mapsto &[\Sym_{\boxtimes_+}(\mathcal{F})].
\end{align*}
The map $\EXP^{\Hodge}$ is an isomorphism onto its image, which is $1+\KK(\Dulf(\MMHM(\Lambda_{\gamma}^{\zeta_{\Ss},+})))$.  It is also a lift of the map $\EXP$ of Definition \ref{plethDef}, in the sense that 
\[
\EXP\circ\chi_Q|_{\KK(\Dulf(\MMHM(\Lambda_{\gamma}^{\zeta_{\Ss},+})))}=\chi_Q\circ\EXP^{\Hodge}.  
\]
We define $\hat{\Ror}^{\Hodge}_Q$ to be the free $\KK(\Dulf(\MMHM(\pt)))$-module generated by symbols $Y^{\ee}$, with $\ee\in\mathbb{N}^m$, and with multiplication defined by 
\[
[\mathcal{G}]Y^{\ee}\cdot [\mathcal{G}']Y^{\dd}=[\LL^{\langle \dd,\ee\rangle_Q/2}\otimes \mathcal{G}\otimes\mathcal{G}']Y^{\ee+\dd},
\]
completed with respect to the ideal generated by $Y^{\ee}$ for $\ee\in\mathbb{N}^m\setminus \{0\}$.  We define the isomorphism
\begin{align*}
\chi^{\Hodge}_Q:\KK\left(\Dulf(\MMHM(\mathbb{N}^m))\right)\rightarrow&\hat{\Ror}^{\Hodge}_Q\\
[\mathcal{F}]\mapsto&\sum_{\dd\in\mathbb{N}^m}[\mathcal{F}_{\dd}]Y^{\dd}.
\end{align*}

Let $\gamma\in [0,\theta_{\Ss}]$.  The Hodge-theoretic Donaldson--Thomas invariants $\Omega_{\dd}^{\zeta_{\Ss}}\in\KK(\Dulf(\MMHM(\pt)))$ for the category of $\zeta_{\Ss}$-semistable $\Jac(Q,W)$-modules of slope $\gamma$ are defined to be the classes satisfying
\begin{equation}
\chi_Q^{\Hodge}\left(\left[\Ho\left(\Dim_{\gamma,*}^{\zeta_{\Ss}\sst}\left(\phim{\WWW^{\zeta_{\Ss}\sst}_{\gamma}}\ICS_{\Mst(Q)^{\zeta_{\Ss}\sst}_{\gamma}}(\mathbb{Q})\right)\right)\right]\right)= \EXP^{\Hodge}\left(\frac{\sum_{\dd\in\Lambda^{\zeta_{\Ss},+}_{\gamma}}\Omega^{\zeta_{\Ss}}_{\dd}Y^{\dd}}{[\mathbb{L}^{-1/2}]-[\mathbb{L}^{1/2}]}\right),
\end{equation}
where we define the right hand side via the identification 
\[
\KK(\Dulf(\MMHM(\Lambda_{\gamma}^{\zeta_{\Ss}})))=\KK(\Dulf(\MMHM(\pt)))[[Y^{\ee}|\ee\in \Lambda^{\zeta_{\Ss}}_{\gamma}]]
\]
induced by $\chi_Q^{\Hodge}$, and the expansion
\[
1/([\LL^{-1/2}]-[\LL^{1/2}])=[\LL^{1/2}]+[\LL^{3/2}]+\ldots.
\]

Similarly, the DT invariants $\Omega_{\dd}^{\zeta_{\Ss},\nilp}$ are defined by the equation
\begin{equation}
\label{niceNilp}
\chi_Q^{\Hodge}\left(\left[\Ho\left(\Dim_{\gamma,*}^{\zeta_{\Ss}\sst}\left(\phim{\WWW^{\zeta_{\Ss}\sst}_{\gamma}}\ICS_{\Mst(Q)^{\zeta_{\Ss}\sst}_{\gamma}}(\mathbb{Q})\right)_{\nilp}\right)\right]\right)= \EXP^{\Hodge}\left(\frac{\sum_{\dd\in\Lambda^{\zeta_{\Ss},+}_{\gamma}}\Omega^{\zeta_{\Ss},\nilp}_{\dd}Y^{\dd}}{[\mathbb{L}^{-1/2}]-[\mathbb{L}^{1/2}]}\right).
\end{equation}
\begin{remark}\label{VerSt}
Strictly speaking, the correct formulation of the second definition is
\begin{equation}
\label{badNilp}
\chi_Q^{\Hodge}\left(\left[\DD_{\Lambda_{\gamma}^{\zeta_{\Ss}}}^{\mon}\Ho\left(\Dim_{\gamma,!}^{\zeta_{\Ss}\sst}\left(\phim{\WWW^{\zeta_{\Ss}\sst}_{\gamma}}\ICS_{\Mst(Q)^{\zeta_{\Ss}\sst}_{\gamma}}(\mathbb{Q})\right)_{\nilp}\right)\right]\right)= \EXP^{\Hodge}\left(\frac{\sum_{\dd\in\Lambda^{\zeta_{\Ss},+}_{\gamma}}\Omega^{\zeta_{\Ss},\nilp}_{\dd}Y^{\dd}}{[\mathbb{L}^{-1/2}]-[\mathbb{L}^{1/2}]}\right)
\end{equation}
instead of (\ref{niceNilp}).  However, the left hand sides of (\ref{niceNilp}) and (\ref{badNilp}) are equal, by self-duality of the vanishing cycle complex and Corollary \ref{allW}.
\end{remark}

In the language of motivic Donaldson--Thomas theory \cite{KS}, the classes $\Omega_{\dd}^{\zeta_{\Ss}}$ and $\Omega_{\dd}^{\zeta_{\Ss},\nilp}$ are the Hodge--theoretic realisations of the respective motivic Donaldson--Thomas invariants, as explained in \cite{COHA}.  As in \cite{DaMe4} and \cite{DaMe15b}, we define
\[
\DT_{\gamma}^{\zeta_{\Ss}}\in\Db(\MMHM(\Lambda_{\gamma}^{\zeta_{\Ss},+}))
\]
by the condition that, for $\dd\in\Lambda_{\gamma}^{\zeta_{\Ss},+}$, 
\begin{equation*}
\DT_{\dd}^{\zeta_{\Ss}}=\begin{cases}\Ho\left(\tau_*\phim{\WW^{\zeta_{\Ss}\sst}_{\dd}}\ICS_{\Msp(Q)_{\dd}^{\zeta_{\Ss}\sst}}(\QQ)\right)& \textrm{if }\Msp(Q)_{\dd}^{\zeta_{\Ss}\st}\neq\emptyset\\ 0&\textrm{otherwise,}\end{cases}
\end{equation*}
where $\tau:\Msp(Q)^{\zeta_{\Ss}\sst}_{\dd}\rightarrow\pt$ is the map to a point.  Similarly, we define
\begin{equation*}
\DT_{\dd}^{\zeta_{\Ss},\nilp}=\begin{cases}\Ho\left(\tau_*\left(\phim{\WW^{\zeta_{\Ss}\sst}_{\dd}}\ICS_{\Msp(Q)_{\dd}^{\zeta_{\Ss}\sst}}(\QQ)\right)_{\nilp}\right)& \textrm{if }\Msp(Q)_{\dd}^{\zeta_{\Ss}\st}\neq\emptyset\\ 0&\textrm{otherwise.}\end{cases}
\end{equation*}

Then by \cite[Thm.A]{DaMe15b}, there are isomorphisms
\begin{align}
\Ho\left(\Dim_{\gamma,*}^{\zeta_{\Ss}\sst}\left(\phim{\WWW^{\zeta_{\Ss}\sst}_{\gamma}}\ICS_{\Mst(Q)^{\zeta_{\Ss}\sst}_{\gamma}}(\mathbb{Q})\right)\right)\cong &\Sym_{\boxtimes_+}\left(  \DT_{\gamma}^{\zeta_{\Ss}}\otimes\HO(\BC,\QQ)_{\vir} \right)\\
\Ho\left(\Dim_{\gamma,*}^{\zeta_{\Ss}\sst}\left(\phim{\WWW^{\zeta_{\Ss}\sst}_{\gamma}}\ICS_{\Mst(Q)^{\zeta_{\Ss}\sst}_{\gamma}}(\mathbb{Q})\right)_{\nilp}\right)\cong &\Sym_{\boxtimes_+}\left(  \DT_{\gamma}^{\zeta_{\Ss},\nilp}\otimes\HO(\BC,\QQ)_{\vir} \right),\label{nilpver}
\end{align}
from which we deduce that 
\begin{align*}
\Omega^{\zeta_{\Ss}}_{\dd}=&[\DT^{\zeta_{\Ss}}_{\dd}]\\
\Omega^{\zeta_{\Ss},\nilp}_{\dd}=&[\DT^{\zeta_{\Ss},\nilp}_{\dd}].
\end{align*}
We now state our main results regarding Donaldson--Thomas invariants.
\begin{theorem}
\label{KPos}
For $\zeta_{\Ss}$ and $\theta_{\Ss}$ as above, and $\dd\in\mathbb{N}^m$ of slope less than or equal to $\theta_{\Ss}$, the Hodge--theoretic Donaldson--Thomas invariants $\Omega^{\zeta_{\Ss},\nilp}_{\dd}$ can be written as $h_{\dd}(\LL^{1/2})$, for $h_{\dd}(q^{1/2})=h_{\dd}(q^{-1/2})$ equal to $b_{\dd}(q)q^{{}-\deg(b_{\dd}(q))/2}$, for some polynomial $b_{\dd}(q)\in\mathbb{N}[q]$ with unimodal coefficients.
\end{theorem}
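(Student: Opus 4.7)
The plan is to establish four properties of $\Omega_\dd^{\zeta_{\Ss}, \nilp}$---membership in $\mathbb{N}[\LL^{\pm 1/2}]$ (purity and positivity), palindromy, parity (only even or only odd powers of $[\LL^{1/2}]$), and unimodality---via a purity analysis of the unframed total pushforward, combined with the cohomological integrality isomorphism.  First I would extend the purity, Tate-type, and trivial-monodromy statements of Proposition \ref{TPure} from the single slope $\gamma = \Mu^{\zeta_{\Ss}}(\shave)$ to every slope $\gamma \in [0, \theta_{\Ss}]$.  This is essentially implicit in the proof of Theorem \ref{KConj}: setting $\xi = \zeta_{\Ss}^{(\theta_{\Ss})}$ and $\kappa = \Mu^\xi(1_\infty) > \theta_{\Ss}$, any $\xi$-semistable $\mathbb{C}Q_\ff$-module of slope $\gamma \leq \theta_{\Ss}$ must have zero dimension at the framing vertex and hence coincides with a $\zeta_{\Ss}$-semistable $\mathbb{C}Q$-module.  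The wall-crossing isomorphism (\ref{WC}) then exhibits $\Ho(\Dim^{\zeta_{\Ss}\sst}_{\gamma,*}(\phim{\WWW^{\zeta_{\Ss}\sst}_\gamma} \ICS_{\Mst(Q)^{\zeta_{\Ss}\sst}_\gamma}(\QQ))_{\nilp})$ as a tensor factor of the pure, Tate-type object whose purity is established in that proof, and the parallel argument with $\QQ_{\Mst(\ldots)}$ in place of $\ICS$ yields trivial monodromy of the corresponding unnormalised pushforward.

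Next I would transfer these properties to the DT sheaf via the cohomological integrality isomorphism (\ref{nilpver}).  Under the genericity hypothesis on $\zeta_{\Ss}$, the twisted product $\boxtimes_+^{\tw}$ restricted to slope-$\gamma$ pieces is untwisted and symmetric, so $\Sym_{\boxtimes_+}$ is the free symmetric algebra functor; both it and its inverse $(\EXP^{\Hodge})^{-1}$ preserve the subring of the $K$-group generated by $[\LL^{1/2}]$, as well as its subring of trivial-monodromy classes.  Applying $(\EXP^{\Hodge})^{-1}$ to the pure pushforward class and dividing by $[\HO(\BC, \QQ)_{\vir}]$ yields $\Omega_\dd^{\zeta_{\Ss}, \nilp} \in \mathbb{N}[\LL^{\pm 1/2}]$ involving only even or only odd powers of $[\LL^{1/2}]$.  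Palindromy $h_\dd(q^{1/2}) = h_\dd(q^{-1/2})$ then follows from Verdier self-duality of $\phim{\WW^{\zeta_{\Ss}\sst}_\dd} \ICS_{\Msp(Q)^{\zeta_{\Ss}\sst}_\dd}(\QQ)_{\nilp}$, whose support is proper over the single closed point $0 \in \Msp(Q)_\dd^{\nilp}$.

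For the unimodality of $b_\dd(q)$ I would invoke Saito's relative hard Lefschetz theorem.  The monodromic mixed Hodge module $\phim{\WW^{\zeta_{\Ss}\sst}_\dd} \ICS_{\Msp(Q)^{\zeta_{\Ss}\sst}_\dd}(\QQ)$ is pure, since $\ICS$ is pure and the monodromic vanishing cycle functor preserves purity, and $q_\dd^{\zeta_{\Ss}}: \Msp(Q)^{\zeta_{\Ss}\sst}_\dd \to \Msp(Q)_\dd$ is projective, so relative hard Lefschetz equips the perverse cohomology sheaves of $(q_\dd^{\zeta_{\Ss}})_* \phim{\WW^{\zeta_{\Ss}\sst}_\dd} \ICS_{\Msp(Q)^{\zeta_{\Ss}\sst}_\dd}(\QQ)$ with a Lefschetz structure.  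Since $\Msp(Q)_\dd^{\nilp}$ is the single closed point $0$, proper base change identifies the restriction of this pushforward to $\{0\}$ with $\DT^{\zeta_{\Ss}, \nilp}_\dd$, carrying a descended Lefschetz operator; at a point the perverse and standard t-structures coincide, so the desired isomorphisms $\mathcal{H}^{-k}(\DT^{\zeta_{\Ss}, \nilp}_\dd) \cong \mathcal{H}^{k}(\DT^{\zeta_{\Ss}, \nilp}_\dd)(k)$ follow.

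The main technical obstacle is reconciling the Lefschetz structure furnished by relative hard Lefschetz with the cohomological integrality decomposition: one must ensure that the Lefschetz operator acts compatibly on the BPS factor $\DT^{\zeta_{\Ss}, \nilp}_\dd$ rather than only on the ambient $\Sym_{\boxtimes_+}$ construction.  An attractive alternative is to transfer the Lefschetz operator of Theorem \ref{KConj}, already established on the framed vanishing cycle cohomology, through the identification (\ref{extraq})---which relates framed and unframed pushforwards up to the factor $\HO(\BC, \QQ)_{\vir}$---and thence read off the Lefschetz property of $\DT^{\zeta_{\Ss}, \nilp}_\dd$ directly from the $\Sym_{\boxtimes_+}$ decomposition.
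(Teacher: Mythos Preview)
Your overall strategy matches the paper's: purity of the slope-$\gamma$ stacky pushforward, combined with the integrality isomorphism (\ref{nilpver}) and a hard Lefschetz argument on the coarse moduli space, yields the result. The paper packages this as Theorem~\ref{cohPos} (purity, Tate type, Lefschetz, monodromy for $\DT^{\zeta_{\Ss},\nilp}_{\dd}$) and then reads off Theorem~\ref{KPos}. Two points in your write-up need fixing.

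First, your positivity step has a genuine gap. You assert that applying $(\EXP^{\Hodge})^{-1}$ to a pure class and dividing by $[\HO(\BC,\QQ)_{\vir}]$ lands in $\mathbb{N}[\LL^{\pm 1/2}]$. This does not follow: the plethystic logarithm involves M\"obius-inversion signs and does not preserve non-negativity at the $K$-group level. What you should use instead is that (\ref{nilpver}) is an isomorphism of \emph{monodromic mixed Hodge modules}, not merely a $K$-group identity. It exhibits $\DT^{\zeta_{\Ss},\nilp}_{\dd}\otimes\HO(\BC,\QQ)_{\vir}$ as a direct summand of the pure, Tate-type object you have already produced, and hence $\DT^{\zeta_{\Ss},\nilp}_{\dd}$ itself is pure and of Tate type. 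Positivity of the coefficients is then automatic. This is exactly how the paper argues in the proof of Theorem~\ref{cohPos}.

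Second, your ``technical obstacle'' is a phantom, and the step where you invoke purity of $\phim{\WW}\ICS$ is both false in general and unnecessary. The point is that $\DT^{\zeta_{\Ss},\nilp}_{\dd}$ is \emph{by definition} the hypercohomology of $\bigl(\phim{\WW^{\zeta_{\Ss}\sst}_{\dd}}\ICS_{\Msp(Q)^{\zeta_{\Ss}\sst}_{\dd}}(\QQ)\bigr)_{\nilp}$ on the coarse moduli space---it is not something extracted from the stacky pushforward via the $\Sym$ decomposition. By Corollary~\ref{allW} this restriction is supported on $(q^{\zeta_{\Ss}}_{\dd})^{-1}(0)$, a proper subvariety, and so the Lefschetz operator (cup product with an ample class) and the hard Lefschetz isomorphisms come directly from Saito's theory as in the proof of \cite[Thm.~2.3]{DMSS13}; no interaction with (\ref{nilpver}) is required for this part. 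You do not need $\phim{\WW}\ICS$ to be pure for this argument---the vanishing cycle functor does not preserve purity in general---only that the support is proper and that the input perverse sheaf is Verdier self-dual.

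A minor simplification: in your first step you can bypass the framed quiver entirely. Apply Theorem~\ref{rays} directly on the unframed quiver with the interval $[0,\theta_{\Ss}]$, and use Proposition~\ref{stackPure} for purity of the left-hand side; each slope-$\gamma$ factor is then a summand of something pure, as in the paper's proof of Theorem~\ref{cohPos}.
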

This theorem is in turn a consequence of the following one.
\begin{theorem}
\label{cohPos}
For $\zeta_{\Ss},\theta_{\Ss},\dd$ as above, the monodromic mixed Hodge module $\mathcal{H}=\DT^{\zeta_{\Ss},\nilp}_{\dd}$ is pure, of Tate type, and carries a Lefschetz operator $l\colon \mathcal{H}^{\bullet}\rightarrow \mathcal{H}^{\bullet+2}$ such that $l^k\colon \mathcal{H}^{-k}\rightarrow\mathcal{H}^k$ is an isomorphism for all $k$.  Moreover, either $\mathcal{H}$ or $\mathcal{H}\otimes\LL^{1/2}$ has trivial monodromy.
\end{theorem}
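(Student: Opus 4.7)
The plan is to deduce Theorem \ref{cohPos} from Proposition \ref{stackPure} and the BPS-type decomposition of \cite[Thm.A]{DaMe15b} via the cohomological wall-crossing formula (Theorem \ref{rays}). Fix a slope $\gamma\in[0,\theta_{\Ss}]$. By genericity of $\zeta_{\Ss}$, the dimension vectors of slope $\gamma$ form a submonoid $\Lambda^{\zeta_{\Ss}}_{\gamma}\subset\mathbb{N}^m$ on which the twisted product $\boxtimes_+^{\tw}$ is untwisted, and contributions from nonzero dimension vectors of other slopes lie in submonoids of $\mathbb{N}^m$ disjoint from $\Lambda^{\zeta_{\Ss}}_{\gamma}$. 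Hence Theorem \ref{rays}, applied to the tilted heart over $[0,\theta_{\Ss}]$ and then projected onto $\Lambda^{\zeta_{\Ss}}_{\gamma}$, identifies the slope-$\gamma$ stack cohomology
\[
\Ho\left(\Dim^{\zeta_{\Ss}\sst}_{\gamma,*}\left(\phim{\WWW^{\zeta_{\Ss}\sst}_{\gamma}}\ICS_{\Mst(Q)^{\zeta_{\Ss}\sst}_{\gamma}}(\QQ)\right)_{\nilp}\right)
\]
as a direct summand of the tilted-heart cohomology, which is pure of Tate type by Proposition \ref{stackPure}. Combining this with the isomorphism
\[
\Ho\left(\Dim^{\zeta_{\Ss}\sst}_{\gamma,*}\left(\phim{\WWW^{\zeta_{\Ss}\sst}_{\gamma}}\ICS_{\Mst(Q)^{\zeta_{\Ss}\sst}_{\gamma}}(\QQ)\right)_{\nilp}\right)\cong \Sym_{\boxtimes_+}\left(\DT^{\zeta_{\Ss},\nilp}_{\gamma}\otimes\HO(\BC,\QQ)_{\vir}\right)
\]
from \cite[Thm.A]{DaMe15b}, and using that $\EXP^{\Hodge}$ is an equivalence onto its image that preserves and reflects purity of Tate type, while $\HO(\BC,\QQ)_{\vir}$ is pure Tate, extracts purity of Tate type for $\DT^{\zeta_{\Ss},\nilp}_{\dd}$ for each $\dd\in\Lambda^{\zeta_{\Ss},+}_{\gamma}$.

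For the Lefschetz operator, I will adapt the framing argument from the proof of Theorem \ref{KConj}. Introduce a framing vertex $\infty$ with framing vector $\ff$ and extend $\zeta_{\Ss}$ to a stability condition on $Q_{\ff}$ for which $1_{\infty}$ has slope slightly greater than $\gamma$; the resulting framed moduli $\Msp(Q)^{\zeta_{\Ss},\gamma\sfr}_{\ff,\dd}$ is projective over the affine semisimple moduli $\Msp(Q)_{\dd}$, so by \cite[Thm.2.3]{DMSS13} its nilpotent-locus vanishing-cycle cohomology carries a Lefschetz operator. A wall-crossing decomposition of this framed cohomology, analogous to (\ref{WC}) in the proof of Theorem \ref{KConj}, expresses it in the $\ff\to\infty$ limit as the monoidal product of a pure framing-vertex factor with $\Sym_{\boxtimes_+}(\DT^{\zeta_{\Ss},\nilp}_{\gamma}\otimes\HO(\BC,\QQ)_{\vir})$. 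Extracting the generating summand and dividing out by the Lefschetz-type $\HO(\BC,\QQ)_{\vir}$ gives the desired Lefschetz operator on $\DT^{\zeta_{\Ss},\nilp}_{\dd}$. For the monodromy statement, run the analogous pipeline with $\QQ$ in place of $\ICS$; the $\QQ$-coefficient version of Proposition \ref{stackPure} has trivial monodromy, and the relation to $\DT^{\zeta_{\Ss},\nilp}_{\dd}$ is via the half Tate twist $\LL^{\dim(\Msp(Q)^{\zeta_{\Ss}\sst}_{\dd})/2}$, whose parity is controlled by $(\dd,\dd)_Q\bmod 2$ as in Proposition \ref{invOD}, yielding that either $\mathcal{H}$ or $\mathcal{H}\otimes\LL^{1/2}$ has trivial monodromy.

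The main obstacle is the descent of the Lefschetz operator from the symmetric algebra $\Sym_{\boxtimes_+}(\DT^{\zeta_{\Ss},\nilp}_{\gamma}\otimes\HO(\BC,\QQ)_{\vir})$, or from the framed-moduli cohomology, down to each individual generator $\DT^{\zeta_{\Ss},\nilp}_{\dd}$. Purity and Tate type descend cleanly through $\EXP^{\Hodge}$ at the level of weight polynomials, but the full $\mathfrak{sl}_2$-action on a symmetric algebra need not respect the decomposition along its generators. The cleanest resolution is likely to construct the Lefschetz operator on $\DT^{\zeta_{\Ss},\nilp}_{\dd}$ directly: the fibre $(q^{\zeta_{\Ss}}_{\dd})^{-1}(0)=\Msp(Q)^{\zeta_{\Ss}\sst,\nilp}_{\dd}$ of the projective coarse moduli map is itself projective, so Saito's hard Lefschetz theorem applies to its vanishing-cycle pushforward to a point, provided the complex $\left(\phim{\WW^{\zeta_{\Ss}\sst}_{\dd}}\ICS_{\Msp(Q)^{\zeta_{\Ss}\sst}_{\dd}}(\QQ)\right)_{\nilp}$ is itself pure as a monodromic mixed Hodge module --- a refinement of the total-cohomology purity from the first paragraph, but one that is plausibly derivable from the same wall-crossing and framing machinery combined with the approximation by proper maps (Corollary \ref{easyAPM}) and the Beilinson--Bernstein--Deligne decomposition theorem in Saito's setting.
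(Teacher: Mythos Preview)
Your argument for purity and Tate type is essentially the paper's: apply Theorem \ref{rays} and Proposition \ref{stackPure} to see the slope-$\gamma$ piece is a direct summand of a pure Tate-type object, then extract $\DT^{\zeta_{\Ss},\nilp}_{\dd}$ from the $\Sym$ decomposition (\ref{nilpver}). Your monodromy argument is likewise the same. The one substantive difference is the Lefschetz operator.

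Your framing approach for hard Lefschetz is an unnecessary detour, and your own ``obstacle'' paragraph correctly identifies why: descent of an $\mathfrak{sl}_2$-action from $\Sym$ to its generators is genuinely problematic. The paper does exactly what you propose in your final paragraph --- construct the Lefschetz operator directly on $\DT^{\zeta_{\Ss},\nilp}_{\dd}=\Ho\bigl(\tau_*(\phim{\WW^{\zeta_{\Ss}\sst}_{\dd}}\ICS_{\Msp(Q)^{\zeta_{\Ss}\sst}_{\dd}}(\QQ))_{\nilp}\bigr)$ --- but your hedge ``provided the complex is itself pure'' is not needed. By Corollary \ref{allW} the nilpotent locus is a union of connected components of the support of the vanishing cycle sheaf, and this locus is the fibre $(q^{\zeta_{\Ss}}_{\dd})^{-1}(0)$ of the proper GIT map $q^{\zeta_{\Ss}}_{\dd}$, hence projective. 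This is precisely case (c) of \cite[Thm.2.1]{DMSS13}, and \cite[Thm.2.3]{DMSS13} then supplies the Lefschetz operator: the input is purity of $\ICS_{\Msp(Q)^{\zeta_{\Ss}\sst}_{\dd}}(\QQ)$ \emph{before} applying $\phim{}$ (which holds for any IC sheaf), together with commutation of $\phim{}$ with proper pushforward and exactness of $\phim{}$. No purity of the vanishing cycle complex itself is invoked. So drop the framing argument entirely and cite \cite[Thm.2.3]{DMSS13} directly, as the paper does.
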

\begin{proof}
By Theorem \ref{rays} we may write

\begin{align}\label{DTcalc}
&\Ho\left(\Dim_*\left(\phim{\WWW^{\zeta_{\Ss}}_{[0,\theta_{\Ss}]}}\ICS_{\Mst(Q)_{[0,\theta_{\Ss}]}^{\zeta_{\Ss}}}(\mathbb{Q})\right)_{\nilp}\right)\cong\\&\Boxtimes_{+,[\theta_{\Ss}\xrightarrow{\gamma}0]}^{\tw}\Ho\left(\Dim_*\left(\phim{\WWW^{\zeta_{\Ss}\sst}_{\gamma}}\ICS_{\Mst(Q)^{\zeta_{\Ss}\sst}_{\gamma}}(\mathbb{Q})\right)_{\nilp}\right)\label{rayCalc}
\end{align}
and by Proposition \ref{stackPure} we moreover deduce that (\ref{DTcalc}) is pure, of Tate type.  It follows as in the proof of Theorem \ref{KConj} that each of the terms in the product (\ref{rayCalc}) is pure, of Tate type.  As each $\DT_{\dd}^{\zeta_{\Ss},\nilp}$ is a summand of (\ref{rayCalc}), we deduce that all of the monodromic mixed Hodge structures $\DT_{\dd}^{\zeta_{\Ss},\nilp}$ are pure, of Tate type.  

In addition, from Corollary \ref{allW} it follows that 
\begin{align*}
\left(\phim{\WW^{\zeta_{\Ss}\sst}_{\dd}}\ICS_{\Msp(Q)_{\dd}^{\zeta_{\Ss}\sst}}(\QQ)\right)_{\nilp}
\end{align*}
is the restriction of 
\begin{align*}
\phim{\WW^{\zeta_{\Ss}\sst}_{\dd}}\ICS_{\Msp(Q)_{\dd}^{\zeta_{\Ss}\sst}}(\QQ)
\end{align*}
to a proper union of components of its support, i.e. the preimage of the origin under the proper map $q^{\zeta_{\Ss}}_\dd\colon  \Msp(Q)^{\zeta_{\Ss}\sst}_{\dd}\rightarrow \Msp(Q)_{\dd}$, and so its cohomology carries a Lefschetz operator, as in the proof of \cite[Thm.2.3]{DMSS13}.  Moreover, since by Proposition \ref{stackPure}, (\ref{DTcalc}) has trivial monodromy (possibly after tensoring by a half Tate twist, depending on $\dd$), we deduce that the same is true of each $\DT_{\dd}^{\zeta_{\Ss},\nilp}$.
\end{proof}
\begin{remark}
The above is a kind of categorified ``no exotics'' statement for the BPS/DT invariants associated to cluster collections --- compare with \cite{GeomEng}, where in the physics context the no exotics property of refined DT invariants is explained by the principal that the cohomology of the spaces of BPS states that they derive from carry a Lefschetz operator, as representations of $\mathfrak{sl}_2$.  We use the word ``categorified'' here to mean that for cohomological DT invariants coming from cluster collections, we can construct the Lefschetz action itself, in addition to deducing the no exotics property on the underlying refined DT invariants, which in our context is the Tate type property.
\end{remark}
\begin{corollary}\label{firstCor}
Let $(Q,W)$ be an algebraic QP, such that there is a sequence of vertices $\Ss$, for which $W$ is nondegenerate, and for which $\mathcal{F}_{\Ss}\cap \rmod{\HJac(Q,W)}=\rmod{\HJac(Q,W)}$.  Then for a generic stability condition $\zeta$, the Hodge--theoretic Donaldson--Thomas invariants $\Omega^{\zeta_{\Ss},\nilp}_{\dd}$ can be written as $h_{\dd}(\LL^{1/2})$, for $h_{\dd}(q^{1/2})=h_{\dd}(q^{-1/2})$ equal to $b_{\dd}(q)q^{{}-\deg(b_{\dd}(q))/2}$, for $b_{\dd}(q)\in\mathbb{N}[q]$ with unimodal coefficients.
\end{corollary}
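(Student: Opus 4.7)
The plan is to deduce the corollary as an immediate consequence of Theorem \ref{KPos}, with the hypothesis on $\mathcal{F}_{\Ss}\cap\rmod{\HJac(Q,W)}$ serving precisely to lift the slope restriction imposed in that theorem.

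First I would pick $\zeta_{\Ss}$ from Proposition \ref{zetaTheta} and then perturb it so that no two linearly independent dimension vectors share the same slope, thereby ensuring the genericity condition imposed at the start of this appendix. The torsion structure $(\mathcal{T}_{\Ss},\mathcal{F}_{\Ss})$ is defined in a purely categorical way from the data $(Q,W,\Ss)$ via the derived equivalences $\Phi_{\Ss,\epsilon_{\Ss}}$ and does not depend on the particular choice of $\zeta_{\Ss}$, so this perturbation preserves the corollary's hypothesis. Proposition \ref{zetaTheta} combined with the hypothesis then yields
\[
(\rmod{\HJac(Q,W)})^{\zeta_{\Ss}}_{[0,\theta_{\Ss}]}=\mathcal{F}_{\Ss}\cap\rmod{\HJac(Q,W)}=\rmod{\HJac(Q,W)},
\]
which says that every finite-dimensional $\Jac(Q,W)$-module has all its Harder--Narasimhan slopes in $[0,\theta_{\Ss}]$.

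In particular, any $\dd\in\mathbb{N}^m$ supporting a nonzero $\zeta_{\Ss}$-semistable nilpotent $\Jac(Q,W)$-module must satisfy $\Mu^{\zeta_{\Ss}}(\dd)\in[0,\theta_{\Ss}]$; for such $\dd$, Theorem \ref{KPos} applies and delivers the stated palindromic and unimodal form of $\Omega^{\zeta_{\Ss},\nilp}_{\dd}$ directly. For all remaining $\dd$ the relevant moduli stack $\Mst(Q,W)^{\zeta_{\Ss}\sst,\nilp}_{\dd}$ is empty, so $\Omega^{\zeta_{\Ss},\nilp}_{\dd}=0$ and the conclusion holds vacuously.

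The only subtle point, rather than a genuine obstacle, is matching the genericity convention of this appendix with the construction of $\zeta_{\Ss}$ in Proposition \ref{zetaTheta}. Since the latter involves only strict inequalities, it cuts out an open chamber in the space of stability conditions within which the desired generic perturbation can be performed without disturbing either the torsion structure or the slope characterization of $\mathcal{F}_{\Ss}\cap\rmod{\HJac(Q,W)}$.
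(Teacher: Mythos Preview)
Your proposal is correct and matches the paper's intended (implicit) argument: the corollary is stated without proof precisely because the hypothesis $\mathcal{F}_{\Ss}\cap\rmod{\HJac(Q,W)}=\rmod{\HJac(Q,W)}$, combined with Proposition \ref{zetaTheta}, forces every nonzero $\dd\in\mathbb{N}^m$ to have slope at most $\theta_{\Ss}$ (indeed each simple $\Simp(Q)_i$ lies in $\mathcal{F}_{\Ss}$, so each $\zeta_{\Ss,i}$ has argument $\leq\theta_{\Ss}$, and the sector is convex), whereupon Theorem \ref{KPos} applies to all $\dd$. Your case split into dimension vectors that do or do not support a nilpotent semistable module is harmless but unnecessary, since in fact no $\dd$ falls outside the range of Theorem \ref{KPos}.
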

As a special case, we recover the following result from \cite{MeRe14}; note however that this is not a new proof, as the proof of \cite[Thm.A]{DaMe15b} uses the results of \cite{MeRe14} in an essential way.  For an example of a QP satisfying the conditions of Corollary \ref{firstCor}, and for which $Q$ is not acyclic, see the example worked out after Conjecture 6.8 of \cite{Efi11}.
\begin{corollary}\cite[Cor.1.2]{MeRe14}
Let $Q$ be acyclic.  Then for a generic stability condition $\zeta$ the Hodge--theoretic Donaldson--Thomas invariants $\Omega^{\zeta_{\Ss},\nilp}_{\dd}$ can be written as $h_{\dd}([\LL^{1/2}])$, for $h_{\dd}(q^{1/2})=h_{\dd}(q^{-1/2})$ equal to $b_{\dd}(q)q^{{}-\deg(b_{\dd}(q))/2}$, for some polynomial $b_{\dd}(q)\in\mathbb{N}[q]$ with unimodal coefficients.
\end{corollary}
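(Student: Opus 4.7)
The plan is to derive this as the special case $W=0$ of Corollary \ref{firstCor}. Since $Q$ is acyclic, the space $\mathbb{C}Q/[\mathbb{C}Q,\mathbb{C}Q]$ contains only the zero potential, so $\HJac(Q,0)=\Jac(Q,0)=\mathbb{C}Q$, and the hypothesis of Corollary \ref{firstCor} reduces to exhibiting a finite sequence $\Ss$ of principal vertices along which $W=0$ is nondegenerate (i.e., no 2-cycles appear at any intermediate mutation) and for which $\mathcal{F}_{\Ss}\cap\rmod{\mathbb{C}Q}=\rmod{\mathbb{C}Q}$.

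I would first construct such an $\Ss$ by invoking the classical tilting theory of hereditary algebras. Concretely, one takes a maximal green sequence for $Q$, i.e.\ a finite sequence of mutations along which the Nagao recursion (\ref{TSdef}) produces only additive steps, each step adjoining a new indecomposable $\mathbb{C}Q$-module to $\mathcal{F}_{\Ss_{\leq t}}$. The existence of such a sequence for any acyclic quiver is classical and follows, for example, by traversing the preprojective component of the Auslander--Reiten quiver of $\mathbb{C}Q$. Using Lemma \ref{simpTransf} together with the 3-Calabi--Yau duality on $\Dub^{\fd}_{\princ}(\Rmod{\HGamma(Q,0)})$, one checks by induction that at each stage $\Simp_{\Ss_{\leq t}}$ is represented by a bona fide $\mathbb{C}Q$-module rather than a shift of one (so each step is additive in the sense of Section \ref{Tpair}), and that no 2-cycle ever appears in the mutated quivers.

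Since $\rmod{\mathbb{C}Q}$ is hereditary and $\mathcal{F}_{\Ss}$ is closed under extensions by construction, the finite collection of indecomposable $\mathbb{C}Q$-modules adjoined along a suitable $\Ss$ generates all of $\rmod{\mathbb{C}Q}$ under iterated extensions, verifying the hypothesis $\mathcal{F}_{\Ss}\cap\rmod{\mathbb{C}Q}=\rmod{\mathbb{C}Q}$. Once this is established, Corollary \ref{firstCor} applies verbatim: Theorem \ref{cohPos} provides the purity, Tate-type, and Lefschetz properties of $\DT^{\zeta_{\Ss},\nilp}_{\dd}$, and reading off the associated weight polynomial yields the claimed positivity, unimodality, and symmetry under $q^{1/2}\mapsto q^{-1/2}$ of $\Omega^{\zeta_{\Ss},\nilp}_{\dd}$.

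The main obstacle is the combinatorial verification of the existence of a maximal green sequence with the right additivity properties. For Dynkin $Q$ this is straightforward from Gabriel's theorem, as $\rmod{\mathbb{C}Q}$ has only finitely many indecomposables. For non-Dynkin acyclic $Q$, more care is needed: one must ensure that the recursive torsion-free subcategory $\mathcal{F}_{\Ss_{\leq t}}$ really grows by the expected indecomposables, which amounts to tracking the Keller--Yang derived equivalences $\Phi_{\Ss_{\leq t}}$ along $\tau$-orbits in the preprojective component. This bookkeeping is routine but tedious; once it is in place, the remainder of the argument reduces entirely to the general results already established, most crucially the purity statement of Theorem \ref{cohPos}.
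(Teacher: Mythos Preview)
Your reduction to Corollary \ref{firstCor} is exactly the paper's approach: the paper presents the acyclic statement as a special case with no further argument, tacitly leaving it to the reader to observe that acyclic $(Q,0)$ satisfies the hypothesis. So the strategy is correct.

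Where your proposal goes astray is in the verification itself. The appeal to the preprojective component of the Auslander--Reiten quiver and to $\tau$-orbits is misplaced, and your suggestion that the non-Dynkin case ``requires more care'' is unfounded. A much simpler uniform argument works for every acyclic $Q$: choose an ordering $s_1,\ldots,s_m$ of the vertices so that every arrow of $Q$ goes from a later vertex to an earlier one (so $s_1$ is a sink, $s_2$ is a sink of $Q\setminus\{s_1\}$, etc.), and set $\Ss=(s_1,\ldots,s_m)$. At each step one mutates at a sink of the current quiver, so no length-two paths pass through the mutated vertex; hence no composed arrows are created, the potential remains zero throughout, and in particular no 2-cycles appear, giving nondegeneracy. (Alternatively: since $Q$ is acyclic the only potential is $W=0$, so the nondegenerate potential guaranteed by \cite[Cor.7.4]{DWZ08} must be $W=0$.) An easy induction using the triangle (\ref{readback}) and the 3-Calabi--Yau pairing shows that at each stage the relevant $\Ext^2$ vanishes, so $\overline{\Simp}_{\Ss_{\leq k}}=\Simp(Q)_{s_k}$, every step is additive, and after $m$ steps $\mathcal{F}_{\Ss}$ contains all the simple $\mathbb{C}Q$-modules. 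Since $\mathcal{F}_{\Ss}$ is a torsion-free class it is closed under extensions, and therefore $\mathcal{F}_{\Ss}\cap\rmod{\mathbb{C}Q}=\rmod{\mathbb{C}Q}$. This is just the standard maximal green sequence for an acyclic quiver; no representation-type dichotomy enters.
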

\bibliographystyle{amsalpha}
\bibliography{SPP}

\newcommand{\etalchar}[1]{$^{#1}$}
\providecommand{\bysame}{\leavevmode\hbox to3em{\hrulefill}\thinspace}
\providecommand{\MR}{\relax\ifhmode\unskip\space\fi MR }
\providecommand{\MRhref}[2]{%
  \href{http://www.ams.org/mathscinet-getitem?mr=#1}{#2}
}
\providecommand{\href}[2]{#2}
\begin{thebibliography}{CDM{\etalchar{+}}14}

\bibitem[BBD83]{BBD}
A.~Beilinson, J.~Bernstein, and P.~Deligne, \emph{Faisceaux pervers},
  Ast{\'e}risque \textbf{100} (1983).

\bibitem[BBD{\etalchar{+}}15]{Br12}
C.~Brav, V.~Bussi, D.~Dupont, D.~Joyce, and B.~Szendr{\H{o}}i, \emph{Symmetries
  and stabilization for sheaves of vanishing cycles}, J. Singul. \textbf{11}
  (2015), 85--151, With an appendix by J{\"o}rg Sch{\"u}rmann.

\bibitem[Bon10]{Bon10}
M.~Bondarko, \emph{Weight structures vs. t-structures; weight filtrations,
  spectral sequences, and complexes (for motives and in general)}, Journal of
  K-theory: K-theory and its Applications to Algebra, Geometry, and Topology
  \textbf{6} (2010), no.~03, 387--504.

\bibitem[BP90]{LBP90}
L.~Le Bruyn and C.~Procesi, \emph{Semisimple representations of quivers},
  {Transactions of the American Mathematical Society} (1990), 585--598.

\bibitem[BZ05]{BZ05}
Arkady Berenstein and Andrei Zelevinsky, \emph{Quantum cluster algebras},
  Advances in Mathematics \textbf{195} (2005), no.~2, 405--455.

\bibitem[CDM{\etalchar{+}}14]{GeomEng}
W.~Chuang, D-E Diaconescu, J.~Manschot, G.~Moore, and Y.~Soibelman,
  \emph{{Geometric engineering of (framed) BPS states}}, Advances in
  Theoretical and Mathematical Physics \textbf{18} (2014), no.~5, 1063--1231.

\bibitem[DM15]{DaMe4}
B.~Davison and S.~Meinhardt, \emph{{Donaldson--Thomas theory for categories of
  homological dimension one with potential}},
  \url{http://arxiv.org/abs/1601.02479}, 2015.

\bibitem[DM16]{DaMe15b}
\bysame, \emph{{Cohomological Donaldson--Thomas theory of a quiver with
  potential and quantum enveloping algebras}},
  \url{http://arxiv.org/abs/1601.02479}, 2016.

\bibitem[DMSS15]{DMSS13}
B.~Davison, D.~Maulik, J.~Sch{\"u}rmann, and B.~Szendr\H{o}i, \emph{Purity for
  graded potentials and quantum cluster positivity}, Comp. Math. \textbf{151}
  (2015), 1913--1944.

\bibitem[DWZ08]{DWZ08}
H.~Derksen, J.~Weyman, and A.~Zelevinsky, \emph{{Quivers with potentials and
  their representations. I. Mutations}}, Selecta Math. \textbf{14} (2008),
  no.~1, 59--119.

\bibitem[DWZ10]{DWZ10}
\bysame, \emph{{Quivers with potentials and their representations. II:
  Applications to cluster algebras}}, J. Am. Math. Soc. \textbf{23} (2010),
  no.~3, 749--790.

\bibitem[Efi]{Efi11}
A.~Efimov, \emph{Quantum cluster variables via vanishing cycles},
  \url{http://arxiv.org/abs/1112.3601}.

\bibitem[EG98]{EdGr98}
D.~Edidin and W.~Graham, \emph{{Equivariant intersection theory (With an
  appendix by Angelo Vistoli: The Chow ring of M2)}}, {Inventiones Math.}
  \textbf{131} (1998), no.~3, 595--634.

\bibitem[FZ02]{FZ02}
S.~Fomin and A.~Zelevinsky, \emph{{Cluster algebras. I. Foundations}}, J. Am.
  Math. Soc \textbf{15} (2002), no.~2, 497--529.

\bibitem[GHKK14]{GHKK14}
M.~Gross, P.~Hacking, S.~Keel, and M.~Kontsevich, \emph{Canonical bases for
  cluster algebras}, \url{http://arxiv.org/abs/1411.1394}, 2014.

\bibitem[Gin06]{ginz}
V.~Ginzburg, \emph{{C}alabi-{Y}au algebras},
  \url{http://arxiv.org/abs/math/0612139}, 2006.

\bibitem[Kel]{Ke375}
B.~Keller, \emph{Cluster algebras, quiver representations and triangulated
  categories, triangulated categories, 76--160}, London Math. Soc. Lecture Note
  Ser \textbf{375}.

\bibitem[Kel08]{CYTC}
\bysame, \emph{Calabi-{Y}au triangulated categories}, {Trends in Representation
  Theory of algebras and related topics}, Ser. Congr. Rep., EMS, Z\"urich,
  2008, pp.~467--489.

\bibitem[Kel11a]{Kel09}
\bysame, \emph{Deformed {C}alabi-{Y}au completions}, J. Reine Angew. Math.
  \textbf{654} (2011), 125--180, {With an appendix by Michel Van den Bergh}.
  \MR{2795754}

\bibitem[Kel11b]{Keller10}
\bysame, \emph{On cluster theory and quantum dilogarithm identities},
  Representations of Algebras and Related Topics, Editors A. Skowronski and K.
  Yamagata, EMS Series of Congress Reports, European Mathematical Society,
  2011, pp.~85--11.

\bibitem[Kin94]{King94}
A.~King, \emph{Moduli of representations of finite-dimensional algebras},
  Quart. J. Math. Oxford (2) \textbf{45} (1994).

\bibitem[KQ14]{KQ14}
Y.~Kimura and F.~Qin, \emph{Graded quiver varieties, quantum cluster algebras
  and dual canonical basis}, Advances in Mathematics \textbf{262} (2014),
  261--312.

\bibitem[KS08]{KS}
M.~Kontsevich and Y.~Soibelman, \emph{{Stability structures, motivic
  Donaldson-Thomas invariants and cluster transformations}},
  \url{http://arxiv.org/abs/0811.2435}, 2008.

\bibitem[KS11]{COHA}
\bysame, \emph{{Cohomological Hall algebra, exponential Hodge structures and
  motivic Donaldson-Thomas invariants}}, {Comm. Num. Th. and Phys.} \textbf{5}
  (2011), no.~2, 231--252.

\bibitem[KY11]{KellerMutations}
B.~Keller and D.~Yang, \emph{Derived equivalences from mutations of quivers
  with potential}, Adv. Math. \textbf{226} (2011), no.~3, 2118--2168.

\bibitem[LPWZ08]{Koszul}
D.~Lu, J.~Palmieri, Q.~Wu, and J.~Zhang, \emph{{Koszul Equivalences in
  {$A_{\infty}$}-Algebras}}, New York J. Math \textbf{14} (2008), 325--378.

\bibitem[LS15]{LS15}
K.~Lee and R.~Schiffler, \emph{Positivity for cluster algebras}, Annals of
  Mathematics \textbf{182} (2015), no.~1, 73--125.

\bibitem[Mas01]{Ma01}
D.~B. Massey, \emph{{The Sebastiani--Thom isomorphism in the Derived
  Category}}, Comp. Math. \textbf{125} (2001), no.~3, 353--362.

\bibitem[MFK94]{MFK94}
D.~Mumford, J.~Fogarty, and F.~Kirwan, \emph{Geometric invariant theory},
  vol.~34, Springer Science \& Business Media, 1994.

\bibitem[Mil63]{Mi63}
J.~Milnor, \emph{{Morse theory, volume 51 of Annals of Mathematics Studies}},
  Princeton, NJ, USA (1963).

\bibitem[MR]{MeRe14}
S.~Meinhardt and M.~Reineke, \emph{{Donaldson-Thomas invariants versus
  intersection cohomology of quiver moduli}},
  \url{http://arxiv.org/abs/1411.4062}.

\bibitem[Nag13]{Na13}
K.~Nagao, \emph{{Donaldson--Thomas theory and cluster algebras}}, Duke
  Mathematical Journal \textbf{162} (2013), no.~7, 1313--1367.

\bibitem[Nak11]{Na11}
H.~Nakajima, \emph{Quiver varieties and cluster algebras}, Kyoto Journal of
  Mathematics \textbf{51} (2011), no.~1, 71--126.

\bibitem[Pla11]{Pl11}
P-G. Plamondon, \emph{Cluster characters for cluster categories with
  infinite-dimensional morphism spaces}, Advances in Mathematics \textbf{227}
  (2011), no.~1, 1--39.

\bibitem[Rei03]{Reineke_HN}
M.~Reineke, \emph{The {H}arder-{N}arasimhan system in quantum groups and
  cohomology of quiver moduli}, Invent. Math. \textbf{152} (2003), no.~2,
  349--368.

\bibitem[Rei05]{Re05}
\bysame, \emph{{Cohomology of noncommutative Hilbert schemes}}, Algebras and
  representation theory \textbf{8} (2005), no.~4, 541--561.

\bibitem[Sai]{Saito10}
M.~Saito, \emph{{Thom--Sebastiani Theorem for Hodge Modules}}, preprint 2010.

\bibitem[Sai88]{Sa88}
\bysame, \emph{{Modules de Hodge polarisables}}, Publications of the Research
  Institute for Mathematical Sciences \textbf{24} (1988), no.~6, 849--995.

\bibitem[Sai89a]{Sai89duality}
\bysame, \emph{Duality for vanishing cycle functors}, Publications of the
  Research Institute for Mathematical Sciences \textbf{25} (1989), no.~6,
  889--921.

\bibitem[Sai89b]{Sai89}
\bysame, \emph{{Introduction to mixed Hodge modules}}, Ast\`{e}risque
  \textbf{179--180} (1989), 145--162.

\bibitem[Sai89c]{Saito89}
\bysame, \emph{{Mixed Hodge modules and admissible variations}}, {CR Acad. Sci.
  Paris} \textbf{309} (1989), no.~6, 351--356.

\bibitem[Sai90]{Sai90}
\bysame, \emph{{Mixed Hodge modules}}, Publ. RIMS \textbf{26} (1990), 221--333.

\bibitem[SZ85]{StZu85}
J.~Steenbrink and S.~Zucker, \emph{{Variation of mixed Hodge structure. I}},
  Inventiones mathematicae \textbf{80} (1985), no.~3, 489--542.

\end{thebibliography}

\vfill

\textsc{\small B. Davison: School of Mathematics and Statistics\\University of Glasgow, University Place\\Glasgow G12 8SQ}\\
\textit{\small E-mail address:} \texttt{\small ben.davison@glasgow.ac.uk}\\
\\

\end{document}